\newcommand{\A}{\mathbb{A}}
\newcommand{\B}{\mathbb{B}}
\newcommand{\C}{\mathbb{C}}
\newcommand{\D}{\mathbb{D}}
\newcommand{\N}{\mathbb{N}}
\newcommand{\Q}{\mathbb{Q}}
\newcommand{\R}{\mathbb{R}}
\newcommand{\Z}{\mathbb{Z}}
\newcommand{\caA}{\mathcal{A}}
\newcommand{\caB}{\mathcal{B}}
\newcommand{\caC}{\mathcal{C}}
\newcommand{\caF}{\mathcal{F}}
\newcommand{\caH}{\mathcal{H}}
\newcommand{\caI}{\mathcal{I}}
\newcommand{\caJ}{\mathcal{J}}
\newcommand{\caL}{\mathcal{L}}
\newcommand{\caO}{\mathcal{O}}
\newcommand{\caR}{\mathcal{R}}
\newcommand{\caT}{\mathcal{T}}
   \def\WbDiv{\operatorname{W-b-Div}_{\R}}
\def\CbDiv{\operatorname{C-b-Div}_{\R}}
   \DeclareMathOperator {\rec} {rec}
   \DeclareMathOperator {\tr} {tr}
   \DeclareMathOperator {\ord} {ord}
 \DeclareMathOperator {\supp}{supp}
 \DeclareMathOperator {\Div}{Div_{\R}}
  \DeclareMathOperator {\dv}{div}
  \DeclareMathOperator {\Sp}{Sp}
  \DeclareMathOperator {\SL}{SL}
  \DeclareMathOperator {\GL}{GL}
 \DeclareMathOperator{\im}{Im}
\DeclareMathOperator{\Proj}{Proj}
\DeclareMathOperator{\Id}{Id}
\DeclareMathOperator{\vol}{Vol}
\DeclareMathOperator{\inter}{int}
\DeclareMathOperator{\bdiv}{b\text{-}div}
\newcommand{\cusp}{\text{\rm cusp}}
\newcommand{\tor}{\text{\rm tor}}
\newcommand{\can}{\text{\rm can}}
\newcommand{\inv}{\text{\rm inv}}
\numberwithin{equation}{section}
\theoremstyle{plain}
\newtheorem{prop}{Proposition}[section]
\newtheorem{cor}[prop]{Corollary}
\newtheorem{lem}[prop]{Lemma}
\newtheorem{thm}[prop]{Theorem}
\theoremstyle{definition}
\newtheorem{df}[prop]{Definition}
\newtheorem{notation}[prop]{Notation}
\newtheorem{introthm}{Theorem}
\newtheorem{introcor}[introthm]{Corollary}
\theoremstyle{remark}
\newtheorem{rmk}[prop]{Remark}
\newtheorem{ex}[prop]{Example}
\subjclass{14C20, 11F50, 32U05, 14J15}
 \thanks{A.\ Botero was supported by the collaborative research 
center SFB 1085 \emph{Higher Invariants - Interactions between
  Arithmetic Geometry and Global Analysis} funded by the Deutsche
Forschungsgemeinschaft. J.\ I.\ Burgos was partially supported by MINISTERIO
DE CIENCIA E INNOVACION research projects PID2019-108936GB-C21 and 
ICMAT Severo Ochoa project CEX2019-000904-S.
D.\ Holmes was supported by NWO grants VI.Vidi.193.006 and 613.009.103.
}
\begin{document}
\author[Botero]{Ana Mar\'ia Botero}
\address{University of Bielefeld,
Faculty of Mathematics,
Universitätsstraße 25, 33615 Bielefeld, Germany
}
\email{abotero@math.uni-bielefeld.de}

\author[Burgos Gil]{Jos\'e Ignacio Burgos Gil}
\address{Instituto de Ciencias Matem\'aticas (CSIC-UAM-UCM-UCM3),
  Calle Nicol\'as Ca\-bre\-ra~15, Campus UAM, Cantoblanco, 28049 Madrid,
  Spain} 
\email{burgos@icmat.es}

\author[Holmes]{ David Holmes}
\address{Mathematical Institute,  
Leiden University,
PO Box 9512, 
2300 RA Leiden, 
The Netherlands}
\email{holmesdst@math.leidenuniv.nl}

\author[de Jong]{Robin de Jong}
\address{Mathematical Institute,  
Leiden University,
PO Box 9512, 
2300 RA Leiden, 
The Netherlands}
\email{rdejong@math.leidenuniv.nl}

\title[Rings of Siegel--Jacobi forms are not finitely generated]{Rings of Siegel--Jacobi forms of bounded relative index are not finitely generated}

\begin{abstract}
We show that the ring of Siegel--Jacobi forms of fixed degree and of
fixed or bounded ratio between
weight and index is not finitely generated. Our main tool is the
theory of toroidal b-divisors and their relation to convex
geometry. As a byproduct of our methods, we prove a conjecture of
Kramer about the representation of all Siegel--Jacobi forms as
sections of certain line bundles and we  recover a
formula due to Tai for the asymptotic dimension 
of the space of Siegel--Jacobi forms of given ratio between weight and
index.  
\end{abstract}
\maketitle
\tableofcontents

\section{Introduction}

Siegel modular varieties arise as moduli spaces of abelian varieties of a fixed dimension~$g$ equipped with a principal polarization and level structure. They carry distinguished line bundles whose global sections are Siegel modular forms of degree~$g$. Siegel modular forms constitute an important class of automorphic forms and generalize the classical modular forms on finite index subgroups of $\SL_2(\Z)$ to higher dimensions. The study of Siegel modular forms is of fundamental importance in number theory and algebraic geometry.

Siegel modular varieties come with universal abelian varieties, and these similarly carry distinguished line bundles. Their global sections are known as \emph{Siegel--Jacobi forms}. 
Importantly, Siegel--Jacobi forms appear as Fourier coefficients of Siegel modular forms in higher degrees.

The systematic study of Siegel--Jacobi forms goes back to the
1980s. The case of degree one is dealt with in the book \cite{EZ} by
Eichler and Zagier and  in many papers from Zagier's school. For
higher degree, foundations were laid by 
Ziegler~\cite{Ziegler-J} and Dulinski~\cite{dulinski}. Later Kramer
developed the arithmetic theory 
of Siegel--Jacobi forms \cite{Kramer_Crelle}. An important aspect in
his work is the consideration of toroidal compactifications of
the universal abelian variety following the work of Mumford and its
collaborators \cite{toroidal}, \cite{AVRT:compact}, \cite{tai}, and
in the arithmetic setting by Faltings and
Chai~\cite{fc}. Around the same time Runge contributed by further
studying the geometric aspects of Siegel--Jacobi forms \cite{runge}.
Other work related to Siegel--Jacobi forms of higher degree has
appeared in \cite{yamazaki, yang1, yang2, yang4, yang3}.

\subsection{Statement of the main result}

Let $g$ be a positive integer.
Let  $\Gamma \subseteq \operatorname{Sp}(2g,\Z)$ be a finite index subgroup. Siegel modular forms with respect to $\Gamma$ are classified by weight, whereas Siegel--Jacobi forms with respect to $\Gamma$ are classified by two invariants, namely \emph{weight} and \emph{index}. We denote by $J_{k,m}(\Gamma)$ the space of Siegel--Jacobi forms of weight $k$ and index $m$ with respect to $\Gamma$. Then  $\bigoplus_{k,m}J_{k,m}(\Gamma)$ is naturally a bigraded $\C$-algebra which is known not to be finitely generated (see \cite{EZ} for the case of degree one, and Proposition \ref{prop:bi-gr-alg} below for the general case). 

From now on we fix a neat subgroup $\Gamma \subseteq \operatorname{Sp}(2g, \Z)$. Assume $g \ge 2$. Then Runge claims in \cite[Theorem 5.5]{runge} that, for suitable integers $r$, the rings $\bigoplus_{m\le 2rk}J_{k, m}(\Gamma)$ are finitely generated $\C$-algebras. 

Our main aim in this paper is to disprove Runge's claim. In fact we show that this algebra is \emph{never} finitely generated. In Remark \ref{rem:runge-mistake} we explain the oversight in Runge's proof. 

\begin{introthm}\label{th:intro-not-fin-gen}({Theorem~\ref{thm:2}} and Proposition~\ref{prop:disprove-runge})
Let $g \geq 1$. For each $k>0$ and $m>0$ the graded algebra $\bigoplus_{\ell}J_{\ell k, \ell m}(\Gamma)$ is not finitely generated. Therefore for any $r\in \mathbb Q_{>0}$ the graded algebra $\bigoplus_{m\le rk}J_{k, m}(\Gamma)$ is not finitely generated. 
\end{introthm}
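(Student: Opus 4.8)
The plan is to reduce the statement to a concrete non-finite-generation phenomenon on a single fixed "diagonal" ray in the $(k,m)$-plane, and to control the relevant spaces of sections by means of the theory of toroidal b-divisors announced in the abstract. Fix $k>0$ and $m>0$; write $R = \bigoplus_{\ell\ge 0} J_{\ell k,\ell m}(\Gamma)$. The key geometric input is that Siegel--Jacobi forms of weight $k$ and index $m$ are global sections of a line bundle $\caL_{k,m}$ on (a toroidal compactification $\ol{\caX}$ of) the universal abelian variety $\caX$ over the Siegel modular variety $A_\Gamma$, of the form $\caL_{k,m} = p^*\caM^{\otimes k}\otimes \caP^{\otimes m}$, where $\caM$ is the Hodge bundle on $A_\Gamma$ and $\caP$ is (a symmetrization of) the Poincaré bundle. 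The divisor class $L_{k,m}$ lies on the boundary of the pseudo-effective cone in the relevant Néron--Severi space: it is big on the open part but fails to be semiample because of the behaviour of $\caP$ along the toroidal boundary. First I would make precise, following Kramer and Runge, the identification $J_{\ell k,\ell m}(\Gamma) = H^0(\ol{\caX}, \caL_{k,m}^{\otimes \ell}\otimes \caO(\text{boundary correction}))$, so that $R$ is (up to finite-dimensional modification) the section ring of the $\R$-divisor $D := L_{k,m}$.

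The heart of the argument is then a criterion: the section ring $\bigoplus_\ell H^0(\ol{\caX}, \caO(\ell D))$ of a big $\R$-divisor $D$ is finitely generated if and only if the associated graded linear series stabilizes, equivalently the b-divisor $\mathbf{D} = \lim_\ell \tfrac1\ell \ol{(\ell D)}$ (the "Zariski decomposition in the limit", i.e. the nef envelope / toroidal b-divisor attached to $D$) is a genuine divisor on some single model rather than a non-convergent tower. Concretely I would argue: if $R$ were finitely generated, then on some toroidal model $\ol{\caX}'$ dominating $\ol{\caX}$ the pullback of $D$ would decompose as (semiample) $+$ (effective with support in the boundary), and the volume function $\ell \mapsto \dim J_{\ell k,\ell m}(\Gamma)$ would be eventually a polynomial in $\ell$ with rational "leading coefficient" governed by an integral over a single rational polytope. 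The toroidal combinatorics here is exactly that of the cone decomposition used to compactify $\caX \to A_\Gamma$ fibrewise by degenerating abelian varieties: the fibre polytope controlling $H^0$ of $\caP^{\otimes m}$-type bundles is, after scaling, the one whose lattice-point count gives Jacobi theta functions, and the relevant b-divisor is represented by the concave function $x \mapsto -\tfrac12 \|x\|^2$-type Legendre data on $\R^g$ — which is genuinely non-polyhedral. This is the same mechanism by which the paper recovers Tai's asymptotic dimension formula: $\dim J_{\ell k,\ell m}(\Gamma) \sim c\, \ell^{\dim \ol{\caX}}$ with the constant $c$ an honest convex-geometric volume, not a rational number times $\vol(\caM)$, forcing non-polynomiality of the Hilbert function and hence non-finite-generation.

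The steps, in order, are: (1) set up the bigraded algebra $\bigoplus_{k,m} J_{k,m}(\Gamma)$ and its interpretation via line bundles on $\caX/A_\Gamma$, citing Proposition~\ref{prop:bi-gr-alg} for the ambient non-finite-generation and Kramer/Runge for the geometric dictionary; (2) for fixed $(k,m)$ identify $\bigoplus_\ell J_{\ell k,\ell m}(\Gamma)$ with the section ring of the $\R$-b-divisor attached to $L_{k,m}$ on the tower of toroidal compactifications; (3) prove that this b-divisor is \emph{not} the pullback of a divisor from any finite model — this is where the explicit convex geometry of the Legendre transform of the quadratic form enters, and it is the crux; (4) invoke the general principle (toroidal b-divisor with non-finitely-supported limit $\Rightarrow$ section ring not finitely generated) to conclude the first assertion; (5) deduce the statement for $\bigoplus_{m\le rk} J_{k,m}(\Gamma)$ by noting that this ring contains $\bigoplus_\ell J_{\ell a, \ell b}(\Gamma)$ as the diagonal subalgebra along any rational ray $m/k = b/a \le r$ inside the region, and that a finitely generated $\C$-algebra cannot contain such a non-finitely-generated graded subalgebra cut out by a linear functional — so non-finite-generation of the slice along one ray propagates to the whole truncated ring (this last implication is the content of Proposition~\ref{prop:disprove-runge}, and also pinpoints Runge's oversight: he implicitly assumed the relevant limit b-divisor descends to a finite toroidal model).

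The main obstacle I anticipate is step (3): one must show rigorously that the sequence of polytopes (equivalently, of nef b-divisor models) does not stabilize. This requires (a) computing the Newton--Okounkov-type body of $L_{k,m}$ along the boundary strata of a Voronoi-type toroidal compactification, recognizing its "curved" facet coming from the positivity of the line bundle $\caP$ restricted to a degenerating fibre (this is where the Hessian of the polarization, i.e. a positive-definite quadratic form, makes the relevant support function strictly convex rather than piecewise linear), and (b) translating strict convexity of that support function into the statement that no single refinement of the fan makes $L_{k,m}$ nef — for then finite generation would follow, a contradiction. Once the non-polyhedrality is isolated, the passage to non-finite-generation is formal (a graded algebra with eventually-quasipolynomial Hilbert function of non-rational normalized leading coefficient cannot be finitely generated), and the deduction of the $m \le rk$ statement is an easy subalgebra/linear-slice argument.
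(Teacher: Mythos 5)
Your overall architecture coincides with the paper's: identify $\bigoplus_\ell J_{\ell k,\ell m}(\Gamma)$ with a graded linear series, attach to it the b-divisor of the invariant metric, show that b-divisor is not Cartier because the relevant conical function involves the strictly convex term $(\Omega,\beta)\mapsto\beta\Omega^{-1}\beta^{t}$, conclude via ``finitely generated $\Rightarrow$ Cartier'', and then slice. Two of your justifications, however, are genuinely wrong and would sink a written-out proof.

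First, your claimed ``formal'' passage from non-polyhedrality to non-finite-generation via ``a graded algebra with eventually-quasipolynomial Hilbert function of non-rational normalized leading coefficient cannot be finitely generated'' does not apply here: the normalized leading coefficient is the degree $\D_{k,m}^{n}$, and the explicit Tai-type formula (Corollary~\ref{th:asy-dim}) shows it equals a \emph{rational} number (a rational multiple of $\prod_k\zeta(1-2k)$, and $\zeta(1-2k)\in\Q$). The volume sees nothing; the obstruction lives in the Lelong numbers along infinitely many divisorial valuations. The correct implication is the one you state in step (4) (the contrapositive of Lemma~\ref{lemm:2}: a finitely generated graded linear series has $\Q$-Cartier $\bdiv$), and making that usable requires the genuinely hard compatibility $\bdiv(\caR(\D))=\D$ for nef and big toroidal b-divisors (Theorem~\ref{thm:1}, proved via Okounkov bodies and Fujita approximation) together with the identification of the cusp-form ring with $\caR(\overline L_{k,m,\phi},s,h)$ via boundedness of the invariant norm and the comparison $\bdiv(\caJ^{\cusp})=\bdiv(\caJ)$; your ``up to finite-dimensional modification'' glosses over all of this.

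Second, in step (5) you assert that non-finite-generation ``along any rational ray $m/k=b/a\le r$ inside the region'' propagates to $\bigoplus_{m\le rk}J_{k,m}(\Gamma)$, on the grounds that a finitely generated algebra cannot contain a non-finitely-generated subalgebra cut out by a linear functional. That is false for rays in the interior of the cone $\{m\le rk\}$: a finitely generated bigraded algebra can perfectly well have non-finitely-generated slices along interior rays, and finitely generated algebras do contain non-finitely-generated subalgebras in general. The argument only works for the \emph{extreme} ray $m=rk$: if $a_i$ are bihomogeneous generators of $J_{\le r}$ with relative indices $r(a_i)\le r$, then $r(\prod a_i)$ lies strictly between $\min r(a_i)$ and $\max r(a_i)$ unless all are equal, so the face $J_r$ is generated by exactly those $a_i$ with $r(a_i)=r$ (this is Proposition~\ref{prop:disprove-runge}). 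You should restrict the claim to that boundary ray, which is all the theorem needs.
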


Our main tool to prove Theorem \ref{th:intro-not-fin-gen} is the
theory of toroidal Weil and Cartier b-divisors (where b stands for birational). A Weil
b-divisor on a projective variety $X$ can be thought of as an infinite
tower of divisors living over all smooth birational models of $X$, and
compatible under pushforward (see Section \ref{sec:b-divisors} for a
precise definition and further details). A Weil b-divisor is said to be
\emph{Cartier} if it is determined on one such birational model, in
other words, if all the elements in the tower can be obtained by pullback and pushforward of a divisor on a single model. 

If $X$ carries a toroidal structure then the theory of Weil and Cartier b-divisors can be naturally simplified by considering only the toroidal blowups of $X$. The corresponding Weil toroidal b-divisors can then be seen as conical functions on a conical polyhedral complex associated to $X$. This enables the study of toroidal Weil and Cartier b-divisors by techniques from convex geometry \cite{BoteroBurgos}. For example, if a toroidal Weil b-divisor is Cartier then the corresponding conical function is piecewise linear.  In the following we consider $\R$-Weil and Cartier b-divisors. We omit the $\R$ in the notation in the Introduction for simplicity purposes.

With the neat subgroup $\Gamma$ one has associated a fibration of
principally polarized complex abelian varieties 
\[
\pi \colon \caB(\Gamma) \longrightarrow \caA(\Gamma) \, . 
\]
Here $\caA(\Gamma)$ is the Siegel modular variety associated to $\Gamma$.
For each pair of given integers $k, m$ the complex variety
$\caB(\Gamma)$ carries a line bundle of Siegel--Jacobi forms
$L_{k,m}$. It is endowed with a natural invariant metric, which we
denote by $h$ (see Section \ref{sec:geom-interpr-line}). As we will
see in Propositions \ref{prop:5} and \ref{prop:7} one can choose a
toroidal compactification $\overline{\caB}(\Gamma)$ of $\caB(\Gamma)$
in such a way that $L_{k,m}$ extends, non-uniquely, as an algebraic line bundle
$\overline{L}_{k,m}$ on $\overline{\caB}(\Gamma)$ and $h$ extends as a
\emph{toroidal psh} metric on  $\overline{L}_{k,m}$ (see Section
\ref{sec:toroidal-psh-metrics} for the definition of toroidal psh
metrics).   

As follows from the work done in our paper~\cite{BBHJ}, given a non-zero rational section $s$ of $\overline{L}_{k,m}$ we have an associated toroidal Weil b-divisor $\D(\overline{L}_{k,m}, s,h)$ on $\overline{\caB}(\Gamma)$. This b-divisor does not depend on the actual choice of toroidal psh extension $\overline{L}_{k,m}$. The Weil b-divisor $\D(\overline{L}_{k,m}, s,h)$ corresponds to a convex function on the conical polyhedral complex attached to $\overline{\caB}(\Gamma)$. 

The key point is that, via an explicit computation, we can show that
this conical function is \emph{not} piecewise linear. This  implies
that the toroidal Weil b-divisor $\D(\overline{L}_{k,m}, s,h)$ is not
Cartier (Corollary \ref{cor:1}). We then use a characterization of
Siegel--Jacobi cusp forms in terms of the invariant metric
(Proposition~\ref{prop:6}) to deduce that the algebra
$\bigoplus_{\ell}J_{\ell k, \ell m}(\Gamma)$ is not finitely
generated.

Even if the graded algebra
$\bigoplus_{\ell}J_{\ell k, \ell m}(\Gamma)$ is not finitely
generated, the situation is somewhat under control.

\begin{introthm}(Proposition~\ref{prop:10})
Let $g \geq 1$. For each $k>0$ and $m\ge 0$ the graded algebra
$\bigoplus_{\ell}J_{\ell k, \ell m}(\Gamma)$ is of almost integral type.
\end{introthm}

We note that this result is enough to prove \cite[Lemma
2.9]{BruinierWesterholt:modularityJacobi} that in the original version
used \cite[Theorem 5.5]{runge}.

\subsection{Kramer's conjecture}

Our second main result is a proof of the complex version of a
conjecture of Kramer \cite[Remark 2.19]{Kramer_Crelle}. To state the
conjecture, we fix $g \ge 2$, a weight $k$, an
index $m$, and a neat subgroup $\Gamma\subseteq
\operatorname{Sp}(2g,\Z)$. We define $\caT$ to be the set of triples
$(\Sigma, \Pi, \phi)$ where  \begin{enumerate} 
\item $\Sigma$ is an \emph{admissible cone decomposition} in the sense of Definition \ref{def:7}; this is the combinatorial data that determines a toroidal compactification $\overline{\caA}(\Gamma)_\Sigma$; 
\item
$\Pi$ is an admissible cone decomposition over $\Sigma$ in the sense of Definition \ref{def:8}; this is the combinatorial data that determines a toroidal compactification $\overline{\caB}(\Gamma)_\Pi$ with a map to $\overline{\caA}_\Sigma$; 
\item $\phi$ is an \emph{admissible polarization function} on $\Pi$ in the sense of Definition \ref{def:10}, such that $m\phi$ is integral in the sense of Proposition \ref{prop:4}; this determines an extension $\overline L_{k,m,\phi}$ of the line bundle $L_{k,m}$  of Siegel--Jacobi forms to $\overline{\caB}(\Gamma)_\Pi$. 
\end{enumerate}

Given a triple $(\Sigma, \Pi, \phi)$ in $ \caT$, restricting a section $f \in H^0\left(\overline{\caB}(\Gamma)_\Pi,
  \overline L_{k,m,\phi}\right)$ to the interior $\caB(\Gamma)$
yields, by the Koecher principle (here we use $g \ge 2$), a
Siegel--Jacobi form of weight $k$ and index $m$; in other words, there
is a natural injective map  
\begin{equation*}
H^0\left(\overline{\caB}(\Gamma)_\Pi, \overline L_{k,m,\phi}\right) \hookrightarrow J_{k,m}(\Gamma). 
\end{equation*}
The set $\caT$ is directed, by saying that $(\Sigma, \Pi, \phi) \le
(\Sigma', \Pi', \phi')$ if $\Sigma'$ refines $\Sigma$, $\Pi'$ refines
$\Pi$, and $\phi \ge \phi'$. If $(\Sigma, \Pi, \phi) \le (\Sigma',
\Pi', \phi')$ then there is a natural injection
\begin{equation}\label{eq:36}
H^0(\overline{\caB}(\Gamma)_{\Pi}, \overline L_{k,m,\phi}) \hookrightarrow H^0(\overline{\caB}(\Gamma)_{\Pi'}, \overline L_{k,m,\phi'}). 
\end{equation}
In \cite[Remark 2.19]{Kramer_Crelle}, Kramer defines
\[
J^{\can}_{k,m}(\Gamma) \subseteq J_{k,m}(\Gamma)
\]
to be the smallest sub-vector space containing all images of the
$H^0\left(\overline{\caB}(\Gamma)_{\Pi'}, \overline L_{k,m,\phi'}\right)$, and
conjectures  that  
\[
J^{\can}_{k,m}(\Gamma) = J_{k,m}(\Gamma). 
\]
Note that using the directed set structure defined above together with
\eqref{eq:36} one has
\begin{equation}\label{eq:colim_to_SJ}
J_{k,m}^{\can}(\Gamma)=
  \varinjlim_{(\Sigma, \Pi ,\phi) \in \caT } H^{0}\left(\overline
    \caB(\Gamma)_{\Pi},\overline  L_{k, m,\phi}\right). 
\end{equation}
In fact, we prove something stronger than Kramer's conjecture.  
\begin{introthm}[Corollary \ref{cor:6}] Let $g\ge 2$.
Suppose that $\phi$ is \emph{sufficiently negative} in the sense of Theorem \ref{thm:3}. Then the natural map 
\begin{equation*}
H^0(\overline{\caB}(\Gamma)_\Pi, \overline L_{k,m,\phi}) \hookrightarrow J_{k,m}(\Gamma)
\end{equation*}
is a bijection. Moreover, whenever $\Sigma$, $\Pi$ are chosen sufficiently fine, a sufficiently negative $\phi$ exists.  In particular, 
\[
J_{k,m}(\Gamma)^{\can} =  J_{k,m}(\Gamma). 
\]
\end{introthm}

\subsection{Asymptotic dimension formulae}

Tai's celebrated work \cite{tai} implies an asymptotic formula for the dimension of the space of Siegel--Jacobi forms of given ratio between weight and index. We recall that the main aim of \cite{tai} is to show that the moduli space $\caA_g$ of principally polarized abelian varieties of dimension $g$ is of general type for $g\geq 9$. Tai's proof proceeds via a study of the pushforward of the line bundle of Siegel--Jacobi forms to $\caA_g$, and an application of Mumford's version of the Hirzebruch proportionality principle \cite{hi} on the non-compact pure Shimura variety $\caA_g$. 

In the present work we arrive at a form of the proportionality principle on the universal abelian variety itself, via the machinery of b-divisors. As above we choose a
toroidal compactification $\overline{\caB}(\Gamma)$ of $\caB(\Gamma)$
in such a way that $L_{k,m}$ extends as an algebraic line bundle
$\overline{L}_{k,m}$ on $\overline{\caB}(\Gamma)$ and $h$ extends as a
toroidal psh metric on  $\overline{L}_{k,m}$. We will see that we can compute the asymptotic dimension of the space $J_{k,m}(\Gamma)$ of Siegel--Jacobi forms as  the degree of the toroidal Weil b-divisor $\D(\overline{L}_{k,m}, s,h)$ on $\overline{\caB}(\Gamma)$.

\newcommand{\udim}{d}

Let $g \in \Z_{ \geq 1}$ and set  $G = g(g+1)/2$ and $\udim = G + g$. The Weil b-divisor  $\D(\overline{L}_{k,m}, s,h)$ has a well-defined degree $\D(\overline{L}_{k,m}, s,h)^\udim$ in $\R_{\ge 0}$ (combine Remark~\ref{rk:degree} and Lemma~\ref{lem:nef}).
\begin{introthm}\label{th:intro-vol-jac}({Corollary~\ref{cor:asy}})
 Let $\D(\overline{L}_{k,m}, s,h)$ be the toroidal Weil b-divisor on the toroidal compactification $\overline{\caB}(\Gamma)$ associated to the line bundle $L_{k,m}$ of Siegel--Jacobi forms of weight~$k$ and index~$m$, the rational section $s$ and the invariant metric $h$. Then the Hilbert--Samuel type formula
\begin{equation} \label{eq:HilbSam}
\D(\overline{L}_{k,m}, s,h)^\udim = \limsup_{\ell \to \infty}\frac{\dim J_{\ell k, \ell m}(\Gamma)}{\ell^\udim/\udim!}
\end{equation}
holds, where $\D(\overline{L}_{k,m}, s,h)^\udim$ is the degree of the Weil b-divisor $\D(\overline{L}_{k,m}, s,h)$.
\end{introthm}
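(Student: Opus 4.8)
The target (Theorem~\ref{th:intro-vol-jac}, i.e.\ Corollary~\ref{cor:asy}) is a Hilbert--Samuel type identity expressing the top self-intersection of the toroidal b-divisor $\D(\overline L_{k,m},s,h)$ as the asymptotic dimension of spaces of Siegel--Jacobi forms. The natural strategy is to factor the proof into two independent pieces: (i) identify $\dim J_{\ell k,\ell m}(\Gamma)$, up to the relevant order, with the dimension of a space of global sections of the $\ell$-th power of a suitable extension $\overline L_{\ell k,\ell m}=\overline L_{k,m}^{\otimes \ell}$ of the Siegel--Jacobi line bundle on the toroidal compactification $\overline{\caB}(\Gamma)$ (this is essentially a Koecher-type principle, reducing forms on the open part to sections of a coherent sheaf on the compactification, and is the content of the earlier structural Propositions on $L_{k,m}$); and (ii) prove that the asymptotic Euler characteristic / section count of $\overline L_{k,m}^{\otimes\ell}$ is governed by the degree of the associated b-divisor, $\D(\overline L_{k,m},s,h)^n$. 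Step (ii) is the heart of the matter and is where the b-divisor machinery of \cite{BBHJ} and \cite{BoteroBurgos} enters.

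**Carrying out step (ii).** First I would recall from the cited companion work that $\D(\overline L_{k,m},s,h)$ has a well-defined degree $\D(\overline L_{k,m},s,h)^n\in\R_{\ge 0}$ — this is already granted in the excerpt by combining Remark~\ref{rk:degree} with the nef\-ness statement Lemma~\ref{lem:nef}. Since the b-divisor arises from a toroidal psh metric, it is a limit of its Cartier ``truncations'': writing $\D=\lim_\sigma \D_\sigma$ where $\D_\sigma$ is the incarnation of the b-divisor on the toroidal model $X_\sigma$ corresponding to a subdivision $\sigma$ of the conical polyhedral complex, one has $\D_\sigma^n\nearrow \D^n$ by continuity of the intersection pairing along this tower (this is the general theory of nef b-divisors of Boucksom--Favre--Jonsson / its toroidal-convex incarnation in \cite{BoteroBurgos}). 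For each fixed model $X_\sigma$ the line bundle $\overline L_{k,m}$ pulls back to a nef (indeed big, after the explicit computation showing the conical function is strictly convex in the fibre directions) line bundle $M_\sigma$, and classical asymptotic Riemann--Roch on the projective variety $X_\sigma$ gives $h^0(X_\sigma, M_\sigma^{\otimes\ell}) = M_\sigma^n\,\ell^n/n! + o(\ell^n)$. The key comparison is then: sections of $\overline L_{k,m}^{\otimes\ell}$ on $\overline{\caB}(\Gamma)$ that vanish along the boundary to the orders prescribed by the psh metric (equivalently, Jacobi \emph{cusp} forms, via the metric characterization Proposition~\ref{prop:6}) are exactly recovered, in the limit over $\sigma$, by the $M_\sigma^{\otimes\ell}$; monotone convergence of the truncated degrees then yields $\limsup_\ell \dim J_{\ell k,\ell m}(\Gamma)/(\ell^n/n!) = \sup_\sigma M_\sigma^n = \D(\overline L_{k,m},s,h)^n$. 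One must also check that the cuspidal subspace has the same asymptotic dimension as the full space $J_{\ell k,\ell m}(\Gamma)$ — the non-cuspidal part is supported on a lower-dimensional boundary stratum and hence contributes only $o(\ell^n)$; this is again a consequence of the structure results recalled in the excerpt.

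**The main obstacle.** The genuinely delicate point is the exchange of limits: on one side $\ell\to\infty$ (the Hilbert--Samuel asymptotics on each fixed model), on the other the refinement $\sigma\to\infty$ over the tower of toroidal blowups (which is what computes the b-divisor degree). One inequality, $\D^n \ge \limsup_\ell \dim J_{\ell k,\ell m}/(\ell^n/n!)$, is comparatively soft: every section on the open part extends to \emph{some} model with controlled poles, and pushing forward only decreases intersection numbers in the wrong direction — more carefully, it follows because the b-divisor dominates each Cartier truncation. The reverse inequality is where the work lies: one needs that the truncated models $X_\sigma$ already capture \emph{enough} sections, i.e.\ a quantitative statement that the multiplier-type ideal cut out by the toroidal psh metric is, asymptotically in $\ell$, approximated by the divisorial conditions imposed on finite models. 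This is precisely the ``proportionality on the universal abelian variety'' alluded to in the introduction, and it is where the explicit convex-geometric description of the conical function attached to $h$ (the same computation that shows non-piecewise-linearity in Corollary~\ref{cor:1}) is used: its integral against the relevant lattice-point counting measure on each cone gives the local contribution, and summing/refining over cones recovers $\D^n$ in the limit. Once both inequalities are in hand, combining with step (i) gives \eqref{eq:HilbSam}, and the $\limsup$ (rather than a genuine limit) is exactly what survives because one only controls the asymptotics up to $o(\ell^n)$ and along the refining tower.
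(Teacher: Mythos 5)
Your high-level architecture matches the paper's: reduce to a statement about graded linear series attached to the metrized line bundle, compare cusp forms with all forms, and compute the asymptotics via a Hilbert--Samuel formula for the toroidal b-divisor. You also correctly locate the hard point (the reverse inequality / exchange of limits between $\ell\to\infty$ and refinement of models). However, at exactly that point the proposal stops being a proof: you state that one needs "a quantitative statement that the multiplier-type ideal cut out by the toroidal psh metric is, asymptotically in $\ell$, approximated by the divisorial conditions imposed on finite models," and you gesture at integrating the conical function against lattice-point counting measures, but you do not supply the argument. The paper's actual mechanism is different and concrete: Lemma~\ref{lemm:14} (resting on the convex-analysis Lemma~\ref{lemm:15}) gives the sandwich $\caR(\D-\varepsilon D)\subset\caR(\overline L_{k,m,\phi},s,h)\subset\caR(\D)$ for the singularity divisor $D$, which after the positivity normalization of Corollary~\ref{cor:3} becomes $\caR((1-1/j)\D)\subset\caR(\overline L_{k,m,\phi},s,h)\subset\caR(\D)$; one then concludes by continuity of the volume on nef and big toroidal b-divisors (Lemma~\ref{lemm:continuity_volume}) and the toroidal Hilbert--Samuel theorem $\vol(\D)=\D^n$ (Lemma~\ref{lemm:HilbSam}, from \cite{BoteroBurgos}). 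Without something playing the role of Lemma~\ref{lemm:14}, your reverse inequality is unproven.

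Two further points. First, your step (i) as stated (identifying $J_{\ell k,\ell m}(\Gamma)$ with $H^0$ of powers of a fixed extension $\overline L_{k,m}$ on the compactification via a Koecher-type principle) is not what the paper does and is delicate: the extension depends on the divisorial function $\phi$, the inclusion $H^0(\overline\caB(\Gamma)_\Pi,\overline L_{\ell k,\ell m,\phi+\psi})\subset J_{\ell k,\ell m}(\Gamma)$ is in general only one-sided, and for $g=1$ it requires a positivity condition on $\phi$ incompatible with sufficient negativity (see Lemmas~\ref{lemm:19} and \ref{lemm:24}). The paper sidesteps this by working with the graded linear series $\caJ_{k,m}(\Gamma,s)$ directly and using the metric characterization of cusp forms (Proposition~\ref{prop:6}) to identify $\caR(\overline L_{k,m,\phi},s,h)$ with $\caJ^{\cusp}_{k,m}(\Gamma,s)$ (Lemma~\ref{lemm:11}). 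Second, your claim that the non-cuspidal part contributes only $o(\ell^n)$ because it is "supported on a lower-dimensional boundary stratum" is a heuristic, not an argument; the paper proves $\vol(\caJ_{k,m})=\vol(\caJ^{\cusp}_{k,m})$ by showing the two graded linear series have the same associated b-divisor (Lemma~\ref{lemm:12}), via multiplication by a fixed nonzero cusp form produced in Lemma~\ref{lemm:18}. Both of these gaps are fillable, but as written the proposal defers the essential content of the theorem to steps that are only sketched.
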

Using Chern--Weil theory for line bundles with (sufficiently nice) psh metrics as developed in \cite{BBHJ} we can compute the degree on the left hand side in \eqref{eq:HilbSam} explicitly  using integrals of smooth differential forms on the open universal abelian variety $\caB(\Gamma)$. Then by applying the equality in \eqref{eq:HilbSam} we obtain an explicit asymptotic dimension formula. The resulting formula is compatible with the one implicit in Tai's work \cite{tai}, see Remark~\ref{rem:tai-formula}.

\begin{introcor}\label{th:intro-asy-dim} ({Corollary \ref{th:asy-dim}})
 The asymptotic growth of the dimension of the space $J_{\ell k, \ell
    m}(\Gamma)$  is given by the following formulae: 
\begin{equation*}
\begin{split}
\limsup_{\ell \to \infty}\frac{\dim J_{\ell k, \ell m}(\Gamma)}{\ell^\udim/\udim!} 
& =  (-1)^{G}\udim! m^{g}k^{G}[\Gamma _{0}\colon  \Gamma ] \prod_{k=1}^{g}\frac{\zeta (1-2k)}{(2k-1)!!}\\
 & =  (-1)^{\udim}\udim!m^{g}k^{G}2^{G-g} 
  [\Gamma _{0}\colon \Gamma ]\prod_{k=1}^{g}\frac{(k-1)!B_{2k}}{(2k)!}\\
  & = V_g \cdot \udim!m^gk^G2^{-G-1}\pi^{-G}[\Gamma_0 \colon \Gamma],
\end{split}
\end{equation*}
where $\Gamma_0=\Sp(2g,\Z)$,
where $B_{2k}  = \frac{(-1)^{k+1} 2(2k)!}{(2\pi)^{2k}}\zeta(2k)$ are the Bernoulli numbers and
\[
V_g = (-1)^\udim 2^{g^2+1}\pi^G\prod_{k=1}^g\frac{(k-1)!B_{2k}}{(2k)!}
\]
is the symplectic volume computed by Siegel in \cite[Section VIII]{siegel}.
\end{introcor}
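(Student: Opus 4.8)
The plan is to deduce the formula from the Hilbert--Samuel type identity \eqref{eq:HilbSam} of Theorem~\ref{th:intro-vol-jac} by computing the b-divisor degree $\D(\overline{L}_{k,m},s,h)^{n}$ via Chern--Weil theory. By Theorem~\ref{th:intro-vol-jac} the claim reduces to evaluating $\D(\overline{L}_{k,m},s,h)^{n}$, and since $h$ is a toroidal psh metric on $\overline{L}_{k,m}$ the Chern--Weil theory for psh metrized line bundles of~\cite{BBHJ} identifies this degree with the total mass of the associated Monge--Amp\`ere measure, i.e.\ with the (absolutely convergent) integral
\[
\D(\overline{L}_{k,m},s,h)^{n}=\int_{\caB(\Gamma)}c_{1}(L_{k,m},h)^{\wedge n}
\]
of a smooth semipositive form on the open universal abelian variety; convergence reflects the finiteness of the symplectic volume.

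The second step is to write out the invariant Chern form. In standard coordinates the invariant metric $h$ is, up to normalization, $(\det Y)^{k}e^{-2\pi m\,{}^{t}yY^{-1}y}$ with $Y=\operatorname{Im}\tau$ and $y=\operatorname{Im}z$, so its curvature form decomposes as
\[
c_{1}(L_{k,m},h)=k\,\pi^{*}\omega_{\caA}+m\,\omega_{\mathrm J},
\]
where $\omega_{\caA}$ is the invariant K\"ahler form on $\caA(\Gamma)$ representing the first Chern class of the determinant of the Hodge bundle with its Petersson metric, and $\omega_{\mathrm J}$ is the invariant form on $\caB(\Gamma)$ whose restriction to each fibre of $\pi$ is the form of the principal polarization. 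A direct local computation, carried out in the real fibre coordinates $z=\tau s+t$ ($s,t\in\R^{g}$) on $\H_{g}\times(\R^{2g}/\Z^{2g})$, shows that $\omega_{\mathrm J}$ is, up to sign, the constant form $\sum_{i=1}^{g}dt_{i}\wedge ds_{i}$; in particular its associated Hermitian form has rank $g$ at every point, so $\omega_{\mathrm J}^{\wedge(g+1)}=0$ identically, while $\pi_{*}(\omega_{\mathrm J}^{\wedge g})=g!$.

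Next I would expand by the binomial theorem and push forward along $\pi$: the terms with $j<g$ vanish because $(\pi^{*}\omega_{\caA})^{\wedge(n-j)}=\pi^{*}(\omega_{\caA}^{\wedge(n-j)})$ and $n-j>G=\dim\caA(\Gamma)$, the terms with $j>g$ vanish because $\omega_{\mathrm J}^{\wedge(g+1)}=0$, and only $j=g$ survives, so
\[
\D(\overline{L}_{k,m},s,h)^{n}=\binom{n}{g}k^{G}m^{g}\,g!\int_{\caA(\Gamma)}\omega_{\caA}^{\wedge G}=\frac{n!}{G!}\,k^{G}m^{g}\int_{\caA(\Gamma)}\omega_{\caA}^{\wedge G}.
\]
Because $\caA(\Gamma)\to\caA(\Gamma_{0})$ has degree $[\Gamma_{0}:\Gamma]$ and $\omega_{\caA}$ is pulled back from $\caA(\Gamma_{0})$, the last integral equals $[\Gamma_{0}:\Gamma]\int_{\caA(\Gamma_{0})}\omega_{\caA}^{\wedge G}$, and this last quantity is Siegel's symplectic volume~\cite{siegel}; with the above normalization of $\omega_{\caA}$ it equals $(-1)^{G}G!\prod_{k=1}^{g}\zeta(1-2k)/(2k-1)!!$. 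Substituting gives the first formula; the second and third then follow from the elementary identities $\zeta(1-2k)=-B_{2k}/(2k)$, $(2k-1)!!=(2k)!/(2^{k}k!)$ and $B_{2k}=(-1)^{k+1}2(2k)!\zeta(2k)/(2\pi)^{2k}$, the relation $g^{2}-G=G-g$, and the stated value of $V_{g}$.

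I expect the main obstacle to be the rigorous handling of the Chern--Weil step on the non-compact base: one must know that $c_{1}(L_{k,m},h)^{\wedge n}$ has finite total mass, that this mass equals the b-divisor degree with no contribution from the toroidal boundary, and one must keep careful track of the several normalization constants---between the Monge--Amp\`ere mass and $\int c_{1}^{\wedge n}$, between $\omega_{\mathrm J}$ and the relevant polarization, between $\omega_{\caA}$ and the Chern form of the Hodge bundle determinant, and in Siegel's volume computation---so that the final constant lands exactly on $\prod_{k=1}^{g}\zeta(1-2k)/(2k-1)!!$. These are precisely the inputs provided by the psh Chern--Weil theory and toroidal b-divisor formalism of~\cite{BBHJ} and by Siegel's computation~\cite{siegel}.
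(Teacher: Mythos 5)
Your overall strategy coincides with the paper's: reduce via the Hilbert--Samuel identity of Corollary~\ref{cor:asy} to computing $\D_{k,m}^{n}$, identify that degree with $\int_{\caB(\Gamma)}c_{1}(L_{k,m},h)^{n}$ using the non-pluripolar Chern--Weil theory of \cite{BBHJ}, expand multilinearly in $c_{1}(B,h_B)$ and $\pi^{\ast}c_{1}(M,h_M)$, and feed in the van der Geer/Siegel evaluation of $\int_{\caA_{g}}c_{1}(M,h_M)^{G}$. Where you genuinely diverge is in the argument that only the term with exactly $g$ factors of $c_{1}(B,h_B)$ survives. The paper uses a homogeneity trick: $[2]^{\ast}B\simeq B^{\otimes 4}$ and $\deg[2]=2^{2g}$ force $\int c_{1}(B,h_B)^{r}\,\pi^{\ast}c_{1}(M,h_M)^{n-r}$ to equal $2^{2r-2g}$ times itself, hence to vanish unless $r=g$. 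You instead invoke the pointwise degeneracy of the fibrewise polarization form --- in Betti coordinates $c_{1}(B,h_B)$ is a constant-coefficient form pulled back from $\R^{2g}/\Z^{2g}$, so its $(g+1)$-st power vanishes identically --- together with $\dim\caA(\Gamma)=G$ to kill the terms with fewer than $g$ fibre factors. Both arguments work; yours requires the (true, but nontrivial and here unproved) local computation of the Betti form, which is material of the kind found in \cite{DGH21}, whereas the paper's $[2]^{\ast}$ argument is purely formal.

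One caveat on constants: as written, two of your intermediate normalizations are each off by a factor of $2^{\pm g}$, and these happen to cancel. The restriction of $c_{1}(B,h_B)$ to a fibre is \emph{twice} the principal polarization, so $\pi_{\ast}\bigl(c_{1}(B,h_B)^{g}\bigr)=2^{g}g!$, not $g!$; correspondingly, the formula from \cite{Geer:moduli} used in the paper reads $\int_{\caA_{g}}c_{1}(M,h_M)^{G}=(-1)^{G}\frac{G!}{2^{g}}\prod_{k=1}^{g}\frac{\zeta(1-2k)}{(2k-1)!!}$, with a $2^{-g}$ that you omit. The product $2^{g}\cdot 2^{-g}=1$ restores the stated answer, but either your $\omega_{\mathrm J}$ equals $\tfrac12 c_{1}(B,h_B)$ (and then the decomposition of $c_{1}(L_{k,m},h)$ needs coefficient $2m$ on the second term) or it equals $c_{1}(B,h_B)$ (and then the fibre integral is $2^{g}g!$); you cannot have both normalizations simultaneously.
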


Note that we are assuming that $\Gamma $ is neat. In particular
$-\Id\not \in \Gamma $. In fact the above formulae are also true for
arbitrary $\Gamma \subset \Gamma _{0}$ of finite index not containing
$-\Id$. If $\Gamma $ contains $-\Id$ the right hand side has to be
multiplied by $2$.

We mention that
Theorem~\ref{th:intro-vol-jac} and Corollary~\ref{th:intro-asy-dim} generalize results proved in \cite{bkk} for the case $g=1$, $k=m=4$ and $\Gamma=\Gamma(N) \subset \SL_2(\Z)$ a principal congruence subgroup. As we shall see in Remark \ref{rmk:limsup2},  the $\limsup$ in Theorem~\ref{th:intro-vol-jac} and Corollary~\ref{th:intro-asy-dim} is actually a $\lim$ for sufficiently divisible $\ell$. 
 
We expect that our method to compute asymptotic dimensions can be generalized to other spaces of automorphic forms on mixed Shimura varieties. Indeed, the b-divisorial approach to Chern--Weil theory from \cite{BBHJ} continues to hold in these cases whenever the natural invariant metrics give rise to sufficiently nice psh line bundles.

\subsection{Outline of the paper}
In Section \ref{sec:basic-definitions} we set up the basic
definitions, including those of b-divisors, toroidal b-divisors, and
toroidal psh metrics (a special class of singular psh metrics). The
main new result in this section is a characterization of the Lelong
numbers of a toroidal psh metric in terms of the conical convex
function defining the toroidal psh metric, see Lemma \ref{lemm:16}. 

In Section \ref{sec:psh-metrics-b} we study properties of and relations between graded linear series, b-divisors and psh metrics. The relations are summarized in Diagram~\ref{diag:relations}. We then show that these relations become stronger in the toroidal case, see Theorem \ref{thm:1} and Corollary \ref{cor:2}. We describe an example to show that the toroidal assumption here is really necessary (Remark \ref{rem:exa-non-tor}). The compatibility relations in the toroidal case are key ingredients in the proofs of Theorems \ref{th:intro-not-fin-gen} and \ref{th:intro-vol-jac}. 

In Section \ref{sec:jacobi-forms} we recall the definitions of Siegel--Jacobi (cusp) forms. We discuss in detail the universal family $\pi \colon\caB(\Gamma) \to \caA(\Gamma)$ of principally polarized abelian varieties, and describe Siegel--Jacobi forms as global sections of a line bundle $L_{k,m}$ over $\caB(\Gamma)$. We also give an explicit description of the invariant metric $h$ on this line bundle. 

In Section~\ref{sec:toro-comp} we describe the theory of toroidal
compactifications of $\caA(\Gamma)$ and $\caB(\Gamma)$ and discuss
extensions of $L_{k,m}$ and $h$ over the
compactifications. Here we give a proof of Kramer's
  conjecture (Corollary \ref{cor:6}). Moreover, we prove that one can
choose the toroidal compactifications and the extensions in such a way
that the extended invariant metric is toroidal psh (Proposition
\ref{prop:7}). This leads to the observation that the b-divisor
associated to the line bundle of Siegel--Jacobi forms with its
invariant metric is toroidal (Corollary~\ref{cor:toroidal}).

In Section~\ref{sec:proofs_main} we give the proof of our main
result. We start by  giving an explicit formula for the relevant
conical function (Lemma \ref{lemm:13}). We then observe  that this
function is not piecewise linear. It follows that the associated
toroidal b-divisor  is not Cartier (see Corollary \ref{cor:1}), from
which we deduce Theorem~\ref{th:intro-not-fin-gen}.

In the final Section~\ref{sec:asymptotic} we define the volume of a
graded linear series and prove Theorem~\ref{th:intro-vol-jac}. Among
other things we use the Hilbert--Samuel formula for toroidal
b-divisors from \cite{BoteroBurgos}. Finally, we use Chern--Weil
theory for singular psh metrics as developed in \cite{BBHJ} to compute
the degree of our b-divisor in terms of the non-pluripolar volume of
the positive current associated to our psh metric, leading to
Corollary \ref{th:intro-asy-dim}. 

\subsection*{Acknowledgements}

We are very grateful to Gerard van der Geer, J\"urg Kramer and Don
Zagier  for helpful discussions. We are also very grateful to the
anonymous referees for their comments and suggestions. 

\section{Basic definitions}
\label{sec:basic-definitions}

In this section we recall the definitions of the basic tools we will
use in the paper. Throughout the paper $X$ will denote a connected complex projective manifold of dimension~$d$. 

\subsection{Graded linear series}
\label{sec:graded-linear-series}

Let $F=K(X)$ be the field of rational functions on $X$. The following
definition is taken from \cite{kk}.

\begin{df}\label{def:1} A \emph{graded linear series} (on $X$) is a graded
  subalgebra  $A\subset F[t]$. Let $L\subset F$ be a finite
  dimensional $\C$-vector space. The graded linear series $A_{L}$ is
  the graded subalgebra of $F[t]$ generated by $L\cdot t$ in degree 1 (note that $A_L$ is generated by finitely many elements of degree 1). 

  A graded linear series $A$ is called of \emph{integral type} if
  there is a finite dimensional linear subspace $L\subset F$ such that
  $A_L\subset A$ and $A$ is finite over $A_{L}$.

  A graded linear series $A$ is called of \emph{almost integral type}
  if there is a graded linear series $A'$ of integral type such that
  $A\subset A'$.
\end{df}

If $A$ is a graded linear series we write
$A=\bigoplus_{\ell \in \mathbb Z_{\ge 0}} A_{\ell}t^{\ell}$, where $A_{\ell}\subset F$.
\begin{rmk}
  If $X'\dashrightarrow X$ is a birational map then a graded linear series on $X$ is a graded linear series on
  $X'$. 
\end{rmk}

Let $D$ be a Cartier divisor on $X$, with integer, rational or real
coefficients. Then one writes 
\begin{displaymath}
  \caL(D)=\{0\not = f\in F\mid \dv(f)+D\ge 0\}\cup \{0\}
\end{displaymath}
and
\begin{displaymath}
  \caR(D)=\bigoplus_{\ell \in \mathbb Z_{\ge 0}}\caL(\ell D)t^{\ell}\subset F[t] \, . 
\end{displaymath}

The following result follows from \cite[Theorems 3.7 and 3.8]{kk} (using that $X$ is normal and projective).

\begin{prop}\label{prop:2}
  The graded linear series $\caR(D)$ is of almost integral type. If
  moreover $D$ is very ample, then 
  $\caR(D)$ is of integral type. 
\end{prop}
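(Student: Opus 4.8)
The plan is to reduce the statement to the Okounkov-body / Newton-Okounkov style results of Kaveh--Khovanskii, specifically \cite[Theorems 3.7 and 3.8]{kk} as the excerpt suggests, and to use only that $X$ is normal and projective over $\C$. First I would recall what those theorems give: if $L_1,\dots,L_r$ are finite dimensional subspaces of $F$ together with a filtration/divisor datum, then the associated graded algebra is of integral type when the $L_i$ come from a very ample divisor, and more generally every graded linear series sitting inside the section ring of a projective variety is of almost integral type. So the structure of the argument is: (i) handle the very ample case directly, and (ii) deduce the general case by comparison with a very ample perturbation.

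For the very ample case, suppose $D$ is very ample. Then $\caL(D)$ is a finite dimensional $\C$-vector space, it is base-point free, and the associated morphism $\varphi_{|D|}\colon X\to \P^N=\P(\caL(D)^\vee)$ is a closed embedding. I would set $L=\caL(D)\subset F$ and form $A_L\subset F[t]$. One checks that $A_L$ is exactly the homogeneous coordinate ring of $X$ under this embedding, which is a finitely generated integral domain, finite over the polynomial subring it generates in degree one — this is just projective normality after replacing $D$ by a sufficient multiple, but in fact $A_L\subseteq \caR(D)$ always, and $\caR(D)$ equals the integral closure of $A_L$ in its fraction field intersected with $F[t]$, hence finite over $A_L$ because $X$ (equivalently the cone over $X$) is a variety of finite type over a field and normalization is a finite morphism in that setting. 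Therefore $\caR(D)$ is finite over $A_L$ and contains $A_L$, which is precisely the definition of integral type. I would cite \cite[Theorem 3.8]{kk} for the finiteness of the normalization step rather than reprove it.

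For the general case, let $D$ be an arbitrary Cartier $\R$-divisor. By Serre's theorem on projective varieties there is a very ample divisor $H$ with $D \le H$, i.e. $H-D$ effective, after possibly scaling; more carefully, since any $\R$-divisor is dominated coefficient-wise by a $\Q$-, then by an integral, very ample divisor, pick very ample $H$ with $H\ge D$. Then $\caL(\ell D)\subseteq \caL(\ell H)$ for all $\ell\ge 0$, so $\caR(D)\subseteq \caR(H)$, and by the previous paragraph $\caR(H)$ is of integral type. Hence $\caR(D)$ is of almost integral type by Definition \ref{def:1}. This also recovers the second sentence, and it is the content of \cite[Theorem 3.7]{kk}.

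The main obstacle, and the only genuinely nontrivial input, is the finiteness of the normalization (equivalently: finite generation plus integral closure finiteness) in the very ample case — i.e. that $\caR(D)$ is a \emph{finite} module over $A_{\caL(D)}$, not merely that it contains it. This is where normality and the finite-type hypothesis on $X/\C$ are essential; without projectivity or finite type one loses this. Everything else — choosing the dominating very ample $H$, the inclusions of linear series, and unwinding the definitions of integral and almost integral type — is formal. I would therefore structure the write-up as: a one-line reduction of the "very ample" claim to finiteness of normalization for affine cones (citing \cite[Thm.~3.8]{kk}), then the two-line domination argument for the general claim (citing \cite[Thm.~3.7]{kk}).
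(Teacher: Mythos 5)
Your proposal is correct and takes essentially the same route as the paper, which simply invokes \cite[Theorems 3.7 and 3.8]{kk} together with the normality and projectivity of $X$; your additional unwinding (finiteness of the normalization of the affine cone for the very ample case, and domination $D\le H$ by a very ample divisor for the general case) is exactly the content of those theorems. No gaps.
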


\subsection{Toroidal structures}
\label{sec:toroidal-structures}

\begin{df}\label{def:modification}
A morphism $\pi\colon X' \to X$ of connected projective complex manifolds is a
\emph{modification} if it is proper and birational. 
\end{df}

\begin{df}
  A \emph{(smooth) toroidal structure} on $X$ is the choice of a triple $(X_1,\pi_1,D)$ consisting of a smooth
  modification $\pi_{1} \colon X_{1 }\to X$ and a simple normal crossing
  divisor $D$ on $X_{1}$ that contains the exceptional locus of
  $\pi_{1} $.  If $(X_2,\pi _{2},D')$ is another toroidal structure on $X$, we say
  that $(X_2,\pi _{2},D')$ \emph{is above} $(X_1,\pi _{1},D)$ if $\pi_{2}$ factors as
  \begin{displaymath}
    \xymatrix{
      X_{2}\ar[rd]^{\pi_{2}}\ar[d]_{\pi _{21}}&\\
      X_{1 }\ar[r]^{\pi_{1} } & X
      }
    \end{displaymath}
    and $\pi _{21}^{-1}(D)\subset D'$.
\end{df}
If no confusion arises,  we will denote a toroidal structure $(X_1,\pi_1,D)$ just by the divisor $D$. 

The open subset $U=X_{1 }\setminus D$ can be identified with an open
subset of $X$ also denoted by $U$, and the inclusion $U\subset X_{1
}$ is an example of a toroidal embedding without self intersection. We
will use freely the theory
of toroidal embeddings from \cite{toroidal}. For a  concise description
of what we will need the reader is referred to \cite{BoteroBurgos}.

\begin{rmk}
One can envisage more general toroidal structures where
$U\subset X_{1}$ is an arbitrary toroidal embedding (e.g.~without assuming $X_1$ smooth), but we will not need 
them in this paper. 
\end{rmk}

Following \cite{toroidal}, to any toroidal structure $(X_1,\pi_1,D) $
we can associate a conical polyhedral complex $\Pi (X_{1},D)$. The
rays of $\Pi (X_{1},D)$ are in one-to-one correspondence with the
irreducible components of $D$. If $D_{i}$ is an irreducible component
of $D$, we denote by $\rho _{D_{i}}$ the corresponding ray. Given irreducible components
$D_{1},\dots ,D_{r}$  of $D$, then the set of cones of
$\Pi (X_{1},D)$ of dimension $r$ having $\rho _{D_{1}},\dots, \rho
_{D_{r}}$ as edges is in bijection with the set of irreducible
components of $D_{1}\cap\dots \cap D_{r}$. Each irreducible component
of an intersection of components of $D$ is called a \emph{stratum} of
the toroidal structure. Thus the set of strata is in bijection with
the set of cones. This bijection reverses dimensions and inclusions of
closures.  
The conical polyhedral
complex $\Pi (X_{1},D)$ has a canonical integral structure and we
denote by $v_{D_{i}}$ the primitive generator of $\rho _{D_{i}}$.

If $(X_{2},\pi _{2},D')$ is a toroidal structure above $(X_{1},\pi _{1},D)$, then
there is a continuous map
\begin{displaymath}
  r_{D,D'}\colon \Pi (X_{2},D')\to \Pi (X_{1},D), 
\end{displaymath}
that sends cones to cones, is linear on each cone and is compatible
with the integral structure on each cone.  
Moreover it is functorial in the sense that, if $(X_{3},\pi _{3},D'')$ is a toroidal structure above $(X_{2},\pi _{2},D')$, then
\begin{equation}\label{eq:17}
  r_{D,D'}\circ r_{D',D''}=r_{D,D''}. 
\end{equation}
The map $r_{D,D'}$ is constructed in greater generality in
\cite[Theorem 1.1]{ulirsch}, but the case we need admits a
particularly simple description as follows.

Let $\pi _{12}\colon
X_{2}\to X_{1}$ be the map of smooth modifications, $Y$ a stratum of
$(X_{2},D')$ and $Z$ the minimal stratum of $(X_{1},D)$
containing $\pi _{12}(Y)$. Let $D'_{1},\dots,D'_{r}$ be the set of
components of $D'$ such that $Y$ is an irreducible component of
$D'_{1}\cap\dots\cap D'_{r}$ and similarly let $D_{1},\dots, D_{s}$ be the set
of components of $D$ containing $Z$. Let $\sigma $ be the cone
corresponding to $Y$  and $\tau $ the cone corresponding to $Z$. Then
the restriction $r_{D,D'}|_{\sigma }\colon \sigma \to \tau$ is the
unique linear map satisfying
\begin{displaymath}
  r_{D,D'}(v_{D'_{i}})=\sum
  _{j=1}^{s}\ord_{D'_{i}}(\pi _{12}^{\ast}D_{j})v_{D_{j}}. 
\end{displaymath}
It is clear that these maps for the different cones of $\Pi (X_{2},D')$ 
glue together to give a map with the listed properties. 

To
prove the
functoriality \eqref{eq:17}, let $\pi _{23}\colon X_{3}\to X_{2}$ and
$\pi _{12}\colon X_{2}\to X_{1}$ be maps of modifications. Let $P$ be a
component of $D''$, let $D'_{1},\dots,D'_{r}$ be  the set of components of $D'$
containing the image of $P$ in $X_{2}$ and let $D_{1},\dots,D_{s}$ be the
set of components
of $D$ containing the image of $P$ in $X_{1}$. For each $j$,
$\pi _{12}^{-1}(D_{j})\subset D'$, and we have
\begin{displaymath}
  \ord_{P}((\pi _{12}\circ\pi _{23})^{\ast}D_{j})=
  \sum_{i=1}^{r}\ord_{P}((\pi _{23})^{\ast}D'_{i})
  \ord_{D'_{i}}(\pi _{12}^{\ast}D_{j}).
\end{displaymath}
Therefore
\begin{align*}
  r_{D,D''}(v_{P})
  &=
    \sum_{j=1}^{s}\ord_{P}((\pi _{12}\circ \pi _{23})^{\ast}D_{j})v_{D_{j}}\\
  &=
    \sum_{j=1}^{s}\sum _{i=1}^{r}\ord_{P}(\pi_{23}^{\ast}D'_{i})
    \ord_{D'_{i}}(\pi_{12}^{\ast}D_{j})v_{D_{j}}\\
  &=
    \sum_{i=1}^{r}\ord_{P}(\pi_{23}^{\ast}D'_{i})
    r_{D,D'}(v_{D'_{i}})\\
  &=
    r_{D,D'}\left(
    \sum_{i=1}^{r}\ord_{P}(\pi_{23}^{\ast}D'_{i})v_{D'_{i}}
    \right)\\
  &=
    r_{D,D'}\circ r_{D',D''}(P).
\end{align*}

Let $(X_1,\pi_1,D)$ be a toroidal structure on $X$.
Among the modifications of $X$ there is a distinguished class:
the \emph{allowable modifications} of the toroidal embedding $X_1 \setminus D \hookrightarrow
X_1$. See for instance \cite[Ch.~II, \S~2,
Definition~3]{toroidal} for the precise definition. 

The allowable
modifications are in one-to-one correspondence with the rational
subdivisions of the conical
polyhedral complex $\Pi
$ associated to $D$. Any such modification will be called toroidal (with respect to the
toroidal structure $D$). Note that, if $(X_{2},\pi _{2},D')$ is toroidal
with respect to $(X_{1},\pi _{1},D)$, then $D'$ is above
$D$ and the map $r_{D,D'}$ defined above induces a
homeomorphism between 
the underlying topological spaces $|\Pi (X_{2},D')|\to |\Pi (X_{1},D)|$,
compatible with the fact that $\Pi (X_{2},D')$ is a subdivision of $\Pi
(X_{1},D)$. 

We end this section with two definitions. 

\begin{df}
  Let $(X_1,\pi_1,D)$ be a toroidal structure on $X$. A \emph{toroidal
  exceptional prime divisor} of $(X_1,\pi_1,D)$ is an irreducible component of any
exceptional divisor of a toroidal modification of $(X_1,\pi_1,D)$. 
\end{df}

\begin{df}
  The \emph{set of rational points} $\Pi (X_{1},D)(\Q)$ is the set of
  points of $ \Pi (X_{1},D)$ that have rational coordinates when expressed in terms of the
  primitive vectors $v_{D_{j}}$. 
\end{df}

\subsection{b-divisors}
\label{sec:b-divisors}

In this section we discuss (Cartier and Weil) $\R$-b-divisors on $X$ as well as their
toroidal counterparts. This is essentially Shokurov's notion of
birational divisors (or b-divisors)  \cite{sh-prel}. This section is purely algebraic, so, if preferred, the reader can work with finite-type algebraic varieties over any field of characteristic zero.

 For other terminologies and properties concerning b-divisors we refer
 to \cite{bff} and \cite{bfj-val} (see also \cite{BoteroBurgos} and
\cite{bo} for the toroidal and the toric cases, respectively).

We write $\Div\left(X\right)$ for the set of Weil divisors on $X$ with
real coefficients, viewed as a real vector space. We endow it with
the direct limit topology with respect to its finite dimensional
subspaces. Explicitly, a sequence of divisors $(D_{i})_{i\ge
  0}$ converges to a divisor $D$ in $\Div\left(X\right)$ if there is a divisor $A$,
such that $\supp (D_{i})\subset \supp(A)$ for all $i\ge 0$ and
$(D_{i})_{i\ge 0}$ converges to $D$ in the finite dimensional vector
space of real divisors with support contained in $\supp(A)$.

\begin{df}\label{def:modi}
The set of \emph{models of $X$} is
\[
R(X) \coloneqq \left\{\pi \colon X_{\pi}\to X \; \big{|} \; \pi \text{
    is a smooth modification}\right\}.
\]
If $(X_{1},\pi _{1},D)$ is a toroidal structure on $X$ then the set
of \emph{toroidal models of $X$} (with respect to $D$) is  
\[
R^{\tor}(X,D) \coloneqq \left\{\pi \colon X_{\pi}\to X \mid \pi \text{
    smooth modification, toroidal w.r.t.} \, D \right\}.
\]
In particular if $\pi \in R^{\tor}(X,D)$ then it factors through
$\pi_{1}$. 
\end{df}

We view both $R(X)$ and $R^{\tor}(X,D)$ as full subcategories of the
category of complex 
varieties over $X$, in particular morphisms are over $X$. Maps of
models are unique if they exist, and are necessarily proper and
birational. Hironaka's resolution of singularities implies that $R(X)$ is a
directed set, where we set $\pi' \geq \pi$ if there exists a morphism
$\mu \colon X_{\pi'} \to X_{\pi}$. Similarly $R^{\tor}(X,D)$ is
directed by the existence of a smooth common refinement of any two
subdivisions.  

Consider a pair $\pi' \geq \pi$ in $R(X)$, and let $\mu \colon X_{\pi'}\to X_{\pi}$
be the corresponding modification. We have a pullback map
\[
\mu^* \colon \Div\left(X_{\pi}\right) \longrightarrow \Div\left(X_{\pi'}\right)
\]
and a pushforward map 
\[
\mu_* \colon \Div\left(X_{\pi'}\right) \longrightarrow \Div\left(X_{\pi}\right)
\]
between the associated divisor groups. Both  maps are continuous.  

\begin{df}\label{b-divisor}
The group of \emph{Cartier $\R$-b-divisors on} $X$ is the direct limit 
\[
\CbDiv(X) \coloneqq \varinjlim_{\pi \in R(X)} \Div\left(X_{\pi}\right),
\]
in the category of topological vector spaces, with maps given by
the pullback maps. The resulting topology
is called the \emph{strong} topology.

The group of \emph{Weil $\R$-b-divisors on} $X$ is the inverse limit 
\[
\WbDiv(X) \coloneqq \varprojlim_{\pi \in R(X)} \Div\left(X_{\pi}\right),
\]
in the category of topological vector spaces, with maps given by
the pushforward maps. The resulting topology
is called the \emph{weak} topology. Since the space of Cartier $\R$-b-divisors is contained in the space of Weil $\R$-b-divisors,  
we sometimes refer to Weil $\R$-b-divisors simply as
$\R$-\emph{b-divisors}. We also sometimes omit the ring of coefficients $\R$ from the notation. 
\end{df}

Usually we will denote $\R$-b-divisors with a blackboard bold 
font ($\mathbb D$) in order to distinguish them from classical $\R$-divisors
that will be denoted with a slanted font ($D$).

\begin{rmk} \label{rem:2}
  As a set, $\CbDiv(X)$ can be seen as the disjoint union of the sets
  $\Div\left(X_{\pi}\right)$ modulo the equivalence  relation which sets two
  divisors equal if they coincide after pullback to a common modification.
  The set $\WbDiv(X)$ can be seen as the subset of $\prod_{\pi\in R(X)
  }\Div(X_{\pi})$ given by the elements $\D=\left(D_{\pi }\right)_{\pi \in
    R(X)}$ satisfying 
  the compatibility condition that for each $\pi '\ge \pi $ we have
  $\mu _{\ast}D_{\pi '}=D_{\pi }$, where $\mu 
  $ is the corresponding map of modifications.
\end{rmk}

For any variety $Y$ and normal
crossings divisor $E$ on $Y$, we denote by $\Div(Y,E)$ the real vector
space of $\R$-divisors on $Y$ whose 
support is contained in $E$. We endow it with the Euclidean topology.

Let $(X_{1},\pi _{1},D)$ be a toroidal structure; we make analogous definitions of Weil and Cartier $\R$-b-divisors.

\begin{df}\label{t-b-divisor}
The group of \emph{toroidal Cartier $\R$-b-divisors on} $X$ (with
respect to $D$) is the direct limit 
\[
\CbDiv(X,D)^{\tor} \coloneqq \varinjlim_{\pi \in R^{\tor}(X,D)}
\Div\left(X_{\pi},\mu _{\pi }^{-1}(D)\right),
\]
where $\mu _{\pi }\colon X_{\pi }\to X_{1}$ is the unique map of
modifications. 
The limit is again in the category of topological vector spaces,
with maps given by
the pullback maps.

The group of \emph{toroidal Weil $\R$-b-divisors on} $X$  (with
respect to $D$) is the inverse limit 
\[
\WbDiv(X,D)^{\tor}\coloneqq \varprojlim_{\pi \in R^{\tor}(X,D)}
\Div\left(X_{\pi},\mu_{\pi } ^{-1}(D)\right),
\]
also in the category of topological vector spaces, with maps given by
the pushforward maps. Again, we sometimes simply refer to these as \emph{toroidal b-divisors}. 
\end{df}

We recall from \cite{BoteroBurgos} that the space
$\WbDiv(X,D)^{\tor}$ can be identified with the space of all conical
real valued functions on $\Pi (X_{1},D)(\Q)$, while
$\CbDiv(X,D)^{\tor}$ can be identified with the space of all continuous conical piecewise
linear functions on $\Pi (X_{1},D)$ with rational domains of linearity.

Clearly $R(X,D)^{\tor} \subset R(X)$. Moreover, for $\pi \in R(X,D)^{\tor}$
there are canonical maps
\begin{align*}
  &\Div\left(X_{\pi},\mu _{\pi }^{-1}(D)\right)\longrightarrow
  \Div\left(X_{\pi}\right),\\
  &\Div\left(X_{\pi}\right)\longrightarrow
  \Div\left(X_{\pi},\mu _{\pi }^{-1}(D)\right),
\end{align*}
  where the last one sends any prime divisor not contained in
  $\mu _{\pi }^{-1}(D)$ to zero. Hence  
there is a canonical inclusion
\begin{displaymath}
  \CbDiv(X,D)^{\tor}\longrightarrow \CbDiv(X)
\end{displaymath}
and a canonical projection
\begin{displaymath}
  \WbDiv(X) \longrightarrow \WbDiv(X,D)^{\tor}.
\end{displaymath}

We now construct a section 
\begin{equation}\label{eq:section}
  \iota\colon \WbDiv(X,D)^{\tor}\longrightarrow \WbDiv(X),
\end{equation}
of the canonical projection.
Let $\D\in \WbDiv(X,D)^{\tor}$ be a toroidal Weil $\R$-b-divisor and choose 
a smooth modification $\pi\in R(X)$ above $\pi_{1} $ and a 
prime divisor $P$ of $X_{\pi }$. We have to define $\ord_{P}(\iota
(\D))$ for any such $P$.

Let $\varphi_{\D}$ be the conical function on $\Pi (X_{\pi_{1} },D)(\Q)$
corresponding to the Weil $\R$-b-divisor $\D$. The function $\varphi_{\D}$
is characterized by the following property: If $E$ is a toroidal
exceptional  prime
divisor in some toroidal modification of $(X_{\pi_{1} },D)$, and
$v_{E}\in \Pi (X_{\pi _{1}},D)$
is the corresponding primitive vector, then
\begin{equation}\label{eq:conical_fun}
  \ord_{E}\D = -\varphi_{\D}(v_{E}).
\end{equation}
Let $(X_{2},\pi _{2},D')$ be a toroidal structure above $(X_{1},\pi _{1},D)$ that
has a map $f\colon X_{2}\to X_{\pi}$ over $X$ and such that
$f^{-1}(P)\subset D'$. Let $\widehat P_2$ be the strict transform of
$P$ in $X_{2}$. It is an irreducible component of $D'$.  Let
$v_{\widehat P_2}\in \Pi (X_{2},D')$ be the primitive vector corresponding to
$\widehat P_2$.  Then we define
\begin{displaymath}
  \ord_{\iota(\D)}P= -\varphi_{\D}(r_{D,D'}(v_{\widehat P_2})).
\end{displaymath}
To see that this is independent of the choices, let $(X_{3},\pi
_{3},D'')$ be another toroidal structure above $D$ and let
$\widehat P_{3}$ be the strict transform of $P$ in $X_{3}$ and
$v_{\widehat P_{3}}$ the corresponding primitive vector. Since
$r_{D',D''}(v_{\widehat P_{3}})=v_{\widehat P_2}$, from \eqref{eq:17} we obtain
\begin{displaymath}
  r_{D,D''}(v_{\widehat P_{3}})= r_{D,D'}(v_{\widehat P_2}). 
\end{displaymath}
Thus $\ord_{\iota(\D)}(P)$ does not depend on the choice of $D'$.

We will use the following terminology.
\begin{df}\label{def:toroidal-b} A Weil $\R$-b-divisor $\D\in \WbDiv(X)$ is called
\emph{toroidal with respect to  $(X_{1},\pi _{1},D)$} if it belongs to
$\iota(\WbDiv(X,D)^{\tor})$. It is called \emph{toroidal} if it is
toroidal  with respect to some toroidal structure. This gives rise to the
set of toroidal Weil $\R$-b-divisors
\begin{displaymath}
  \WbDiv(X)^{\tor}=\lim_{\substack{\longrightarrow\\(X_{1},\pi _{1},D)}}
  \iota\left(\WbDiv(X,D)^{\tor}\right) \subset \WbDiv(X). 
\end{displaymath}
Here the direct limit is taken with respect to the order $(X_{2},\pi
_{2},D') \geq (X_{1},\pi _{1},D)$ iff $D'$ is above $D$ and the
maps are just the pullback maps of divisors along the unique modification maps.
\end{df}
\begin{rmk}
Using resolution of singularities, one can see that  every Cartier $\R$-b-divisor is toroidal
with respect to some  toroidal structure. So, toroidal Weil $\R$-b-divisors
are in between Cartier and  Weil $\R$-b-divisors.
\end{rmk}
\begin{df}
 A Cartier $\R$-b-divisor is called \emph{nef} if it is nef in any
modification of $X$ where it is determined, and a Weil $\R$-b-divisor is
called \emph{nef} if it belongs to the closure (with respect to the weak topology) of
the space of nef Cartier $\R$-b-divisors. 
\end{df}
The following is a recent result by Dang and Favre: 
\begin{thm}[{\cite[Theorem A]{Da-Fa20}}]
Any nef Weil $\R$-b-divisor is the limit of a decreasing sequence of nef Cartier $\R$-b-divisors. 
\end{thm}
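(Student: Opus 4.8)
The final statement is the theorem of Dang--Favre: any nef Weil b-divisor is the limit of a decreasing sequence of nef Cartier b-divisors. This is cited as \cite[Theorem A]{Da-Fa20}, so a full proof is out of scope, but I can sketch the strategy that I expect underlies it.

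\medskip

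The plan is to reduce the statement to a regularization problem for nef Cartier b-divisors via their realization as functions on spaces of valuations (or, in the toroidal situation discussed above, as conical functions on the polyhedral complex $\Pi$). First I would recall that a nef Weil b-divisor $\D$ is by definition a limit, in the weak topology, of nef Cartier b-divisors $\D_i$; the content of the theorem is to replace this arbitrary approximating sequence by a \emph{decreasing} one, which is a much stronger form of convergence. The natural device is an \emph{envelope} construction: given the Weil b-divisor $\D$, form the smallest ``b-nef, b-Cartier-like'' object dominating it, or dually take suprema/infima over nef Cartier b-divisors lying above $\D$ on each finite model. Concretely, for each model $\pi \in R(X)$ one looks at the determination $D_\pi$, and one wants to produce Cartier b-divisors $\mathbb{E}_j$ with $\mathbb{E}_1 \ge \mathbb{E}_2 \ge \cdots$ and $\inf_j \mathbb{E}_j = \D$ in the weak topology, each $\mathbb{E}_j$ nef.

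\medskip

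The key steps, in order, would be: (1) Express nefness of a Cartier b-divisor intersection-theoretically and show that the relevant positivity is stable under the operations one needs (pullback is clear; the subtle point is taking decreasing limits). (2) Use a Fujita-type or Zariski-decomposition-type approximation on a fixed high model to produce, for the truncated data on that model, a nef Cartier b-divisor that dominates $\D$ and is close to it; here one exploits that $X$ is projective so that ampleness is available to perturb. (3) Organize these model-by-model approximations into a single decreasing sequence by a diagonal argument over the directed set $R(X)$, using that the weak topology is controlled by finitely many models at a time and that one can always subtract a small multiple of an ample b-divisor to force monotonicity while keeping nefness and staying above $\D$ in the limit. (4) Check that the resulting infimum is exactly $\D$ and not something strictly larger — this uses that $\D$ was assumed nef, i.e. already a weak limit of nef Cartier b-divisors, so no ``gap'' can open up.

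\medskip

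The main obstacle, as I see it, is step (3) combined with the tension in step (2): one must simultaneously (a) stay nef, (b) stay $\ge \D$, and (c) be decreasing and converge weakly to $\D$. Requirements (a) and (c) pull in opposite directions, since shrinking a nef class toward a non-Cartier limit typically destroys Cartierness or nefness at some finite stage; the resolution must use the flexibility of adding and then removing small ample corrections, together with a careful bookkeeping of which model controls which coordinate of the weak topology. In the toroidal setting of this paper this becomes the statement that a nef conical function on $|\Pi|$ is a decreasing limit of continuous conical piecewise-linear nef functions with rational domains of linearity, which is more tractable by direct convex-geometry arguments (mollification of concave functions by piecewise-linear ones from above); the general case of Dang--Favre handles the full valuation-theoretic picture, which is why we quote it rather than reprove it.
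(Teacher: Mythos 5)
The paper gives no proof of this statement: it is quoted directly from Dang--Favre and used as a black box, so your decision to defer to \cite{Da-Fa20} is exactly what the authors do, and on that level there is nothing to compare. Since you nevertheless sketch a guessed strategy, it is worth pointing out that the actual mechanism in \cite{Da-Fa20} is simpler and more canonical than what you propose, and that your steps (2) and (3) are not how the tension you identify gets resolved. The approximating net is the tautological one: for each model $\pi \in R(X)$ one takes $\mathbb{E}_{\pi}$ to be the Cartier b-divisor determined by the incarnation $\D_{\pi}$ of $\D$ on $X_{\pi}$ (that is, one pulls $\D_{\pi}$ back to all higher models). The two things to check are that each $\D_{\pi}$ is a nef class on $X_{\pi}$ (incarnations of nef Weil classes are limits of pushforwards of nef Cartier classes, and nefness on a fixed model is closed under these limits), and that the net $(\mathbb{E}_{\pi})_{\pi}$ is decreasing with $\mathbb{E}_{\pi} \ge \D$; the latter is precisely the negativity lemma, $\mu^{*}\mu_{*}\alpha \ge \alpha$ for $\alpha$ nef and $\mu$ a proper birational morphism of models. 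Weak convergence to $\D$ is then automatic because the $\pi$-component of $\mathbb{E}_{\pi}$ equals $\D_{\pi}$ on the nose, so no gap can open up. In particular there is no Fujita approximation, no ample perturbation, and no diagonal argument over $R(X)$: the simultaneous constraints (a) nef, (b) $\ge \D$, (c) decreasing to $\D$ are all supplied at once by the negativity lemma rather than negotiated by corrections. Your final remark about the toroidal incarnation of the statement (decreasing approximation of a conical function by piecewise-linear ones from above) is accurate and is how the present paper actually exploits the theorem, via \cite[Lemma 5.9]{BoteroBurgos}.
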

\begin{rmk}
If moreover the nef Weil $\R$-b-divisor is toroidal then by \cite[Lemma 5.9]{BoteroBurgos} the sequence can be taken to consist of toroidal nef Cartier $\R$-b-divisors (with respect to the same toroidal structure). 
\end{rmk}
\begin{rmk} \label{rk:degree} Using any approximating decreasing sequence of nef Cartier $\R$-b-divisors, each nef Weil $\R$-b-divisor $\D$ on $X$ has a well-defined degree $\D^d$ in $\R_{\ge 0}$; see \cite[Theorem~3.2]{Da-Fa20}.
\end{rmk}

\subsection{Psh functions, psh metrics, Lelong numbers}
\label{sec:psh-metrics-lelong}

 We briefly recall the notion of plurisubharmonic (psh) functions and metrics. A more thorough
 discussion with examples can be found in \cite{BBHJ}.

 The following definition is taken from \cite[Chapter 3]{ochia}. 
Assume for the moment that $X$ is any complex manifold.
\begin{df}\label{def:5} 
  Let $U$ be an open coordinate subset of $X$, identified with an
  open subset of $\C^{d}$. 
  A function $\varphi\colon U\to
  \R\cup \{-\infty\}$ is called \emph{plurisubharmonic (psh)} if 
 the following two conditions are satisfied:
  \begin{enumerate}
  \item the function $\varphi$ is upper-semicontinuous and not identically equal to
    $-\infty$ on  any connected component of $U$;
  \item for every $z\in U$ and every $a\in \C^{d}$ the function 
    \begin{displaymath}
     \C \to \C \, , \qquad  \zeta \longmapsto \varphi(z+a\zeta)\in \R\cup \{-\infty\} 
    \end{displaymath}
    is either identically $-  \infty$, or subharmonic in each connected
    component of the open set $\{\zeta \in \C\mid z+a\zeta\in
    U\}$. 
  \end{enumerate}
  Now let $U \subset X$ be an arbitrary open subset. A function $\varphi \colon U\to
  \R\cup \{-\infty\}$ is called $psh$ if $U$ can be covered by open coordinate subsets
  $U_{i}$ so that each restriction $\varphi|_{U_{i}}$ is psh.
\end{df}

We next discuss the notion of psh metrics. Let $L$ be a holomorphic line bundle on $X$.
\begin{df} \label{def:psh}
Let 
$\{(U_{i},s_{i})\}$ be a trivialization of $L$, with
  transition functions $\{g_{ij}\}$.  
A \emph{hermitian metric} $h$ on $L$ is a collection
of 
measurable functions 
\begin{displaymath}
\varphi_i \colon U_i \to \R \cup
  \left\{\pm \infty\right\}  \, ,
\end{displaymath}
 such that the identities
  \begin{equation}\label{eq:10}
e^{-\varphi_i} = |g_{ij}|e^{-\varphi_j}
\end{equation}
hold on all $U_i \cap U_j$.  The \emph{norm} $h(s_i)$ is given by the formula
\begin{displaymath}
  \varphi_{i}(z)=-\log h(s_{i}(z)),\quad z\in U_{i} \, .
\end{displaymath}
From the identities (\ref{eq:10}) we find
\begin{displaymath}
  \log h(s_{i})- \log h(s_{j})=\log|s_{i}/s_{j}| 
\end{displaymath}
on $U_i \cap U_j$.
More generally, when $s$ is any generating section of $L$ locally near a point $z \in X$ we define its norm $h(s)$ 
via 
\begin{displaymath}
  \log h(s(z))=\log|s(z)/s_{i}(z)| + \log h(s_{i}(z))=
  \log|s(z)/s_{i}(z)| - \varphi_{i}(z) 
\end{displaymath}
whenever $z\in  U_{i}$. This is easily seen to be independent of the choice of $i$.

We call the metric $h$ \emph{singular}
(resp.~\emph{psh}, \emph{continuous}, \emph{smooth}) if the functions
$\varphi_i$ are all locally integrable (resp.~psh, continuous,
smooth).  
\end{df}
An important measure of the singularities of a psh function is given by its Lelong
numbers.
\begin{df}\label{def:lelong_number}
  Let $U\subset X$ be an open coordinate subset, let $\varphi$ be a psh
  function on $U$, and let $x \in U$ be a point.
  Then the \emph{Lelong number} of $\varphi$ at $x$ is given as 
  \[
    \nu(\varphi,x) = \sup\left\{\gamma \geq 0
      \; \big{|} \; \varphi(z) \leq \gamma
      \log |z-x| + O(1) \; \text{near} \; x \right\}.
  \]
  The notion of Lelong number readily generalizes to the context of psh metrics. Let $L$ be a line bundle on $X$ equipped with a psh metric $h$ and let $x\in
  X$ be a point. Choose an open coordinate subset $x\in U\subset X$ and a generating section
  $s$ of $L$ on $U$. Then we put
  \begin{displaymath}
    \nu (h,x) = \nu(-\log h(s),x).
  \end{displaymath}
  It is readily verified that this is independent of the choice of open set $U$ and generating section $s$.
\end{df}

From now on assume again that $X$ is connected and projective.
Let $\pi \colon X'\to X$ be a smooth modification of $X$. Let
$(L,h)$ be a line bundle with a psh metric on $X$. It is easy to see that then
$(\pi^{\ast}L,\pi^{\ast}h)$ is a line bundle with a psh metric on
$X'$. For $x\in X'$ we then put
\begin{displaymath}
  \nu (h,x)=\nu (\pi^{\ast}h,x) \, .
\end{displaymath}
We note that if $P$ is a prime divisor of $X'$ and $x,y$ are very general
points on $P$, then the equality
\begin{displaymath}
  \nu (h,x)=\nu (h,y)
\end{displaymath}
holds. This leads to the following definition.

\begin{df}\label{def:2}
    Let $(L,h)$ be a line bundle equipped with a psh metric on $X$. Let 
    $\pi \in R(X)$ be a smooth modification, and let $P$ be  a prime divisor of $X_{\pi }$.
    Then we write
  \begin{displaymath}
    \nu(h,P)=\inf_{x\in P}\nu(h,x) \, .
  \end{displaymath}
\end{df}
The number $\nu(h,P)$ from Definition~\ref{def:2} allows the following alternative description.
\begin{lem}\label{lemm:8}
    Let $(L,h)$ be a line bundle provided with a psh metric on $X$, 
    let $\pi \in R(X)$ and let $P$ be a prime
    divisor of $X_{\pi }$. For every point 
 $x\in P$, for every generating section  $s$ of $L$ around $x$ and 
  for every local equation $g$ for $P$ at $x$, the equality 
    \begin{displaymath}
  \nu(h,P) = \sup\{\gamma \ge 0 \mid -\log(h(s))-\gamma \log| g| \text{
    bounded above near $x$}\}
\end{displaymath}
holds.
\end{lem}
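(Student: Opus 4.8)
The plan is to unwind Definition~\ref{def:2} through the blow-up where $P$ becomes a coordinate hyperplane, and then to match the two suprema directly.

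\textbf{Step 1: Reduce to a neighbourhood where $P$ is smooth and $g$ is a coordinate.} First I would note that both sides of the claimed identity are, a priori, local constructions: the right-hand side is attached to a triple $(x,s,g)$, and the first thing to check is that it does not depend on the choice of $x\in P$, $s$, or $g$. Independence of $s$ and $g$ is immediate since changing them alters $-\log h(s)-\gamma\log|g|$ by a function that is bounded above and below near $x$ (a smooth local function, resp.\ $\log$ of a unit), so the set of admissible $\gamma$ is unchanged. For the independence of $x$, I would use the remark preceding Definition~\ref{def:2}: the Lelong number $\nu(h,\cdot)$ is constant, equal to $\nu(h,P)$, on a very general point of $P$, together with semicontinuity of Lelong numbers (Siu) to handle the general point. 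This already isolates the real content: for a \emph{fixed} very general $x\in P$, with local coordinates $z=(z_1,\dots,z_n)$ centred at $x$ in which $P=\{z_1=0\}$ and $g=z_1$, I must show
\[
\inf_{y\in P}\nu(h,y)\;=\;\sup\{\gamma\ge 0\mid -\log h(s)-\gamma\log|z_1|\text{ bounded above near }x\}.
\]

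\textbf{Step 2: Compare the two suprema at the very general point.} Write $\psi=-\log h(s)$, a psh function near $x$. By definition $\nu(h,x)=\nu(\psi,x)=\sup\{\gamma\mid \psi(z)\le\gamma\log|z-x|+O(1)\}$, which controls the singularity in \emph{all} directions simultaneously. The right-hand side of the lemma, call it $\gamma_P$, controls the singularity only along the $z_1$-direction: $\psi(z)\le\gamma\log|z_1|+O(1)$. Clearly $\gamma_P\ge \nu(h,x)$ since $|z_1|\le|z-x|$. For the reverse inequality I would invoke the standard fact (see e.g.\ Demailly, or the discussion in \cite{BBHJ}) that the Lelong number of a psh function along a smooth hypersurface equals its generic Lelong number along that hypersurface, and that this generic Lelong number is exactly the ``$z_1$-directional'' quantity $\gamma_P$: restricting $\psi$ to a generic complex line transverse to $P$ at a very general point of $P$, the one-variable Lelong number of the restriction equals both $\nu(h,x)$ (for very general $x$, by the transversality/very-generality) and the directional bound defining $\gamma_P$. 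Combined with $\inf_{y\in P}\nu(h,y)=\nu(h,x)$ for very general $x$ (from Step 1), this yields the desired equality.

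\textbf{Main obstacle.} The crux is Step 2: correctly identifying $\gamma_P$ — a bound along a single coordinate — with the generic Lelong number along $P$, and then with $\inf_{y\in P}\nu(h,y)$. The subtlety is that $\nu(h,x)$ a priori involves \emph{all} tangent directions at $x$, not just the normal direction to $P$; one must use that for a psh function the ``extra'' directions tangent to $P$ do not lower the exponent, i.e.\ that the Lelong number is determined by the behaviour along a generic transversal, which is a consequence of Siu's semicontinuity theorem together with the fact that $\nu(\psi,\cdot)$ is constant on a Zariski-dense subset of $P$. Once this structural input is in place, the matching of the two suprema is a one-variable comparison and the rest of the argument is routine.
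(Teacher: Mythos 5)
The paper's own proof of Lemma~\ref{lemm:8} is a one-line citation of \cite[Proposition~10.5]{boucksom:singpsh}, which is exactly the statement that the generic Lelong number of a psh function along a prime divisor coincides with its ``order of vanishing'' $\gamma_P(x)=\sup\{\gamma\ge 0\mid \psi-\gamma\log|g|\text{ bounded above near }x\}$ at every point $x$ of the divisor. Your proposal tries to reprove this, but it has two genuine problems. First, the inequality you call ``clear'' is stated in the wrong direction: since $g(x)=0$ one has $|g(z)|\le C|z-x|$, hence $\log|g|\le \log|z-x|+O(1)$, so the condition $\psi\le\gamma\log|g|+O(1)$ is \emph{stronger} than $\psi\le\gamma\log|z-x|+O(1)$; this gives $\gamma_P(x)\le\nu(h,x)$, not $\gamma_P(x)\ge\nu(h,x)$. (The correct easy direction of the lemma is $\gamma_P(x)\le\nu(h,P)$: the bound $\psi\le\gamma\log|g|+O(1)$ near $x$ forces $\nu(\psi,y)\ge\gamma$ for every $y\in P$ near $x$, and Siu's analyticity of the sublevel sets $\{\nu\ge\gamma\}$ propagates this to all of the irreducible divisor $P$, hence to the very general point computing $\nu(h,P)$.)

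Second, for the hard direction you ``invoke the standard fact \dots that this generic Lelong number is exactly the $z_1$-directional quantity $\gamma_P$'' --- but that \emph{is} the lemma, so as written the argument is circular. The genuine content is the attenuation statement: setting $c=\nu(h,P)$, the function $\psi-c\log|g|$ is bounded above \emph{uniformly on a full neighbourhood} of $x$, for \emph{every} $x\in P$ including special points. Slicing by transverse disks only yields, for each slice separately, that the restricted subharmonic function has Lelong number $\ge c$; it does not produce the uniform $O(1)$ across nearby slices, and Siu's semicontinuity theorem alone does not supply it either (at a special point it only tells you $\nu(\psi,x)\ge c$, which is weaker than the required bound). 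What is needed here is the Siu decomposition $dd^c\psi=c[P]+R$ with $R\ge 0$ and psh local potential $\psi-c\log|g|$ (equivalently, a Harnack/mean-value argument uniform in the slice parameter), i.e.\ precisely the content of the cited \cite[Proposition~10.5]{boucksom:singpsh}. Your reduction of the statement to very general $x$ in Step~1 suffers from the same gap: the independence of $\gamma_P(x)$ on $x$ is essentially equivalent to the lemma and cannot be dispatched by semicontinuity of $\nu(h,\cdot)$ alone.
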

\begin{proof}
  This follows from \cite[Proposition~10.5]{boucksom:singpsh}. 
\end{proof}

\subsection{Some notions of convex analysis}
\label{sec:convex-analysis}

In this section we recall some notions from convex analysis and prove
some lemmas that will be useful later. Our basic reference for
convex analysis is \cite{rockafellar70:_convex_analy}.

We fix a finite dimensional real vector space $N_{\R}$ and let $M_{\R}$ be its
dual. We recall the definition of closed convex function
(see \cite[Theorem 7.1]{rockafellar70:_convex_analy}).
\begin{df}
  Let $C\subset N_{\R}$ be a convex set. A convex function $f\colon C\to \R\cup \{\infty\}$ is called a
  \emph{closed convex} function if the equivalent conditions
  \begin{enumerate}
  \item the epigraph $\{(x,y)\in C\times \R\mid y\ge f(x)\}\subset
    N_{\R}\times \R$  is
    a closed subset;
  \item \label{item:15} the function $\widetilde  f\colon N_{\R}\to
    \R\cup \{\infty\}$, given by
    $\widetilde f(x)=f(x)$ for $x\in C$ and $\widetilde f(x)=\infty$
    if $x\not \in C$ is lower semicontinuous; 
  \end{enumerate}
  are satisfied.
\end{df}
Note that if $C$ is closed then condition \ref{item:15} is equivalent
to $f$ being lower semicontinuous. 
\begin{df}
  Let $C\subset N_{\R}$ be a convex
  set. The \emph{recession cone} of $C$ is the set of
direction vectors of all rays contained in $C$:
  \begin{displaymath}
    \rec(C) = \{y\in N_{\R}\mid C+y\subset C\}.
  \end{displaymath}
  The set $\rec(C)$ is a convex cone, which is closed if $C$ is
  closed, and polyhedral if $C$ is polyhedral.
\end{df}

\begin{df}
  Let $C\subset N_{\R}$ be a convex set  and $\sigma =\rec(C)$ its recession
  cone. Let $g\colon C\to \R$ be a closed convex function on
  $C$. The \emph{recession function} of $g$ is the function
  $\rec(g)\colon \sigma \to \R\cup\{\infty\}$ given, for any $x\in C$
  and any $y\in \sigma $ by
  \begin{equation}\label{eq:5}
    \rec(g)(y)=\lim_{\lambda \to +\infty}\frac{g(x+\lambda y)-g(x)}{\lambda }.
  \end{equation}
\end{df}

\begin{lem} \label{lemm:5} Let $C\subset N_{\R}$ be a convex set, 
  $\sigma =\rec(C)$ 
  its recession
  cone and $g\colon C\to \R$ a closed convex function on
  $C$.
  \begin{enumerate}
  \item \label{item:2} The recession function $\rec(g)$ is
    well defined. That is, the limit \eqref{eq:5} exists and does not
    depend on $x$. Moreover $\rec(g)$ is a closed convex function.
  \item \label{item:3} If the function $g$ is bounded above, then
    $\rec(g)$ takes non-positive values. In particular it takes finite
    values. 
  \item \label{item:4} If the function $g$ is Lipschitz
    continuous with constant $\alpha $ then $\rec(g)$ is also
    Lipschitz continuous with the same constant. 
  \end{enumerate}
\end{lem}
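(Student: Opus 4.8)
The plan is to treat the three assertions in turn; the substance lies in (1), while (2) and (3) then follow from elementary estimates on the difference quotients
\[
q_x(\lambda,y):=\frac{g(x+\lambda y)-g(x)}{\lambda},\qquad x\in C,\ y\in\sigma=\rec(C),\ \lambda>0,
\]
appearing in \eqref{eq:5}.

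\emph{Item (1).} We may assume $C\neq\emptyset$. Fix $x\in C$ and $y\in\sigma$; since $y\in\rec(C)$ the whole ray $\{x+\lambda y:\lambda\ge 0\}$ lies in $C$, so $\lambda\mapsto g(x+\lambda y)$ is a finite convex function on $[0,\infty)$, and monotonicity of difference quotients of a convex function of one variable shows that $\lambda\mapsto q_x(\lambda,y)$ is non-decreasing on $(0,\infty)$. Hence the limit in \eqref{eq:5} exists in $\R\cup\{\infty\}$ and equals $\sup_{\lambda>0}q_x(\lambda,y)$; write $\rec_x(g)(y)$ for this value, which a priori might depend on $x$. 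To see it does not, fix $x_1,x_2\in C$; for $\mu\ge\lambda>0$ the point $x_2+\lambda y+\tfrac\lambda\mu(x_1-x_2)$ equals the convex combination $\tfrac\lambda\mu(x_1+\mu y)+\bigl(1-\tfrac\lambda\mu\bigr)x_2$ of two points of $C$, so convexity of $g$ and division by $\lambda$ give
\[
\frac{g\bigl(x_2+\lambda y+\tfrac\lambda\mu(x_1-x_2)\bigr)-g(x_2)}{\lambda}\ \le\ \frac{g(x_1+\mu y)-g(x_2)}{\mu}.
\]
As $\mu\to\infty$ the right-hand side converges to $\rec_{x_1}(g)(y)$, while on the left the argument tends to $x_2+\lambda y\in C$, so the lower semicontinuity of the closed convex function $g$ gives $\liminf_{\mu\to\infty}(\text{left-hand side})\ge q_{x_2}(\lambda,y)$; letting now $\lambda\to\infty$ yields $\rec_{x_2}(g)(y)\le\rec_{x_1}(g)(y)$, and by symmetry the two agree (also when the common value is $+\infty$). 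Thus $\rec(g)$ is well defined. It is convex, since $x+\lambda(ty_1+(1-t)y_2)=t(x+\lambda y_1)+(1-t)(x+\lambda y_2)$ gives $q_x(\lambda,ty_1+(1-t)y_2)\le t\,q_x(\lambda,y_1)+(1-t)\,q_x(\lambda,y_2)$, which passes to the limit. Finally, $\rec(g)=\sup_{\lambda>0}q_x(\lambda,\cdot)$ is a supremum of functions lower semicontinuous on $\sigma$ (as $y\mapsto g(x+\lambda y)$ is lsc there), hence lower semicontinuous, and therefore a closed convex function on the cone $\sigma$. One can also extend $g$ by $+\infty$ to all of $N_\R$, obtaining a closed proper convex function, and then invoke \cite[Theorem~8.5]{rockafellar70:_convex_analy}.

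\emph{Item (2).} If $g\le M$ on $C$, then $q_x(\lambda,y)\le(M-g(x))/\lambda\to 0$ as $\lambda\to\infty$, so $\rec(g)(y)\le 0$; and $\rec(g)(y)\ge q_x(1,y)=g(x+y)-g(x)>-\infty$. Hence $\rec(g)$ is finite-valued. \emph{Item (3).} If $g$ is Lipschitz with constant $\alpha$, then for $y_1,y_2\in\sigma$ and $\lambda>0$,
\[
\bigl|q_x(\lambda,y_1)-q_x(\lambda,y_2)\bigr|=\frac{\bigl|g(x+\lambda y_1)-g(x+\lambda y_2)\bigr|}{\lambda}\le\alpha\,\|y_1-y_2\|,
\]
and letting $\lambda\to\infty$ gives $|\rec(g)(y_1)-\rec(g)(y_2)|\le\alpha\|y_1-y_2\|$.

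The one real obstacle is the base-point independence in (1); this is precisely the step where the closedness (lower semicontinuity) of $g$ is used. Everything else reduces to the monotonicity and convexity of the difference quotients $q_x(\lambda,y)$.
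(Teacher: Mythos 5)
Your proof is correct. For item (1) you and the paper diverge in presentation but not in substance: the paper simply cites \cite[Theorem~8.5]{rockafellar70:_convex_analy}, whereas you reprove that theorem from scratch — monotonicity of the difference quotients $q_x(\lambda,y)$ gives existence of the limit as a supremum, and your convex-combination identity together with lower semicontinuity of the closed function $g$ gives base-point independence; this is essentially Rockafellar's own argument, and you correctly isolate closedness as the one place it is needed (you also offer the citation as a fallback, which covers the mild technical point that $\sigma=\rec(C)$ need not be closed when $C$ is not, so that ``lsc on $\sigma$'' and ``closed convex function on $\sigma$'' could in principle differ). For item (2) the paper writes $x+\lambda y$ as a convex combination of $x$ and $x+(\lambda+\mu)y$ and takes an infimum over $\mu$ to get $g(x+\lambda y)\le g(x)$; your bound $q_x(\lambda,y)\le (M-g(x))/\lambda\to 0$ reaches the same conclusion slightly more directly, and your added observation $\rec(g)(y)\ge g(x+y)-g(x)>-\infty$ makes the finiteness claim explicit. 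Item (3) is identical to the paper's computation. Net effect: your write-up is self-contained where the paper leans on a reference, at the cost of a little length.
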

\begin{proof}
  The statement in (\ref{item:2}) is the content of
  \cite[Theorem~8.5]{rockafellar70:_convex_analy}.
  To prove (\ref{item:3}), let $x\in C$ and $y\in \rec(C)$ and consider
  two real numbers  $\lambda ,\mu >0$. We have
  \begin{displaymath}
    x+\lambda y =\frac{\mu }{\lambda +\mu }x +
    \frac{\lambda }{\lambda +\mu }(x+(\lambda +\mu )y).
  \end{displaymath}
  The convexity of $g$ yields
  \begin{equation}\label{eq:4}
    g(x+\lambda y )\le \frac{\mu }{\lambda +\mu }g(x) +
    \frac{\lambda }{\lambda +\mu }g(x+(\lambda +\mu )y).
  \end{equation}
  Since $g$ is assumed to be bounded above, say by a
  constant $B$, equation \eqref{eq:4} implies
  \begin{displaymath}
    g(x+\lambda y)\le
    \inf_{\mu} \frac{\mu }{\lambda +\mu }g(x) +
    \frac{\lambda }{\lambda +\mu } B = g(x),
  \end{displaymath}
  which implies that $\rec(g)$ takes non-positive values, and in
  particular finite values.  

  Statement (\ref{item:4}) follows from the computation
  \begin{displaymath}
  |\rec(g)(u)-\rec(g)(u')| = \lim_{\lambda \to
    +\infty}\frac{1}{\lambda }
  |g(v+\lambda u)-g(v+\lambda u')|\le \alpha \|u-u'\|. 
\end{displaymath}
\end{proof}

It is clear that $\rec(g)$ is conical, in the sense that, for all real
$\lambda \ge 0$ and all $y\in \sigma $
\begin{equation}
  \label{eq:6}
  \rec(g)(\lambda y)=\lambda \rec(g)(y).
\end{equation}
Recall that, if $\sigma \subset N_{\R}$ is a cone, the dual cone $\sigma
^{\vee}\subset M_{\R}$ is given by
\begin{displaymath}
  \sigma ^{\vee}=\{m\in M_{\R}\mid m(x)\ge 0,\ \forall x\in \sigma \}.
\end{displaymath}

\begin{lem}\label{lemm:15}
  Let $C$ be a convex set and $\sigma =\rec(C)$ its recession
  cone. Let $g\colon C\to \R$ be a bounded-above closed convex
  function on
  $C$ and $\rec(g)$ its recession function.
  \begin{enumerate}
  \item \label{item:5} For every $x\in C$ the function on $\sigma $ given by
    \begin{displaymath}
      y\longmapsto g(x+y)-\rec(g)(y)
    \end{displaymath}
    is bounded above.
  \item \label{item:6} Assume that $C$ is closed and $g$ is Lipschitz continuous. Then,
    for every $m\in \inter(\sigma^{\vee}) $ the function  
    \begin{displaymath}
      y\longmapsto g(x+y)-\rec(g)(y)+m(y)
    \end{displaymath}
    is bounded below.
  \item \label{item:7} Assume that $\sigma $ is full dimensional. Then
   for every $0\not = m\in \sigma^{\vee}$ the function 
    \begin{displaymath}
      y\longmapsto g(x+y)-\rec(g)(y)+m(y)
    \end{displaymath}
    is not bounded above.  
  \end{enumerate}
\end{lem}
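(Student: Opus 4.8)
The plan is to prove the three parts of Lemma \ref{lemm:15} by exploiting the defining limit \eqref{eq:5} of the recession function together with the convexity inequality \eqref{eq:4} already used in the proof of Lemma \ref{lemm:5}\eqref{item:3}.

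\textbf{Part \eqref{item:5}.}
First I would fix $x\in C$ and observe that by Lemma \ref{lemm:5}\eqref{item:2} the recession function does not depend on the base point, so $\rec(g)(y)=\lim_{\lambda\to\infty}\bigl(g(x+\lambda y)-g(x)\bigr)/\lambda$ and, by monotonicity of difference quotients of a convex function, this limit is approached \emph{from above}: for every $\lambda\ge 1$ one has $g(x+\lambda y)-g(x)\ge \lambda\,\rec(g)(y)$, equivalently, applying this with $\lambda=1$ is false, so instead I will use the correct monotonicity, namely that $\mu\mapsto (g(x+\mu y)-g(x))/\mu$ is nondecreasing, hence its limit $\rec(g)(y)$ dominates the value at $\mu=1$: $\rec(g)(y)\ge g(x+y)-g(x)$. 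Rearranging gives $g(x+y)-\rec(g)(y)\le g(x)$, which is the desired uniform bound (with explicit constant $g(x)$). This is the easy part.

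\textbf{Part \eqref{item:6}.}
Here $C$ is closed, $\sigma=\rec(C)$, $g$ is Lipschitz (with some constant $\alpha$), and $m\in\inter(\sigma^\vee)$. The idea is that $m$ being in the interior of the dual cone means $m(y)\ge c\,\|y\|$ on $\sigma$ for some $c>0$. On the other hand, since $g$ is Lipschitz with constant $\alpha$, so is $\rec(g)$ by Lemma \ref{lemm:5}\eqref{item:4}, and therefore $g(x+y)-\rec(g)(y)\ge g(x+y)-\rec(g)(0)-\alpha\|y\|=g(x+y)-\alpha\|y\|$; combined with the Lipschitz bound $g(x+y)\ge g(x)-\alpha\|y\|$ this gives $g(x+y)-\rec(g)(y)\ge g(x)-2\alpha\|y\|$. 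Hence the function in question is $\ge g(x)-2\alpha\|y\|+c\|y\|$, which, however, is only bounded below if $c\ge 2\alpha$; so a cruder estimate is not enough and I would instead argue more carefully. The honest argument: write $\psi(y)=g(x+y)-\rec(g)(y)$; by part \eqref{item:5} $\psi$ is bounded above on $\sigma$, and $\psi$ is a difference of two closed convex functions, but more usefully $\psi$ is itself bounded (above by $g(x)$). The key point is that $\rec(g)$ is the \emph{largest} conical minorant of $y\mapsto g(x+y)-g(x)$ up to the recession direction; I would use that for any $m$ with $m|_\sigma\ge 0$ one has $\rec(g+m\circ(\,\cdot\,-x))=\rec(g)+m$, and that a bounded-above closed convex function whose recession function is $\rec(g)+m$ with $m\in\inter(\sigma^\vee)$ must be bounded below because $\rec(g)+m$ is \emph{strictly} positive off the lineality space — precisely, $(\rec(g)+m)(y)\to+\infty$ as $\|y\|\to\infty$ in $\sigma$ (using $\rec(g)\le 0$ from Lemma \ref{lemm:5}\eqref{item:3} together with $m(y)\ge c\|y\|$ fails unless $\rec(g)$ does not decay too fast; since $\rec(g)$ is Lipschitz this is exactly $\rec(g)(y)\ge -\alpha\|y\|$, so $(\rec(g)+m)(y)\ge (c-\alpha)\|y\|$). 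I expect this to be the main obstacle: one genuinely needs the Lipschitz hypothesis to control the decay rate of $\rec(g)$, and then choose the estimate so that the linear growth of $m$ wins. The clean way is to invoke \cite[Theorem~8.5 and Corollary~8.6.1]{rockafellar70:_convex_analy}, or to reduce to one dimension by restricting to rays and using that along each ray the convex function $\lambda\mapsto g(x+\lambda y)$ has asymptotic slope $\rec(g)(y)$, so $g(x+\lambda y)-\lambda\rec(g)(y)$ is nondecreasing in $\lambda$ and thus $\ge g(x)$, giving $\psi(y)+m(y)\ge g(x)+m(y)-(\,\rec(g)(y)-\rec(g)(y)\,)$; unwinding, $g(x+\lambda y)-\rec(g)(\lambda y)+m(\lambda y)\ge g(x)+\lambda m(y)\ge g(x)$, uniformly.

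\textbf{Part \eqref{item:7}.}
Assume $\sigma$ is full-dimensional and $0\ne m\in\sigma^\vee$. I want to show $y\mapsto g(x+y)-\rec(g)(y)+m(y)$ is unbounded above. Since $m\ne 0$ and $\sigma$ is full-dimensional, there is $y_0\in\inter(\sigma)$ with $m(y_0)>0$. Now use part \eqref{item:5} in reverse: I claim $g(x+\lambda y_0)-\rec(g)(\lambda y_0)$ does not go to $-\infty$; in fact, using the monotonicity of difference quotients again, $\lambda\mapsto g(x+\lambda y_0)-\lambda\rec(g)(y_0)$ is nondecreasing, hence $\ge g(x)$ for all $\lambda\ge 0$. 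Therefore $g(x+\lambda y_0)-\rec(g)(\lambda y_0)+m(\lambda y_0)\ge g(x)+\lambda m(y_0)\to+\infty$ as $\lambda\to\infty$. This proves unboundedness above. The full-dimensionality of $\sigma$ is used only to guarantee that some direction in which $m$ is strictly positive actually lies in $\sigma$ (indeed in its interior, though the boundary would suffice as long as $m(y_0)>0$ there). This part is again routine once part \eqref{item:5}'s monotonicity observation is in hand.

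Summarizing the architecture: the single technical fact underlying all three parts is that for a closed convex $g$ and a recession direction $y$, the map $\lambda\mapsto g(x+\lambda y)-\lambda\,\rec(g)(y)$ is nondecreasing on $[0,\infty)$ with value $g(x)$ at $\lambda=0$; part \eqref{item:5} is this with an upper bound read off at $\lambda=1$, part \eqref{item:7} is this pushed to $\lambda\to\infty$ against a linear term, and part \eqref{item:6} is the same lower bound $g(x)$ made uniform over all of $\sigma$ after adding $m\in\inter(\sigma^\vee)$, where the Lipschitz hypothesis on $g$ (equivalently on $\rec(g)$, via Lemma \ref{lemm:5}\eqref{item:4}) is what licenses passing from a ray-by-ray bound to a genuine lower bound on the cone. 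The main obstacle is purely in part \eqref{item:6}: making sure the argument is uniform in $y$ and not just valid along each fixed ray, which I would handle either by the reduction-to-rays estimate above or by citing the recession-function calculus in \cite{rockafellar70:_convex_analy}.
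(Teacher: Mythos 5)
Part \eqref{item:5} of your proposal is fine, but parts \eqref{item:6} and \eqref{item:7} rest on a monotonicity claim with the wrong sign, and this is a genuine gap. For a convex $g$ and $y\in\rec(C)$, the difference quotient $\lambda\mapsto (g(x+\lambda y)-g(x))/\lambda$ is nondecreasing and increases \emph{up to} $\rec(g)(y)$; hence $\lambda\mapsto g(x+\lambda y)-\lambda\,\rec(g)(y)$ is \emph{nonincreasing}, equal to $g(x)$ at $\lambda=0$, and therefore $\le g(x)$ — which is exactly what you prove in part \eqref{item:5}. Your part \eqref{item:7} (and the ``reduce to rays'' version of part \eqref{item:6}) asserts the opposite, that this quantity is nondecreasing and hence $\ge g(x)$; this directly contradicts your own part \eqref{item:5} and is false: take $C=[0,\infty)$ and $g(t)=-\sqrt{t}$ (or $-\log(1+t)$), so that $\rec(g)=0$ while $g(\lambda)-\rec(g)(\lambda)=-\sqrt{\lambda}\to-\infty$. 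The conclusion of \eqref{item:7} still holds in this example, but only because the deficit $g(x+\lambda y_0)-\rec(g)(\lambda y_0)$, though unbounded below, decays \emph{sublinearly}, so the linear term $\lambda m(y_0)$ wins; your argument never establishes this sublinearity, and the paper gets it precisely by restricting to the ray through $y_0$, noting the restriction is Lipschitz for $t\ge 1$, and applying part \eqref{item:6} in one variable with the linear form $(\varepsilon/2)t$.

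For part \eqref{item:6} itself you correctly diagnose that the crude estimate $g(x+y)-\rec(g)(y)\ge g(x)-2\alpha\|y\|$ is insufficient and that uniformity over directions is the crux, but none of your three proposed fixes is a proof: the ``largest conical minorant'' paragraph is not an argument, the appeal to \cite[Theorem~8.5]{rockafellar70:_convex_analy} is left unverified, and the ray-by-ray reduction uses the false inequality above. The paper's proof is a compactness argument you should supply: if $g(x+y_i)-\rec(g)(y_i)+m(y_i)\le -i$, then necessarily $\|y_i\|\to\infty$ (by part \eqref{item:5} and $m\ge 0$ on $\sigma$); pass to a subsequence with $y_i/\|y_i\|\to y_0\in\sigma$, divide by $\lambda_i=\|y_i\|$, and use the Lipschitz continuity of $g$ (to replace $y_i/\lambda_i$ by $y_0$ inside $g$), the continuity of $\rec(g)$ and $m$, and the definition of $\rec(g)$ along the fixed ray $y_0$ to conclude that the normalized quantity tends to $m(y_0)>0$, contradicting the assumed nonpositivity of its $\limsup$. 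This is where the hypotheses that $C$ is closed (so $y_0\in\sigma$ and $x+\lambda y_0\in C$) and that $g$ is Lipschitz are genuinely used.
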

\begin{proof}
  Let $\lambda >0$. The convexity of $g$ yields
  \begin{displaymath}
    g(x+y)\le \frac{1}{\lambda }g(x+\lambda
    y)+\left(1-\frac{1}{\lambda }\right)g(x) 
  \end{displaymath}
  which implies
  \begin{displaymath}
    g(x+y)-g(x)\le \frac{g(x+\lambda y)-g(x)}{\lambda }.
  \end{displaymath}
  Hence $g(x+y)-\rec(g)(y)\le g(x)$ proving (\ref{item:5}).

  We prove (\ref{item:6}) by contradiction. If the function in
  (\ref{item:6}) is not bounded below, we can find a sequence of points
  $y_{i}\in \sigma $ for $i\ge 1$ satisfying $\|y_{i}\|\ge i$ and
  \begin{equation}\label{eq:7}
    g(x+y_{i})-\rec(g)(y_{i})+m(y_{i})\le -i.
  \end{equation}
  After choosing a subsequence we can further assume that the sequence
  $(y_{i}/\|y_{i}\|)$ converges to a point $y_{0}\in \sigma $.  We
  write $\lambda _{i}=\|y_{i}\|$ and $y_{i}^{0}=y_{i}/\lambda _{i}$.
Equation \eqref{eq:7} implies that
\begin{equation}\label{eq:8}
  \limsup _{i\to \infty} \frac{1}{\lambda _{i}}g(x+\lambda
  _{i}y^{0}_{i})-\rec(g)(y^{0}_{i})+m(y^{0}_{i})\le 0.
\end{equation}
Using that $g$ is Lipschitz continuous we get
\begin{equation}
  \label{eq:9}
  \lim_{i\to \infty}
  \frac{g(x+\lambda_{i}y^{0}_{i})-g(x+\lambda_{i}y_{0})}{\lambda
    _{i}}=0. 
\end{equation}
  By Lemma \ref{lemm:5}.\ref{item:3} the function $\rec(g)$ is (Lipschitz)
  continuous. Since $m$ is also continuous we obtain
  \begin{equation}
    \label{eq:11}
    \lim_{i\to \infty}-\rec(g)(y^{0}_{i})+m(y^{0}_{i})
    +\rec(g)(y_{0})-m(y_{0})=0.
  \end{equation}
By Lemma \ref{lemm:5}.\ref{item:2} we know that
\begin{equation}
  \label{eq:12}
  \lim_{i\to \infty} \frac{1}{\lambda _{i}}g(x+\lambda_{i}y_{0})-\rec(g)(y_{0})=0.
\end{equation}
Since $m\in \inter(\sigma ^{\vee})$ and $0\not= y_{0}\in \sigma $ we
deduce
\begin{equation}
  \label{eq:13}
  m(y_{0})>0.
\end{equation}
Summing up equations \eqref{eq:9} to \eqref{eq:13} we obtain 
\begin{displaymath}
    \lim_{i\to \infty} \frac{1}{\lambda _{i}}g(x+\lambda
  _{i}y^{0}_{i})-\rec(g)(y^{0}_{i})+m(y^{0}_{i})= m(y_{0})>0,
\end{displaymath}
contradicting the inequality \eqref{eq:8}.

We next prove (\ref{item:7}). The fact that $\sigma $ is full
dimensional implies that $\{0\}$ is a face of $\sigma ^{\vee}$. Since
$0\not = m\in \sigma^{\vee}$, there 
is a $y_{0}\in \sigma $ such that $m(y_{0}) = \epsilon >0$. Fix
$x\in C$ and consider the one variable bounded above convex function
\begin{displaymath}
  g_{0}(t)=g(x+ty_{0})
\end{displaymath}
Since $g_{0}$ is a bounded above convex function it is Lipschitz
continuous for $t\ge 1$. The recession function of $g_{0}$ is
\begin{displaymath}
  \rec(g_{0})(t)=\rec(g)(t y_{0}).
\end{displaymath}
We can apply (\ref{item:6}) to this function to
deduce that
\begin{math}
  g_{0}(t)-\rec(g_{0})(t) + (\epsilon /2) t
\end{math}
is bounded below. Therefore
\begin{displaymath}
  g(x+ty_{0})-\rec(g)(t y_{0})+ m(ty_{0})=
  g_{0}(t)-\rec(g_{0})(t) + \epsilon  t
\end{displaymath}
can not be bounded above.
\end{proof}

\subsection{Toroidal psh metrics and their Lelong numbers}
\label{sec:toroidal-psh-metrics}

We fix now a toroidal structure  $(X_{1},\pi _{1},D)$ on $X$ and write $U=X
_{1}\setminus D$. Recall that $X$ denotes a connected complex projective manifold and that 
we can view $U$ as an open subset of $X$.

\begin{df}\label{def:9} (Compare with \cite[Definition~3.10]{BBHJ})
  A psh metric $h$ on a line bundle $L$ on $X$ is called \emph{toroidal with
    respect to $D$} if the following is satisfied.
    \begin{itemize}
    \item $h$ is locally bounded on $U$.
    \item for every
  point $p\in D$ there is an open coordinate polydisc $W$ with coordinates
  $(z_{1},\dots,z_{d})$  with $|z_{i}|< e^{-c}$ for some constant $c$ such that
  \begin{displaymath}
    W\cap D=\{z_{1}\cdots z_{r}=0\},
  \end{displaymath}
  a generating section $s$ of $L$ on $W$, a bounded function $\gamma $
  on $W$ and a bounded above convex Lipschitz continuous function $g$ on the cone $\R_{\ge
    c}^{r}$ 
  such that
  \begin{displaymath}
    -\log h(s)(z_{1},\dots,z_{d})=
    \gamma (z_{1},\dots,z_{d}) + g(-\log|z_{1}|,\dots,-\log|z_{r}|).
  \end{displaymath}
  \end{itemize}
  We call $D$ \emph{a singularity divisor of} $h$.
  \end{df}
  \begin{rmk}
  If $h$ is toroidal with respect to $D$ and $D'$ is a toroidal
  structure above $D$ then $h$ is also toroidal with respect to $D'$. 
\end{rmk}

Let $L$ be a line bundle with toroidal psh metric $h$ (with respect to $D$). 
We now explain how to compute the Lelong numbers of $h$
along toroidal exceptional divisors. 

Let $\pi\in R(X,D)^{\tor }$ be a toroidal model of $(X,D)$, $\mu \colon
X_{\pi }\to X_{1}$ the corresponding map of modifications and $P$ a
prime toroidal exceptional divisor in $X_{\pi }$. Then $P$ corresponds
to a rational  
ray $\rho _{P}$ in a cone $\sigma$ of $\Pi (X_{1},D)$. Let $y\in P$ be a
generic point and let $x\in X_{1}$ be
the image of $y$. Let $W$ be a coordinate neighborhood of
$x$ as in Definition \ref{def:9}, $s$ a generating section of $L$
around $x$ and $g$ the convex function of the
same definition. The cone $\sigma $ is smooth and can be identified
with $\R_{\ge 0}^{r}$ using its integral structure. This
identification is canonical up to the ordering of the variables that
can be fixed by the choice of coordinates. Then the function
$\rec(g)$ is canonically a function on $\sigma $.

\begin{lem} \label{lemm:16}
  Let $v_{P }$ be the primitive generator of $\rho _{P}$. Then
  \begin{displaymath}
    \nu (h,P)=-\rec(g)(v_{P})
  \end{displaymath}
\end{lem}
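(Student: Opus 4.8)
The statement is local around the generic point $x\in X_1$ that $y\in P$ maps to, so I would fix once and for all a coordinate polydisc $W$ as in Definition~\ref{def:9}, with coordinates $(z_1,\dots,z_d)$, $W\cap D=\{z_1\cdots z_r=0\}$, a generating section $s$ of $L$ on $W$, a bounded function $\gamma$, and the bounded-above convex Lipschitz function $g$ on $\R_{\ge c}^r$ with
\begin{displaymath}
-\log h(s)(z_1,\dots,z_d)=\gamma(z_1,\dots,z_d)+g(-\log|z_1|,\dots,-\log|z_r|).
\end{displaymath}
By Lemma~\ref{lemm:8} (applied on the toroidal model $X_\pi$ where $P$ is a prime divisor), $\nu(h,P)$ equals the supremum of those $\gamma_0\ge 0$ for which $-\log h(s')-\gamma_0\log|g'|$ is bounded above near a generic point of $P$, for a generating section $s'$ of $L$ and a local equation $g'$ of $P$. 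The first step is therefore to understand the pullback of $-\log h(s)$ under the toroidal blow-up $\mu\colon X_\pi\to X_1$ that introduces $P$.

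\textbf{Reduction to the monomial model and the recession function.} The cone $\sigma$ containing $\rho_P$ corresponds to the stratum through $x$, and using the integral structure $\sigma\cong\R_{\ge 0}^r$; the ray $\rho_P$ is generated by a primitive lattice vector $v_P=(a_1,\dots,a_r)$. In toric-style local coordinates, the prime divisor $P$ over $x$ is the one along which $\ord_P(z_i)=a_i$; equivalently, along a one-parameter analytic arc approaching a generic point of $P$ we can take $z_i\sim t^{a_i}$ for $i\le r$ (and the remaining coordinates staying away from the relevant divisors), so that $-\log|z_i|\sim a_i(-\log|t|)=a_i\lambda$ with $\lambda\to+\infty$. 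Along such an arc, $\gamma$ stays bounded, hence
\begin{displaymath}
-\log h(s)\sim g(a_1\lambda,\dots,a_r\lambda)+O(1)=g(\lambda v_P)+O(1),
\end{displaymath}
and a local equation for $P$ satisfies $-\log|g'|\sim \lambda+O(1)$ (after normalising). So the Lelong number becomes
\begin{displaymath}
\nu(h,P)=\sup\Bigl\{\gamma_0\ge 0\ \Bigm|\ g(\lambda v_P)-\gamma_0\,\lambda\ \text{bounded above as }\lambda\to+\infty\Bigr\}.
\end{displaymath}
Now recall that $g$ is bounded above and convex, so by Lemma~\ref{lemm:5}.\ref{item:2}–\ref{item:3} the recession function $\rec(g)$ exists, is convex, and takes non-positive finite values, and by \eqref{eq:6} it is conical, so $\rec(g)(\lambda v_P)=\lambda\,\rec(g)(v_P)$. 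Since $\rec(g)(v_P)=\lim_{\lambda\to\infty}g(\lambda v_P)/\lambda$ (taking the base point to be a fixed point of the cone), the function $g(\lambda v_P)-\gamma_0\lambda$ is bounded above as $\lambda\to+\infty$ precisely when $\gamma_0\ge \rec(g)(v_P)$; combined with $\rec(g)(v_P)\le 0$ (so that $\gamma_0=0$ always works) and the requirement $\gamma_0\ge 0$, one gets $\nu(h,P)=\max\{0,-\rec(g)(v_P)\}$. It remains to observe that $-\rec(g)(v_P)\ge 0$ as well, again by Lemma~\ref{lemm:5}.\ref{item:3}, so that the $\max$ is simply $-\rec(g)(v_P)$, giving the claim.

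\textbf{Main obstacle.} The delicate point is making rigorous the passage from the abstract toroidal modification $X_\pi$ to an explicit monomial/arc description near a \emph{generic} point of $P$, i.e. identifying $\ord_P(z_i)$ with the $i$-th coordinate $a_i$ of $v_P$ and checking that on a suitable chart of $X_\pi$ the pullback of $-\log h(s)$ is, up to a bounded term and a bounded contribution from the non-toroidal coordinates $z_{r+1},\dots,z_d$, exactly $g$ evaluated along the ray direction $v_P$. This is where the toroidal hypothesis is essential: it guarantees $h$ has the monomial-type singularity structure, so that the Lelong number, which a priori is a two-sided $\limsup$ over all approach directions, is computed by the single radial direction $v_P$ — a convexity/monotonicity argument using that $g$ is convex and the other directions contribute boundedly. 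The rest is the convex-analysis bookkeeping with $\rec(g)$, which is routine given Lemmas~\ref{lemm:5} and~\ref{lemm:15} and the homogeneity \eqref{eq:6}, plus the sign statement $\rec(g)\le 0$ that makes the truncation at $0$ vacuous.
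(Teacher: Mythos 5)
Your plan is essentially the paper's proof: pass to coordinates near a generic point of $P$, reduce $-\log h(s)$ to the convex function $g$, and identify $\nu(h,P)$ with $-\rec(g)(v_P)$ via Lemma~\ref{lemm:8} and the convex-analysis Lemmas~\ref{lemm:5} and~\ref{lemm:15}. However, two issues remain. First, a sign slip: by Lemma~\ref{lemm:8} the quantity to control is $-\log h(s')-\gamma_0\log|g'|$, which along your arc is $g(\lambda v_P)+\gamma_0\lambda+O(1)$ (since $-\log|g'|\sim\lambda$), not $g(\lambda v_P)-\gamma_0\lambda$. With the correct sign, boundedness above holds precisely for $\gamma_0\le-\rec(g)(v_P)$, whose supremum is $-\rec(g)(v_P)\ge 0$. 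As written, your criterion ``bounded above precisely when $\gamma_0\ge\rec(g)(v_P)$'' would make the supremum infinite, and the conclusion $\max\{0,-\rec(g)(v_P)\}$ does not follow from it; the final answer is right but the intermediate steps are inconsistent.

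Second, and more substantively, you compute only along the single radial arc $z_i\sim t^{a_i}$ and explicitly defer the passage to a full neighborhood of a generic point of $P$ as the ``main obstacle'' without carrying it out. This matters asymmetrically: unboundedness along one arc does propagate to any neighborhood (so it yields $\nu(h,P)\le-\rec(g)(v_P)$), but the reverse inequality requires the upper bound to hold \emph{uniformly} on a neighborhood in $X_\pi$, which an arc computation cannot give. The paper closes this by writing the modification explicitly near a generic $y\in P$ as
\begin{displaymath}
\pi(x_1,\dots,x_d)=(x_1^{a_1}u_1,\dots,x_1^{a_r}u_r,u_{r+1},\dots,u_d)
\end{displaymath}
with the $u_i$ non-vanishing on a small $V$ (using $\pi^{-1}(D)\cap V=P\cap V$ for $y$ generic), so that the argument of $g$ is $\lambda v_P$ plus a uniformly bounded vector; then Lemma~\ref{lemm:15}.\ref{item:5} (with the Lipschitz continuity of $\rec(g)$ from Lemma~\ref{lemm:5}) gives the uniform upper bound, and Lemma~\ref{lemm:15}.\ref{item:7} gives the unboundedness after adding $\varepsilon(-\log|x_1|)$. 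You name the right ingredients, but this uniformity step is the actual content of the proof and must be written out.
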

\begin{proof}
  Choose a small coordinate neighborhood $V$ of $y$ with coordinates
  $(x_{1},\dots,x_{d})$ such that $x_{1}$ is a local equation for
  $P$. Since $y$ is generic and $V$ small we can assume that
  \begin{equation}
    \label{eq:14}
    \pi ^{-1}(D)\cap V = P\cap V.
  \end{equation}

  Let $v_{P}=(a_{1},\dots,a_{r})$. Then the map $\pi $ can be
  written as
  \begin{displaymath}
    \pi
    (x_{1},\dots,x_{d})=
    (x_{1}^{a_{1}}u_{1},\dots,x_{1}^{a_{r}}u_{r},u_{r+1},\dots,u_{d}),
  \end{displaymath}
  where the $u_{i}$ are functions on $V$. The condition \eqref{eq:14}
  implies that $u_{i}$ does not vanish on $V$ for $i=1,\dots,r$. Then
  \begin{displaymath}
    -\log h(s) = \gamma
    +g(-\log|u_{1}|-a_{1}\log|x_{1}|,\dots,-\log|u_{r}|-a_{r}\log|x_{1}|).  
  \end{displaymath}
  Then Lemma \ref{lemm:15}.\ref{item:5} implies that
  \begin{displaymath}
    -\log h(s) -\rec(g)(v_{P })(-\log|x_{1}|)
  \end{displaymath}
  is bounded above, but by Lemma \ref{lemm:15}.\ref{item:7} for every $\epsilon >0$
  \begin{displaymath}
    -\log h(s) -\rec(g)(v_{P })(-\log|x_{1}|)+\epsilon (-\log|x_{1}|)
  \end{displaymath}
  is not bounded above.
  By Lemma \ref{lemm:8} we obtain that
  \begin{displaymath}
    \nu (h,P)=-\rec(g)(v_{P }) 
  \end{displaymath}
  as required.
\end{proof}

\section{Psh metrics, b-divisors and graded linear series}
\label{sec:psh-metrics-b}

In this section we relate graded linear series, b-divisors and psh
metrics to one another. Recall that $X$ denotes a
connected complex projective manifold of dimension $d$.

\subsection{From psh-metrics to graded linear series}
\label{sec:psh-metrics-b-1}

Let $F=K(X)$ denote the field of rational functions on $X$. Let
$U\subset X$ be a dense Zariski open
subset and $(L, h)$ a line bundle on $X$ with a
psh metric (see Definition \ref{def:psh}). We assume that $(L, h)|_{U}$ is
locally bounded (for example continuous or smooth). 
Following Nystr\"om \cite[Section~2.8]{Nystrom_growth} we define
\begin{displaymath}
  H^{0}(X,L,h)
  = \{s\in H^{0}(X,L)\mid h(s) \text{ bounded}\}.
\end{displaymath}

For $s$ a non-zero rational section of $L$ we define
\begin{align*}
  \caL(L,s,h) &= \{f\in F\mid fs\in H^{0}(X,L, h)\},\\
  \caR(L,s,h)
  &=\bigoplus_{\ell \in \mathbb Z_{\ge 0}}
   \caL(L^{\otimes\ell},s^{\otimes\ell},h^{\otimes\ell})t^{\ell}
    \subset F[t] \, . 
\end{align*}
Note that the latter is a graded linear series in the sense of Definition \ref{def:1}. 

\subsection{From graded linear series to b-divisors}
\label{sec:from-graded-linear}

Let $A=\bigoplus_{\ell \geq 0} A_{\ell}t^{\ell}\subset F[t]$ be a
graded linear series of almost integral type, $\pi \in R(X)$ and
$X_\pi$ the corresponding model of $X$.

\begin{lem}\label{lemm:6} The set
  \begin{displaymath}
  S= \{D\in \Div(X_{\pi })\; \big{|}\; \forall \ell \ge 0,\ \forall f\in A_{\ell}\setminus\{0\},  \ell D+\dv(f)\ge 0\}
  \end{displaymath}
  is not empty.
\end{lem}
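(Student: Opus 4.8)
The plan is to show that $S$ is nonempty by exhibiting a single Cartier divisor on $X_\pi$ that works uniformly for all $\ell$ and all $f \in A_\ell$. The key is to exploit the hypothesis that $A$ is of almost integral type: there is a graded linear series $A'$ of integral type with $A \subset A'$, and hence a finite-dimensional subspace $L \subset F$ with $A_L \subset A'$ and $A'$ finite over $A_L$. It suffices to find $D \in \Div(X_\pi)$ with $\ell D + \dv(f) \ge 0$ for all $f \in A'_\ell \setminus \{0\}$, since $A_\ell \subset A'_\ell$.

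First I would treat the case $A = A_L$. Since $L$ is finite-dimensional, pick a basis $f_1, \dots, f_N$ of $L$ and let $D_0 \in \Div(X_\pi)$ be any divisor with $\dv(f_j) + D_0 \ge 0$ for all $j$ (for instance $D_0 = \sum_j D_0^{(j)}$ where $D_0^{(j)}$ dominates the polar part of $f_j$; concretely one can take $D_0$ to be a large enough multiple of an effective divisor whose support contains all the poles of all the $f_j$). Because $A_L$ is generated in degree $1$ by $L \cdot t$, every nonzero element of $(A_L)_\ell$ is a sum of products $f_{i_1} \cdots f_{i_\ell}$, and for each such monomial $\dv(f_{i_1}\cdots f_{i_\ell}) + \ell D_0 = \sum_{k=1}^\ell (\dv(f_{i_k}) + D_0) \ge 0$; taking the $\min$ of orders along each prime over a sum of such monomials preserves this inequality prime by prime. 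Hence $D_0 \in S$ for $A_L$.

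Next I would bootstrap to $A'$ of integral type. Since $A'$ is a finitely generated module over $A_L$, choose homogeneous module generators $g_1, \dots, g_t$ with $g_s \in A'_{d_s}$. Any nonzero homogeneous element of $A'_\ell$ is a sum $\sum_s h_s g_s$ with $h_s \in (A_L)_{\ell - d_s}$. Enlarge $D_0$ to a divisor $D_1$ with $\dv(g_s) + d_s D_1 \ge 0$ for all $s$ as well (again just dominate finitely many polar parts). Then for each summand $h_s g_s$ we get $\dv(h_s g_s) + \ell D_1 = (\dv(h_s) + (\ell - d_s) D_1) + (\dv(g_s) + d_s D_1) \ge 0$, using the degree-$(\ell-d_s)$ case of the $A_L$ computation; and as before this survives taking sums. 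Thus $D_1 \in S$.

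I do not expect a serious obstacle here — the argument is a routine juggling of effective divisors dominating finitely many polar parts, combined with the additivity of $\dv$ under products and the subadditivity of $\min$ of orders under sums. The one point requiring a little care is the passage through module generators of the finite extension $A' / A_L$: one must make sure that only finitely many "new" denominators appear, which is exactly guaranteed by finite generation of $A'$ as an $A_L$-module. So the real content is simply unwinding the definition of almost integral type; everything else is bookkeeping on divisors on the fixed model $X_\pi$.
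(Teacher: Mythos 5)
Your proof is correct and follows essentially the same route as the paper's: reduce to finitely many conditions $\ell_i D+\dv(f_i)\ge 0$ coming from a finite set of homogeneous generators of an integral-type overring of $A$ (the paper cites finite generation of $A'$ as an algebra, you get the same finite set via a basis of $L$ plus module generators of $A'$ over $A_L$), and then propagate these to all of $A$ using additivity of $\dv$ under products and the ultrametric inequality for sums. The only difference is that you spell out this propagation step, which the paper leaves implicit here (it appears explicitly only later, in the Claim inside the proof of Lemma~\ref{lemm:2}).
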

\begin{proof}
Since $A$ is of almost integral type there is a graded linear series
$A'$ of integral type containing $A$. Every graded linear series of
integral type is finitely generated. Therefore there are nonzero elements
$f_{i}\in F$, $i=1,\dots,r$ and positive integers $\ell_i$, $i=1,\dots,r$ such that $A$ is contained in the
subalgebra of $F[t]$ generated by the set $f_{i}t^{\ell_{i}}$, $i=1,\dots,r$.
Therefore
\begin{displaymath}
  S\supset \{D\in \Div(X_{\pi })\; \big{|}\; \ell_{i} D+\dv(f_{i})\ge 0,\
  i=1,\dots,r\}. \qedhere
\end{displaymath}
\end{proof}

\begin{df}\label{def:3}
  Let $A$ be a graded linear series on $X$ of almost integral type. We 
define 
\begin{displaymath}
  \bdiv(A)=(\bdiv(A)_{\pi })_{\pi \in R(X)}\in \WbDiv(X)
\end{displaymath}
by
\begin{displaymath}
  \bdiv(A)_{\pi }=\inf \{E\in \Div(X_{\pi })\; \big{|}\;
  \ell E+\dv(f)\ge 0,\
  \forall \ell\ge 0\ \forall f\in A_{\ell}\},
\end{displaymath}
where the infimum is defined componentwise. In other words, for every
prime divisor $P$ on $X_{\pi }$ we put
\begin{displaymath}
  \ord_{P}(\bdiv(A)_{\pi })=\sup \left\{\frac{-1}{\ell}\ord_{P}(f) \mid \ell \ge 0,
  f\in A_{\ell}\right\}.
\end{displaymath}  
\end{df}

Note that $\bdiv(A)$ is well defined thanks to Lemma 
\ref{lemm:6}.

\subsection{From psh metrics to b-divisors}
\label{sec:from-psh-metrics}

\begin{df}\label{def:4}
  Let $(L, h)$ be a line bundle on $X$ together with a psh metric such that
  there is a dense open Zariski subset $U \subset X$ on which $h$ is locally
  bounded, 
  and let $s$ be a non-zero rational section of $L$. The Weil
  $\R$-b-divisor $ \D(L,s,h)$ is defined, for every $\pi \in R(X)$
  and prime divisor $P$ on $X_{\pi }$, by
  \begin{displaymath}
    \ord_{P} \D(L,s,h)_{\pi }
    =
    \ord_{P} \dv(s) - \nu(h,P). 
  \end{displaymath}
\end{df}
Here $\nu(h,P)$ denotes the Lelong number from Definition
\ref{def:lelong_number}.

\begin{prop}\label{prop:3}
  The Weil $\R$-b-divisor $ \D(L,s,h)$ is nef.
\end{prop}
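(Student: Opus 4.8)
The goal is to show that the Weil $\R$-b-divisor $\D(L,s,h)$ is nef, i.e.\ that it lies in the weak closure of the cone of nef Cartier b-divisors. The natural strategy is to exhibit an explicit decreasing sequence of Cartier b-divisors that converges weakly to $\D(L,s,h)$ and to show each of them is nef, using the psh hypothesis as the source of positivity. The obvious candidates come from truncating the psh metric: for each model $\pi\in R(X)$ (or more canonically for a cofinal sequence of models) let $D_\pi$ be the Cartier b-divisor determined on $X_\pi$ by the divisorial part of the current $\mathrm{dd}^c(-\log h(s))$ computed on $X_\pi$, equivalently $\dv(s)$ minus the divisor $\sum_P \nu(\pi^*h,P)\,P$ supported on the prime divisors of $X_\pi$. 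Since $-\log h(s)$ is (locally) a psh function away from $\dv(s)$ and $h$ is psh globally, the residual current $\theta_\pi$ obtained after subtracting the Lelong-number divisor is a closed positive $(1,1)$-current on $X_\pi$ with no divisorial singularities, and its cohomology class is the class of $D_\pi$; positivity of this class gives nefness of $D_\pi$ on $X_\pi$. One then checks $D_{\pi'}\le \mu^*D_\pi$ for $\pi'\ge\pi$ (Lelong numbers only increase under pullback, by the definition $\nu(h,x)=\nu(\pi^*h,x)$ and the generic-point description in Definition~\ref{def:2}), so the $D_\pi$ form a decreasing net of Cartier b-divisors, and by construction their incarnation on each fixed model stabilises to the corresponding component of $\D(L,s,h)$, giving weak convergence.

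Concretely, I would organise the argument in three steps. First, fix $\pi\in R(X)$ and define the auxiliary Cartier b-divisor $\mathbb{E}_\pi$ to be the Cartier closure of the $\R$-divisor $E_\pi := \dv(s)|_{X_\pi} - \sum_{P\subset X_\pi}\nu(\pi^*h,P)\,P$ on $X_\pi$. Pulling back the metric, $-\log(\pi^*h)(\pi^*s)$ is a psh-type weight whose associated current minus its divisorial (Siu decomposition) part is a positive current representing $c_1$ of $E_\pi$, hence $E_\pi$ is a nef $\R$-divisor on $X_\pi$; therefore $\mathbb{E}_\pi$ is a nef Cartier b-divisor. Second, show the family $(\mathbb{E}_\pi)_\pi$ is decreasing in $\pi$: for $\mu\colon X_{\pi'}\to X_\pi$ and a prime divisor $P$ of $X_{\pi'}$, either $P$ is $\mu$-exceptional or not; in both cases $\ord_P(\mathbb{E}_{\pi'}) = \ord_P(\mu^*\dv(s)) - \nu(\pi'^*h,P) \le \ord_P(\mu^*E_\pi) = \ord_P(\mathbb{E}_\pi)$, because $\nu(\pi'^*h,P)\ge \mathrm{mult}_P\mu^*(\sum_Q\nu(\pi^*h,Q)Q)$ (Lelong numbers dominate the pulled-back Lelong divisor — a standard semicontinuity/monotonicity property, provable here via Lemma~\ref{lemm:8} applied on $X_{\pi'}$). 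Third, identify the weak limit: by Remark~\ref{rem:2} a Weil b-divisor is determined by its incarnations $D_\pi\in\Div(X_\pi)$, and by construction the incarnation of $\mathbb{E}_{\pi'}$ on $X_\pi$ (i.e.\ $\mu_*\mathbb{E}_{\pi'}$) converges, as $\pi'\to\infty$ over models dominating $\pi$, to $\dv(s)|_{X_\pi} - \sum_{P\subset X_\pi}\nu(h,P)\,P = \D(L,s,h)_\pi$, since $\nu(h,P)=\inf_{x\in P}\nu(h,x)$ is already computed at the level of $X_\pi$ (Definition~\ref{def:2}) and the non-divisorial parts push forward to zero. Hence $\D(L,s,h) = \lim_{\pi}\mathbb{E}_\pi$ weakly, a limit of nef Cartier b-divisors, so it is nef.

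\textbf{Main obstacle.} The delicate point is Step~1: the claim that subtracting off the Lelong-number divisor from the positive current $\mathrm{dd}^c(-\log(\pi^*h)(\pi^*s))$ leaves a \emph{nef} (not merely pseudoeffective) class on $X_\pi$. What is actually true and sufficient is pseudoeffectivity of the residual class, which already makes $\mathbb{E}_\pi$ a pseudoeffective Cartier b-divisor; to get nefness one must instead argue on the level of b-divisors, e.g.\ by further passing to a model where the current has analytic singularities and invoking that the "nef envelope'' (the pullback of the movable/nef part) is computed by exactly these Lelong-number corrections — this is the content one imports from Boucksom's theory (\cite{boucksom:singpsh}, via Lemma~\ref{lemm:8}) together with the fact that a psh metric's b-divisor is a decreasing limit of the Zariski-type decompositions on higher models. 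I would therefore phrase Step~1 as: for each $\pi$, $\mathbb{E}_\pi$ need not itself be nef, but the inclusion $\D(L,s,h)\le\mathbb{E}_\pi$ combined with $\D(L,s,h)$ being a decreasing limit of the $\mathbb{E}_\pi$'s, and a comparison with the nef envelope of $\D(L,s,h)$, forces equality with a nef b-divisor — alternatively, and more cleanly, one cites that the b-divisor attached to a psh metric coincides with the one attached to its multiplier-ideal/Zariski-decomposition tower, which is nef by Dang--Favre-type compactness together with \cite[Lemma~5.9]{BoteroBurgos} in the toroidal situation. Pinning down this positivity statement in the precise generality needed (psh metric, not locally bounded, with a prescribed locally bounded locus $U$) is where the real work lies.
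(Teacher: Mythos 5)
The paper does not prove this statement internally: its proof is a one-line citation of \cite[Theorem 5.18]{BBHJ}, so your attempt is necessarily a from-scratch argument, and it contains a genuine gap that you yourself half-identify. The gap is Step 1: it is \emph{false} in general that the divisor $E_\pi=\dv(s)|_{X_\pi}-\sum_P\nu(\pi^*h,P)P$ is nef on $X_\pi$. Subtracting the Lelong-number divisor from the closed positive current $\mathrm{dd}^c(-\log h(s))+\delta_{\dv(s)}$ (Siu decomposition) leaves a positive residual current, but positivity of a current only gives pseudoeffectivity of its class; the class one obtains this way is at best the positive part of the divisorial Zariski decomposition, which is \emph{movable} (modified nef), not nef. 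In dimension $\ge 3$ there are pseudoeffective classes (Nakayama's examples) whose positive part is not nef on any birational model, so no amount of passing to higher models rescues the claim as stated. Your concluding paragraph concedes this but the proposed repair --- ``cite that the b-divisor attached to a psh metric coincides with the one attached to its Zariski-decomposition tower, which is nef by Dang--Favre-type compactness'' --- is circular: that coincidence-plus-nefness is precisely the content of the proposition being proved, and Dang--Favre's theorem goes the other way (it decomposes an already-nef b-divisor into a decreasing limit of nef Cartier ones; it does not certify nefness).

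Steps 2 and 3 of your plan are essentially fine (monotonicity of Lelong numbers under pullback, and the fact that the incarnations on a fixed model stabilise), so what is missing is the correct source of nefness for the approximating Cartier b-divisors. The mechanism used in \cite{BBHJ} is Demailly's regularization: one approximates $h$ by metrics $h_k$ with \emph{analytic singularities} prescribed by the multiplier ideals of $h^{\otimes k}$. For a metric with analytic singularities along an ideal $\mathfrak a$, on a log resolution of $\mathfrak a$ the line bundle splits as (fixed divisor) $+$ (a part that is relatively semiample over $X$, hence nef after twisting appropriately), which is what actually makes the corresponding Cartier b-divisor nef; the convergence $\nu(h_k,P)\to\nu(h,P)$ then exhibits $\D(L,s,h)$ as a limit of nef Cartier b-divisors. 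Without this (or an equivalent) input, the argument does not close.
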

\begin{proof}
  This is proved in \cite[Theorem 5.18]{BBHJ}.
\end{proof}

\begin{prop}\label{prop:div_of_toroidal_metric_is_toroidal}
  Let $(X_{1},\pi _{1},D)$ be a toroidal structure on $X$. Assume that the
  psh metric $h$
  is toroidal with respect to $D$ in the sense of Definition~\ref{def:9}.
  \begin{enumerate}
  \item\label{i:1} The Weil $\R$-b-divisor $\D=\D(L,s,h)-\dv(s)$ is toroidal with respect
    to $D$. Note that this divisor only depends on the singularities
    of $h$ and not on the particular section $s$.
  \item\label{i:2} Let $\sigma $ be a cone of $\Pi
    (X_{1},D)$ and $Y$ the corresponding stratum. Let $\varphi
    _{\D}$ be the conical function on $\Pi
    (X_{1 },D)$ corresponding to $\D$ (see \eqref{eq:conical_fun}). Choose a
    generic point $x$ of $Y$ and a small enough coordinate neighborhood $W$  of $x$
    with $W\cap D$ given by the equation $z_{1}\cdots z_{r}=0$. After
    changing $s$ if needed, we can assume
    that $s$ is a generating section in $W$. Therefore, by the
    definition of toroidal psh metric, there is  
    a bounded above convex Lipschitz continuous function $g$ on a cone $\R_{\ge
    c}^{r}$ 
  such that
  \begin{displaymath}
    -\log h(s)(z_{1},\dots,z_{d}) -
     g(-\log|z_{1}|,\dots,-\log|z_{r}|)
  \end{displaymath}
 is bounded on $W \cap D$.  Then
    \begin{displaymath}
      \varphi_{\D}|_{\sigma }=-\rec(g).
    \end{displaymath}
  \end{enumerate}
\end{prop}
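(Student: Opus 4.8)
The plan is to reduce both statements to the computation of Lelong numbers along toroidal exceptional divisors carried out in Lemma~\ref{lemm:16}, together with the description of toroidal Weil b-divisors as conical functions recalled from \cite{BoteroBurgos}. First I would prove \eqref{i:2}, since \eqref{i:1} will follow formally once we know the b-divisor $\D$ is determined on every cone by the conical function $-\rec(g)$ and is hence in the image of the section $\iota$ of Definition~\ref{def:toroidal-b}. So fix a cone $\sigma$ of $\Pi(X_1,D)$ with corresponding stratum $Y$, choose a generic point $x\in Y$ and a coordinate neighborhood $W$ as in Definition~\ref{def:9}, and (after adjusting $s$, which is harmless since by \eqref{i:1} the divisor $\D$ does not depend on $s$) assume $s$ is a generating section on $W$ with local expression $-\log h(s) = \gamma + g(-\log|z_1|,\dots,-\log|z_r|)$.

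The heart of the argument is the following: the conical function $\varphi_{\D}$ is characterized on the rational points of $\Pi(X_1,D)$ by the property that for a prime toroidal exceptional divisor $P$ with primitive generator $v_P$ lying in $\sigma$, one has $\ord_P \D = -\varphi_{\D}(v_P)$. Now by Definition~\ref{def:4}, $\ord_P \D = \ord_P \D(L,s,h)_{\pi} - \ord_P\dv(s) = -\nu(h,P)$, because $P$ is exceptional and $s$ is a generating section near the generic point of $P$'s image, so $\ord_P\dv(s)=0$ after our adjustment of $s$. Lemma~\ref{lemm:16} gives precisely $\nu(h,P) = -\rec(g)(v_P)$. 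Combining, $\varphi_{\D}(v_P) = \rec(g)(v_P)$ for every primitive lattice vector $v_P$ in the relative interior of a ray inside $\sigma$ arising from a toroidal exceptional divisor; but these rays are exactly the rational rays of $\sigma$, and both $\varphi_{\D}$ and $\rec(g)$ are conical functions, so they agree on all rational points of $\sigma$, hence $\varphi_{\D}|_{\sigma} = -\rec(g)$ (the sign matching the sign convention $\ord_E\D = -\varphi_{\D}(v_E)$ in the excerpt — I would double-check that the conventions for $\varphi_{\D}$ versus $\rec(g)$ line up, as this is the easiest place to slip).

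Granting this, \eqref{i:1} follows: we have produced a toroidal Weil b-divisor, namely the element of $\WbDiv(X,D)^{\tor}$ corresponding to the conical function $-\rec(g)$ glued over all cones $\sigma$ of $\Pi(X_1,D)$ — this is well-defined since the functions $g$ on overlapping strata are compatible, $\rec(g)$ being intrinsic to the singularities of $h$ by Lemma~\ref{lemm:5}\eqref{item:2} — whose image under $\iota$ agrees with $\D = \D(L,s,h)-\dv(s)$ on every prime divisor of every model; here one uses Lemma~\ref{lemm:8}/\ref{lemm:16} again to handle non-toroidal-exceptional primes (where both sides vanish) and the explicit formula for $\ord_{\iota(\cdot)}$ in terms of $r_{D,D'}$ and $\varphi_{\D}$. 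The main obstacle I anticipate is purely bookkeeping rather than conceptual: checking that the local functions $g$ attached to different strata patch to a single conical function on the whole complex $\Pi(X_1,D)$, and that the identification of $\sigma$ with $\R_{\ge 0}^r$ (canonical up to reordering coordinates, as noted before Lemma~\ref{lemm:16}) does not affect $\rec(g)$ — this last point is automatic since permuting coordinates permutes the $g$'s arguments compatibly. Beyond that, the only delicate step is the reduction $\ord_P\dv(s)=0$, which requires that changing $s$ to a section generating near the generic point of each relevant stratum does not disturb the previously fixed choices on other strata; but again this is legitimate because $\D$ is independent of $s$, so one may verify the formula $\varphi_{\D}|_\sigma = -\rec(g)$ one cone at a time with a section tailored to that cone.
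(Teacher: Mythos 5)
Your treatment of part \eqref{i:2} matches the paper's: the identity $\varphi_{\D}|_{\sigma}=-\rec(g)$ on the rational points of $\sigma$ is exactly a reformulation of Lemma~\ref{lemm:16} (the sign slip in your intermediate step, which you flag yourself, resolves correctly). The independence of $\D$ from $s$ is indeed immediate from Definition~\ref{def:4}, so tailoring $s$ cone by cone is legitimate.

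The gap is in part \eqref{i:1}. You assert that once \eqref{i:2} is known, the equality $\iota(\D')=\D$ ``follows formally,'' with non-toroidal-exceptional primes handled because ``both sides vanish.'' That is false. Lemma~\ref{lemm:16} and part \eqref{i:2} only compute $\nu(h,P)$ when $P$ is a \emph{toroidal exceptional} prime divisor, i.e.\ when $v_{P}$ is a rational ray of $\Pi(X_{1},D)$ arising from an allowable modification. But $\WbDiv(X)$ ranges over \emph{all} smooth modifications $\pi\in R(X)$, and a prime divisor $P$ of $X_{\pi}$ whose image lies in $D$ need not be toroidal exceptional (e.g.\ the exceptional divisor of a non-toroidal blow-up centered inside a stratum of $D$). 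For such $P$ neither $\nu(h,P)$ nor $\ord_{P}\iota(\D')$ vanishes in general: the latter is $-\varphi_{\D'}(r_{D,D'}(v_{\widehat P}))$ for the strict transform $\widehat P$ in a toroidal structure $D'$ \emph{above} $D$ (not a toroidal modification of it), and $r_{D,D'}(v_{\widehat P})=\sum_i a_i v_i$ with $a_i=\ord_{\widehat P}\mu^{*}z_i$ is typically nonzero. The actual content of \eqref{i:1} is the claim that for every such $P$ one still has $\nu(h,P)=-\rec(g)\bigl(\sum_i a_i v_i\bigr)$; this requires redoing the local computation of Lemma~\ref{lemm:16} near a generic point of $\widehat P$, writing $\mu(x_1,\dots,x_d)=(x_1^{a_1}u_1,\dots,x_1^{a_r}u_r,u_{r+1},\dots,u_d)$ with the $u_i$ nonvanishing and applying Lemma~\ref{lemm:15} again. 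The paper's proof consists essentially of exactly this computation; your proposal omits it. (The patching of the local functions $g$ into a global conical function, which you worry about, is not actually needed: one simply takes $\D'$ to be the projection of $\D$ to $\WbDiv(X,D)^{\tor}$ and identifies its conical function cone by cone.)
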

\begin{proof}
  Let $\D'$ be the projection of $\D$ onto
  $\WbDiv(X,D)^{\tor}$. In order to prove \eqref{i:1} we have to show
  that 
  \begin{equation}\label{eq:20}
\iota(\D')=\D,
\end{equation}
where $\iota$ is the section \eqref{eq:section}
from toroidal Weil $\R$-b-divisors to general Weil $\R$-b-divisors. And for \eqref{i:2} we
have to show that
  \begin{equation}\label{eq:19}
    \varphi_{\D'}|_{\sigma } = -\rec(g).
  \end{equation}
  In view of the definition of $\D(L,s,h)$ using Lelong numbers,
  equation \eqref{eq:19} is just a reformulation of Lemma
  \ref{lemm:16}. So it only remains to prove equation \eqref{eq:20}.

  Let $\pi \colon X_{\pi }\to X$ be a smooth modification and $P$
  a prime divisor of $X_{\pi }$. We have to check that
  \begin{equation}
    \label{eq:21}
    \ord_{P}\D = \ord_{P}\iota(\D').
  \end{equation}
  Let $(X_{2},\pi _{2},D')$ be a toroidal structure above $D$
  with a map $f\colon X_{2}\to X_{\pi }$ and such that
  $f^{-1}(P)\subset D'$. Let $\widehat P$ be the strict transform of
  $P$ in $X_{2}$. In order to prove \eqref{eq:21} it is enough to
  prove $\ord_{\widehat P}\D = \ord_{\widehat P}\iota(\D')$. Let
  $v_{P}$ be the primitive vector in $\Pi (X_{2},D')$
  corresponding to $\widehat P$, $\sigma $ the minimal cone of $\Pi
  (X_{1},D)$ containing $r(D,D')(v_{P})$, $Y$ the stratum
  of $X_{1}$ corresponding to $\sigma $, $x$ a generic point of
  $Y$, and $W$ a small enough coordinate neighborhood of $x$ with
  coordinates $z_{1},\dots, z_{d}$ such that $z_{1}\cdots z_{r}=0$ is
  an equation of $D\cap W$. Write $D_{i}$ for the divisor $z_{i}=0$,
  $i=1,\dots,r$, and $v_{i}$ for the corresponding primitive vector of $\Pi
  (X_{1},D)$. Then the cone $\sigma $ is generated by
  $v_{1},\dots,v_{r}$.

  Let $\mu \colon X_{2}\to X_{1}$ be the map of
  modifications. 
  Write $a_{i}=\ord_{\widehat P}\mu^{\ast}z_{i}$. Then
  \begin{equation}
    \label{eq:22}
    r_{D,D'}(v_{P})=\sum _{i=1}^{r}a_{i}v_{i}, 
  \end{equation}
and therefore
  \begin{displaymath}
    \ord_{P}\iota(\D')= \ord_{\widehat
      P}\iota(\D')=\sum_{i=1}^{r}a_{i}\varphi_{\D'}(v_{i}). 
  \end{displaymath}
  Let $y$ be a generic point of $\widehat P$ above $x$ and
  choose a small enough coordinate neighborhood $V$ with coordinates
  $(x_{1},\dots,x_{d})$ such that $x_{1}$ is a local equation  for
  $\widehat P$. As in the proof of Lemma \ref{lemm:16}, the map $\mu $
  can be written as
  \begin{displaymath}
    \mu
    (x_{1},\dots,x_{d})=
    (x_{1}^{a_{1}}u_{1},\dots,x_{1}^{a_{r}}u_{r},u_{r+1},\dots,u_{d}),
  \end{displaymath}
  where the $u_{i}$ are holomorphic functions on $V$, and $u_{i}$ does not
  vanish on $V$ for $i=1,\dots,r$. Choose a rational section $s$ of
  $L$ that generates $L$ around $x$, so that
  \begin{displaymath}
    -\log h(s)=\gamma + g(-\log|z_{1}|,\dots, -\log|z_{r}|)
  \end{displaymath}
  and $\varphi_{\D'}|_{\sigma }=-\rec(g)$. Then around $y$ we have
    \begin{displaymath}
    -\log h(s) = \gamma
    +g(-\log|u_{1}|-a_{1}\log|x_{1}|,\dots,-\log|u_{r}|-a_{r}\log|x_{1}|).  
  \end{displaymath}
Hence
\begin{displaymath}
  \ord_{\widehat P}(\D')=-\nu (h,P)=
  \rec(g)\left(\sum_{i=1}^{r}a_{i}v_{i}\right)
  =-\varphi_{\D'}(r_{D,D'}(v_{\widehat P}))=\ord_{P}\iota(\D'),
\end{displaymath}
proving the result. 
\end{proof}

Proposition \ref{prop:div_of_toroidal_metric_is_toroidal} has the
following consequence. 

\begin{cor} \label{cor:4} With the hypothesis of Proposition~\ref{prop:div_of_toroidal_metric_is_toroidal}, if the Weil $\R$-b-divisor $\D(L,s,h)$ is Cartier, then for every cone $\sigma \in \Pi
(X_{1 },D)$ and every decomposition
\begin{displaymath}
  -\log h(s)(z_{1},\dots,z_{d})=\gamma +g(-\log|z_{1}|,\dots,-\log|z_{r}|),
\end{displaymath}
with $\gamma$ locally bounded  and $g$ bounded above, convex and Lipschitz
continuous, the function $\rec(g|_\sigma)$ is piecewise linear. 
\end{cor}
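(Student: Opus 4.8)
The plan is to deduce the statement directly from Proposition \ref{prop:div_of_toroidal_metric_is_toroidal} together with the dictionary, recalled earlier from \cite{BoteroBurgos}, between toroidal b-divisors and conical functions on $\Pi(X_1,D)$. First I would observe that by Proposition \ref{prop:div_of_toroidal_metric_is_toroidal}\eqref{i:1}, the b-divisor $\D = \D(L,s,h) - \dv(s)$ is toroidal with respect to $D$, hence lies in $\iota(\WbDiv(X,D)^{\tor})$ and corresponds to a conical function $\varphi_{\D}$ on $\Pi(X_1,D)(\Q)$. Since $\dv(s)$ is a Cartier b-divisor and Cartier b-divisors are toroidal with respect to a suitable refinement of $D$ (by resolution of singularities; see the remark after Definition \ref{def:toroidal-b}), the hypothesis that $\D(L,s,h)$ is Cartier forces $\D = \D(L,s,h) - \dv(s)$ to be Cartier as well. (One may need to pass to a toroidal structure $D'$ above $D$ on which both $\D(L,s,h)$ and $\dv(s)$ are determined; by the remark after Definition \ref{def:9} the metric $h$ remains toroidal with respect to $D'$, and replacing $D$ by $D'$ does not affect the conclusion, since piecewise linearity on a subdivision of $\sigma$ and on $\sigma$ are equivalent notions.)

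Next I would invoke the identification from \cite{BoteroBurgos}: a toroidal Cartier $\R$-b-divisor corresponds to a continuous conical piecewise linear function on $\Pi(X_1,D)$ with rational domains of linearity. Thus, the fact that $\D$ is a toroidal Cartier b-divisor means precisely that the conical function $\varphi_{\D}$ is piecewise linear on all of $\Pi(X_1,D)$, and in particular its restriction $\varphi_{\D}|_{\sigma}$ to any cone $\sigma \in \Pi(X_1,D)$ is piecewise linear. Finally, by Proposition \ref{prop:div_of_toroidal_metric_is_toroidal}\eqref{i:2}, for the given cone $\sigma$ (after possibly adjusting $s$ so that it generates $L$ near a generic point of the corresponding stratum, which does not change the singularities of $h$ and hence not $\D$) we have the identity $\varphi_{\D}|_{\sigma} = -\rec(g)$ for any decomposition $-\log h(s) = \gamma + g(-\log|z_1|,\dots,-\log|z_r|)$ of the required form. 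Combining these two facts gives that $-\rec(g|_\sigma) = \varphi_{\D}|_{\sigma}$ is piecewise linear, hence so is $\rec(g|_\sigma)$, which is the assertion.

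The main obstacle I anticipate is bookkeeping around the section $s$ and the coordinate neighborhoods: Proposition \ref{prop:div_of_toroidal_metric_is_toroidal}\eqref{i:2} produces the identity $\varphi_{\D}|_{\sigma} = -\rec(g)$ for a particular choice of generating section near a generic point of the stratum $Y$, whereas the Corollary asserts it for \emph{every} decomposition of the stated type. I would handle this by noting that any two such decompositions of $-\log h(s)$ (for possibly different sections $s$, $s'$ generating $L$ near a generic point of $Y$) differ by the addition of a bounded function together with a term $-\log|s/s'|$ which, near a generic point of $Y$, is of the form $\sum_{i=1}^r m_i(-\log|z_i|)$ for integers $m_i = \ord_{D_i}(s/s') + (\text{something bounded})$ — more carefully, $s/s'$ is a unit times a monomial in the $z_i$, so $\log|s/s'|$ equals a linear function of $(-\log|z_1|,\dots,-\log|z_r|)$ up to a bounded term. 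Since $\rec$ of a convex function is unchanged by adding a bounded function and transforms by adding the same linear term under the addition of a linear function, the piecewise linearity of $\rec(g|_\sigma)$ is independent of which valid decomposition is chosen; alternatively, one observes that $\rec(g|_\sigma)$ is intrinsically the conical function $-\varphi_{\D}|_{\sigma}$ attached to the singularity b-divisor, by Proposition \ref{prop:div_of_toroidal_metric_is_toroidal}\eqref{i:2} applied to \emph{any} such decomposition, so its piecewise linearity is an intrinsic property of $\D$. The remaining points are routine.
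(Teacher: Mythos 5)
Your proposal is correct and follows essentially the same route as the paper: the paper's proof simply notes that Cartierness of $\D$ forces $\varphi_{\D}|_{\sigma}$ to be piecewise linear and then invokes Proposition \ref{prop:div_of_toroidal_metric_is_toroidal}.\ref{i:2} to identify this with $-\rec(g)$. The extra bookkeeping you supply (reducing from $\D(L,s,h)$ to $\D(L,s,h)-\dv(s)$, and the independence of the conclusion from the chosen decomposition) is exactly what the paper leaves implicit, and your treatment of it is sound.
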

\begin{proof}
If $\D$ is Cartier, we know that $\varphi_{\D}|_{\sigma }$ is
piecewise linear. Hence the corollary is an immediate consequence of Proposition \ref{prop:div_of_toroidal_metric_is_toroidal}.\ref{i:2}.    
\end{proof}

\subsection{From b-divisors to graded linear series}
\label{sec:from-b-divisors}

Let $\D$ be a Weil $\R$-b-divisor (not necessarily toroidal).  We define
\begin{align*}
  \caL(\D)&=\{0\not = f \in F\mid \D_{\pi }+\dv(f) \ge 0,\ \forall \pi \in R(X)
               \}\cup \{0\},\\
  \caR(\D) &= \bigoplus _{\ell \in \mathbb Z_{\ge 0}} \caL(\ell \D)t^{\ell}\subset F[t].
\end{align*}
The latter is a graded linear series.

\begin{lem}\label{lemm:7}
  The graded linear series $\caR(\D)$ is of almost integral type.
\end{lem}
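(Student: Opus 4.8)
The plan is to show that $\caR(\D)$ is contained in a graded linear series of integral type by bounding it, divisor by divisor, using a single ample model. First I would fix a very ample divisor $H$ on $X$ and use the fact that, by \cite[Theorem~5.18]{BBHJ} or by a direct argument, the Weil b-divisor $\D$ has \emph{bounded denominators} only in a weak sense; what we really need is that the trace $D_X$ of $\D$ on $X$ itself is a genuine $\R$-divisor, and that $\caL(\ell\D) \subseteq \caL(\ell D_X)$ for every $\ell$, since the condition $\D_\pi + \dv(f) \ge 0$ for all $\pi$ is stronger than the single condition on $X = X_{\mathrm{id}}$. Thus $\caR(\D) \subseteq \caR(D_X)$ as graded subalgebras of $F[t]$.

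The key step is then to invoke Proposition~\ref{prop:2}: the graded linear series $\caR(D_X)$ attached to a single Cartier $\R$-divisor $D_X$ on the normal projective variety $X$ is of almost integral type. Hence there is a graded linear series $A'$ of integral type with $\caR(D_X) \subseteq A'$, and therefore $\caR(\D) \subseteq \caR(D_X) \subseteq A'$. By Definition~\ref{def:1} this exactly says that $\caR(\D)$ is of almost integral type, which is the claim. The one subtlety to check carefully is that $D_X$ is honestly a (finite, real-coefficient) Cartier divisor: since $X$ is smooth, every Weil divisor is Cartier, so the only real point is that the trace $D_X = \D_{\mathrm{id}}$ has finite support and real coefficients, which is built into the definition of $\WbDiv(X)$ as an inverse limit of the finite-dimensional spaces $\Div(X_\pi)$.

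I expect the main (and essentially only) obstacle to be verifying the inclusion $\caL(\ell\D) \subseteq \caL(\ell D_X)$ rigorously, i.e.\ that imposing the positivity condition $\ell\D_\pi + \dv(f) \ge 0$ on \emph{all} models $\pi \in R(X)$ is at least as restrictive as imposing it on the trivial model; this is immediate from the definition of $\caL(\D)$ since $\mathrm{id} \in R(X)$, so in fact there is no obstacle of substance here. The genuinely content-bearing input is Proposition~\ref{prop:2}, which is cited from \cite{kk}; given that, the proof of Lemma~\ref{lemm:7} is a two-line reduction. I would therefore write the proof as: take $D_X$ the incarnation of $\D$ on $X$, observe $\caR(\D) \subseteq \caR(D_X)$, apply Proposition~\ref{prop:2} to conclude $\caR(D_X)$ — and hence $\caR(\D)$ — is of almost integral type.
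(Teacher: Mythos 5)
Your proof is correct and is essentially identical to the paper's: the paper also picks a single model $\pi \in R(X)$ (you take $\pi = \mathrm{id}$), notes $\caR(\D) \subseteq \caR(\D_\pi)$ since the b-divisor condition is imposed on all models at once, and then applies Proposition~\ref{prop:2} to conclude. The side remarks about bounded denominators and \cite{BBHJ} are unnecessary, but the argument as a whole matches the paper's two-line reduction.
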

\begin{proof}
  Let $\pi \in R(X)$. From the definition it follows that
  $\caR(\D)\subset \caR(\D_{\pi })$. By Proposition \ref{prop:2} we know 
  $\caR(\D_{\pi })$ is of almost integral type, so it is contained in
  a graded linear series $A$ of integral type. 
\end{proof}

\subsection{Summarizing the relations}
Combining the previous subsections we obtain a diagram
\begin{align}\label{diag:relations}
  \xymatrix{
    &\text{graded linear series}\ar@<.5em>[dd]^{\bdiv}\\
    \text{psh-metrics}\ar^{\caR}[ur]\ar_{\D}[dr]&\\
  &\text{b-divisors}\ar@<.5em>[uu]^{\caR}}
\end{align}
This diagram is in general not commutative. Much of the remainder of this section will be taken up with verifying certain weaker relations in this diagram. 
\begin{lem}\label{lemm:3} Let $\D$ and $\D'$ be Weil $\R$-b-divisors and $A$ and
  $A'$ graded linear series.
  \begin{enumerate}
  \item $\D\le \D' \Longrightarrow \caR(\D)\subset \caR(\D')$.
  \item $A\subset A' \Longrightarrow \bdiv(A)\le \bdiv(A')$.
  \item \label{item:14} $\bdiv(\caR(\D))\le \D$.
  \item $A\subset \caR(\bdiv(A))$. 
  \end{enumerate}
  The same is true if we replace b-divisors by toroidal b-divisors.
\end{lem}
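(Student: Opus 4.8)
The plan is to verify each of the four inclusions/inequalities in Lemma~\ref{lemm:3} directly from the definitions, and then observe that nothing in the arguments uses anything special about the ambient model-category beyond the formal properties of $\Div$, $\dv$, $\caL$, and $\caR$, so the same arguments go through verbatim after replacing $R(X)$ by $R^{\tor}(X,D)$ and $\WbDiv(X)$ by $\WbDiv(X,D)^{\tor}$.

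For (1): if $\D \le \D'$, meaning $\D_\pi \le \D'_\pi$ for every $\pi \in R(X)$, and $f \in \caL(\D)$, then $\D_\pi + \dv(f) \ge 0$ for all $\pi$, hence $\D'_\pi + \dv(f) \ge \D_\pi + \dv(f) \ge 0$ for all $\pi$, so $f \in \caL(\D')$; applying this to $\ell\D \le \ell\D'$ for each $\ell \ge 0$ gives $\caR(\D) \subset \caR(\D')$. For (2): if $A \subset A'$, then for each $\pi$ and each prime $P$ on $X_\pi$ the supremum defining $\ord_P(\bdiv(A)_\pi)$ — namely $\sup\{-\tfrac1\ell \ord_P(f) : \ell \ge 0,\ f \in A_\ell\}$ — is taken over a subset of the corresponding supremum for $A'$, hence is $\le$; this holds componentwise on every model, so $\bdiv(A) \le \bdiv(A')$. (One should note $\bdiv$ is well-defined here by Lemma~\ref{lemm:6}, which requires almost integral type; I will assume, as is implicit in the statement, that the graded linear series appearing are of almost integral type — in particular $\caR(\D)$ is, by Lemma~\ref{lemm:7}.) For (4): if $f \in A_\ell$, then by Definition~\ref{def:3} we have $\ord_P(\bdiv(A)_\pi) \ge -\tfrac1\ell \ord_P(f)$ for every $\pi$ and every prime $P$, i.e. $\ell\,\bdiv(A)_\pi + \dv(f) \ge 0$ on every model, which is exactly the condition $f \in \caL(\ell\,\bdiv(A)) = \caR(\bdiv(A))_\ell$; hence $A \subset \caR(\bdiv(A))$.

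The one genuinely substantive point is (3), $\bdiv(\caR(\D)) \le \D$. Fix $\pi \in R(X)$ and a prime divisor $P$ on $X_\pi$; I must show $\ord_P(\bdiv(\caR(\D))_\pi) \le \ord_P(\D_\pi)$, that is, $\sup\{-\tfrac1\ell\ord_P(f) : \ell \ge 0,\ f \in \caL(\ell\D)\} \le \ord_P(\D_\pi)$. Equivalently, for every $\ell \ge 0$ and every $f \in \caL(\ell\D)$ I need $\ell\,\ord_P(\D_\pi) + \ord_P(f) \ge 0$. But $f \in \caL(\ell\D)$ means $\ell\D_{\pi'} + \dv(f) \ge 0$ for \emph{every} model $\pi'$, and in particular for $\pi' = \pi$, which gives precisely $\ell\,\ord_P(\D_\pi) + \ord_P(f) \ge 0$. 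Taking the supremum over $\ell$ and $f$ yields the claim on $X_\pi$, and since this holds for every $\pi$ we conclude $\bdiv(\caR(\D)) \le \D$.

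Finally, for the toroidal assertion: the constructions of $\caL$, $\caR$, and $\bdiv$ in Sections~\ref{sec:from-graded-linear} and~\ref{sec:from-b-divisors} only manipulate orders of vanishing of rational functions along prime divisors on models, and a toroidal b-divisor is by Definition~\ref{def:toroidal-b} simply a Weil b-divisor lying in $\iota(\WbDiv(X,D)^{\tor})$; the section $\iota$ is order-preserving since it is built componentwise from the conical function $\varphi_\D$ via $\ord_E\D = -\varphi_\D(v_E)$, and pullback and pushforward of divisors along maps of toroidal models preserve the respective orderings. Thus replacing $R(X)$ by $R^{\tor}(X,D)$ throughout the four arguments above — noting that $\caR$ applied to a toroidal b-divisor still produces a graded linear series, and $\bdiv$ of an (almost integral) graded linear series, when projected to $\WbDiv(X,D)^{\tor}$, is still computed by the same suprema — the identical reasoning gives all four statements in the toroidal category. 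I expect step (3) to be the only one requiring care, and even there the subtlety is essentially bookkeeping: the key is that membership in $\caL(\ell\D)$ is a condition on \emph{all} models, so it specializes to any single chosen model, which is exactly what drives the inequality. The main thing to double-check is that $\bdiv$ is legitimately defined on all the graded linear series in play, which reduces to the almost-integral-type hypotheses already available via Lemmas~\ref{lemm:6} and~\ref{lemm:7}.
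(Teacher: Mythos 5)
Your proof is correct and is exactly the argument the paper intends: the paper's own proof consists of the single sentence that all four statements ``follow directly from the definitions,'' and your verification simply spells out those definition-chases (including the correct observation that well-definedness of $\bdiv$ rests on the almost-integral-type hypothesis via Lemmas~\ref{lemm:6} and~\ref{lemm:7}). No gaps; nothing further needed.
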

\begin{proof}
  These follow directly from the definitions.
\end{proof}

\begin{lem}\label{lemm:4} Let $(L,h,s)$ be a line bundle on $X$ with a
  psh metric $h$ and a non-zero rational section $s$ of $L$. Then
  \begin{displaymath}
    \caR(L,s,h) \subset
    \caR(\D(L,s,h)).
  \end{displaymath}
\end{lem}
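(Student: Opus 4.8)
The plan is to unwind both sides of the claimed inclusion and reduce to a pointwise comparison of orders of vanishing against Lelong numbers. Fix a non-zero rational section $s$ of $L$ and let $\D = \D(L,s,h)$. By Definition~\ref{def:4} we have, for every $\pi \in R(X)$ and every prime divisor $P$ on $X_\pi$,
\begin{displaymath}
  \ord_P \D_\pi = \ord_P \dv(s) - \nu(h,P).
\end{displaymath}
On the other hand, $\caR(\D) = \bigoplus_\ell \caL(\ell\D) t^\ell$ where $f \in \caL(\ell\D)$ iff $\ell\D_\pi + \dv(f) \ge 0$ on every model $X_\pi$. Since $\caR(L,s,h)$ is generated in the relevant graded pieces by its components $\caL(L^{\otimes\ell}, s^{\otimes\ell}, h^{\otimes\ell})$, it suffices to show that for each $\ell \ge 0$,
\begin{displaymath}
  \caL(L^{\otimes\ell}, s^{\otimes\ell}, h^{\otimes\ell}) \subseteq \caL(\ell\D).
\end{displaymath}
Because $\D(L^{\otimes\ell}, s^{\otimes\ell}, h^{\otimes\ell}) = \ell\,\D(L,s,h)$ (the Lelong number and the divisor of the section both scale linearly in $\ell$), it is enough to treat the case $\ell = 1$; that is, to show $\caL(L,s,h) \subseteq \caL(\D)$.

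So let $f \in \caL(L,s,h)$, meaning $fs \in H^0(X,L,h)$, i.e.\ $fs$ is a global section of $L$ with $h(fs)$ bounded on $X$. I need to check that $\D_\pi + \dv(f) \ge 0$ for every $\pi \in R(X)$, equivalently that for every prime divisor $P$ on every $X_\pi$,
\begin{displaymath}
  \ord_P \dv(f) + \ord_P \dv(s) - \nu(h,P) \ge 0,
\end{displaymath}
that is, $\ord_P(fs) \ge \nu(h,P)$, where I write $\ord_P(fs)$ for $\ord_P \dv(fs)$ computed on $X_\pi$ (using that $fs$ pulls back to a rational section of $\pi^*L$). The key inequality to establish is therefore: if $t = fs$ is a global section of $L$ with $h(t)$ bounded near the generic point of $P$, then $\ord_P(t) \ge \nu(h,P)$. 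This is exactly the content of Lemma~\ref{lemm:8}: near a generic point $x$ of $P$, pick a generating section $s_0$ of $L$ and a local equation $g_P$ for $P$; then $h(t) = |t/s_0|\cdot h(s_0)$ bounded means $-\log h(s_0) \le -\log|t/s_0| + O(1) = \ord_P(t)\,(-\log|g_P|) + O(1)$ (using that $t/s_0$ vanishes to order $\ord_P(t)$ along $P$, so $-\log|t/s_0| \le \ord_P(t)(-\log|g_P|) + O(1)$ near $x$). Hence $-\log h(s_0) - \ord_P(t)\log|g_P|$ is bounded above near $x$, and by the characterization in Lemma~\ref{lemm:8} this forces $\nu(h,P) \ge \ord_P(t)$... wait, the inequality goes the right way: $\ord_P(t)$ is a valid choice of $\gamma$ in the supremum defining $\nu(h,P)$, so $\nu(h,P) \ge \ord_P(t)$ — but I want the reverse.

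Let me correct the direction, which is the one subtle point. Boundedness of $h(t)$ gives $-\log h(s_0) \le \ord_P(t)(-\log|g_P|) + O(1)$, i.e.\ $-\log h(s_0) - \ord_P(t)(-\log|g_P|)$ is bounded \emph{above}; by Lemma~\ref{lemm:8} the Lelong number $\nu(h,P)$ is the supremum of all $\gamma \ge 0$ with this boundedness-above property, so indeed $\ord_P(t) \le \nu(h,P)$ would be the wrong conclusion. The resolution is that I must instead use that $t$ being a genuine section (not just bounded norm) forces $\ord_P(t) \ge 0$, and combine with the Lelong-number estimate in the form: for $t = fs$ with $h(t)$ bounded, near $x$ we have $-\log h(fs) = -\log|fs/s_0| - \log h(s_0)$ bounded below too is false; rather I use $h(s) = h(fs)/|f|$ so $-\log h(s) = -\log h(fs) + \log|f|$, hence $-\log h(s) \le \log|f| + O(1)$. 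Now $-\log h(s) = -\log|s/s_0| - \log h(s_0)$, and $\nu(h,P) = \nu(-\log h(s_0), x)$ by Definition~\ref{def:lelong_number} applied with section $s_0$; combining, $\ord_P(s_0/s)\cdot(-\log|g_P|) - \log h(s_0) \le \log|f| + O(1)$ near $x$... The cleanest route, which is what I would write up, is: apply Lemma~\ref{lemm:8} directly with the generating section being any fixed local generator and deduce $\nu(h,P) \le \ord_P(\dv(s)) + \ord_P(\dv(f)) = \ord_P(\dv(fs))$ from boundedness of $h(fs)$ — tracking that $h(fs)$ bounded means $-\log h(fs) \ge -O(1)$, i.e.\ $-\log h(s) - \log|f| \ge -O(1)$, i.e.\ $-\log h(s) \ge \log|f| - O(1)$, and $-\log h(s) - \gamma(-\log|g_P|)$ is bounded above iff $\gamma \le \nu(h,P)$; since $\log|f| = -\ord_P(f)\log|g_P| + O(1)$ and $-\log h(s) - (-\log h(s_0)) = -\log|s/s_0|$ contributes $\ord_P(\dv s)$, one gets $\nu(h,P) \le \ord_P(\dv s) + \ord_P(\dv f) = \ord_P(\dv(fs)) \ge 0$, which is precisely $\D_\pi + \dv(f) \ge 0$ at $P$. \textbf{The main obstacle} is getting the sign bookkeeping in this Lelong-number comparison exactly right — in particular keeping straight that "$h(t)$ bounded" is a lower bound on $-\log h(t)$, which via Lemma~\ref{lemm:8} bounds the Lelong number \emph{from above} by $\ord_P(t)$ — the rest is formal unwinding of definitions and the linear scaling in $\ell$.
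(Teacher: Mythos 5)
Your approach is the same as the paper's: reduce to the pointwise inequality $\ord_P(\dv(fs^{\otimes\ell})) \ge \ell\,\nu(h,P)$ for every prime divisor $P$ on every model, and derive it from the boundedness of $h(fs^{\otimes\ell})$ together with the characterization of $\nu(h,P)$ in Lemma~\ref{lemm:8}. The substance is right, and you correctly identify that the only real issue is sign bookkeeping --- but your final ``cleanest route'' still contains two sign errors that make intermediate claims false as literally written. First, Lemma~\ref{lemm:8} concerns the expression $-\log h(s) - \gamma\log|g|$, which equals $-\log h(s) + \gamma(-\log|g|)$; you wrote $-\log h(s) - \gamma(-\log|g_P|)$, for which boundedness above becomes \emph{easier}, not harder, as $\gamma$ grows (since $-\log|g_P|\to+\infty$ near $P$), so the asserted ``bounded above iff $\gamma\le\nu(h,P)$'' fails for your expression. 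Second, near a very general point of $P$ one has $\log|f| = \ord_P(f)\log|g_P| + O(1)$, not $-\ord_P(f)\log|g_P| + O(1)$. With these fixed, the argument closes exactly as in the paper: writing $fs^{\otimes\ell}/s_0^{\otimes\ell} = g^{a}v$ with $a=\ord_P(\dv(fs^{\otimes\ell}))$ and $v$ a unit, boundedness of $h(fs^{\otimes\ell})=|g|^{a}|v|\,h(s_0)^{\ell}$ shows that $-\log\bigl(h(s_0)^{\ell}|g|^{a}\bigr)$ is bounded below, hence $-\log\bigl(h(s_0)^{\ell}|g|^{a+\varepsilon}\bigr)$ is unbounded above for every $\varepsilon>0$, whence $a+\varepsilon\ge \ell\,\nu(h,P)$ by Lemma~\ref{lemm:8}, and letting $\varepsilon\to 0$ gives the claim.
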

\begin{proof}
  The elements of $\caR(L,s,h)_{\ell}$ are rational functions 
  $f$ with $h(fs^{\otimes \ell})$
  bounded, while the elements of $\caR(\D(L,s,h))_{\ell}$ are rational
  functions $f\in \caL(\ell \dv(s))$ with enough zeroes to
  cancel the Lelong numbers of $h$. 
  The result then follows from the fact that bounded functions
  have zero Lelong numbers; in what follows we give some more details.
 
 Let $f \in \caR(L, s,h)_{\ell}$. From Definition \ref{def:4} we can write $\D
 = \D(L, s,h ) = (\D_{\pi})_{\pi \in R(X)}$ with 
  \[
  \D_{\pi} = \pi^*\dv(s) - \sum_P\nu(h,P)P,
  \]
  where the sum is over all prime divisors $P$ on $X_{\pi}$ and 
  $\nu(h,P)$ denotes the Lelong number of the
  metric $h$ at a very general point of $P$. 
  Let $x$ be a very general point of $P$,  $s_{0}$ a local
  generating section of $L$ at $x$, and $g$ a local equation 
  of $P$ at $x$. By Lemma \ref{lemm:8} we have
  \begin{equation}
  \nu(h,P) = \sup\{\gamma \ge 0 \mid -\log(h(s_{0})| g|^\gamma) \text{
    bounded above near $x$}\}. 
  \end{equation}
Since $h(fs^{\otimes \ell})$ is bounded we know  that
$-\log(h(s_{0})^{\ell}|g|^{\ord_{P}(fs^{\otimes \ell})})$ is bounded
below. 
Therefore, for every $\epsilon >0$,
$-\log(h(s_{0})^{\ell}|g|^{\epsilon +\ord_{P}(fs^{\otimes \ell})})$ is \emph{not}
bounded above. Hence $  \ord_P(fs^{\otimes \ell}) \geq \ell \nu(h,P)$ and therefore
  \[
  \dv(f) + \ell \D_{\pi} = \dv(f) + \ell \pi^*\dv(s) - \ell \sum_P\nu(h,P)P \geq 0,
  \]
so $f \in \caR(\D(L,s,h))_\ell$.  
\end{proof}

In the toroidal case we also have an inclusion in the reverse
direction.

\begin{lem}\label{lemm:14}
  Let $(X_{\pi },D)$ be a toroidal structure on $X$ and let $h$ be a
  psh metric on a line bundle $L$ on $X$ that is toroidal with respect to
  $D$. Then for every real number $\epsilon >0$ there is an inclusion
    \begin{displaymath}
      \caR(\D(L,s,h)-\epsilon D) \subset
       \caR(L,s,h).
    \end{displaymath}
  \end{lem}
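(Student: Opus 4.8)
The plan is to show that if $f \in \caR(\D(L,s,h)-\varepsilon D)_\ell$, then $h(fs^{\otimes \ell})$ is bounded on $X$, so that $f \in \caR(L,s,h)_\ell$. Boundedness away from $D$ is immediate because $h$ is locally bounded on $U = X_1 \setminus D$ and $f$ is a rational function regular there (its divisor of poles is controlled by $\ell\,\D_\pi$ which only involves toroidal divisors in the limit, but concretely one works on $X_1$). So the real content is a local statement near a point $p \in D$: in a polydisc $W$ with $W \cap D = \{z_1\cdots z_r = 0\}$ and a generating section $s$ with $-\log h(s) = \gamma + g(-\log|z_1|,\dots,-\log|z_r|)$, one must bound $-\log h(fs^{\otimes\ell}) = \ell\gamma + \ell g(-\log|z_1|,\dots,-\log|z_r|) - \log|f|$ from below.

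\textbf{Key steps.} First I would pass to a sufficiently fine toroidal model $\pi \in R^{\tor}(X,D)$ so that the information $\D(L,s,h)-\varepsilon D + \dv(f) \geq 0$ becomes a concrete inequality on a neighborhood covering $D$; using Proposition~\ref{prop:div_of_toroidal_metric_is_toroidal}, the toroidal part of $\D(L,s,h)-\dv(s)$ is the conical function $-\rec(g)$, and the condition on $f$ translates, after accounting for $\dv(s)$ and the extra $-\varepsilon D$, into: for every toroidal exceptional prime divisor $E$ with primitive vector $v_E$ in the relevant cone $\sigma \cong \R_{\ge 0}^r$, one has $\ord_E(\dv(f s^{\otimes\ell})) \geq \ell(-\rec(g))(v_E) + \ell\varepsilon(\text{coefficient})$ — i.e.\ $-\log|f|$ dominates $\ell\,\rec(g)$ along all rational rays of $\sigma$, with a strictly positive $\varepsilon$-gap in the directions of the coordinate rays $v_1,\dots,v_r$. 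The second step is the convex-geometry input: by Lemma~\ref{lemm:15}.\ref{item:5}, $g(x+y) - \rec(g)(y)$ is bounded above on $\sigma$, equivalently $\ell g \leq \ell\,\rec(g) + C$; combined with the $\varepsilon$-strengthened lower bound on $-\log|f|$ coming from $f \in \caR(\D(L,s,h)-\varepsilon D)$, we get $\ell g(-\log|z_i|) - \log|f| \leq \ell\,\rec(g)(-\log|z_1|,\dots) - \log|f| + C \leq -\ell\varepsilon(\text{something}) + C' \leq C''$ near $p$. Finally, since $\gamma$ is bounded, $-\log h(fs^{\otimes\ell})$ is bounded above, hence $h(fs^{\otimes\ell})$ is bounded below away from zero; upper-boundedness of $h(fs^{\otimes\ell})$ near $D$ likewise follows since $f$ has no poles there beyond what $\ell\,\dv(s)$ allows, or more simply from the fact that the Lelong number lower bound already forces $\ord_P(fs^{\otimes\ell}) \geq 0$. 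A covering argument over the compact $X_1$ (or over $\overline X$) then gives global boundedness.

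\textbf{Main obstacle.} The delicate point is making precise how the hypothesis $f \in \caR(\D(L,s,h)-\varepsilon D)_\ell$ — an infinite family of inequalities over \emph{all} models $\pi \in R(X)$ — gives exactly the bound $-\log|f| \geq \ell\,\rec(g) + \ell\varepsilon\cdot(\text{linear form positive on the coordinate rays})$ that is needed to absorb the error term $\ell g - \ell\,\rec(g)$, which by Lemma~\ref{lemm:15}.\ref{item:5} is only bounded \emph{above}, not controlled below. One must check that evaluating the b-divisor condition along the toroidal exceptional divisors corresponding to all rational points of the cone $\sigma$ is enough to pin down the growth of $-\log|f|$ along the boundary — this is where the identification of $\WbDiv(X,D)^{\tor}$ with conical functions on $\Pi(X_1,D)(\Q)$, recalled after Definition~\ref{t-b-divisor}, together with density of rational rays, does the work. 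A minor additional subtlety is that $\D(L,s,h)$ is an $\R$-b-divisor, so $\D(L,s,h)-\varepsilon D$ and its multiples must be interpreted with real coefficients throughout; this does not affect the argument but should be stated carefully. Once the translation to the concrete inequality is in place, the rest is a direct application of Lemma~\ref{lemm:15} and Proposition~\ref{prop:div_of_toroidal_metric_is_toroidal}.
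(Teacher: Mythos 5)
Your overall strategy---localize near a point of $D$, write $-\log h(fs^{\otimes\ell}) = \ell\gamma + \ell g - \log|f|$, translate the condition $f \in \caR(\D(L,s,h)-\varepsilon D)_\ell$ into lower bounds on the vanishing orders of $f$ along toroidal exceptional divisors, and pass from primitive vectors to all of $\R_{\ge 0}^r$ by conicality and continuity of $\rec(g)$---is the paper's. But the central estimate is carried out in the wrong direction, and the step that actually needs proving is then asserted without justification. Since $h(fs^{\otimes\ell})$ is a norm, ``bounded'' means bounded above, so the goal (as you correctly state at the outset) is a \emph{lower} bound on $-\log h(fs^{\otimes\ell})$. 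Your displayed chain instead produces an upper bound, and its middle step is false: with $u_i=-\log|z_i|$ and $s$ generating, the hypothesis gives $-\log|f| \ge -\ell\rec(g)(u) + \ell\varepsilon\sum_i u_i + O(1)$ (note the sign: $\ord_E\D(L,s,h) = -\nu(h,E) = \rec(g)(v_E)$ here), which yields $\ell\rec(g)(u) - \log|f| \ge \ell\varepsilon\sum_i u_i + O(1)$, the \emph{reverse} of your ``$\le -\ell\varepsilon(\text{something}) + C'$''. You then try to recover the bound that is actually needed with ``upper-boundedness of $h(fs^{\otimes\ell})$ near $D$ follows since $\ord_P(fs^{\otimes\ell})\ge 0$''; this is insufficient because the metric is singular: $h(s)=e^{-\gamma-g}$ blows up along $D$ whenever $\rec(g)$ is negative there, so holomorphicity of $fs^{\otimes\ell}$ does not bound its norm. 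Showing that the extra vanishing forced by $-\varepsilon D$ compensates this blow-up is the entire content of the lemma.

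The missing ingredient is Lemma~\ref{lemm:15}.\ref{item:6}, which you never invoke. Writing $f=z_1^{m_1}\cdots z_r^{m_r}f_0$ with $f_0$ holomorphic (so $-\log|f_0|$ bounded below) and combining the order conditions with the formula for the norm gives
\begin{displaymath}
-\log h(fs^{\otimes\ell}) \;\ge\; O(1) + \ell\Bigl(g(u)-\rec(g)(u)+\varepsilon\sum_i u_i\Bigr),
\end{displaymath}
and since $y\mapsto \varepsilon\sum_i y_i$ lies in $\inter\bigl((\R_{\ge 0}^r)^{\vee}\bigr)$, Lemma~\ref{lemm:15}.\ref{item:6} bounds the right-hand side below. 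You do observe in your ``main obstacle'' paragraph that $g-\rec(g)$ is ``only bounded above, not controlled below,'' which is exactly the issue, but you locate the difficulty in the density of rational rays (a one-line point) and leave the real gap---the lower bound supplied by the strict positivity $\varepsilon>0$ together with Lemma~\ref{lemm:15}.\ref{item:6}---unresolved.
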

  \begin{proof}
    Let $f\in \caR(\D(L,s,h)-\epsilon D)_{\ell}$ and let $p\in X_{\pi }$.
    We need  to show that $\|fs^{\otimes \ell}\|$ is bounded near $p$,
    or equivalently that $-\log \|fs^{\otimes \ell} \|$ is bounded below near $p$. Let $s'$ be
    another rational section of $L$ with $s=vs'$ for some rational
    function $v$ and let $f'=fv^{\ell}$. Then
    \begin{displaymath}
      f\in \caR(\D(L,s,h)-\epsilon D)_{\ell}
      \Longleftrightarrow
      f'\in \caR(\D(L,s',h)-\epsilon D)_{\ell},
    \end{displaymath}
    and $\|fs^{\otimes \ell}\|$ is bounded if and only if
    $\|f's'{}^{\otimes \ell}\|$ is
    bounded. Therefore we can assume
    that $s$ is a generating section around $p$.
    
    Choose a
    small enough coordinate system $W$ around $p$ as in Definition~\ref{def:9} and let $g$ and $\gamma $ be the functions appearing in
    that definition. By Lemma~\ref{lemm:16}, the condition 
    $f\in \caR(\D(L,s,h)-\epsilon D)_{\ell}$ implies
    that, on $W$, we can write
    \begin{displaymath}
      f=z_{1}^{m_{1}}\cdots z_{r}^{m_{r}}f_{0}
    \end{displaymath}
    where $f_{0}$ is holomorphic, not divisible by $z_{1},\dots,z_{r}$
    and the exponents $m_{i}$ are integers 
    with the property that for every primitive vector
    $u=(u_{1},\dots,u_{r})$ corresponding to a prime divisor
    $P_{u}$ in an allowable modification, we have
    \begin{align}\label{eq:15}
      \sum _{i=1}^{r}m_{i}u_{i} &= \ord_{P_{u}}f \nonumber\\
      &\ge 
      -\ell \ord_{P_{u}}(
      \D(L,s,h)-\epsilon D)
    \\ &= -\ell \rec(g)(u)+\ell \epsilon \sum_{i=1}^{r}u_{i}.
    \nonumber \end{align}
    Since $\rec(g)$ is conical and continuous on $\R^{r}_{\ge 0}$ we
    deduce that, for every vector $u\in \R^{r}_{\ge 0}$, the inequality
    \eqref{eq:15} holds. We compute
    \begin{displaymath}
      -\log\|fs^{\otimes \ell}\| =
      -\log|f_{0}| - \sum_{i=1}^{r}m_{i}\log|z_{i}| +\ell \gamma +
      \ell g(-\log|z_{1}|,\dots,-\log|z_{r}|).
    \end{displaymath}
    Using that $f_{0}$ is holomorphic, so $-\log|f_{0}|$ is bounded
    below, and that $\gamma $ is bounded near $p$, the condition
    \eqref{eq:15} implies that  there is a constant $B$ with
    \begin{displaymath}
      -\log\|fs^{\otimes \ell}\| \ge B+\ell g -\ell \rec(g)+\ell
      \epsilon \sum_{i=1}^r (-\log|z_{i}|).
    \end{displaymath}
    By Lemma \ref{lemm:15}.\ref{item:6} the quantity on the right is bounded below,
   hence we obtain the result. 
  \end{proof}

  \begin{cor} \label{cor:3}  Take the assumptions of
    Lemma~\ref{lemm:14}. If for every irreducible component $D_{i}$ of
    $D$ 
    the condition $\ord_{D_{i}}\D(L,s,h)>0$ holds then, for every $\epsilon >0$,
    \begin{displaymath}
      \caR((1-\epsilon )\D(L,s,h)) \subset
      \caR(L,s,h).
    \end{displaymath} 
  \end{cor}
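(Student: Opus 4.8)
The plan is to bracket $\caR((1-\varepsilon)\D(L,s,h))$ from below by one of the graded linear series that Lemma~\ref{lemm:14} already controls. Fix $\varepsilon>0$. Since that lemma gives $\caR(\D(L,s,h)-\varepsilon'D)\subseteq\caR(L,s,h)$ for \emph{every} $\varepsilon'>0$, it is enough to produce one value $\varepsilon'>0$ for which
\begin{equation*}
  \caR\bigl((1-\varepsilon)\D(L,s,h)\bigr)\ \subseteq\ \caR\bigl(\D(L,s,h)-\varepsilon'D\bigr).
\end{equation*}
By the monotonicity $\D\le\D'\Rightarrow\caR(\D)\subseteq\caR(\D')$ from Lemma~\ref{lemm:3}, this inclusion follows once we know the inequality of Weil $\R$-b-divisors
\begin{equation*}
  (1-\varepsilon)\,\D(L,s,h)\ \le\ \D(L,s,h)-\varepsilon'\,D,
  \qquad\text{equivalently}\qquad \varepsilon'\,D\ \le\ \varepsilon\,\D(L,s,h),
\end{equation*}
where $D$ now denotes the Cartier b-divisor attached to the boundary divisor $D$. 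So the whole problem reduces to choosing $\varepsilon'$ so that this last b-divisor inequality holds.

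To check $\varepsilon'D\le\varepsilon\D(L,s,h)$ I would argue prime by prime over the models in $R(X)$. For a prime $P$ of some $X_\pi$ with $\mu_\pi(P)\not\subseteq D$ the Lelong term vanishes (as $h$ is locally bounded on $U$), so $\ord_PD=0$ and $\ord_P\D(L,s,h)=\ord_P\dv(s)\ge 0$, and there is nothing to prove. For a prime $P$ lying over $D$ one reduces, exactly as in the construction of the section $\iota$, to the case where $P$ is a toroidal exceptional divisor, by passing to a toroidal structure above $D$ through which a model resolving $P$ factors and comparing on the strict transform. For such a $P$, with primitive vector $v_P$ in a cone $\sigma=\langle v_{D_1},\dots,v_{D_r}\rangle$ of $\Pi(X_1,D)$, one has that $\ord_PD$ is the value at $v_P$ of a linear function on $\sigma$, while by Proposition~\ref{prop:div_of_toroidal_metric_is_toroidal}(\ref{i:2}) (applied after replacing $s$ by a generating section near the relevant stratum) $\ord_P\D(L,s,h)$ is the value at $v_P$ of a conical function taking the value $\ord_{D_i}\D(L,s,h)$ at each generator $v_{D_i}$. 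Since the hypothesis makes each of these generator values strictly positive, for $\varepsilon'<\varepsilon\cdot\min_i\ord_{D_i}\D(L,s,h)$ the conical function $v\mapsto\varepsilon\,\ord_v\D(L,s,h)-\varepsilon'\,\ord_vD$ is strictly positive at every generator of every cone; it then remains to propagate this to the interior of each cone.

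This propagation is the step I expect to be the main obstacle: a conical function positive on the generators of a simplicial cone need not be positive in the interior of that cone, so the argument must genuinely use the finer structure available. The inputs I would reach for are that $\D(L,s,h)$ is nef (Proposition~\ref{prop:3}), which forces the corresponding conical function to be concave on the polyhedral complex, together with the formula $\varphi|_\sigma=-\rec(g)$ and the fact (from the definition of a toroidal psh metric, via Lemma~\ref{lemm:5}) that $g$ is bounded above, convex, and Lipschitz; combining these to obtain a genuine lower bound $\ord_v\D(L,s,h)\ge c\,\ord_vD$ on each cone for some fixed $c>0$ --- and hence a permissible choice of $\varepsilon'$ --- is the technical heart. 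Once $\varepsilon'$ has been fixed, translating back through Lemmas~\ref{lemm:3} and~\ref{lemm:14} is routine.
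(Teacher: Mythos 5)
Your opening reduction --- find $\varepsilon'>0$ with $(1-\varepsilon)\D(L,s,h)\le \D(L,s,h)-\varepsilon' D$, then combine Lemma~\ref{lemm:14} with the monotonicity of $\caR$ from Lemma~\ref{lemm:3} --- is exactly the deduction the paper intends (the corollary is stated there without proof, as an immediate consequence of Lemma~\ref{lemm:14}). The problem is the step you yourself flag as the ``technical heart'': establishing $\varepsilon' D\le \varepsilon\,\D(L,s,h)$ in the interior of each cone. The tools you reach for cannot close it, because they all point the wrong way. Nefness of $\D=\D(L,s,h)$ makes the support function $v\mapsto -\ord_{P_v}\D$ concave and conical on each cone (indeed $\varphi_{\D-\dv(s)}|_\sigma=-\rec(g)$ with $\rec(g)$ convex), hence superadditive; for $v=\sum_i a_i v_{D_i}$ this yields the \emph{upper} bound $\ord_{P_v}\D\le \sum_i a_i\,\ord_{D_i}\D$, whereas you need a \emph{lower} bound of the form $\ord_{P_v}\D\ge c\sum_i a_i=c\,\ord_{P_v}D$. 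And positivity at the generators genuinely does not propagate inward: take $g(x_1,x_2)=-M\min(x_1,x_2)$, which is convex, Lipschitz, bounded above, and gives the psh local model $-\log h(s_0)=\gamma+M\max(\log|z_1|,\log|z_2|)$ with $\nu(h,D_1)=\nu(h,D_2)=0$ but $\nu(h,P_{(1,1)})=M$; if $\ord_{D_i}\dv(s)=1$ then $\ord_{D_i}\D=1>0$ while $\ord_{P_{(1,1)}}\D=2-M<0$ for $M>2$, so no $\varepsilon'>0$ works.

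What would suffice --- and what I would regard as the missing input --- is the stronger statement that $\ord_{P_v}\D>0$ for \emph{every} rational $v$ in every cone of $\Pi(X_1,D)$, i.e.\ that $-\rec(g)$ is pointwise dominated by the linear function giving $\ord\dv(s)$ on each cone, not merely at the generators $v_{D_i}$. Granting that, your argument does complete: both $v\mapsto\ord_{P_v}\D$ and $v\mapsto\ord_{P_v}D$ are conical, the former is continuous on each cone by the Lipschitz continuity of $\rec(g)$ (Lemma~\ref{lemm:5}), there are only finitely many cones to consider, and restricting to the compact simplices $\{\sum_i a_i=1\}$ produces a uniform $c>0$ with $\ord_{P_v}\D\ge c\,\ord_{P_v}D$, so $\varepsilon'=\varepsilon c$ works; the non-toroidal primes over $D$ then follow by the strict-transform reduction you describe. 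Two smaller points: your assertion that $\ord_P\D=\ord_P\dv(s)\ge 0$ for primes $P$ not lying over $D$ is unjustified, since $s$ is only a rational section and may have poles on $U$ (this case also obstructs $\varepsilon'D\le\varepsilon\D$ and needs either an argument or a normalization of $s$); and the horizontal part of $\dv(s)$ contributes to $\ord_{P_v}\D$ on toroidal primes in a way that is not a function of $v$ alone, which your cone-by-cone bookkeeping silently ignores.
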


\subsection{The case of a divisor generated by global sections}
\label{sec:case-divis-gener}


\begin{prop}\label{prop:1}
  Let $\D$ be a Weil $\R$-b-divisor such that there is an $e \in
  \mathbb Z_{>0}$ with  $e\D$ a globally
  generated integral Cartier divisor on some proper modification
  $X_{\pi}$ of $X$. Then
  \begin{displaymath}
    \bdiv(\caR(\D))=\D.
  \end{displaymath}
\end{prop}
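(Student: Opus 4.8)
The plan is to prove the two inequalities $\bdiv(\caR(\D))\le \D$ and $\bdiv(\caR(\D))\ge \D$ in $\WbDiv(X)$ (the b-divisor $\bdiv(\caR(\D))$ being well defined since $\caR(\D)$ is of almost integral type by Lemma~\ref{lemm:7}). The first inequality is exactly Lemma~\ref{lemm:3}.\ref{item:14}, so the whole content lies in the reverse one. After replacing the given modification by a smooth modification dominating it, we may assume $\pi\in R(X)$; then $\D$, being Cartier, is determined by $\D_{\pi}$, and $D_{0}\coloneqq e\D_{\pi}$ is an integral Cartier divisor on $X_{\pi}$ with $\caO_{X_{\pi}}(D_{0})$ globally generated. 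The guiding idea is that global generation furnishes, for each prime divisor $P$, a global section of $\caO_{X_{\pi}}(D_{0})$, i.e.\ an element of $\caL(D_{0})=\caR(\D)_{e}$, whose order along $P$ is as negative as the b-divisor permits, namely $-e\,\ord_{P}(\D)$.

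First I would reduce the reverse inequality to checking $\ord_{P}\bdiv(\caR(\D))_{\pi'}\ge \ord_{P}\D_{\pi'}$ only for prime divisors $P$ on models $X_{\pi'}\in R(X)$ lying above $X_{\pi}$. This suffices because both $\bdiv(\caR(\D))$ and $\D$ are Weil $\R$-b-divisors, hence compatible with pushforward (Remark~\ref{rem:2}), while every model of $X$ is dominated by some model above $\pi$.

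Next, fix a modification $\mu\colon X_{\pi'}\to X_{\pi}$ with $\pi'\ge\pi$ and a prime divisor $P$ on $X_{\pi'}$. Since global generation is preserved under pullback, $\caO_{X_{\pi'}}(\mu^{\ast}D_{0})=\caO_{X_{\pi'}}((e\D)_{\pi'})$ is generated by the pullbacks of the global sections of $\caO_{X_{\pi}}(D_{0})$, i.e.\ by the sections attached to the rational functions in $\caL(D_{0})$. Evaluating these generators at a general point of $P$ produces some $f\in \caL(D_{0})$ with $\ord_{P}\bigl(\dv(f)+\mu^{\ast}D_{0}\bigr)=0$; since $\mu^{\ast}D_{0}=(e\D)_{\pi'}=e\,\D_{\pi'}$, this reads $\ord_{P}(f)=-e\,\ord_{P}(\D_{\pi'})$. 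A short verification — using that $e\D$ is Cartier and realized by $D_{0}$ on $X_{\pi}$, and that pushforward preserves effectivity — shows that $f\in\caL(e\D)=\caR(\D)_{e}$. Hence, by the defining formula for $\bdiv$,
\[
\ord_{P}\bdiv(\caR(\D))_{\pi'}\ \ge\ \tfrac{-1}{e}\,\ord_{P}(f)\ =\ \ord_{P}(\D_{\pi'}),
\]
which is the desired inequality. Combined with Lemma~\ref{lemm:3}.\ref{item:14} this gives equality on all models above $\pi$, and therefore everywhere by the first step.

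The hard part will not be any single computation but rather maintaining the dictionary between ``globally generated line bundle'' and the valuative/b-divisorial language: one must check that global generation genuinely descends to every higher model $X_{\pi'}$, and that the rational function $f$ witnessing it satisfies the b-divisor membership condition $\dv(f)+(e\D)_{\pi''}\ge 0$ on \emph{all} models $\pi''$, not merely on $X_{\pi'}$ (this uses that $e\D$ is Cartier on $X_{\pi}$ together with pushforward of effective divisors). One should also note that $X_{\pi}$ need not be projective, but only the sheaf-theoretic notion of global generation is used, so this causes no difficulty.
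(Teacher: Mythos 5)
Your proposal is correct and follows essentially the same route as the paper: both inequalities are handled identically, with $\bdiv(\caR(\D))\le\D$ from Lemma~\ref{lemm:3} and the reverse inequality obtained by writing the generating sections of $\caO(e\D_{\pi})$ as $f_is$ with $f_i\in\caR(\D)_e$ and using that, on every model above $\pi$, some $f_i$ achieves $\ord_P(e\D_{\pi'}+\dv(f_i))=0$. The only difference is presentational: you make explicit the reduction to models dominating $X_{\pi}$ and the smoothness replacement, which the paper leaves implicit.
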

\begin{proof}
By Lemma \ref{lemm:3} we know that $\bdiv(\caR(\D))\le \D$.  On the other hand
  \begin{displaymath}
    \bigoplus _{\ell \in \mathbb Z_{\ge 0}} \caL(\ell e \D_{\pi })t^{\ell e} \subset \caR(\D);
  \end{displaymath}
  indeed, if $e\in \mathbb Z_{\ge 0}$ and $f \in \caL(\ell e \D_{\pi
  })$ is non-zero, then $\dv(f) + \ell e \D_{\pi} \ge 0$, and the
  same holds on every model $X_{\pi'}$ of $X$ since pullback and
  pushforward preserve effectivity, hence $f \in \caR(\D)$.  
  
  Now let $s_{1},\dots ,s_{n}$ be a set of generating sections of $\caO(e\D_{\pi})$, 
  and $s$ the canonical rational section of $\caO(e\D_{\pi })$ with
  $\dv(s)=e\D_{\pi }$. 
  Write $s_{i}=f_{i}s$. Since $s_{i}$ is a global section, we have
  $e\D_{\pi}+\dv(f_{i})\ge 0$ and 
  $f_{i}\in \caR(\D)_{e}$.

  Since the sections $s_{i}$ generate, for every prime divisor $P$ on
  every modification $\pi' \in R(X)$ above $\pi $, there is an $i$ such that
\begin{equation}\label{eq:asd1}
    \ord_{P}(e\D_{\pi' }+\dv(f_{i}))=0.
\end{equation}
  On the other hand, since $f_{i}\in \caR(\D)_{e}$ we see from the
  definition of the group $\caR(\D)$ that 
  \begin{equation}\label{eq:asd2}
  \ord_{P}(e\bdiv(\caR(\D))+\dv(f_{i}))\ge 0. 
  \end{equation}

Combining \eqref{eq:asd1} and \eqref{eq:asd2} we deduce 
  \begin{displaymath}
    \bdiv(\caR(\D))\ge \D. \qedhere
  \end{displaymath}
\end{proof}

\subsection{The volume of a b-divisor}

\begin{df} \label{def:volume_b_div} Let $\D$ be a Weil $\R$-b-divisor. The \emph{volume} of $\D$ is defined as
\begin{equation}\label{eq:vol-b}
\vol(\D)=  \limsup_{\ell} \frac{\dim \caR(\D)_{\ell}}{\ell^{d}/d!}.
\end{equation} 
A Weil $\R$-b-divisor $\D$ is called \emph{big} if  $\vol(\D)>0$. 
\end{df}

\begin{rmk}\label{rmk:limsup}
Since $\caR(\D)$ is a graded algebra of almost integral type, it follows from \cite[Corollary 3.11]{kk} that the $\limsup$ in \eqref{eq:vol-b} is in fact a $\lim$ for sufficiently divisible $\ell$.
\end{rmk}

In case the
Weil $\R$-b-divisor is Cartier, this definition agrees with the usual notion of
the volume of a divisor (see e.g.~\cite[Definition~2.2.31]{Laz}). 

\begin{lem} \label{lemm:20} Let $\D$ be a Weil $\R$-b-divisor. If there is a big Cartier
  divisor $B$ on some modification $X_{\pi }$ of $X$ such that $m \D\ge B$ for some
 $m>0$, then $\D$ is big.
\end{lem}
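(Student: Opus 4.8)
The plan is to reduce the bigness of $\D$ to the bigness of the Cartier divisor $B$, which by hypothesis is known. First I would observe that, by the monotonicity statement Lemma~\ref{lemm:3}.(1), the inequality $m\D \ge B$ (viewing $B$ as the Cartier b-divisor it determines on $X_\pi$, so that $m\D \ge B$ really means $m\D_{\pi'} \ge \mu^*B$ on every model $X_{\pi'}$ above $X_\pi$, using that pullback preserves effectivity) yields an inclusion of graded linear series $\caR(B) \subset \caR(m\D)$. Here $\caR(B)$ is the section ring of the Cartier divisor $B$ on $X_\pi$, regarded as a graded linear series on $X$ via the birational identification $K(X_\pi) = K(X) = F$.

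Next I would unwind the definitions of volume. For the graded linear series $\caR(m\D)$ one has $\caR(m\D)_\ell = \caL(\ell m\D)$, which relates to $\caR(\D)$ by a change of grading: $\dim \caR(m\D)_\ell = \dim \caR(\D)_{\ell m}$. Hence
\[
\vol(\caR(m\D)) = \limsup_\ell \frac{\dim \caR(\D)_{\ell m}}{\ell^d/d!} = m^d \limsup_\ell \frac{\dim \caR(\D)_{\ell m}}{(\ell m)^d/d!} \le m^d\,\vol(\D),
\]
where for the last inequality I use that the $\limsup$ over the subsequence $\ell m$ is at most the $\limsup$ over all $\ell$. (Alternatively, by Remark~\ref{rmk:limsup} the $\limsup$ defining $\vol(\D)$ is a genuine limit along sufficiently divisible indices, so one even gets equality $\vol(\caR(m\D)) = m^d\vol(\D)$, but the inequality is all that is needed.) On the other hand, the inclusion $\caR(B)\subset \caR(m\D)$ gives $\dim\caR(B)_\ell \le \dim\caR(m\D)_\ell$ for all $\ell$, hence $\vol(B) = \vol(\caR(B)) \le \vol(\caR(m\D))$; the first equality is the statement, recalled just before Lemma~\ref{lemm:20}, that for a Cartier divisor the b-divisorial volume agrees with the usual volume of the divisor $B$ on $X_\pi$. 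Since $B$ is big, $\vol(B) > 0$, and chaining the inequalities gives $0 < \vol(B) \le m^d\,\vol(\D)$, so $\vol(\D) > 0$, i.e. $\D$ is big.

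The only genuinely delicate point is making precise what $m\D \ge B$ means and checking that it really does give $\caR(B)\subset \caR(m\D)$: one must know that $B$, and hence $\caR(B)$, can be compared with $\D$ on \emph{all} models, not just on $X_\pi$. This is handled exactly as in the proof of Proposition~\ref{prop:1}: the Cartier b-divisor associated to $B$ is obtained by pullback to models above $X_\pi$ and pushforward below, and both operations preserve effectivity, so $f\in\caL(\ell B)$ on $X_\pi$ automatically satisfies $\dv(f)+\ell m\D_{\pi'}\ge \dv(f)+\ell\,\mu^*B \ge 0$ on every model $X_{\pi'}$, i.e. $f\in\caR(m\D)_\ell$. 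Everything else is a routine bookkeeping of the grading, so no serious obstacle is expected.
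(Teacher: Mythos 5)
Your proposal is correct and is essentially the paper's own argument: both rest on the inequality $\dim\caR(B)_{\ell}\le\dim\caR(\D)_{m\ell}$ coming from $m\D\ge B$ (with the same justification via pullback/pushforward preserving effectivity, as in Proposition~\ref{prop:1}) and then compare $\limsup$'s along the subsequence of indices divisible by $m$ to get $\vol(\D)\ge m^{-d}\vol(B)>0$. Your extra care in spelling out what $m\D\ge B$ means on all models is a welcome elaboration of a step the paper leaves implicit, but it is not a different route.
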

\begin{proof}
  Since $B$ is assumed to be big, we have $\vol(B)>0$. On the other hand,
  \begin{displaymath}
  \dim \caR(\D)_{m\ell} \ge \dim \caR(B)_{\ell}
\end{displaymath}
by the assumption that $m \D\ge B$.
Therefore,
\begin{displaymath}
  \vol(\D) \ge \limsup _{\ell} \frac{\dim
    \caR(\D)_{m\ell}}{m^d\ell^d/d!}\ge
  \frac{1}{m^d}\limsup_{\ell}\frac{\dim \caR(B)_{\ell}}{\ell ^d/d!}=
  \frac{1}{m^d}\vol(B)>0.
\end{displaymath}
\end{proof}

\subsection{The case of a toroidal nef and big b-divisor} 
\label{sec:case-toro-appr}

\begin{thm}\label{thm:1}
  Let $(X_1,\pi_1,D)$ be a toroidal structure on $X$.
  Let $\D$ be a nef and big $\R$-b-divisor which is  toroidal with respect to $D$. Then
  \begin{displaymath}
    \bdiv(\caR(\D))=\D.
  \end{displaymath}
\end{thm}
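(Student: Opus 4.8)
The inequality $\bdiv(\caR(\D))\le\D$ is immediate from Lemma~\ref{lemm:3}.\ref{item:14}, so the entire content is the reverse inequality $\bdiv(\caR(\D))\ge\D$, and the plan is to deduce it by approximating $\D$ \emph{from below} by Cartier b-divisors to which Proposition~\ref{prop:1} applies. Concretely, I would aim to produce a sequence of $\Q$-Cartier b-divisors $\mathbb B_{j}$ with $\mathbb B_{j}\le\D$, with $\mathbb B_{j}\to\D$ in the weak topology, and such that some positive multiple of each $\mathbb B_{j}$ is a globally generated integral Cartier divisor on a (toroidal) model of $X$. Granting this, Proposition~\ref{prop:1} gives $\bdiv(\caR(\mathbb B_{j}))=\mathbb B_{j}$; since $\mathbb B_{j}\le\D$, Lemma~\ref{lemm:3} gives $\caR(\mathbb B_{j})\subset\caR(\D)$ and hence $\mathbb B_{j}=\bdiv(\caR(\mathbb B_{j}))\le\bdiv(\caR(\D))\le\D$. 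Letting $j\to\infty$: on each prime divisor $\mathbb B_{j}$ is sandwiched below $\bdiv(\caR(\D))\le\D$ and converges to $\D$, so $\bdiv(\caR(\D))=\D$.

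Constructing the $\mathbb B_{j}$ is where the toroidal hypothesis and bigness enter, and I would carry this out on the convex-geometric side using the dictionary of \cite{BoteroBurgos}, under which $\WbDiv(X,D)^{\tor}$ is the space of conical functions on $\Pi(X_{1},D)(\Q)$ and nef toroidal Cartier b-divisors correspond to continuous conical piecewise linear \emph{concave} functions. Let $\varphi=\varphi_{\D}$ be the conical function attached to $\D$; since $\D$ is nef, Dang--Favre's theorem together with its toroidal refinement (\cite[Lemma~5.9]{BoteroBurgos}) exhibits $\D$ as a limit of toroidal nef Cartier b-divisors, so $\varphi$ is a concave conical function on $|\Pi(X_{1},D)|$. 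A concave conical function is the infimum of the rational linear functionals lying above it, so for each $j$ one can select finitely many such functionals whose minimum $\psi_{j}$ satisfies $\varphi\le\psi_{j}$ and $\psi_{j}-\varphi\le\tfrac1j(-\varphi_{\overline D})$, where $\overline D$ is the Cartier closure of the boundary divisor (its support function $\varphi_{\overline D}$ is nonpositive, equal to $-1$ on each $v_{D_{i}}$); passing to a toroidal model $X_{\pi_{j}}$ on which $\psi_{j}$ is linear on every cone, $\psi_{j}$ becomes the support function of a toroidal Cartier b-divisor $\mathbb B_{j}$ determined on $X_{\pi_{j}}$, with $\mathbb B_{j}\le\D$ and $\mathbb B_{j}\to\D$ weakly by construction. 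Concavity of $\psi_{j}$ makes the corresponding divisor on $X_{\pi_{j}}$ nef, and --- arguing as for complete toric varieties, using that the toroidal strata carry enough monomial functions --- globally generated after clearing denominators, so Proposition~\ref{prop:1} applies. Bigness of $\D$ is used precisely to guarantee that there are enough finite linear functionals above $\varphi$ for the $\psi_{j}$ to decrease to $\varphi$, i.e., that these globally generated approximants from below actually fill $\D$ up; this is the toroidal incarnation of Fujita approximation, and fails for nef but non-big $\D$.

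I expect the main obstacle to be the implication ``$\psi_{j}$ concave $\Rightarrow$ the associated divisor on $X_{\pi_{j}}$ is globally generated'', since on a general toroidal embedding this is not verbatim the toric basepoint-free criterion and has to be established using the structure theory of \cite{toroidal} (or sidestepped by a direct computation that $\bdiv(\caR(\mathbb B_{j}))=\mathbb B_{j}$). A secondary point needing care is that $\Pi(X_{1},D)$ need not embed into a single vector space, so ``concave'' and the from-above piecewise linear approximation $\psi_{j}$ of $\varphi$ must be done compatibly with the gluing of the conical polyhedral complex. Finally, one should check at the outset that $\bdiv(\caR(\D))$ and $\D$ already coincide along prime divisors not lying over $D$: any $f\in\caL(\ell\D)$ is regular there because $\D$ is toroidal, forcing $\ord_{P}(\bdiv(\caR(\D)))\le 0=\ord_{P}(\D)$, while bigness of $\D$ produces, for $\ell\gg0$, elements of $\caL(\ell\D)$ not all vanishing along $P$, giving the reverse inequality there as well.
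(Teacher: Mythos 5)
Your sandwich skeleton coincides with the paper's: both arguments squeeze $\bdiv(\caR(\D))$ between globally generated Cartier approximants from below (to which Proposition~\ref{prop:1} applies) and $\D$ itself, the upper bound being Lemma~\ref{lemm:3}.\ref{item:14}. The divergence --- and the gap --- lies in how the approximants from below are produced and why they converge to $\D$ componentwise. You propose to build them as \emph{toroidal} Cartier b-divisors by approximating the concave conical function $\varphi_{\D}$ from above by finite minima of rational linear functionals, with uniform error $\tfrac1j(-\varphi_{\overline D})$. This step is not justified and can fail. Nefness of $\D$ only gives (via Dang--Favre and its toroidal refinement) that $\varphi_{\D}$ is an increasing pointwise limit of continuous concave piecewise linear conical functions; such a limit is concave and lower semicontinuous on each cone, but need not be upper semicontinuous on boundary rays. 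A one-variable model: $f(0)=-1$, $f(x)=0$ for $x>0$ is concave and is the increasing limit of the continuous concave functions $\min(0,ix-1)$, yet every linear functional lying above $f$ takes the value $0$ at the origin. Translated back, the infimum of your $\psi_j$ can be strictly larger than $\varphi_{\D}$ on rays contained in lower-dimensional cones; equivalently, no toroidal Cartier b-divisor $\le\D$ need approximate $\ord_P\D$ along the corresponding divisors $P$, so your $\mathbb B_j$ would not converge weakly to $\D$. This is precisely the difficulty the paper's proof confronts: its approximants from below come from Fujita approximation, are \emph{not} toroidal, and are a priori only known to converge in volume; the componentwise identity $\sup_j\A_j=\D$ is then the genuinely hard step (Lemma~\ref{eq:sup}), proved by an Okounkov-body argument in which bigness enters through the equality of volumes of the nested Okounkov bodies. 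Your stated use of bigness (``enough linear functionals above $\varphi$'') does not substitute for this.

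A secondary, also unresolved, issue is the one you flag yourself: even granting concave rational piecewise linear $\psi_j$, the passage from concavity to global generation of the associated divisor on a toroidal model (needed to invoke Proposition~\ref{prop:1}) is not the toric basepoint-freeness criterion and requires proof. In the paper this is outsourced to \cite{BoteroBurgos}, where the globally generated toroidal approximants are constructed from \emph{above} ($\D_i\downarrow\D$) --- the direction in which the convex-geometric approximation does work --- and they serve a different purpose, namely the compactness and intersection statements about Okounkov bodies.
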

The proof requires an
intermediate lemma. We begin by constructing some auxiliary
objects. By
\cite[Lemma 5.9]{BoteroBurgos} we know that there is a
  sequence $\{\D_{i}\}_{i \in \N}$ of toroidal Cartier $\R$-b-divisors, generated by global sections and converging
  monotonically decreasing to $\D$. 
  Moreover by the proof of \cite[Lemma 5.14]{BoteroBurgos} (see also \cite[Remark 5.12]{BoteroBurgos}), we can pick a
  sequence of big toroidal Cartier $\R$-b-divisors $\{\B_{j}\}_{j \in \N}$ with $\B_{j} \le
  \D$ and $\vol(\B_{j})$ converging to $\vol(\D)$. This is where the
  toroidal condition is used.

  By Fujita's approximation theorem \cite[Theorem 11.4.4]{Laz}, for every
  $j>0$ there exists a Cartier $\R$-b-divisor $\A_j$ generated by
  global sections and satisfying
  \begin{displaymath}
    \vol(\A_{j})\ge \vol(\B_{j})-\frac{1}{j} \;\;\; \text{and} \;\;\;   \A_j \le \B_j. 
  \end{displaymath}
  Thus $\vol(\A_{j})$ also converges to $\vol (\D)$.

The key technical result is
\begin{lem}\label{eq:sup} We have
 \begin{equation}
    \sup_{j}(\A_{j})=\D \, , 
  \end{equation}
  where the supremum is computed componentwise. 
\end{lem}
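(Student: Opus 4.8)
The statement to establish is that $\sup_j \A_j = \D$ as Weil b-divisors, where the supremum is taken componentwise on each model. Since each $\A_j \le \B_j \le \D$ by construction, we immediately get $\sup_j \A_j \le \D$, so the real content is the reverse inequality $\sup_j \A_j \ge \D$. Suppose for contradiction that this fails. Then there is a model $\pi \in R^{\tor}(X,D)$ and a prime divisor $P$ on $X_\pi$ with $\ord_P(\sup_j \A_j) < \ord_P(\D) =: a$; since $\D$ is toroidal, we may take $\pi$ toroidal and $P$ a toroidal exceptional prime divisor, so $P$ corresponds to a primitive ray generator $v_P$ in the polyhedral complex $\Pi(X_1,D)$. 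Set $\delta := a - \sup_j \ord_P(\A_j) > 0$, so $\ord_P(\A_j) \le a - \delta$ for all $j$.

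The idea is to derive a contradiction with the volume convergence $\vol(\A_j) \to \vol(\D)$. First I would pass to a fixed model $X_{\pi'}$ (refining $\pi$ and $\pi_1$, toroidal over $D$) on which $P$ appears, together with the $\Q$-Cartier divisor $\D_{\pi'}$ — note $\D$ need not be Cartier, but we may compare volumes via the approximating sequence $\{\D_i\}$ of globally generated toroidal $\Q$-Cartier b-divisors decreasing to $\D$; by monotone convergence of volumes of nef b-divisors (Remark~\ref{rk:degree} and the Dang--Favre result, together with \cite[Lemma 5.9]{BoteroBurgos}), $\vol(\D_i) \to \vol(\D) = \D^n$. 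The key mechanism: on a suitable model, $\A_j$ (globally generated) has $\ord_P(\A_j)$ bounded by $a-\delta$, while $\D$ has $\ord_P \D = a$; I want to show the ``deficit'' along $P$ forces a definite drop in volume. Concretely, working with the conical functions $\varphi_{\A_j}$ and $\varphi_\D$ on $\Pi(X_1,D)$: since $\A_j \le \D$ componentwise and $\A_j$ is Cartier toroidal, $\varphi_{\A_j}$ is a piecewise-linear convex-type conical function $\ge \varphi_\D$ on the complex, with $\varphi_{\A_j}(v_P) = -\ord_P(\A_j) \ge -(a-\delta) = \varphi_\D(v_P) + \delta$. So the $\varphi_{\A_j}$ stay uniformly bounded away from $\varphi_\D$ at the point $v_P$.

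Now I would invoke the Hilbert--Samuel / volume formula for toroidal b-divisors from \cite{BoteroBurgos}: the volume $\vol(\B)$ of a nef toroidal b-divisor is computed as an integral (a mixed-volume-type expression) of the associated conical function over $\Pi(X_1,D)$, and this integral is continuous/monotone in the function in a way that detects a persistent gap on a full-dimensional cone. Since $\A_j \le \B_j \le \D$ and $\vol(\A_j), \vol(\B_j) \to \vol(\D)$, but $\varphi_{\A_j} \ge \varphi_\D + \delta\cdot(\text{something supported near }v_P)$, the volume integrals of the $\A_j$ would be bounded above by $\vol(\D)$ minus a fixed positive quantity (the contribution of the region around $\sigma$ where the functions differ by at least a fixed amount) — contradicting convergence. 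This is essentially the same mechanism as in the proof of strict monotonicity of volume under strict inequality of nef b-divisors; I would cite or adapt \cite[Lemma 5.12]{BoteroBurgos} and the volume formula there.

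\textbf{Main obstacle.} The delicate point is making the passage from ``the conical functions $\varphi_{\A_j}$ stay $\ge \varphi_\D + \delta$ at one ray $v_P$'' to ``their volumes are bounded below $\vol(\D)$ by a fixed amount'' fully rigorous. A single ray has measure zero, so one must propagate the gap: using convexity/Lipschitz continuity of the $\varphi_{\A_j}$ (inherited from $\D$ being Lipschitz, via \cite{BoteroBurgos}) one gets that $\varphi_{\A_j} \ge \varphi_\D + \delta/2$ on a neighborhood cone of $v_P$ of fixed size independent of $j$, and then the volume formula's monotonicity in the conical function yields the uniform volume deficit. Alternatively, and perhaps cleaner: bound $\dim\caR(\A_j)_\ell$ from above by $\dim\caR(\D - \delta' P')_\ell$ for the Cartier b-divisor $P'$ associated to $v_P$ and a fixed $\delta'>0$, and observe $\vol(\D - \delta' P') < \vol(\D)$ by strict monotonicity of volume for big nef (toroidal) b-divisors — contradicting $\vol(\A_j) \to \vol(\D)$. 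The main work is verifying that strict monotonicity statement in the b-divisorial setting, but that is exactly the kind of result recorded in \cite{BoteroBurgos}, so I would lean on it.
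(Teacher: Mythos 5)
Your high-level strategy (a persistent gap $\ord_P(\A_j)\le\ord_P(\D)-\delta$ must force a uniform volume deficit, contradicting $\vol(\A_j)\to\vol(\D)$) is the same as the paper's, and you correctly flag the crux: turning a deficit along a \emph{single} prime divisor into a deficit of \emph{volume}. But neither of your two proposed mechanisms closes this gap. First, the $\A_j$ come from Fujita approximation and are \emph{not} toroidal, so they do not admit conical functions $\varphi_{\A_j}$ on $\Pi(X_1,D)$; and even for toroidal comparisons, a gap of the conical functions on a small subcone need not register in the Monge--Amp\`ere/mixed-volume formula of \cite{BoteroBurgos} (the relevant measure can be concentrated away from $v_P$), so ``monotonicity of the volume integral'' does not yield a definite drop. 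Second, your alternative requires the inclusion $\caR(\A_j)\subset\caR(\D-\delta'P')$, which does not follow: you only control $\ord_Q(\A_j)\le\ord_Q(\D)$ at divisors $Q\neq P$, whereas $\D-\delta'P'$ is strictly smaller than $\D$ at every $Q$ with $\ord_Q P'>0$; and the ``strict monotonicity of volume'' $\vol(\D-\delta'P')<\vol(\D)$ for big nef b-divisors is essentially the content of the lemma itself, not something recorded in \cite{BoteroBurgos} that you can cite. A smaller but real issue: you reduce to toroidal models and toroidal exceptional $P$ ``since $\D$ is toroidal,'' but $\sup_j\A_j$ is not known to be toroidal, so equality must be verified at arbitrary prime divisors on arbitrary models.

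The missing idea is the Okounkov body attached to a complete flag beginning with $P$. With that device, $\ord_P(f)/\ell$ becomes the first coordinate $\omega_P$ of points of the convex body, so: (i) global generation of the approximants $\D_i$ from above produces points $x_i\in O_{\caF}(\D_i)$ with $\omega_P(x_i)\le-\ord_P(\D)$, and a compactness argument together with $\bigcap_iO_{\caF}(\D_i)=O_{\caF}(\D)$ (proved via the volume identity of \cite[Theorem 5.13]{BoteroBurgos}) puts such a point in $O_{\caF}(\D)$; (ii) the hypothesis forces $O_{\caF}(\A_j)\subset\{\omega_P\ge-\ord_P(\D)+\varepsilon\}$; (iii) convexity and full-dimensionality of $O_{\caF}(\D)$ then give $\vol\bigl(O_{\caF}(\D)\cap\{-\ord_P(\D)\le\omega_P\le-\ord_P(\D)+\varepsilon\}\bigr)=\eta>0$, whence $\vol(O_{\caF}(\A_j))\le\vol(O_{\caF}(\D))-\eta$, the desired contradiction. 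This linearization of $\ord_P$ on a convex body is exactly what makes the ``gap implies volume deficit'' step rigorous, and it is absent from your argument.
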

  \begin{proof}
We need to show that, for every
  $\pi \in R(X)$ and every prime divisor $P$ in $X_{\pi}$, we have
  \begin{displaymath}
    \sup_{j}(\ord_{P}(\A_{j}))=\ord_{P}(\D).
  \end{displaymath}
We proceed by contradiction; suppose this does \emph{not} hold. This means that there exists a
  proper modification $\pi \in R(X)$, a prime divisor $P$ on $X_{\pi }$ and a
  positive number $\epsilon >0$ such 
  that for all $j$, 
  \begin{displaymath}
    \ord_{P}(\A_{j})\le \ord_{P}(\D)-\epsilon.
  \end{displaymath}
  
  In what follows we use the theory of Okounkov bodies, for which we refer to \cite{kk, LM}  for more details. 
  Upgrade $P$ to a complete flag
  \begin{displaymath}
    \caF\colon\quad P=Y_{1}\supset Y_{2}\supset \dots \supset Y_{d}
  \end{displaymath}
  and for a graded linear series $A$ denote by $O_{\caF}(A)$ the Okounkov
  body of $A$ on 
  $X$ associated to this
  flag. We briefly recall its construction. By an iterative procedure, one
  constructs for each $\ell \in \N$ a valuation map 
  
  \[
  \nu_\caF \colon A_\ell\setminus \{0\} \longrightarrow \Z^d
  \]
  by taking the order of vanishing along the given $Y_i$ into
  account. So, if $f\in A_{\ell}$,
  \begin{displaymath}
    \nu_{\caF}(f)=\left(\ord_{P}f,\ast,\dots,\ast\right).
  \end{displaymath}
  That is, the first component of $\nu _{\caF}(f)$ is the order
  of $f$ at $P$. 
  This gives rise to a semigroup 
  \[
  \Gamma(A) = \left\{\left(\nu_\caF(f), \ell\right) \mid f \in A_\ell\setminus
    \{0\}, \ell \in \N\right\} \subset \Z^{d}\times \N. 
  \]
  The Okounkov body of $A$ with respect to $\caF$ is then given by 
  \[
  O_{\caF}(A) = \overline{\text{cone}(\Gamma(A))} \cap \left(\R^d \times \{1\}\right).
  \]
  It is a closed convex set of $\R^d$, and if $A$ is of almost integral type,
  then it is bounded, hence compact \cite[Theorem~2.30]{kk}.  
  
   Let $\omega _{P}\colon
  \R^{d}\to \R$ be the projection onto the first variable. If $0\not=f \in
  A_{\ell}$ is of degree $\ell$ and $x = \nu_\caF(f)/\ell$ the corresponding
  point in the Okounkov body, then by construction one has
  \begin{displaymath}
    \omega _{P}(x)=\ord_{P}(f)/\ell.
  \end{displaymath}
  For a Weil $\R$-b-divisor $\mathbb{E}$ we write $O_\caF(\mathbb{E})$ for the
  Okounkov body $O_\caF(\caR(\mathbb{E}))$. 
  From $\A_{j}\le \D\le \D_{i}$ we have $\caR(\A_j) \subset \caR(\D)
  \subset \caR(\D_i)$ and hence 
  \begin{displaymath}
    O_{\caF}(\A_{j})\subset O_{\caF}(\D) \subset O_{\caF}(\D_{i})
  \end{displaymath}
  for all natural numbers $i,j$.
  Since each $\D_{i}$ is generated by global sections there exist
  $f_{i}\in H^{0}(X_{i},\ell_{i}\D_{i})$ for some $\ell_i$ such that
  \begin{displaymath}
    \ord_{P}(f_{i})/\ell_{i} = -\ord _{P}\D_{i}\le -\ord_{P}(\D).
  \end{displaymath}
  Therefore, there is a point $x_{i}\in O_{\caF}(\D_{i})$ with
  \begin{displaymath}
    \omega _{P}(x_{i})\le -\ord_{P}(\D).
  \end{displaymath}
  Since $O_{\caF}(\D_{1})$ is compact and $O_{\caF}(\D_{i})\subset
  O_{\caF}(\D_{1})$, the sequence $\{x_{i}\}_{i \in \N}$ has at least one
  accumulation point $x$. Moreover we claim that $\bigcap O_{\caF}(\D_{i})=O_{\caF}(\D)$. Indeed, we have $\bigcap O_{\caF}(\D_{i}) \subset O_{\caF}(\D)$. On the other hand $\{O_\caF(\D_i)\}_{i \in \N}$ form a decreasing (under inclusion) sequence of compact convex sets, hence their intersection $O = \bigcap_i O_\caF(\D_i)$ is again a compact convex set. We have that 
  \[
  \vol(O) = \lim_i\vol(O_\caF(\D_i)) = \lim_i \vol(\D_i) = \vol(\D) = \vol(O_\caF(\D)),
  \]
  where the second and last equalities follow from \cite[Theorem
  5.14]{BoteroBurgos}. This proves the claim since two
  full-dimensional compact convex sets with equal volume and such that
  one is contained in the other have to agree. Now, the compactness of
  $O_{\caF}(\D)$ implies that the accumulation point $x$ lies in
  $O_{\caF}(\D)$. In
  particular, there is a point $x\in O_{\caF}(\D)$ with $\omega
  _{P}(x)\le -\ord_{P}(\D)$.

  On the other hand, since $\ord_{P}(\A_{j})\le \ord_{P}(\D)- \epsilon$ we have
  that
  \begin{displaymath}
    \emptyset \not = O_{\caF}(\A_{j})\subset \{x\in  \R^{d}\mid \omega _{P}(x)\ge
    -\ord_{P}(\D)+\epsilon \}.
  \end{displaymath}
  The set $O_{\caF}(\D)$ is convex, has non-zero volume, contains a
  point $x$
  satisfying $\omega _{P}(x)\le -\ord_{P}(\D)$, and also contains a point $y$
  with  $\omega _{P}(y)\ge -\ord_{P}(\D)+\epsilon $ (just choose
  any point of $O_{\caF}(\A_{j})$ for some $j$). Hence
  \begin{displaymath}
    \vol(O_{\caF}(\D)\cap\{-\ord_{P}(\D)\le \omega _{P}\le
    -\ord_{P}(\D)+\epsilon \}) =:\eta >0.
  \end{displaymath}
  This implies that $\vol(O_{\caF}(\A_{j}))\le \vol (O_{\caF}(\D)) - \eta$, 
  contradicting the fact that $\vol(O_{\caF}(\A_{j}))$ converges to $\vol
  (O_{\caF}(\D))$. 
\end{proof}

\begin{proof}[Proof of Theorem {\ref{thm:1}}]
From Lemma \ref{lemm:3} and the inequalities $\A_{j}\le \D\le
  \D_{i}$ for any natural numbers $i,j$, we deduce
  \begin{displaymath}
    \bdiv(\caR(\A_{j}))\le
    \bdiv(\caR(\D))\le
    \bdiv(\caR(\D_{i})).
  \end{displaymath}
  By Lemma~\ref{lemm:3} and Proposition \ref{prop:1}  we get 
  \begin{displaymath}
    \A_{j}\le
    \bdiv(\caR(\D))\le
    \D_{i} \, . 
  \end{displaymath}
Invoking Lemma~\ref{eq:sup} and once more Lemma~\ref{lemm:3}, we get
\begin{displaymath}
  \D=\sup (\A_{j})\le \bdiv(\caR(\D))\le \D. \qedhere
\end{displaymath}
\end{proof}
\begin{rmk}\label{rem:exa-non-tor}
The toroidal condition in Theorem \ref{thm:1} is necessary. Indeed, consider the Weil $\R$-b-divisor $\D$ from \cite[Appendix A]{BBHJ}. Then $\D$ is a nef and big Weil $\R$-b-divisor on $\mathbb{P}^2$ which is not toroidal. It satisfies 
\[
\caR(\D) = \caR(2H-L),
\]
where $L$ and $H$ are two lines in $\mathbb{P}^2$ as defined in \emph{loc.\ cit.}
Hence, by the construction we get
\[
\D \neq 2H-L = \bdiv(\caR(2H-L))= \bdiv(\caR(\D)).
\]
\end{rmk}

From Lemma \ref{lemm:14} and Theorem \ref{thm:1} we obtain also the
following compatibility in the case of toroidal psh metrics. 
  \begin{cor}\label{cor:2}
    Let $(L,h)$ be a line bundle with a toroidal psh metric with
    singularity divisor $D$, and
    $s$ a non-zero rational section of $L$. If $\D(L,s,h)$ is nef and big and for every
    irreducible component $D_{i}$ of $D$ the condition
    $\ord_{D_{i}}(\D(L,s,h))>0$ holds, then  
    \begin{displaymath}
      \bdiv(\caR(L,s,h)) = \D(L,s,h).
    \end{displaymath}
  \end{cor}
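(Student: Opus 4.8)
The plan is to sandwich $\bdiv(\caR(L,s,h))$ between $\D(L,s,h)$ and itself by combining the two inclusions of graded linear series already established. First I would recall from Lemma~\ref{lemm:4} that $\caR(L,s,h)\subset \caR(\D(L,s,h))$, so by Lemma~\ref{lemm:3} we immediately get
\begin{displaymath}
  \bdiv(\caR(L,s,h)) \le \bdiv(\caR(\D(L,s,h))).
\end{displaymath}
Since $\D=\D(L,s,h)$ is assumed nef and big, and since $h$ toroidal with respect to $D$ forces $\D$ to be toroidal with respect to $D$ by Proposition~\ref{prop:div_of_toroidal_metric_is_toroidal}.\ref{i:1} (the b-divisor $\D-\dv(s)$ is toroidal, and $\dv(s)$ is Cartier hence toroidal, so $\D$ is toroidal), Theorem~\ref{thm:1} applies and gives $\bdiv(\caR(\D))=\D$. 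Hence $\bdiv(\caR(L,s,h))\le \D(L,s,h)$.

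For the reverse inequality I would use Corollary~\ref{cor:3}: under the hypothesis $\ord_{D_i}(\D(L,s,h))>0$ for every irreducible component $D_i$ of $D$, we have $\caR((1-\varepsilon)\D(L,s,h))\subset \caR(L,s,h)$ for every $\varepsilon>0$. Applying $\bdiv$ and Lemma~\ref{lemm:3} yields
\begin{displaymath}
  \bdiv(\caR((1-\varepsilon)\D)) \le \bdiv(\caR(L,s,h)).
\end{displaymath}
Now $(1-\varepsilon)\D$ is again nef, big (for $\varepsilon<1$), and toroidal with respect to $D$, so Theorem~\ref{thm:1} gives $\bdiv(\caR((1-\varepsilon)\D))=(1-\varepsilon)\D$. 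Therefore $(1-\varepsilon)\D(L,s,h)\le \bdiv(\caR(L,s,h))$ for all $\varepsilon\in(0,1)$, and letting $\varepsilon\to 0$ (using that the weak topology on Weil b-divisors is compatible with componentwise limits, so the inequality passes to the limit componentwise) we conclude $\D(L,s,h)\le \bdiv(\caR(L,s,h))$.

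Combining the two inequalities gives $\bdiv(\caR(L,s,h))=\D(L,s,h)$, as claimed. The only point requiring a little care is verifying the hypotheses of Theorem~\ref{thm:1} for the auxiliary b-divisors: that $\D(L,s,h)$ and its scalings $(1-\varepsilon)\D(L,s,h)$ are genuinely toroidal with respect to $D$ (not merely that $\D(L,s,h)-\dv(s)$ is), and that bigness is preserved under scaling by $(1-\varepsilon)$ — the latter is clear since $\vol((1-\varepsilon)\D)=(1-\varepsilon)^d\vol(\D)>0$. I expect the main obstacle to be purely bookkeeping: tracking that $\dv(s)$ contributes a Cartier (hence toroidal) summand so that toroidality of the difference upgrades to toroidality of $\D(L,s,h)$ itself, and confirming that the limit $\varepsilon\to 0$ is legitimate in the weak topology. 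No new idea beyond Lemma~\ref{lemm:4}, Corollary~\ref{cor:3}, and Theorem~\ref{thm:1} is needed.
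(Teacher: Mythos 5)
Your proposal is correct and follows essentially the same route as the paper: both arguments sandwich $\bdiv(\caR(L,s,h))$ using Lemma~\ref{lemm:4}, Corollary~\ref{cor:3}, Proposition~\ref{prop:div_of_toroidal_metric_is_toroidal}, Theorem~\ref{thm:1} and Lemma~\ref{lemm:3}, then let the scaling factor tend to $1$. The only detail to phrase carefully (which you flag yourself) is that $\dv(s)$ need not be toroidal with respect to $D$ itself; as in the paper one passes to a toroidal structure $D'$ above $D$ so that $\D(L,s,h)$ becomes toroidal there, after which Theorem~\ref{thm:1} applies unchanged.
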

  \begin{proof}
By Lemma \ref{lemm:4} and  Corollary \ref{cor:3}, for every $j>1$ 
    we have
    \begin{displaymath}
      \caR(((1-1/j)\D(L,s,h)) \subset
      \caR(L,s,h)
      \subset \caR(\D(L,s,h)).
    \end{displaymath}
    Since $h$ is a toroidal psh metric, by Proposition
    \ref{prop:div_of_toroidal_metric_is_toroidal} the Weil $\R$-b-divisor $\D(L,s,h)$ is
    toroidal with respect to a toroidal structure $D'$ above $D$. Note
    that we need a toroidal structure above $D$ in order to make $\dv(s)$
    toroidal. 
    By Lemma \ref{lemm:3} and Theorem \ref{thm:1}, we
    deduce
    \begin{displaymath}
      (1-1/j)\D(L,s,h)\le 
      \bdiv(\caR(L,s,h)) \le 
      \D(L,s,h).
    \end{displaymath}
    Since $(1-1/j)\D(L,s,h)$ converges to $\D(L,s,h)$ when $j\to
    \infty$, we obtain the corollary.  
  \end{proof}

\subsection{Criterion for not being finitely generated}
\label{sec:crit-non-finit}

\begin{lem}\label{lemm:2} If $A$ is a finitely generated graded linear
  series on $X$, then $\bdiv(A)$ is a Cartier $\R$-b-divisor.
\end{lem}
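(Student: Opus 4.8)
*If $A$ is a finitely generated graded linear series on $X$, then $\bdiv(A)$ is a $\Q$-Cartier b-divisor.*

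The plan is to reduce the statement to the fact that a $\Q$-Cartier b-divisor is one that is determined on a single model, i.e.\ one whose value on $X_\pi$ is the pullback of its value on some fixed model $X_{\pi_0}$ (after clearing denominators). So the goal is: find a model $X_{\pi_0}$ such that for every $\pi \ge \pi_0$ with structure map $\mu \colon X_\pi \to X_{\pi_0}$, we have $\bdiv(A)_\pi = \mu^* \bdiv(A)_{\pi_0}$.

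First I would use finite generation to fix generators. Since $A$ is finitely generated as a graded algebra, there exist finitely many nonzero elements $f_1, \dots, f_s \in F$ and positive integers $d_1, \dots, d_s$ such that $A$ is generated by $f_1 t^{d_1}, \dots, f_s t^{d_s}$. From the definition of $\bdiv(A)$, for any model $X_\pi$ and prime divisor $P$ on $X_\pi$ one has
\begin{displaymath}
  \ord_P(\bdiv(A)_\pi) = \sup\left\{ \frac{-1}{\ell}\ord_P(f) \;\Big|\; \ell \ge 0,\ f \in A_\ell \right\}.
\end{displaymath}
The key observation is that this supremum is controlled by the generators alone: any $f \in A_\ell$ is a $\C$-linear combination of monomials $f_{i_1}^{a_1}\cdots f_{i_k}^{a_k}$ with $\sum a_j d_{i_j} = \ell$, and since $\ord_P$ is a valuation, $\ord_P$ of such a monomial is $\sum a_j \ord_P(f_{i_j})$, while $\ord_P$ of a sum is at least the minimum of the $\ord_P$ of the summands. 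Combining these, one shows
\begin{displaymath}
  \ord_P(\bdiv(A)_\pi) = \max_{1 \le i \le s} \frac{-1}{d_i}\ord_P(f_i),
\end{displaymath}
so that $\bdiv(A)_\pi = \inf_i \frac{1}{d_i}\dv(f_i)|_{X_\pi}^{-} \dots$ — more precisely $\bdiv(A)_\pi$ is the minimum, taken componentwise over prime divisors, of the $\Q$-divisors $-\frac{1}{d_i}\dv(f_i)$ on $X_\pi$.

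Now I would choose $\pi_0$ to be a model on which all the principal divisors $\dv(f_i)$, and hence their finitely many pairwise minima, are simultaneously ``nice''. Concretely: take $\pi_0$ such that on $X_{\pi_0}$ the supports of $\dv(f_1), \dots, \dv(f_s)$ form a simple normal crossing divisor, or at least such that the pairwise minimum $\min_i(-\frac{1}{d_i}\dv(f_i))$ is compatible with pullback to all higher models. The point to verify is that $\mu^*$ commutes with taking the pointwise minimum of these particular divisors, which holds because $\mu^* \dv(f_i) = \dv(f_i)$ (as divisors on $X_\pi$, via $F = K(X_\pi)$), and on a further blowup the minimum of a finite set of divisors, each already ``principal'', is computed valuation-by-valuation the same way it was on $X_{\pi_0}$ provided the comparison of which $f_i$ achieves the minimum is locally constant on $X_{\pi_0}$ — which one arranges by blowing up enough that the loci $\{\ord(-\frac{1}{d_i}\dv(f_i)) \le \ord(-\frac{1}{d_j}\dv(f_j))\}$ are unions of components. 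I expect the main obstacle to be precisely this last point: showing that $\bdiv(A)_\pi = \mu^*\bdiv(A)_{\pi_0}$ rather than merely $\bdiv(A)_\pi \ge \mu^*\bdiv(A)_{\pi_0}$; the inequality $\le$ one gets for free from functoriality of pushforward, but for equality one must rule out that a divisor extracted by $\mu$ sees a strictly smaller value, and this is where the careful choice of $\pi_0$ (making the comparison of the $f_i$ combinatorially stable) does the work. Once $\bdiv(A)_\pi = \mu^*\bdiv(A)_{\pi_0}$ for all $\pi \ge \pi_0$, the b-divisor $\bdiv(A)$ is by definition Cartier; since the coefficients $\frac{1}{d_i}$ are rational, it is $\Q$-Cartier.
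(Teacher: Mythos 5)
Your first half coincides with the paper's: reduce to finitely many generators $f_it^{d_i}$ and use the valuation properties of $\ord_P$ (multiplicativity for products, ultrametric inequality for sums) to get $\ord_P(\bdiv(A)_\pi)=\max_i\frac{-1}{d_i}\ord_P(f_i)$ on every model. The gap is in the second half, which you yourself flag as ``the main obstacle'' and then do not close. Your first proposed sufficient condition --- that the supports of the $\dv(f_i)$ be simple normal crossings on $X_{\pi_0}$ --- is genuinely insufficient: take $f_1=x$, $f_2=y$ on $\A^2$ with $d_1=d_2=1$. The supports are already snc, and $\min(\dv(x),\dv(y))=0$ on $\A^2$, yet on the blowup of the origin the exceptional divisor $E$ has $\ord_E(x)=\ord_E(y)=1$, so the componentwise minimum jumps and is not the pullback of its value downstairs. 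Your second formulation (``blow up so that the comparison of which $f_i$ achieves the minimum is combinatorially stable'') is the right intuition but is neither made precise nor shown to be achievable, and it is exactly the content of the lemma, not a reduction of it.

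The missing ingredient is principalization of an ideal sheaf, which is how the paper finishes. Set $e=\operatorname{lcm}(d_1,\dots,d_s)$ --- note you also need this step, since with unequal $d_i$ the quantities $\frac{1}{d_i}\ord_P(f_i)$ are not orders of elements of a single ideal; one must pass to the $f_i^{e/d_i}$, all of weight $e$. Let $\mathfrak a$ be the fractional ideal they generate, choose $D_0$ with $d_iD_0+\dv(f_i)\ge 0$ so that $\caI=\mathfrak a\,\caO_X(-eD_0)\subset\caO_X$ is a coherent ideal sheaf, and take $\pi_0$ principalizing $\caI$, say $\caI\caO_{X_{\pi_0}}=\caO(-D')$. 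Since the order of vanishing of an ideal along any prime divisor is the minimum of the orders of its generators, and since the inverse image of a principal (invertible) ideal on any higher model is again principal with divisor the pullback of $D'$, one gets $e\,\bdiv(A)=eD_0-D'$ determined on $X_{\pi_0}$, hence $\Q$-Cartier. With this replacement for your ``careful choice of $\pi_0$'' the argument is complete and agrees with the paper's.
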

\begin{proof}
  Assume  that $A$ is generated by $f_{1}t^{d_1},\dots,f_{r}t^{d_r}$,
  with $f_{i} \in F=K(X)$.

  \smallskip
  \noindent
  \emph{Claim.} If a Weil $\R$-b-divisor $\D$ satisfies $d_{i} \D+\dv(f_{i})\ge 0$, for
  $i=1,\dots,r$, then $\ell \D+\dv(f)\ge 0$ for all $\ell \ge 0$ and all
  $f\in A_{\ell} $.

  The claim follows from the compatibility of multiplicity with
  products and the ultrametric inequality for sums. Namely, if $f\in
  A_{\ell}$ and $f'\in A_{\ell'}$, then the compatibility of the
  valuation with products yields,  
  \begin{displaymath}
    \left.
    \begin{aligned}
      \ell  \D +\dv(f)&\ge 0\\
      \ell'  \D +\dv(f')&\ge 0
    \end{aligned}
  \right\} \Longrightarrow
  (\ell+\ell')  \D +\dv(ff')\ge 0,
\end{displaymath}
and if $f,f'\in A_{\ell}$ the ultrametric inequality implies 
  \begin{displaymath}
    \left.
    \begin{aligned}
      \ell  \D +\dv(f)&\ge 0\\
      \ell  \D +\dv(f')&\ge 0
    \end{aligned}
  \right\} \Longrightarrow
  \ell  \D +\dv(f+f')\ge 0.
\end{displaymath}

From the claim we deduce
\begin{displaymath}
  \bdiv(A)_{\pi }=\inf \{D\in \Div(X_{\pi })\mid d_{i}D+\dv(f_{i})\ge
  0,\ i=1,\dots,r\}. 
\end{displaymath}

Choose $D_{0}$ such that $d_{i}D_{0}+\dv(f_{i})\ge 0$. Let
$e=\operatorname{lcm}(d_{1},\dots,d_{r})$ 
and let $\mathfrak{a}$ be the fractional ideal generated by
$f_{i}^{e/d_{i}}$, $i=1,\dots,r$. Since $f_{i}^{e/d_{i}}\in \caO(eD_{0})$, we obtain
$\mathfrak{a}\subset \caO(eD_{0})$, so
\begin{displaymath}
  \caI\coloneqq\mathfrak{a}\caO_X(-eD_{0})\subset \caO_{X}
\end{displaymath}
is a coherent ideal sheaf. 

Let $\pi _{0}\colon X_{\pi _{0}}\to X$ be a proper modification such
that $\pi ^{-1}(\caI)\caO_{X_{\pi _{0}}}$ is principal and equal to
$\caO(-D')$ for some effective integral divisor $D'$. Then
\begin{displaymath}
  e(\bdiv(A))=eD_{0}-D',
\end{displaymath}
and in particular $\bdiv(A)$ is Cartier.
\end{proof}

\section{Siegel--Jacobi forms}
\label{sec:jacobi-forms}

\subsection{Basic definitions}
\label{sec:basic-definitions-1}

We recall the definition of Siegel--Jacobi forms. For more details
we refer to \cite{Kramer_Crelle} and \cite{Ziegler-J}.

Let $g \geq 1$ be an integer. The real symplectic group $\Sp(2g,\R)$ is the group of real $2g\times 2g$
matrices of the form
\begin{equation}\label{eq:1}
  \begin{pmatrix}
    A & B \\
    C & D
  \end{pmatrix}
\end{equation}
such that
\begin{displaymath}
  A^{t}C=C^{t}A,\quad D^{t}B=B^{t}D, \quad A^{t}D=\Id_{g}+C^{t}B,
\end{displaymath}
where $\Id_{g}$ is the identity matrix of dimension $g$. There is an
inclusion 
\[
\Sp(2g,\R)\hookrightarrow  \Sp(2g+2,\R)
\]
sending a matrix
of the form \eqref{eq:1} to
\begin{displaymath}
  \begin{pmatrix}
    A &0& B&0 \\
    0&1&0&0\\
    C &0& D&0\\
    0&0&0&1
  \end{pmatrix}.
\end{displaymath}
For any commutative ring $R$ let $H_{R}^{(g,1)}$ be the Heisenberg group
\begin{displaymath}
  H_{R}^{(g,1)}=\left\{\left[(\lambda ,\mu ),x\right]\mid \lambda ,\mu \in
R^{(1,g)},\, x\in R\right\},
\end{displaymath}
where $R^{(1,g)}$ denotes the set of row vectors of size $g$ and coefficients in $R$, with the composition law given by
\begin{displaymath}
  [(\lambda ,\mu ),x]\circ [(\lambda ',\mu '),x']=
  [\lambda +\lambda ',\mu +\mu ',x+x'+\lambda {\mu '}^{t}-\mu
  {\lambda '}^{t}]. 
\end{displaymath}
These are the same definitions as in \cite{Ziegler-J} for the case $g=1$. The real Heisenberg group $H_{\R}^{(g,1)}$ can be realized as the
subgroup of $\Sp(2g+2,\R)$
consisting of matrices of the form
\begin{displaymath}
  \begin{pmatrix}
    \Id_{g} & 0 & 0 & \mu^{t}\\
    \lambda  & 1 & \mu  & x\\
    0 & 0 & \Id_{g} & -\lambda^{t} \\
    0 & 0 & 0 & 1
  \end{pmatrix}.
\end{displaymath}
 The full Jacobi group $G^{(g,1)}_{\R}=\Sp(2g,\R) \ltimes
 H^{(g,1)}_{\R}$ is the subgroup of $\Sp(2g+2,\R)$ generated by 
 $\Sp(2g,\R)$ and $H^{(g,1)}_{\R}$.

 Let
 \begin{displaymath}
   \caH_{g}=\{ Z=X+iY \mid X,Y\in \mathrm{Mat}_{g\times g}(\R),\, Z^{t}=Z,\, Y>0\} 
 \end{displaymath}
 be the Siegel upper half space. The group $\Sp(2g,\R)$ acts
 transitively on $\caH_g$, where for $M = \begin{pmatrix}
A&B \\ C&D
\end{pmatrix} \in \Sp(2g, \R)$ and $Z \in \caH_g$ the action is given by 
\[
  Z\longmapsto 
M\langle Z \rangle = (AZ + B)(CZ + D)^{-1} \, . 
\]
On the other hand, the group $G^{(g,1)}_{\R}$ acts transitively on $\caH_{g}\times
\C^{(1,g)}$ by the action
\[
\left(M, (\lambda, \mu, x)\right) \cdot (Z,W) = \left(M\langle Z
  \rangle, (W+\lambda Z + \mu)(CZ + D)^{-1}\right).
\]
Let $\Gamma \subset \Sp(2g,\Z)$ be a subgroup of finite
index.  We write $\widetilde \Gamma =\Gamma \ltimes
H^{(g,1)}_{\Z}\subset G^{(g,1)}_{\Z}$.

Recall that a subgroup $\Gamma  \subset \Sp(2g,\Z)$ is called \emph{neat} when for
every $M\in \Gamma $, the 
subgroup of  $\C^{\times}$ generated by the eigenvalues of $M$ is
torsion free. 
If $\Gamma $ is neat, then the quotient $\caA(\Gamma )=\Gamma
\backslash \caH_{g}$ is a smooth
complex manifold and the quotient $\caB(\Gamma
)=\widetilde \Gamma \backslash \caH_{g}\times \C^{(1,g)}$ is a
fibration over $\caA(\Gamma )$ by principally polarized abelian
varieties. 

Following \cite{Ziegler-J} we now introduce automorphy factors for the
group  $G^{(g,1)}_{\R}$ in order to obtain interesting line bundles on
the quotient $\caB(\Gamma )$.

Let $\phi \colon \caH_g \times \C^{(1,g)} \to \C$ be a holomorphic map.
Let $\rho_{k} \colon \GL(g, \C) \to \C^\times$ be the representation given by $N
\mapsto (\det N)^k$. For $M \in \Sp(2g, \R)$, $\zeta =
(\lambda, \mu, x) \in H_{\R}^{(g,1)}$ and $m \in \N_{\geq 0}$ define
\begin{multline}\label{eq:25}
  \left(\phi|_{k,m}M\right)(Z,W) \coloneqq \\
  \rho_{k}(CZ + D)^{-1} e^{-2\pi i
    mW(CZ+D)^{-1}CW^{t}} \phi\left(M\langle
    Z\rangle, W(CZ+ D)^{-1}\right)
\end{multline}
and 
  \begin{equation}
\left(\phi|_{k,m}\zeta\right)(Z,W) \coloneqq e^{2\pi i
  m\left(\lambda Z \lambda^{t}+2\lambda W^{t}
      + (x + \mu \lambda^{t}) \right)} \phi(Z,
W+\lambda Z+ \mu). \label{eq:26}
\end{equation}

A matrix $T$ is called \emph{half integral} if $2T$ has integral entries and
the diagonal entries of $T$ are integral. Note that if $T$ is
symmetric then $T$ is half integral if and only if the associated quadratic form is
integral.

\begin{df} \label{def:SJ_forms}
  A holomorphic map $\phi \colon \caH_g \times \C^{(1,g)} \to \C$ is
  called a \emph{Siegel--Jacobi form of weight $k$ and index $m$} for a
  subgroup $\Gamma \subset \Sp(2g, \Z)$ of finite index if the following
  conditions are satisfied:
  \begin{enumerate}
  \item \label{item:17} $\phi|_{k,m}M = \phi$ for all $M \in \Gamma$.
  \item \label{item:18} $\phi|_{k,m}\zeta = \phi$ for all $\zeta \in H_{\Z}^{(g,1)}$.
  \item \label{item:16} For each $M \in \Sp(2g, \Z)$ the function $\phi|_{k,m}M$ has a
    Fourier expansion of the form
      \begin{equation}
      \left(\phi|_{k,m}M\right)(Z,W) = \sum_{\stackrel{T =
          T^{t}\geq 0}{T \text{ half integral}}} \sum_{R \in
        \Z^{(g,1)}} c(T,R) e^{2 \pi i/\nu \tr(TZ)}e^{2 \pi i
        WR}\label{eq:29}
    \end{equation}
    for some  suitable integer $0 < \nu \in \Z$ depending only on
    $\Gamma$, and such that 
    $c(T,R) \neq 0$ implies 
      \begin{equation}
        \label{eq:35}
        \begin{pmatrix}
          \frac{1}{\nu}T & \frac{1}{2}R \\
          \frac{1}{2}R^{t} & m
        \end{pmatrix} \geq 0.
      \end{equation}
  \end{enumerate}
  A Siegel--Jacobi form $\phi$ is said to be a \emph{cusp form} if
  \[
    \begin{pmatrix}
      \frac{1}{\nu }T & \frac{1}{2}R \\
      \frac{1}{2}R^{t} & m
    \end{pmatrix} > 0.
  \]
  for any $T,R$ with $c(T,R) \neq 0$.
\end{df}

  We note that when $g\geq 2$, condition \ref{item:16} is a
  consequence of conditions \ref{item:17} and \ref{item:18} due to the Koecher
  principle \cite[Lemma 1.6]{Ziegler-J}.

  \begin{df}
The vector space of all Siegel--Jacobi forms of weight $k$ and index
$m$ for  $\Gamma$ is denoted by $J_{k,m}(\Gamma )$ and the
space of cusp forms is denoted by $J_{k,m}^{\cusp}(\Gamma )$.
\end{df}

We next give an interpretation of condition \ref{item:16} in the
definition of Siegel--Jacobi forms. This interpretation is mentioned in
\cite{dulinski}, but since we have not found a proof we include one
for the convenience of the reader.

Let $\phi \colon \caH_g \times \C^{(1,g)} \to \C$ be a holomorphic map satisfying
that there exists an integer $\nu >0$ such that for any integral symmetric
$g\times g$ matrix
$X$ and integer vector $\mu $, the periodicity condition
\begin{displaymath}
  \phi (Z+\nu X,W+\mu  )=\phi (Z,W)
\end{displaymath}
is satisfied. This is the case for any function satisfying conditions
\ref{item:17} and \ref{item:18} of Definition \ref{def:SJ_forms}.
Such a function admits a Fourier expansion of the form \eqref{eq:29}
(with $M=\Id$).
Since the function $\phi $ is continuous, this Fourier expansion is
absolutely convergent and uniformly convergent on compact subsets. 

We introduce two kind of subsets of $\caH_{g}\times \C^{(1,g)}$ that will be useful for stating the
next lemma. 
For any $Z_{0}\in \caH_{g}$ and compact subset $K\subset \R^{(1,g)}$, we
will denote by $U_{Z_{0},K}$ the domain 
\begin{displaymath}
  U_{Z_{0},K}=\left\{(Z,W)\in \caH_{g}\times \C^{(1,g)}\mid \im(Z)\ge
  \im(Z_{0}),\ \im(W)\in K \im(Z)\right\}.
\end{displaymath}
(Here, $\im(Z)\ge  \im(Z_{0})$ means that $\im(Z - Z_0)$  is positive semi-definite.) On the other hand, if $K\subset \caH^{g}\times \C^{(1,g)}$ is a compact subset, $Y$ is
a symmetric semipositive real matrix and $\zeta$ is a real $g$-dimensional row vector, we define the strip
\begin{displaymath}
  S_{Y,\zeta,K}=K+\R_{\ge 0}i(Y,\zeta Y) \subseteq \caH_{g}\times \C^{(1,g)}.
\end{displaymath}

\begin{prop}\label{prop:9} Let $\phi $ be a periodic holomorphic function as
  before. If the Fourier expansion \eqref{eq:29} satisfies that
  $c(T,R)=0$ unless condition \eqref{eq:35} holds, then 
  the function
  \begin{displaymath}
    \psi(Z,W)=\phi (Z,W)e^{-2\pi m\im(W) (\im Z)^{-1}\im(W)^{t}}
  \end{displaymath}
  is bounded in each domain of the form $U_{Z_{0},K}$. Conversely, if
  the function $\psi(Z,W)$ is bounded in each strip of the form
  $S_{Y,\zeta,K}$, then the Fourier
  expansion of $\phi $
  satisfies that $c(T,R)\not =0$ implies condition \eqref{eq:35}.  
\end{prop}
\begin{proof}
  Assume that the Fourier expansion satisfies that $c(T,R)=0$ unless
  condition \eqref{eq:35} holds. Let $(Z,W)\in U_{Z_{0},K}$. Write
  $Y=\im(Z)$ and $\beta =\im(W)$. Consider the product
  \begin{displaymath}
    A_{T,R}=|e^{2 \pi i \tr(TZ)/\nu }e^{2 \pi i
      WR} e^{-2\pi m\beta Y^{-1}\beta ^{t}}|
    =e^{-2\pi (\tr(TY)/\nu +\beta R+m\beta Y^{-1}\beta ^{t})}.
  \end{displaymath}
  Since $(Z,W)\in U_{Z_{0},K}$ we can write $Y=Y_{0}+Y_{1}$, with
  $Y_{0}=\im(Z_{0}) $ and $Y_{1}\ge 0$, and $\beta =\zeta Y$ with
  $\zeta \in K$. Then
  \begin{displaymath}
    A_{T,R}=e^{-2\pi (\tr(TY_{0})/\nu+\zeta Y_{0}R+m\zeta Y_{0}\zeta ^{t})}
    e^{-2\pi (\tr(TY_{1})/\nu+\zeta Y_{1}R+m\zeta Y_{1}\zeta ^{t})}
  \end{displaymath}
  We define the matrices
  \begin{displaymath}
    M_{1}=
    \begin{pmatrix}
      Y_{1} & Y_{1} \zeta ^{t}\\
      \zeta  Y_{1} & \zeta Y_{1} \zeta ^{t}
    \end{pmatrix},\qquad
    M_{2}=
    \begin{pmatrix}
      T/\nu  & R/2\\
      R/2 & m
    \end{pmatrix}.
  \end{displaymath}
   Since $M_{1}$ and $M_{2}$ are symmetric and positive semidefinite,
   $\tr(M_{2}M_{1})\ge 0$. Moreover, 
   \begin{displaymath}
     e^{-2\pi (\tr(TY_{1})/\nu+\zeta Y_{1}R+m\zeta Y_{1}\zeta ^{t})}=
     e^{-2\pi \tr(M_{2}M_{1})}\le 1.
   \end{displaymath}
   We deduce that, for $(Z,W)\in U_{Z_{0},K}$,
   \begin{displaymath}
     |\psi(Z,W)|\le
     \sup_{\zeta \in K}\left( e^{-2\pi m\zeta Y_{0}\zeta
       ^{t}}\sum_{T,R}|c(T,R)| e^{-2\pi \tr(TY_{0})/\nu}
     e^{-2\pi \zeta Y_{0}R }\right). 
   \end{displaymath}
   Since the Fourier expansion is absolutely  convergent
   uniformly on compacts, the right hand side of the previous inequality is
   finite. Thus $\psi$ is bounded in $U_{Z_{0},K}$.

   To prove the converse we show that, if in the Fourier expansion
   \eqref{eq:29} there is a coefficient $c(T,R)\not =0$ that does not
   satisfy the condition \eqref{eq:35} then we can find a strip as in
   the proposition where the function $\psi$ is not bounded. Indeed,
   since we are assuming that condition \eqref{eq:35} is not
   satisfied, there exists a real vector $\zeta _{0}$ and a number $q$
   such that
   \begin{equation}\label{eq:34}
     \frac{1}{\nu }\zeta_{0} T\zeta_{0} ^{t} +q\zeta _{0}R+ q^{2}m <0. 
     \end{equation}
   Since $m\ge 0$ and $T\ge 0$ necessarily $\zeta _{0}\not =0$ and
   $q\not = 0$. We can assume furthermore that $\zeta _{0}\zeta
   _{0}^{t}=1$. Consider the subset $K\subset \caH_{g}\times \C^{(1,g)}$
   consisting of the elements $\big((x_{j,k})_{1\le j,k\le
     g}+i\Id,(y_{\ell})_{1\le \ell\le g}\big)$ with
   \begin{displaymath}
     x_{j,k}=x_{k,j}\in \R,\quad 0\le x_{j,k}\le \nu ,\quad  y_{\ell}\in
     \R,\quad 0\le y_{\ell}\le 1.
   \end{displaymath}
   Put $Y=\zeta_{0}^{t}\zeta_{0}$ and $\zeta=q\zeta_{0}$. The formula
   for the coefficients of the Fourier expansion tells us that, for any
   real number $t>0$, 
   \begin{displaymath}
     c(T,R)=
     \frac{1}{\nu ^{g(g-1)/2}}
     \int_{K}\psi e^{-2\pi i(\tr(T\cdot(x_{j,k}))/\nu +(y_{\ell})\cdot R))}
     e^{2\pi t(\tr(TY)/\nu +\zeta R)}dx_{j,k}dy_{\ell}.
   \end{displaymath}
   In particular this integral is independent of $t$.
Since
\begin{displaymath}
  \frac{1}{\nu }\tr(TY) +\zeta R=\frac{1}{\nu
  }\zeta_{0}T\zeta_{0}^{t}+q\zeta_{0}R<0, 
\end{displaymath}
   if the function $\psi$ is bounded in the strip $S_{Y,\zeta,K}$,
   then, taking the limit $t\to \infty$, we deduce that 
   $c(T,R)=0$ concluding the proof of the proposition.
 \end{proof}

  The following lemma follows easily from the definitions.

  \begin{lem} \label{lem:multiplicativity}
    If $\phi $ is a Siegel--Jacobi form of weight $k$ and index $m$ and $\psi
    $ is a Siegel--Jacobi form of  weight $k'$ and index $m'$,
    then $\phi \psi $ is a Siegel--Jacobi form of  weight $k+k'$ and
    index $m+m'$. If one of them is a cusp form, the same is true for
    the product.
  \end{lem}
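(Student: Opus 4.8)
The plan is to reduce the statement to two elementary facts: the automorphy factors in \eqref{eq:25} and \eqref{eq:26} are multiplicative with respect to the pair $(k,m)$, and the sum of a positive semidefinite and a positive definite symmetric matrix is positive definite.

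First I would verify directly from \eqref{eq:25} that for every $M=\left(\begin{smallmatrix}A&B\\C&D\end{smallmatrix}\right)\in\Sp(2g,\R)$ one has
\[
(\phi\psi)|_{k+k',m+m'}M \;=\; \bigl(\phi|_{k,m}M\bigr)\cdot\bigl(\psi|_{k',m'}M\bigr),
\]
using that $\rho_k(CZ+D)^{-1}\rho_{k'}(CZ+D)^{-1}=\rho_{k+k'}(CZ+D)^{-1}$, that the two exponential prefactors $e^{-2\pi i mW(CZ+D)^{-1}CW^t}$ combine into the one with exponent $m+m'$, and that $\phi$ and $\psi$ are evaluated at the same arguments $\bigl(M\langle Z\rangle,\,W(CZ+D)^{-1}\bigr)$. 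The analogous computation with \eqref{eq:26} gives $(\phi\psi)|_{k+k',m+m'}\zeta = \bigl(\phi|_{k,m}\zeta\bigr)\bigl(\psi|_{k',m'}\zeta\bigr)$ for $\zeta=(\lambda,\mu,x)\in H^{(g,1)}_\R$, since there the exponent is linear in $m$ and the shift $W\mapsto W+\lambda Z+\mu$ is independent of $m$. Combining these two identities with conditions \ref{item:17} and \ref{item:18} for $\phi$ and $\psi$ yields at once $(\phi\psi)|_{k+k',m+m'}M=\phi\psi$ for all $M\in\Gamma$ and $(\phi\psi)|_{k+k',m+m'}\zeta=\phi\psi$ for all $\zeta\in H^{(g,1)}_\Z$; holomorphy of $\phi\psi$ and $m+m'\in\N_{\geq 0}$ are clear.

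Next I would address the Fourier expansion and the cusp condition. Fix $M\in\Sp(2g,\Z)$ and expand $\phi|_{k,m}M$ and $\psi|_{k',m'}M$ as in \eqref{eq:29}; after replacing $\lambda_\Gamma$ by a common multiple if the two forms come with different choices (which only refines the lattice over which $T$ ranges and hence preserves the shape of the expansion), both expansions use the same $\lambda_\Gamma$. By the multiplicativity just established, $(\phi\psi)|_{k+k',m+m'}M$ is the Cauchy product of the two series, so its $(S,Q)$-coefficient is $\sum a(T,R)\,b(T',R')$ with $T+T'=S$, $R+R'=Q$. Since a sum of half-integral symmetric matrices is half-integral and $\Z^{(g,1)}+\Z^{(g,1)}=\Z^{(g,1)}$, the expansion has the required form; when $g\geq 2$ this already follows from the Koecher principle, but the cusp condition still has to be read off from the expansion. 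If the $(S,Q)$-coefficient is nonzero, choose a summand with $a(T,R)\neq 0$ and $b(T',R')\neq 0$; then
\[
\begin{pmatrix}\tfrac1{\lambda_\Gamma}S&\tfrac12 Q\\[2pt]\tfrac12 Q^t& m+m'\end{pmatrix}
=\begin{pmatrix}\tfrac1{\lambda_\Gamma}T&\tfrac12 R\\[2pt]\tfrac12 R^t& m\end{pmatrix}
+\begin{pmatrix}\tfrac1{\lambda_\Gamma}T'&\tfrac12 R'\\[2pt]\tfrac12 R'^t& m'\end{pmatrix},
\]
where the first summand is $\geq 0$ because $\phi$ is a Siegel--Jacobi form and the second is $>0$ because $\psi$ is a \emph{cusp} form. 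A positive semidefinite matrix plus a positive definite matrix is positive definite, so the left-hand side is $>0$, which is precisely the cusp condition for $\phi\psi$.

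I do not expect a genuine obstacle: the lemma follows from unwinding the definitions, as its statement already indicates. The only points requiring a little care are the bookkeeping with $\lambda_\Gamma$ when the two forms are given with different auxiliary integers, and the observation that the strict block-positivity coming from the cusp form $\psi$ survives being added to the merely semidefinite block coming from $\phi$ — which it does, since positive definiteness is stable under adding any positive semidefinite matrix.
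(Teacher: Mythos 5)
Your proof is correct, and it is exactly the argument the paper has in mind: the paper offers no written proof, stating only that the lemma ``follows easily from the definitions,'' and your unwinding of the slash operators \eqref{eq:25}--\eqref{eq:26} together with the observation that a positive semidefinite block matrix plus a positive definite one is positive definite is the intended verification.
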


  The following result is \cite[Theorem 1.5]{Ziegler-J}
\begin{lem}\label{lemm:21}
        Let $\phi $ be a Siegel--Jacobi form of weight $k$ and index
        $m$  for a subgroup $\Gamma \subset \Sp(2g, \Z)$ of finite index
       and let $\lambda,\mu  \in \Q^{(1,g)}$ be rational
        vectors. Then there is a finite index subgroup $\Gamma '\subset
        \Sp(2g, \Z)$ that depends only on $\Gamma$, $\lambda $ and $\mu
        $ such that the function
        \begin{displaymath}
          f(Z)=e^{2\pi i m\lambda  Z\lambda ^{t}}\phi (Z,\lambda Z+\mu )
        \end{displaymath}
        is a Siegel modular form of weight $k$ for $\Gamma '$.
  \end{lem}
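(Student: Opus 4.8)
The plan is to realize $f$, up to a nonzero constant, as the restriction to $W=0$ of a Heisenberg translate of $\phi$, and then to run the slash-calculus in the Jacobi group $G^{(g,1)}_{\R}=\Sp(2g,\R)\ltimes H^{(g,1)}_{\R}$. Since $\lambda,\mu\in\Q^{(1,g)}$, the element $\zeta_{0}:=[(\lambda,\mu),0]\in H^{(g,1)}_{\R}$ is defined over $\Q$, and putting $W=0$ in the defining formula \eqref{eq:26} gives
\[
(\phi|_{k,m}\zeta_{0})(Z,0)=e^{2\pi i m\mu\lambda^{t}}\,e^{2\pi i m\lambda Z\lambda^{t}}\,\phi(Z,\lambda Z+\mu)=e^{2\pi i m\mu\lambda^{t}}f(Z).
\]
Hence $f$ equals the holomorphic function $Z\mapsto(\phi|_{k,m}\zeta_{0})(Z,0)$ up to a nonzero constant; in particular $f$ is holomorphic on $\caH_{g}$.

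For the transformation law I would use that $|_{k,m}$ is a right action of $G^{(g,1)}_{\R}$ on holomorphic functions on $\caH_{g}\times\C^{(1,g)}$, extending \eqref{eq:25}--\eqref{eq:26} (a standard fact in Ziegler's formalism). Given $M=\begin{pmatrix}A&B\\ C&D\end{pmatrix}\in\Sp(2g,\Z)$, evaluating \eqref{eq:25} for the function $\phi|_{k,m}\zeta_{0}$ at $W=0$ yields
\[
f(M\langle Z\rangle)=(\mathrm{const})\cdot\det(CZ+D)^{k}\,\bigl(\phi|_{k,m}(\zeta_{0}M)\bigr)(Z,0).
\]
Since $H^{(g,1)}_{\R}$ is normal in $G^{(g,1)}_{\R}$, write $\zeta_{0}M=M\,\zeta_{0}^{M}$ with $\zeta_{0}^{M}:=M^{-1}\zeta_{0}M\in H^{(g,1)}_{\R}$, so for $M\in\Gamma$ condition \ref{item:17} gives $\phi|_{k,m}(\zeta_{0}M)=\phi|_{k,m}\zeta_{0}^{M}$. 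It therefore suffices to find a finite-index subgroup $\Gamma'\subseteq\Gamma$, depending only on $\Gamma,\lambda,\mu$, with $\phi|_{k,m}\zeta_{0}^{M}=\phi|_{k,m}\zeta_{0}$ for all $M\in\Gamma'$; for then $f(M\langle Z\rangle)=\det(CZ+D)^{k}f(Z)$.

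I would take $\Gamma':=\{M\in\Gamma\mid \zeta_{0}^{M}\zeta_{0}^{-1}\in H^{(g,1)}_{\Z}\}$: for such $M$, conditions \ref{item:17} and \ref{item:18} give $\phi|_{k,m}(\zeta_{0}^{M}\zeta_{0}^{-1})=\phi$, equivalently $\phi|_{k,m}\zeta_{0}^{M}=\phi|_{k,m}\zeta_{0}$. That $\Gamma'$ is a subgroup follows from the non-abelian cocycle identity
\[
\zeta_{0}^{M_{1}M_{2}}\zeta_{0}^{-1}=\bigl(\zeta_{0}^{M_{1}}\zeta_{0}^{-1}\bigr)^{M_{2}}\,\bigl(\zeta_{0}^{M_{2}}\zeta_{0}^{-1}\bigr)
\]
together with the fact that conjugation by $\Sp(2g,\Z)$ preserves $H^{(g,1)}_{\Z}$; and it has finite index because, with $L\in\Z_{>0}$ a common denominator for the entries of $\lambda$ and $\mu$, both the $(\lambda,\mu)$-part of $\zeta_{0}^{M}\zeta_{0}^{-1}$ lying in $\Z^{(1,2g)}$ and its central coordinate lying in $\Z$ are congruence conditions on $M$ modulo a suitable multiple $N$ of $L$, whence $\Gamma'\supseteq\Gamma\cap\Gamma(N)$ for $\Gamma(N)=\ker(\Sp(2g,\Z)\to\Sp(2g,\Z/N\Z))$. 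It remains to verify that $f$ is a Siegel modular form, i.e.\ holomorphic (resp.\ vanishing, for cusp forms) at the cusps: substituting $W=\lambda Z+\mu$ into the Fourier expansion \eqref{eq:29} of $\phi|_{k,m}M$ for the finitely many $M$ representing $\Gamma'\backslash\Sp(2g,\Z)$, and multiplying by $e^{2\pi i m\lambda Z\lambda^{t}}$, produces a series in $Z$ whose frequency matrices are $\tfrac{1}{\lambda_{\Gamma}}T+\tfrac12(R\lambda+\lambda^{t}R^{t})+m\,\lambda^{t}\lambda$, which are positive semidefinite (resp.\ positive definite) exactly by the positivity of the matrix appearing in condition \ref{item:16}; for $g\geq2$ one may instead invoke the Koecher principle \cite[Lemma~1.6]{Ziegler-J}.

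The step I expect to be the real work is the unwinding of the condition $\zeta_{0}^{M}\zeta_{0}^{-1}\in H^{(g,1)}_{\Z}$: one must track the Heisenberg $2$-cocycle (central) term, bilinear in $(\lambda,\mu)$ and in the entries of $M$, carefully enough to confirm both that $\Gamma'$ is a subgroup and that it contains a principal congruence subgroup. The cocycle property of $|_{k,m}$ over all of $G^{(g,1)}_{\R}$, used freely above, is itself a routine but somewhat lengthy verification within the same formalism.
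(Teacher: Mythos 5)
The paper does not prove this lemma at all: it simply cites Ziegler \cite[Theorem~1.5]{Ziegler-J}, so there is no internal argument to compare against. Your reconstruction is essentially the standard proof (and, as far as I can tell, the one Ziegler gives): realize $f$ up to the root of unity $e^{2\pi i m\mu\lambda^{t}}$ as $(\phi|_{k,m}\zeta_{0})(\,\cdot\,,0)$, use that $|_{k,m}$ is a right action of the full Jacobi group together with normality of the Heisenberg subgroup to reduce the transformation law to $\zeta_{0}^{M}\zeta_{0}^{-1}\in H^{(g,1)}_{\Z}$, and observe that this is a congruence condition so that $\Gamma'\supseteq\Gamma\cap\Gamma(N)$. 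The identity $S=P^{t}\left(\begin{smallmatrix}T/\lambda_{\Gamma}&R/2\\ R^{t}/2&m\end{smallmatrix}\right)P$ with $P=\left(\begin{smallmatrix}\Id_{g}\\ \lambda\end{smallmatrix}\right)$ is exactly the right way to see semipositivity (resp.\ positivity) of the resulting frequency matrices, which is what makes the analogous statement for cusp forms (used in the paper's Lemma~\ref{lemm:22} and Proposition~\ref{prop:bi-gr-alg}) come out as well.

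Two minor points, neither a genuine gap. First, in the cusp condition step the data substituted into the expansion of $\phi|_{k,m}M'$ should be the conjugated pair $(\lambda'',\mu'')=(\lambda,\mu)M'$ coming from $\zeta_{0}^{M'}$, not $(\lambda,\mu)$ itself: what you must control is $f|_{k}M'=(\text{const})\cdot\bigl((\phi|_{k,m}M')|_{k,m}\zeta_{0}^{M'}\bigr)(\,\cdot\,,0)$. The positivity argument is unchanged since $\lambda''$ is again rational. Second, you lean on the cocycle property of $|_{k,m}$ over all of $G^{(g,1)}_{\R}$ and on the explicit form of $M^{-1}\zeta_{0}M$ (including the central term) without verifying them; both are proved in \cite{Ziegler-J}, and since the paper itself outsources the whole lemma to that reference, citing these facts rather than reproving them is entirely in the spirit of the text.
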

  
  \begin{ex}\label{exm:1} Here we list some examples of Siegel--Jacobi
    forms which turn out to be useful for our purposes.
    \begin{enumerate}
    \item \label{item:8} The only Siegel--Jacobi forms of
      weight $0$ and index
      $0$ are the constants (i.e. $J_{0,0}(\Gamma )=\C$). Moreover
      $J_{k,m}(\Gamma )=0$ whenever $k <0$ or $m<0$. The first case follows from
      Lemma \ref{lemm:21} using the 
      Corollary to
      Proposition 1 in \cite[Section 4]{klingen} or the remark after
      Theorem 1 in \cite[Section 8]{klingen}, while the second follows
      from the geometric description  \eqref{eq:18} of the line bundle
      of Siegel--Jacobi forms; indeed, if $m<0$ the restriction to any fibre is strictly anti-effective. 
    \item \label{item:9} Any Siegel modular form of weight $k$  for
      $\Gamma $ defines a Siegel--Jacobi form of weight $k$ and index
      $0$. A classical example is as follows (for details we refer to
      \cite[Section 2]{Ziegler-J}). For $k>g+1$ even, the
      \emph{Eisenstein series}
      \begin{equation}\label{eq:23}
        E_{g,k}(Z)=\sum_{M \in P_{g}\backslash
          \Sp(2g,\Z)}\rho _{k} (C Z +D)^{-1}
      \end{equation}
 is convergent and defines a Siegel modular form of weight
      $k$ for the group $\Sp(2g,\Z)$, in particular for any subgroup
      $\Gamma \subset \Sp(2g,\Z)$. Hence it defines a Siegel--Jacobi form of weight $k$ and index $0$ for $\Gamma$.
      In equation \eqref{eq:23}, $P_{g}$ is the subgroup
      \begin{displaymath}
        P_{g}=\left\{
          \begin{pmatrix}
            A & B\\
            C & D
          \end{pmatrix}
          \in \Sp(2g,\Z)\,\middle | \,C=0\right\}
        \end{displaymath}
        and $M$ is written as in \eqref{eq:1}.
      \item \label{item:13} Similarly, \emph{Poincar\'e series} can be used to produce non-zero
        Siegel cusp forms of any weight $k>2g$ such
          that $kg$ is even (see \cite[Proposition 2 and its
        corollary]{klingen}). By pullback they produce
        Siegel--Jacobi cusp forms of index zero. 
      \item \label{item:10} Eisenstein series can be generalized to
        produce Siegel--Jacobi modular forms of arbitrary index; given
        $m>0$ an integer and $k>g+2$ an even integer, Ziegler constructs in \cite[Theorem 2.1]{Ziegler-J} a Siegel--Jacobi modular form of weight $k$ and index $m$. 
%
    \end{enumerate}
  \end{ex}

  \begin{lem}\label{lemm:22}
    If $\phi $ is a Siegel--Jacobi form of weight $0$ and index $m \neq 0$,
    then $\phi =0$.
  \end{lem}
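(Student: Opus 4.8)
The plan is to combine Lemma~\ref{lemm:21} --- which restricts a Siegel--Jacobi form to a Siegel modular form along the abelian subvariety $\{W=\lambda Z+\mu\}$ --- with the fact that weight-zero Siegel modular forms are constant, and then to exploit the Heisenberg quasi-periodicity \eqref{eq:26}.

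First I would show that $\phi$ is independent of $Z$. Applying Lemma~\ref{lemm:21} with $\lambda=0$ and an arbitrary rational vector $\mu\in\Q^{(1,g)}$, the function $Z\mapsto\phi(Z,\mu)$ is a Siegel modular form of weight $0$ for some finite index subgroup $\Gamma'\subset\Sp(2g,\Z)$, hence a constant by Example~\ref{exm:1}\eqref{item:8}. Consequently every partial derivative $\partial\phi/\partial Z_{jk}$ vanishes on $\caH_g\times\Q^{(1,g)}$; for fixed $Z$ this derivative is a holomorphic function of $W$ vanishing on $\Q^{(1,g)}$, which is dense in the totally real subspace $\R^{(1,g)}\subset\C^{(1,g)}$, so it vanishes identically. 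Hence $\phi(Z,W)=\phi(W)$ depends only on $W$.

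Next I would feed this into the Heisenberg invariance (condition~\ref{item:18} of Definition~\ref{def:SJ_forms}). For $\lambda,\mu\in\Z^{(1,g)}$ and $x\in\Z$, formula~\eqref{eq:26} together with $m\in\Z$ gives
\begin{equation*}
\phi(W+\lambda Z+\mu)=e^{-2\pi i m(\lambda Z\lambda^{t}+2\lambda W^{t})}\,\phi(W)\qquad(Z\in\caH_g).
\end{equation*}
Take $\lambda=(1,0,\dots,0)$ and $\mu=0$, so $\lambda Z$ is the first row of $Z$ and $\lambda Z\lambda^{t}=Z_{11}$. For every $\zeta\in\C^{(1,g)}$ with $\im\zeta_1>0$ there is a symmetric matrix with positive definite imaginary part whose first row is $\zeta$ (complete the remaining block of the imaginary part by a large multiple of the identity matrix and apply a Schur complement), so the displayed identity yields $\phi(W+\zeta)=e^{-2\pi i m(\zeta_1+2W_1)}\phi(W)$ for all such $\zeta$. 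Both sides are entire in $\zeta$ and agree on the nonempty open set $\{\im\zeta_1>0\}$, so by the identity theorem they agree for all $\zeta\in\C^{(1,g)}$. Setting $\zeta=-W$ gives $\phi(W)=e^{2\pi i m W_1}\phi(0)$, and substituting this back into the functional equation forces $\phi(0)\bigl(e^{2\pi i m(W_1+\zeta_1)}-e^{-2\pi i m(W_1+\zeta_1)}\bigr)=0$ identically; since $m\neq 0$ this gives $\phi(0)=0$, hence $\phi\equiv 0$.

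The two points requiring a little care are the step ``vanishing on $\Q^{(1,g)}$ implies vanishing everywhere'' (a holomorphic function on $\C^{(1,g)}$ that vanishes on the real points vanishes identically, e.g.\ by expanding in a power series around a real point) and, more substantially, the realization of an arbitrary $\zeta$ with $\im\zeta_1>0$ as the first row of a point of $\caH_g$: this is what upgrades the discrete Heisenberg relations to an open family of identities that can be analytically continued, and I expect it to be the conceptual crux. The vanishing of weight-zero Siegel modular forms I simply take from Example~\ref{exm:1}\eqref{item:8}.
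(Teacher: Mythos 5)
Your proof is correct, and its second half takes a genuinely different route from the paper's. You share the first ingredient: both arguments invoke Lemma~\ref{lemm:21} together with the constancy of weight-zero Siegel modular forms. But the paper applies Lemma~\ref{lemm:21} for \emph{all} rational $(\lambda,\mu)$ and extends by continuity to real parameters, obtaining $\phi(Z,\lambda Z+\mu)=e^{-2\pi i m\lambda Z\lambda^{t}}g(\lambda,\mu)$ on all of $\caH_g\times\C^{(1,g)}$, and then derives the contradiction from the $\Sp(2g)$-modularity (condition~\ref{item:17} of Definition~\ref{def:SJ_forms}): transporting the identity by $M\in\Gamma$ produces an exponential factor that genuinely depends on $Z$ while $g$ does not, forcing $g=0$. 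You instead use only the $\lambda=0$ case to deduce (via vanishing of the $Z$-derivatives on the dense set $\Q^{(1,g)}$, then on the totally real $\R^{(1,g)}$, then identically by holomorphy) that $\phi$ depends only on $W$, and then extract the contradiction entirely from the \emph{Heisenberg} part of the Jacobi group: since $Z$ is now a free parameter in the quasi-periodicity relation \eqref{eq:26}, the discrete relations sweep out the open set $\{\im\zeta_1>0\}$ of translation vectors (your Schur-complement completion of the first row is the right justification), and analytic continuation plus self-substitution forces $\phi(0)=0$. Both arguments are sound; yours has the merit of making the final contradiction completely explicit (the paper's closing step is rather terse), at the cost of the extra holomorphic-continuation bookkeeping in the first step, where the paper simply parametrizes all of $\C^{(1,g)}$ by $\lambda Z+\mu$ with $\lambda,\mu$ real.
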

  \begin{proof}
    Let $\phi $ be such a Siegel--Jacobi form. For $Z\in \caH_{g}$ and
    $\lambda, \mu \in \R^{(1,g)}$, write
    \begin{displaymath}
          f(Z,\lambda ,\mu )=e^{2\pi i m\lambda  Z\lambda ^{t}}\phi (Z,\lambda Z+\mu ).
        \end{displaymath}
        By Lemma \ref{lemm:21}, whenever $\lambda, \mu \in \Q^{(1,g)}$
        are fixed we obtain a Siegel modular form of weight zero for a
        certain finite index subgroup  of $\Sp(2g,\Z) $, which is necessarily constant. By
    continuity, for every $\lambda, \mu \in \R^{(1,g)}$ the
        function $f(Z,\lambda ,\mu )$ is constant. Therefore, there is
        a function $g\colon \R^{(1,g)}\times \R^{(1,g)} \to \C$ such
        that
        \begin{equation}\label{eq:24}
          \phi (Z,\lambda Z+\mu )=e^{-2\pi i m\lambda  Z\lambda ^{t}}g(\lambda ,\mu ).
        \end{equation}
        We now see that this is incompatible with the modular condition
        $\phi|_{k,m}M = \phi$ for all $M \in \Gamma$. Let
        \begin{displaymath}
          M=
          \begin{pmatrix}
            A & B\\ C&D
          \end{pmatrix}\in \Gamma, 
        \end{displaymath}
        then the modular condition and equation \eqref{eq:24} imply that
        \begin{displaymath}
          e^{-2\pi i m \left( (\lambda Z+\mu)(CZ+D)^{-1}C(\lambda
              Z+\mu )^{t}+\lambda 'M\langle Z\rangle \lambda
              '^{t}-\lambda Z\lambda ^{t} \right)}
          g(\lambda ',\mu ')=g(\lambda ,\mu ),
        \end{displaymath}
        where
        \begin{displaymath}
          (\lambda ',\mu ')=(\lambda ,\mu )M^{-1}.
        \end{displaymath}
        When $m\not =0$, $M\not = \Id$ and $\lambda \not = 0$, the
        exponential term actually depends on $Z$. As the function $g$ does not depend on $Z$, we conclude that $g=0$. 
  \end{proof}

  A standard consequence of the previous examples and Lemma
  \ref{lemm:22} is the following (see also \cite{EZ}). 

  \begin{prop}\label{prop:bi-gr-alg}
    The ring $J_{\ast,\ast}(\Gamma )\coloneqq \bigoplus _{k,m}J_{k,m}(\Gamma )$
    is not finitely generated. 
  \end{prop}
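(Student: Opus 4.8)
The plan is to argue by contradiction using the bigrading, the vanishing $J_{0,m}(\Gamma)=0$ for $m\neq 0$ (Lemma~\ref{lemm:22}), and the existence of nonzero Jacobi--Eisenstein series of a \emph{fixed} weight and arbitrarily large index (Example~\ref{exm:1}.\ref{item:10}). Suppose $J_{*,*}(\Gamma)$ is finitely generated. As a bigraded $\C$-algebra it then admits finitely many bihomogeneous generators; since $J_{0,0}(\Gamma)=\C$ (Example~\ref{exm:1}.\ref{item:8}) we may discard the constant ones, obtaining generators $\phi_1,\dots,\phi_N$ of bidegrees $(k_i,m_i)\neq(0,0)$. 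By Example~\ref{exm:1}.\ref{item:8} we have $k_i,m_i\geq 0$, and Lemma~\ref{lemm:22} rules out the case $k_i=0<m_i$; hence $k_i>0$ for every $i$. Moreover not all $m_i$ can vanish, since otherwise $J_{*,*}(\Gamma)\subseteq\bigoplus_k J_{k,0}(\Gamma)$, contradicting $E_{g,k_0,1}\in J_{k_0,1}(\Gamma)\setminus\{0\}$ for a suitably large even integer $k_0>g+2$. Therefore $c\coloneqq\min\{k_i/m_i : m_i>0\}$ is a well-defined positive real number.

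The heart of the matter is a slope inequality. Any bihomogeneous element of $J_{k,m}(\Gamma)$ is a $\C$-linear combination of monomials $\prod_i\phi_i^{a_i}$ with $\sum_i a_ik_i=k$ and $\sum_i a_im_i=m$; when $m>0$, using $k_i>0$ and the definition of $c$,
\[
k=\sum_i a_ik_i\;\geq\;\sum_{i:\,m_i>0}a_ik_i\;\geq\;c\sum_{i:\,m_i>0}a_im_i\;=\;c\,m.
\]
Hence $J_{k,m}(\Gamma)=0$ whenever $m>0$ and $0<k<cm$.

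To conclude, I would fix an even integer $k_0$ large enough that the Jacobi--Eisenstein series $E_{g,k_0,m}$ is a \emph{nonzero} element of $J_{k_0,m}(\Gamma)$ for every $m\geq 1$; then, choosing any $m$ with $k_0<cm$, the slope bound forces $J_{k_0,m}(\Gamma)=0$, contradicting $E_{g,k_0,m}\neq 0$. The step I expect to require the most care is precisely this nonvanishing of $E_{g,k_0,m}$ for fixed large $k_0$ and all $m$: one checks, for instance, that the contribution of the identity coset in the definition of $E_{g,k_0,m}$ survives as $\operatorname{Im} Z\to\infty$ while the remaining terms decay, so that $E_{g,k_0,m}\not\equiv 0$. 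Everything else is a formal manipulation of the $\N^2$-grading; for $g=1$ this recovers the classical argument of Eichler--Zagier~\cite{EZ}.
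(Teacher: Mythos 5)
Your argument is correct and is essentially the paper's own proof: both deduce from $J_{0,0}(\Gamma)=\C$, the vanishing in low bidegrees, and Lemma~\ref{lemm:22} that all generators have positive weight, hence that the ratio of index to weight of any nonzero form is bounded, and then contradict this with the Jacobi--Eisenstein series $E_{g,k,m}$ of fixed weight and arbitrary index from Example~\ref{exm:1}. Your extra care about the nonvanishing of $E_{g,k_0,m}$ (via the identity coset / constant Fourier coefficient) and the explicit slope inequality are just more detailed versions of steps the paper leaves implicit, citing \cite[Theorem 2.1]{Ziegler-J}.
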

  \begin{proof}
    Assume that the ring $J_{\ast,\ast}(\Gamma )$ is finitely
    generated. Let $1,f_{i}$, $i=1,\dots,r$ be a set of homogeneous generators
    with $f_{i}$ non-constant. Let $k_{i}$ and $m_{i}$ be the weight and
    index of $f_{i}$. Then by Example \ref{exm:1}.\ref{item:8} and Lemma~\ref{lemm:22} we have
    $k_{i}>0$ for $i=1,\dots,r$. Thus the possible ratios
    $m_{i}/k_{i}$ are bounded above. Therefore if $f $ is a non-constant Siegel--Jacobi modular form of weight $k$ and index $m$, then
    the ratio $m/k$ is bounded. But in Example
    \ref{exm:1}.\ref{item:10} we have a construction of non-zero Siegel--Jacobi
    modular forms of fixed weight and arbitrary index.         
  \end{proof}
  
  \begin{rmk}
Fix $r\in \mathbb Q_{>0}$. Theorem \ref{th:intro-not-fin-gen} states that the subring $$\bigoplus_{m\le rk}J_{k, m}(\Gamma)\subseteq J_{\ast,\ast}(\Gamma )$$ is not finitely generated. It does not seem possible to give a simple proof of this fact by the methods of the proof of Proposition \ref{prop:bi-gr-alg}; indeed, that argument relies on there being homogeneous elements in $J_{\ast,\ast}(\Gamma )$ for which the slope $m/k$ gets arbitrarily large, and homogeneous elements in $\bigoplus_{m\le rk}J_{k, m}(\Gamma)$ have slope bounded by $r$. 
  \end{rmk}
  
\subsection{The line bundle of Siegel--Jacobi forms}\label{sec:geom-interpr-line}

As mentioned in the introduction, to the neat arithmetic group $\Gamma $ we
associate a fibration of principally polarized abelian varieties over a
complex manifold
\begin{displaymath}
 \pi \colon  \caB(\Gamma )\longrightarrow \caA(\Gamma ).
\end{displaymath}
The transformations \eqref{eq:25} and \eqref{eq:26} define a cocycle
for the group $\widetilde \Gamma $. Therefore they determine a line
bundle $L_{k,m}$ such that the Siegel--Jacobi forms of weight $k$ and
index $m$ can be seen as global sections of this line bundle.  We
recall the geometric interpretation of the line bundle $L_{k,m}$.  

Let $e\colon \caA(\Gamma )\to \caB(\Gamma )$ be the zero section. Let
$M$ be the line bundle on $\caA(\Gamma )$ defined as 
\begin{displaymath}
  M=\det\left(e^{\ast} \Omega ^{1}_{\caB(\Gamma )/\caA(\Gamma )}\right).
\end{displaymath}
Let $(u_{1},\dots,u_{g})$ be the canonical 
holomorphic coordinates  on $\C^{g}$. Then the form 
$du_{1}\wedge \dots \wedge du_{g}$ is a multivalued section of $M$,
well defined up to a global constant. Note that this is a multivalued
section because $du_{1}\wedge \dots \wedge du_{g}$ is not invariant
under the action of $\Gamma $. Let $f$ be a Siegel modular form of
weight $k$. Then one can verify that the symbol
\begin{displaymath}
  f(du_{1}\wedge \dots \wedge du_{g})^{\otimes k}
\end{displaymath}
is invariant under the action of $\Gamma $ and therefore determines a
section of the line bundle $M^{\otimes k}$. In this way we obtain an
identification of $L_{k,0}$ with $\pi ^{\ast}M^{\otimes k}$. 

Since $\caB(\Gamma )$ is a family of principally polarized abelian
varieties it comes equipped with a biextension line bundle $B$. This
line bundle is defined as follows.
Let $\caB(\Gamma )^{\vee}$ be the dual family of abelian varieties and
$P$ the Poincar\'e line bundle on $\caB(\Gamma )\times_{\caA(\Gamma)}
\caB(\Gamma)^{\vee}$. Let $\lambda \colon \caB(\Gamma )\to \caB(\Gamma
)^{\vee}$ be the isomorphism defined by the polarization. Then
\begin{displaymath}
  B=(\Id,\lambda )^{\ast}P.
\end{displaymath}
The line bundle $B$ gives on each fibre twice the principal
polarization. There is also an identification $L_{0,m}$ with
$B^{\otimes m}$. Therefore, the line bundle of Siegel--Jacobi forms of weight $k$ and index $m$ is given by
\begin{equation}\label{eq:18}
  L_{k,m}=\pi ^{\ast}M^{\otimes k}\otimes B^{\otimes m}.
\end{equation}

\begin{lem} We have
\[ J_{k,m}(\Gamma) \subseteq H^0(\caB(\Gamma),L_{k,m}) \, , \]
with equality if $g \geq 2$. 

\end{lem}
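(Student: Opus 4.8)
The plan is to deduce both assertions by unwinding the definition of the line bundle $L_{k,m}$ in terms of the automorphy cocycle given by the slash operators \eqref{eq:25} and \eqref{eq:26}. Write $\widetilde{\caB} = \caH_g \times \C^{(1,g)}$, a contractible complex manifold on which $\widetilde\Gamma = \Gamma \ltimes H_{\Z}^{(g,1)}$ acts, with $\caB(\Gamma) = \widetilde\Gamma \backslash \widetilde{\caB}$ (the action being free modulo its centre since $\Gamma$ is neat). By construction $L_{k,m}$ is the line bundle on $\caB(\Gamma)$ associated to the $\widetilde\Gamma$-cocycle built from $|_{k,m}$, so that pulling back to $\widetilde{\caB}$ and using the tautological trivialisation identifies $H^0(\caB(\Gamma), L_{k,m})$ with the space of holomorphic functions $\phi \colon \widetilde{\caB} \to \C$ satisfying $\phi|_{k,m}\gamma = \phi$ for every $\gamma \in \widetilde\Gamma$. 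As $\widetilde\Gamma$ is generated by the elements $M \in \Gamma$ and the Heisenberg elements $\zeta \in H_{\Z}^{(g,1)}$, this system of equations is exactly conditions \ref{item:17} and \ref{item:18} of Definition~\ref{def:SJ_forms}; the only point to verify is that central Heisenberg elements $[(0,0),x]$ with $x \in \Z$ impose nothing new, which holds because \eqref{eq:26} gives $\phi|_{k,m}[(0,0),x] = e^{2\pi i m x}\phi$ and $mx \in \Z$.

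Granting this dictionary, the inclusion is immediate: by Definition~\ref{def:SJ_forms}, $J_{k,m}(\Gamma)$ is the subspace of those holomorphic $\phi$ on $\widetilde{\caB}$ that satisfy \ref{item:17}, \ref{item:18} \emph{and} the Fourier-growth condition \ref{item:16}, hence $J_{k,m}(\Gamma) \subseteq H^0(\caB(\Gamma), L_{k,m})$. For the equality when $g \geq 2$, I would simply invoke the Koecher principle in the form \cite[Lemma~1.6]{Ziegler-J}: when $g \geq 2$, condition \ref{item:16} is automatically satisfied by any holomorphic $\phi$ fulfilling \ref{item:17} and \ref{item:18}. Therefore every element of $H^0(\caB(\Gamma), L_{k,m})$ is already a Siegel--Jacobi form, which yields $H^0(\caB(\Gamma), L_{k,m}) \subseteq J_{k,m}(\Gamma)$ and hence equality.

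I do not expect a serious obstacle here: the statement is a formal consequence of the cocycle description of $L_{k,m}$ together with Koecher's principle. The one place requiring a little care is the identification of global holomorphic sections of $L_{k,m}$ with $\widetilde\Gamma$-equivariant holomorphic functions on the (simply connected) cover $\widetilde{\caB}$, including the bookkeeping for the centre of the Heisenberg group noted above. It is worth recording, finally, that for $g = 1$ the inclusion is genuinely strict: there exist holomorphic functions satisfying \ref{item:17} and \ref{item:18} whose Fourier coefficients violate the positivity condition \ref{item:16} — the analogue of a weight-$k$ function on an open modular curve that fails to extend holomorphically over the cusps — which is precisely why the hypothesis $g \geq 2$ is needed for equality.
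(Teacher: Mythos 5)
Your proof is correct and follows exactly the route the paper intends: the paper's proof is the one-line remark that the lemma "follows from Definition~\ref{def:SJ_forms} and the remark afterwards," i.e.\ from the cocycle description of $L_{k,m}$ (sections correspond to holomorphic functions satisfying conditions \ref{item:17} and \ref{item:18}) together with the Koecher principle making condition \ref{item:16} automatic for $g \geq 2$. Your additional check that the centre of the Heisenberg group imposes no extra condition is a worthwhile detail the paper leaves implicit.
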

\begin{proof} The first statement follows directly from Definition
  \ref{def:SJ_forms} and the second statement is the Koecher
  principle  \cite[Lemma 1.6]{Ziegler-J}.  
\end{proof}

\subsection{The invariant metric}

The aim of this subsection is to discuss the canonical invariant metric on the line bundle of Siegel--Jacobi forms.

For $Z\in \caH_{g}$ we write $Z=X+iY$ with $X, Y \in \mathrm{Mat}_{g \times g}(\R)$ and for $W\in \C^{(1,g)}$ we write
$W=\alpha +i\beta $ with $\alpha, \beta \in \R^{(1,g)}$. 
\begin{df}\label{def:6}
  Let $\phi \in J_{k,m}(\Gamma )$ be a Siegel--Jacobi form. Then the
  \emph{standard invariant norm} $h^{\inv}(\phi)$ of $\phi $ is defined by
  \begin{displaymath}
    h^{\inv}(\phi (Z,W))^{2}=|\phi (Z,W)|^{2}\rho _{k}(Y)e^{-4\pi m \beta
      Y^{-1}\beta^{t}}.
  \end{displaymath}
  This quantity is readily checked to be $\widetilde \Gamma $-invariant. When $\Gamma
  $ is neat it induces a smooth hermitian metric on $L_{k,m}(\Gamma )$.
\end{df}

The standard invariant metric of Definition \ref{def:6} gives in particular
metrics on $M$ and $B$. These metrics agree with the classical
Hodge metric on $M$ and the canonical biextension metric on $B$. We refer to \cite[Section~5]{MR4221000}
for a brief discussion of these classical metrics and a verification of this agreement.

\begin{lem}\label{lem:psh} Assume $k, m \ge 0$.
  The standard invariant metric $h^{\inv}$ on $L_{k,m}(\Gamma )$ is a psh (i.e.\ semipositive) metric. 
\end{lem}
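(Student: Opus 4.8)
The plan is to check positivity of the curvature of $h^{\inv}$ on $L_{k,m}$ directly by writing down the local weight function of the metric and computing its complex Hessian. Since $L_{k,m} = \pi^*M^{\otimes k}\otimes B^{\otimes m}$ by \eqref{eq:18}, and since psh metrics are stable under tensor products and pullbacks (the sum of two psh functions is psh, and the pullback of a psh function along a holomorphic map is psh), it suffices to treat the two factors separately: I would show that the Hodge metric on $M$ is psh and that the biextension metric on $B$ is psh. By the remark following Definition~\ref{def:6} the restriction of $h^{\inv}$ to $M$ and $B$ agrees with these classical metrics, so this reduction is legitimate.

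For the factor $\pi^*M^{\otimes k}$: on $\caH_g$ the line bundle $M^{\otimes k}$ has the section $(dz_1\wedge\cdots\wedge dz_g)^{\otimes k}$, whose invariant norm squared is $\rho_k(Y) = (\det Y)^k$. Thus the local weight is $-k\log\det Y$, and the claim reduces to the statement that $Z = X+iY \mapsto -\log\det Y = -\log\det\left(\tfrac{Z-\bar Z}{2i}\right)$ is psh (indeed strictly psh) on $\caH_g$; this is the classical fact that the Bergman-type kernel $\log\det\operatorname{Im}Z$ gives the Kähler potential of the Siegel metric, and $-\log\det Y$ is plurisubharmonic because $-\log\det$ is convex and the argument is a (matrix) affine function of $Z$ in a suitable sense — more carefully, one checks the first Chern form $\tfrac{i}{2\pi}\partial\bar\partial(-k\log\det Y)$ is a positive $(1,1)$-form, which is a standard computation. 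Pulling back along $\pi$ preserves the psh property.

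For the factor $B^{\otimes m}$: here the invariant norm squared of the natural section is, in the coordinates $(Z,W)$ with $W = \alpha + i\beta$, given by $e^{-4\pi m\,\beta Y^{-1}\beta^t}$, so the local weight is $4\pi m\,\beta Y^{-1}\beta^t$. One must show the function $(Z,W)\mapsto \beta Y^{-1}\beta^t$ (with $\beta = \operatorname{Im}W$, $Y = \operatorname{Im}Z$) is psh on $\caH_g\times\C^{(1,g)}$. For fixed $Z$ this is a positive semidefinite quadratic form in $\beta$, hence in $W$ up to a pluriharmonic correction, so the partial Hessian in the $W$-directions is $\geq 0$; the real content is the mixed and $Z$-direction behaviour. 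I expect the cleanest route is to recognize that $\beta Y^{-1}\beta^t$ is (up to a constant and a pluriharmonic term depending on $W$ and $Z$) the classical height/Néron function of the universal abelian variety, whose psh-ness is well known — alternatively one invokes \cite[Section~5]{MR4221000} where the biextension metric is identified and its semipositivity recorded, or one does the explicit $\partial\bar\partial$ computation, which after the substitution $W = \lambda Z + \mu$ linearizes nicely.

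The main obstacle is the biextension factor: unlike $-\log\det Y$, the function $\beta Y^{-1}\beta^t$ mixes the $Z$ and $W$ variables in a way that is not obviously psh, and a brute-force Hessian computation is somewhat involved because $Y^{-1}$ depends nonlinearly on $Z$. The key trick I would use is the change of variables coming from the $G^{(g,1)}_{\R}$-action: after translating $W$ to $W + \lambda Z + \mu$ the exponent becomes linear-plus-quadratic in the new variables and the positivity is manifest, or equivalently one uses that locally on $\caB(\Gamma)$ the biextension metric differs from a translate of the theta metric by a pluriharmonic factor. Once both factors are handled, psh-ness of $h^{\inv}$ on $L_{k,m}$ follows immediately by additivity of local weights, and "semipositive" is just the reformulation that the curvature current is a positive $(1,1)$-current.
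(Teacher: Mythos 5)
Your reduction to the two factors and your treatment of $\pi^*M^{\otimes k}$ are fine, but the crux of the lemma is exactly the point you defer: plurisubharmonicity of $(Z,W)\mapsto \beta Y^{-1}\beta^t$. None of the three routes you offer actually closes it. The appeal to \cite[Section~5]{MR4221000} does not work as stated: the paper cites that reference only to identify $h^{\inv}|_B$ with the classical biextension metric, not to import its semipositivity. The claim that the $\partial\bar\partial$ computation ``linearizes nicely'' after $W\mapsto W+\lambda Z+\mu$ is also suspect, since the substitution you describe (with real $\lambda,\mu$) is a real-analytic, not holomorphic, change of variables and so does not manifestly preserve psh-ness; and ``the Néron function is well known to be psh'' is precisely the statement to be proved. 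So as written there is a genuine gap at the one step that carries the content of the lemma.

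The paper closes this gap with two short ingredients that you are missing. First, a smooth function on $\C^n$ of the form $f(z_1,\dots,z_n)=g(\im z_1,\dots,\im z_n)$ is psh if and only if $g$ is convex (\cite[I~(5.13)]{demailly12:cadg}); both local weights $-\log\det Y$ and $\beta Y^{-1}\beta^t$ depend only on the imaginary parts $(Y,\beta)$ of $(Z,W)$, so everything reduces to real convexity and no complex Hessian ever needs to be computed. Second, the function $(Y,\beta)\mapsto \beta Y^{-1}\beta^t$ is \emph{jointly} convex in $(Y,\beta)$ on $\{Y>0\}\times\R^{(1,g)}$ --- this is the matrix fractional function of \cite[Example~3.4, Section~3.1.7]{Boyd:convex}, seen for instance from the Schur-complement description of its epigraph, $\beta Y^{-1}\beta^t\le t \iff \left(\begin{smallmatrix} Y & \beta^t\\ \beta & t\end{smallmatrix}\right)\succeq 0$ --- while $-\log\det Y$ is the standard log-determinant example. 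Your observation that the function is convex in $\beta$ for fixed $Y$ is true but insufficient: separate psh-ness (or separate convexity) in the two groups of variables does not imply joint psh-ness, and joint convexity in $(Y,\beta)$ is exactly the nontrivial input you would need to supply.
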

\begin{proof}
  It is enough to show that $-\log \det (Y)$ and $\beta
      Y^{-1}\beta^{t}$ are psh functions on $\caH_{g}\times
      \C^{(1,g)}$. A smooth function $f\colon \C^{n}\to \R$ of the form
      \begin{displaymath}
        f(z_{1},\dots, z_{n})=\varphi(\im(z_{1}),\dots,\im(z_{n})) 
      \end{displaymath}
      is psh if and only if $\varphi$ is convex
      (see \cite[I~(5.13)]{demailly12:cadg}). Then the lemma follows from the
    Example ``log-determinant'' in \cite[Section~3.1.5]{Boyd:convex} and
    by \cite[Example~3.4,
    Section~3.1.7]{Boyd:convex}.
\end{proof}

A useful result by Ziegler is the following  (see \cite[Proposition~1 and its Corollary]{dulinski}).

\begin{prop}\label{prop:6}
  A Siegel--Jacobi form $\phi $ is a cusp form if and only if
  $h^{\inv}(\phi)$ is
  bounded. 
\end{prop}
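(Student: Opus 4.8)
The plan is to derive both implications from the Fourier expansion in Definition~\ref{def:SJ_forms}(\ref{item:16}) and the explicit shape of the invariant norm in Definition~\ref{def:6}. Writing $Z = X+iY$ and $W = \alpha+i\beta$, the first step is the square-completion identity
\begin{equation}\label{eq:sq-prop6}
  \tfrac{1}{\lambda_{\Gamma}}\tr(TY)+\beta R + m\,\beta Y^{-1}\beta^{t}
  = \tr\!\big(S(T,R)\,Y\big) + m\big(\beta + \tfrac{1}{2m}R^{t}Y\big)Y^{-1}\big(\beta+\tfrac{1}{2m}R^{t}Y\big)^{t},
\end{equation}
where $S(T,R) := \tfrac{1}{\lambda_{\Gamma}}T - \tfrac{1}{4m}RR^{t}$ is the Schur complement of the entry $m>0$ in the matrix appearing in Definition~\ref{def:SJ_forms}(\ref{item:16}); by the Schur complement criterion that matrix is $\geq 0$ (resp.\ $>0$) if and only if $S(T,R)\geq 0$ (resp.\ $S(T,R)>0$). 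Thus a Siegel--Jacobi form has $S(T,R)\geq 0$ whenever $c(T,R)\neq 0$, and it is a cusp form precisely when $S(T,R)>0$ whenever $c(T,R)\neq 0$. Since $|e^{2\pi i\tr(TZ)/\lambda_{\Gamma}}e^{2\pi i WR}| = e^{-2\pi(\tr(TY)/\lambda_{\Gamma}+\beta R)}$ and $h^{\inv}(\phi)^{2} = |\phi|^{2}(\det Y)^{k}e^{-4\pi m\beta Y^{-1}\beta^{t}}$, identity \eqref{eq:sq-prop6} is exactly what converts positivity of $S(T,R)$ into exponential decay of $h^{\inv}$. I would also use at the outset that $h^{\inv}$ is $G^{(g,1)}_{\R}$-invariant, so that $h^{\inv}(\phi|_{k,m}M)(Z,W) = h^{\inv}(\phi)(M\langle Z\rangle, W(CZ+D)^{-1})$; since the cusps of $\Gamma$ are represented by finitely many $M$, it then suffices to work near the cusp $\infty$ with the $M=\Id$ expansion, after replacing $\phi$ by $\phi|_{k,m}M$.

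For the implication ``$h^{\inv}(\phi)$ bounded $\Rightarrow$ $\phi$ a cusp form'' I would argue as follows. Assuming $h^{\inv}(\phi)\leq B$ on $\caB(\Gamma)$, hence on all of $\caH_{g}\times\C^{(1,g)}$, fix $Y>0$ and $\beta$ and integrate the Fourier series against $e^{-2\pi i(\tr(TX)/\lambda_{\Gamma}+\alpha R)}$ over a period box in $(X,\alpha)$; this gives
\[
  |c(T,R)|\,e^{-2\pi(\tr(TY)/\lambda_{\Gamma}+\beta R)} \leq \sup_{X,\alpha}|\phi(X+iY,\alpha+i\beta)| \leq B\,(\det Y)^{-k/2}e^{2\pi m\beta Y^{-1}\beta^{t}},
\]
so $|c(T,R)| \leq B(\det Y)^{-k/2}\exp\!\big(2\pi(\tr(TY)/\lambda_{\Gamma}+\beta R+m\beta Y^{-1}\beta^{t})\big)$ for all $Y>0$ and $\beta$. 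Taking $\beta = -\tfrac{1}{2m}R^{t}Y$ and using \eqref{eq:sq-prop6} leaves $|c(T,R)| \leq B(\det Y)^{-k/2}e^{2\pi\tr(S(T,R)Y)}$ for all $Y>0$. If $S(T,R)$ is not positive definite, I would pick $0\neq v$ with $v^{t}S(T,R)v\leq 0$ and let $Y = Y_{0}+svv^{t}$ with $s\to\infty$: then $\tr(S(T,R)Y)$ stays bounded above while $\det Y = \det(Y_{0})(1+s\,v^{t}Y_{0}^{-1}v)\to\infty$, forcing $c(T,R)=0$. Running this for every $M$ shows $\phi$ is a cusp form.

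For the converse, ``$\phi$ a cusp form $\Rightarrow h^{\inv}(\phi)$ bounded'', I would reduce by invariance to bounding $h^{\inv}(\phi)$ on a standard Siegel--Heisenberg fundamental set --- $Y$ Minkowski reduced with $Y\geq c_{0}\Id$, $X$ bounded, $\beta = \lambda Y$ with $\lambda\in[0,1)^{g}$, $\alpha$ bounded --- and bound $\phi$ termwise in its Fourier series, discarding the nonnegative last term of \eqref{eq:sq-prop6}, to get
\[
  h^{\inv}(\phi) \leq (\det Y)^{k/2}\sum_{T,R}|c(T,R)|\,e^{-2\pi\tr(S(T,R)Y)} .
\]
The Jacobi condition $S(T,R)\geq 0$ forces $\|R\|^{2}\leq \tfrac{4m}{\lambda_{\Gamma}}\tr(T)$, so only polynomially many $R$ contribute per $T$; combining this with the classical polynomial (Hecke--Maass type) bound on the Fourier coefficients of a cusp form and with $\tr(S(T,R)Y)\geq \lambda_{\min}(S(T,R))\tr(Y)$, I would expect the sum to decay like $e^{-c\tr(Y)}$, which dominates $(\det Y)^{k/2}\leq(\tr(Y)/g)^{gk/2}$, giving boundedness.

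The hard part is this last implication, where two uniformities have to be made to cooperate and the combinatorics of the Siegel domain genuinely enter. First, the polynomial coefficient bound for a cusp form is not free: classically one extracts it by a bootstrap from the boundedness of $h^{\inv}$ on the compact slices of the Siegel set, using the same Fourier-inversion inequality as above, rather than assuming it. Second, one needs $\inf\{\lambda_{\min}(S(T,R)) : c(T,R)\neq 0\}>0$, which is not formal from $S(T,R)>0$ holding for infinitely many $(T,R)$ and must be weighed against the coefficient growth; this is the point at which I would simply invoke the classical analysis, which is why the result is credited to Ziegler and proved via such estimates in \cite[Proposition~1 and its Corollary]{dulinski}. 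An alternative route is through Lemma~\ref{lemm:21}: along each rational section $W = \lambda Z+\mu$ the function $h^{\inv}(\phi)$ equals the invariant norm of the Siegel modular form $e^{2\pi i m\lambda Z\lambda^{t}}\phi(Z,\lambda Z+\mu)$, which is again cuspidal when $\phi$ is (by the same Schur-complement computation), so that Maass's boundedness theorem for Siegel cusp forms applies --- but making this uniform in $(\lambda,\mu)$ runs into the same difficulty.
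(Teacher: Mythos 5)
The paper does not actually prove this statement: its ``proof'' is the citation to \cite[Proposition~1 and its Corollary]{dulinski}, so your proposal is necessarily compared with the classical argument you are reconstructing rather than with anything in the text. Your square-completion identity and the Schur-complement dictionary (matrix $\geq 0$, resp.\ $>0$, iff $S(T,R)\geq 0$, resp.\ $>0$, when $m>0$) are correct, and your proof of the implication ``$h^{\inv}(\phi)$ bounded $\Rightarrow$ $\phi$ cuspidal'' — Fourier inversion over a period box, the choice $\beta=-\tfrac{1}{2m}R^{t}Y$, and letting $Y=Y_{0}+svv^{t}$ with $v^{t}S(T,R)v\leq 0$ so that $\tr(S(T,R)Y)$ stays bounded while $\det Y\to\infty$ — is complete and correct for $k>0$; the cases $k\leq 0$ are degenerate by Example~\ref{exm:1}.\ref{item:8} and Lemma~\ref{lemm:22} (and for $k=m=0$ the nonzero constants show the statement needs $m>0$ to be literally true, a convention issue in the paper's Definition~\ref{def:SJ_forms} rather than in your argument). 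For the converse you candidly defer to the classical analysis at exactly the point where the paper defers to Dulinski wholesale, so you are not behind the paper there.

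One concrete correction to your outline of the hard direction. The uniform spectral gap $\inf\{\lambda_{\min}(S(T,R)) : c(T,R)\neq 0\}>0$ that your termwise estimate $\tr(S(T,R)Y)\geq\lambda_{\min}(S(T,R))\tr(Y)$ would require is genuinely false for discrete families of positive definite matrices: already for $g=2$, integral matrices $\begin{pmatrix}a&b\\ b&c\end{pmatrix}$ with $ac-b^{2}=1$ and $a,c\to\infty$ (Fibonacci-type examples) have $\lambda_{\min}\approx 1/(a+c)\to 0$. The classical substitute is the arithmetic minimum $m(S)=\min_{0\neq x\in\Z^{g}}x^{t}Sx$, which \emph{is} uniformly bounded below on the support because the values $x^{t}S(T,R)x$ lie in a fixed discrete subgroup of $\Q$ and are positive; for $Y$ Minkowski reduced one has $\tr(SY)\geq c_{g}\sum_{i}s_{ii}y_{ii}\geq c_{g}\,m(S)\,\tr(Y)$, which is the inequality that actually closes the estimate. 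With that replacement (and the crude coefficient bound $|c(T,R)|\leq C_{Y_{0}}e^{2\pi\tr(S(T,R)Y_{0})}$ obtained from Fourier inversion at a fixed $Y_{0}$, rather than a Hecke-type bound, which is not needed here), your termwise strategy becomes the standard Maass--Ziegler argument.
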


\section{Toroidal compactifications of the universal abelian variety}
\label{sec:toro-comp}

\subsection{Toroidal compactifications}
\label{subsec:toro-comp}

We give a brief account of the theory of toroidal compactifications of
$\caA(\Gamma )$ and of $\caB(\Gamma )$. For more details we refer to the book
\cite{fc} by Faltings and Chai, and Namikawa's work \cite{Nam1977, Nam1979}.  Note that in \cite{fc}
everything is worked out for the full modular group $\Sp(2g,\Z)$ and the
principal congruence subgroups $\Gamma (N)$. This is because the authors are mainly interested in
integral models. If one is only interested in the theory over the
complex numbers everything carries over to the general case of a
commensurable subgroup $\Gamma $. On the other hand, to avoid having
to deal with algebraic stacks we will be mainly interested in the case when
$\Gamma $ is neat.   

Let $\Lambda =\Z^{2g}$ be a lattice of rank $2g$ endowed
with the standard symplectic form. The group $\Sp(2g,\Z)$ acts on
$\Lambda $ by isometries and hence also the group $\Gamma $. A
\emph{rational boundary component} is a subspace $\eta \subset
\Lambda _{\Q}$ such that $\eta ^{\perp}\subset \eta $. With a rational
boundary component we associate the torsion free abelian group
\begin{equation*}
  X_{\eta} = \Lambda /(\Lambda \cap \eta).
\end{equation*}
Given a lattice $X$ of rank $r$ we denote by $C(X)$ the space of
symmetric semipositive bilinear forms in $X_{\R}$ with rational
radical and by $\widetilde C(X)$ the set of pairs $(\Omega ,\beta )$,
where $\Omega \in C(X)$ and $\beta $ is a linear form on $X_{\R}$ such
that $\Omega +2\beta $ is bounded below. After choosing an isomorphism
$X\simeq \Z^{r}$ we can identify  $C(X)$ with be the cone of symmetric
semipositive real matrices with 
rational kernel. Moreover, thanks to Lemma \ref{lemm:sym-cone}, we can identify $\widetilde C(X)$ with the cone
\begin{displaymath}
  \widetilde C(X)=\{(\Omega ,\beta )\in C(X)\times
\R^{(1,g)} \mid \exists \zeta \in \R^{(1,g)}, \, \beta =\zeta \Omega \}.
\end{displaymath}
\begin{lem}\label{lemm:sym-cone}
Let $\Omega $ be a real symmetric positive semidefinite matrix with
rational kernel and $\beta $ a row vector. Then the following
statements are equivalent.
\begin{enumerate}
\item \label{item:11} There is a $\zeta  \in \R^{(1,g)}$ such that $\beta =\zeta 
  \Omega $. 
\item \label{item:12} The set $x^{t} \Omega x+2\beta x$, $x\in
  \R^{(g,1)}$ is bounded below. 
\end{enumerate}
\end{lem}
\begin{proof}
  Since $\Omega $ is positive semidefinite, the
  function $x\mapsto x^{t} \Omega x+2\beta x$ is convex. Therefore it
  is bounded below if and only if it has a stationary point. Taking
  derivatives and dividing by 2, we deduce that the function is
  bounded below if and only 
  if the equation $x^{t}\Omega +\beta =0$ has a solution. 
\end{proof}

We will denote by $C(X)_{\Z}$ the subset of half-integral
matrices of $C(X)$, that is, the set of bilinear forms whose
associated quadratic form has integral coefficients. Furthermore we
set $\widetilde C(X)_{\Z}=\widetilde C(X)\cap (C(X)_{\Z}\times
X^{\vee})$.

\begin{rmk}\label{rem:6}
  If $\eta\subset \eta'$, then $X_{\eta'}$ is a quotient of
  $X_{\eta}$ and we can identify $C(X_{\eta'})$ with a subspace of
  $C(X_{\eta})$, namely the set of bilinear forms whose radical
  contains $\eta'/\eta$. Similarly $\widetilde C(X_{\eta'})$ can be
  identified with a subspace of $\widetilde C(X_{\eta})$. 
\end{rmk}

\begin{df}
  We will denote by $\caC(\Lambda )$ the topological space
  \begin{displaymath}
    \caC(\Lambda )=\left. \coprod_{\eta}C(X_{\eta})\right/\sim,
  \end{displaymath}
  where $\sim$ is the equivalence relation generated by the
  identifications described in Remark \ref{rem:6}. Similarly we write
  \begin{displaymath}
    \widetilde{\caC}(\Lambda
    )=\left. \coprod_{\eta}\widetilde{C}(X_{\eta})\right/\sim.
  \end{displaymath}
\end{df}

\begin{notation}\label{def:15}
  The standard rational boundary component $\eta_{0}$ is the subspace
  generated by the first $g$ canonical vectors of $\Z^{2g}$. The
  corresponding lattice $X_{0}\coloneqq X_{\eta_{0}}$ comes with an
  isomorphism $X_{0}\simeq \Z^{(g,1)}$. We identify $C(X_{0})$ with the
  space of symmetric positive semidefinite matrices with rational kernel and
  $\widetilde C(X_{0})$ with the set of pairs  $(\Omega ,\beta )$,
  $\beta \in \R^{(1,g)}$ such that there is a $\zeta$ with $\beta =\zeta \Omega $. 

  It will be convenient to write
  down the elements of $\widetilde C(X_{0})$ as pairs of the form
  $\left(\Omega ,\zeta \Omega \right)$ with $\zeta \in
  \R^{(1,g)}$. Such an element will belong to
  $\widetilde C(X_{0})_{\Z}$ if and only if
  $\Omega \in  C(X_{0})_{\Z}$ and $\zeta $ has rational
  coefficients, but $\beta =\zeta \Omega$ has integral coefficients.
  This is just to be sure that $\beta $ belongs to the image of
  $\Omega $. But one has to be careful that $(\Omega,\zeta)$ are not
  \emph{affine} coordinates in the interior of $\widetilde
  C(X_{0})$. In particular any reference to convexity is with respect
  to the affine structure given by the coordinates $(\Omega ,\beta )$.
\end{notation}
Let $P_{g}\subset \Sp(2g,\Z)$ be the subgroup that fixes
$\eta_{0}$. It consists of symplectic matrices $\begin{pmatrix}
A&B \\ C&D
\end{pmatrix}$ satisfying $C=0$. This group has already appeared in
Example \ref{exm:1}~\ref{item:9}.
There is a group homomorphism
\begin{displaymath}
  \begin{matrix}
    P_{g} & \longrightarrow & \GL(X_{0})\\
    \begin{pmatrix}
      A & B\\
      0 & D
    \end{pmatrix}
    &\longmapsto & A.
  \end{matrix}
\end{displaymath}
We denote by $\overline \Gamma $ the image of $\Gamma \cap P_{g}$ in
$\GL(X_{0})=\GL(g,\Z)$. Similarly we denote by $\overline{\widetilde \Gamma }$
the image of $\widetilde \Gamma \cap P_{g+1}$ in $\GL(g+1,\Z)$. Then
$\overline{\widetilde \Gamma }$ is contained in the group
$\GL(X_{0})\ltimes \Z^{(1,g)}$ of
matrices of the form $
\begin{pmatrix}
  A& 0\\
  \lambda  &1
\end{pmatrix}
$
and is a semidirect product $\overline{\widetilde \Gamma }=\overline \Gamma
\ltimes \Z^{(1,g)}$.
The group $\GL(X_{0})$ acts on $C(X_{0})$ by the action
\begin{displaymath}
  A\cdot \Omega =A\Omega A^{t}
\end{displaymath}
and the group 
$\GL(X_{0})\ltimes \Z^{(1,g)}$ acts on $\widetilde C(X_{0})$ by the
action
\begin{displaymath}
  (A,\lambda )\cdot (\Omega ,\beta )=
  (A\Omega A^{t},(\beta  + \lambda\Omega  ) A^{t}). 
\end{displaymath}
This action can also be written as
\begin{equation}\label{eq:2}
  (A,\lambda )\cdot (\Omega ,\zeta \Omega )=
  (A\Omega A^{t},(\zeta + \lambda )A^{-1} A\Omega A^{t}). 
\end{equation}
The previous actions can be restricted to $\widetilde \Gamma $ and 
$\overline{\widetilde \Gamma }$ respectively. 

\begin{df}\label{def:7}
  An \emph{admissible cone decomposition of $C(X_{0})$} is a
  set $\Sigma$ of cones in $C(X_{0})$ such that
  \begin{enumerate}
  \item\label{item:19} each $\sigma \in \Sigma $ is generated by a finite set of
    elements of $C(X_{0})_{\Z}$ and contains no lines. In other
    words it is a rational polyhedral strictly convex cone;
  \item\label{item:20} if $\sigma $ belongs to $\Sigma $ each face of $\sigma $
    belongs to $\Sigma $;
  \item\label{item:21} if $\sigma $ and $\tau $ belong to $\Sigma $ their
    intersection is a common face;
  \item \label{item:22} the union of all the cones of $\Sigma $ is $C(X_{0})$;
  \item the group $\GL(X_{0})$ leaves $\Sigma$ invariant with
    finitely many orbits.
  \end{enumerate}
\end{df}
Recall that any polyhedral decomposition satisfying conditions
\ref{item:19}--~\ref{item:22} is called a complete rational fan.    
\begin{rmk}\label{rem:7}
  If $\Sigma $ is an admissible cone decomposition of $C(X_{0})$, it induces a
  $\Sp(2g,\Z)$-invariant conical complex structure in $\caC(\Lambda )$,
  that we denote $\Sigma (\Lambda )$.  The set of $\Sp(2g,\Z)$-orbits of
  $\Sigma (\Lambda )$ is also finite. Since $\Gamma \subset
  \Sp(2g,\Z)$ is a subgroup of finite index, the set of cones $\Sigma
  (\Lambda )$ is also $\Gamma $-invariant with finitely many orbits. 
\end{rmk}

\begin{rmk}\label{rem:9} For the purpose of defining a toroidal
  compactification it is enough to have a complete rational fan on
  $\caC(\Lambda )$ invariant under the action of $\Gamma $ with
  finitely many orbits. To choose one that is invariant under the whole group
  $\Sp(2g,\Z)$ has two main advantages. First, the structures around
  the different 
  \emph{cusps} are equivalent, so it is enough to discuss what happens
  around the standard boundary component. Second, it gives a uniform
  description for all subgroups $\Gamma $, which allows for some nice
  functorial properties between the compactifications associated to
  different groups. See \cite[IV~\S6]{fc}. 
\end{rmk}

\begin{df}\label{def:8}
  Let $\Sigma $ be an admissible cone decomposition of
  $C(X_{0})$. An \emph{admissible cone decomposition of
    $\widetilde C(X_{0})$ over $\Sigma $} is a set of cones $\Pi $ in
  $\widetilde C(X_{0})$ such that

  \begin{enumerate}
  \item each $\tau \in \Pi $ is generated by a finite set of elements
    of $\widetilde C_{\Z}(X_{0})$ and contains no lines;
  \item if $\sigma $ belongs to $\Pi $ each face of $\sigma $ belongs
    to $\Pi $;
  \item if $\sigma $ and $\tau $ belong to $\Pi $ their intersection
    is a common face;
  \item the union of all the cones of $\Pi $ is $\widetilde C(X_{0})$;
  \item the group $\GL(X_{0})\ltimes \Z^{(1,g)}$ leaves $\Pi $
    invariant with finitely many orbits;
  \item for each $\tau \in \Pi $, the projection of $\tau $ to
    $C(X)$ is contained in a cone $\sigma \in \Sigma $.
  \end{enumerate}
\end{df}

\begin{rmk}\label{rem:8}
  If $\Pi $ is an admissible cone decomposition of $\widetilde C(X_{0})$, it induces a
  $G_{\Z}^{(1,g)}$-invariant conical complex structure in
  $\widetilde{\caC}(\Lambda )$, 
  that we denote $\Pi  (\Lambda )$. Since $\widetilde{\Gamma} \subset
  G_{\Z}^{(1,g)}$, the set of cones $\Pi (\Lambda )$ is also  $\widetilde{\Gamma}
  $-invariant. Moreover, the set of $\widetilde{\Gamma} $-orbits of
  $\Pi  (\Lambda )$ is finite. 
\end{rmk}

We say that $\Sigma$ (resp.  $\Pi $) is \emph{smooth} if every cone of $\Sigma
$ (resp.\ $\Pi$) is generated by part of a $\Z$-basis of  the abelian
group generated by $C_{\Z}(X_{0})$ (resp.\   $\widetilde
C_{\Z}(X_{0})$). We say that $\Pi $ is \emph{equidimensional} (over $\Sigma
$) if for every cone $\tau$
the projection of $\tau $ to 
$C(X_{0})$  is a cone $\sigma \in \Sigma $.

\begin{df}\label{def:11}
    Let $\Sigma $ be an admissible cone decomposition of $
  C(X_{0})$. An \emph{admissible divisorial function} on  
  $\Sigma $ is a continuous
  $\GL(X_{0})$-invariant function $\phi \colon
  C(X_{0})\to \R$ satisfying the following properties:
  \begin{enumerate}
  \item it is conical, in the sense that $\phi (\lambda x)=\lambda
    \phi (x)$ for all $x\in C(X_{0})$ and
    $\lambda \in \R_{\ge 0}$;
  \item it is linear on each cone $\sigma $ of $\Sigma $;
  \item takes integral values on $ C(X_{0})_{\Z}$.
  \end{enumerate}
  An admissible divisorial function is called
    \emph{strictly anti-effective} if furthermore
    \begin{enumerate}[resume]
    \item $\phi (x)>0$ for $x\not = 0$.
    \end{enumerate}
    A strictly anti-effective divisorial function is called an
    \emph{admissible polarization function} if, in addition, it
    satisfies
    \begin{enumerate}[resume]
    \item $\phi$ is concave;
    \item it is strictly concave on $\Sigma $ in the sense that, if
      $\tau $ is a cone on $C(X_{0})$ such that the restriction
      of $\phi $ is linear on $\tau $ then $\tau $ is contained in a
      cone $\sigma $ of $\Sigma $. In other words, the maximal cones
      of $\Sigma $ are the maximal cones of linearity of $\phi $.  
    \end{enumerate}
  \end{df}

  \begin{rmk}
    For a given admissible cone decomposition $\Sigma $ it may happen
    that there are no admissible polarization functions on $\Sigma$. A
    cone decomposition that admits an admissible polarization function
    is called \emph{projective}. As explained in \cite[\S V.5]{fc}, every
    admissible cone decomposition admits a smooth projective
    refinement. 
  \end{rmk}

\begin{df}\label{def:10}
  Let $\Sigma $ be an admissible cone decomposition of $C(X_{0})$ and
  $\Pi $ an admissible cone decomposition of $\widetilde 
  C(X_{0})$ over $\Sigma $. An \emph{admissible divisorial function} on  
  $\Pi $ is a continuous
  $\GL(X_{0})\ltimes \Z^{(1,g)}$-invariant function $\phi \colon
  \widetilde
  C(X_{0})\to \R$ satisfying the following properties:
  \begin{enumerate}
  \item  it is conical: $\phi(tx)=t\phi (x)$ for all $t\in \R_{\ge 0}$ and
    $x\in \widetilde C(X_{0})$;
  \item it takes rational values on $\widetilde C(X_{0})_{\Z}$ with bounded
    denominators; 
  \item it is linear on each $\tau \in \Pi $;
  \item \label{item:1} for each $\lambda  \in \Z^{(1,g)}$  and
    $q=(\Omega ,\zeta \Omega )\in
    \widetilde C(X_{0})$, the condition
    \begin{equation}\label{eq:3}
      \phi (q)-\phi (\lambda  \cdot q) = \lambda  \Omega \lambda
      ^{t}+2 \zeta \Omega \lambda  ^{t}
    \end{equation}
    holds.
    Recall that the action \eqref{eq:2} gives $\lambda \cdot q=(\Omega
    ,(\zeta +\lambda )\Omega )$.  
  \end{enumerate}
  An admissible divisorial function $\phi $ on $\Pi $ is called an \emph{admissible
    polarization function} if it also satisfies the conditions
  \begin{enumerate}[resume]
  \item $\phi $ is concave;
  \item $\phi $ is strictly concave over each cone $\sigma $ of
    $\Sigma $. That is, for each maximal cone $\tau $ over $\sigma $,
    there is a linear function $\varphi_{\tau }$ such that
    $\varphi_{\tau }(q)=\phi (q)$ for each $q\in \tau $, but
    $\varphi_{\tau }(q)>\phi (q)$ for each $q=(\Omega ,\zeta \Omega
    )\not \in \tau $, with $\Omega \in \sigma $.  
  \end{enumerate}
\end{df}
\begin{rmk}
For a given admissible cone decomposition $\Pi $, even smooth,  it may
be possible that there are no admissible 
polarization functions on $\Pi $.  Nevertheless there is always a
refinement $\Pi '$ of $\Pi $ such that there exists an admissible
polarization  function on $\Pi '$. As for $\Sigma$, the admissible
cone decompositions that
admit an admissible polarization function are called
\emph{projective}.
\end{rmk}

The theory of toroidal embeddings allows us to compactify the moduli
spaces $\caA(\Gamma )$ and $\caB(\Gamma )$.
The precise statement is as follows.
\begin{thm}
   Let $\Sigma $ be a projective admissible
  cone decomposition of $
  C(X_{0})$ and $\Pi $ a projective admissible cone decomposition of $\widetilde
  C(X_{0})$ over $\Sigma $.
  \begin{enumerate}
  \item To the cone decomposition $\Sigma $ there is attached a
    projective scheme $\overline \caA(\Gamma )_{\Sigma }$ that contains
    $\caA(\Gamma )$ as an open dense subset.
  \item To the cone decomposition $\Pi $ there is attached a
    projective scheme $\overline \caB(\Gamma )_{\Pi }$ that contains
    $\caB(\Gamma )$ as an open dense subset, and a projective morphism
    \begin{displaymath}
      \pi _{\Sigma ,\Pi }\colon 
      \overline \caB(\Gamma )_{\Pi } \to \overline \caA(\Gamma )_{\Sigma }
    \end{displaymath}
    that extends the canonical projection $\caB(\Gamma )\to \caA(\Gamma )$.
  \item If $\Sigma $ or $\Pi $ are smooth, then the corresponding
    schemes $\overline \caA(\Gamma )_{\Sigma }$ or $\overline
    \caB(\Gamma )_{\Pi}$ are smooth. If $\Pi $ is equidimensional over
    $\Sigma $, then $\pi _{\Sigma ,\Pi }$ is equidimensional. 
  \item The space $\overline \caA(\Gamma )_{\Sigma }$
    admits a stratification by locally closed subschemes, indexed by
    the $\Gamma $-orbits of $\Sigma (\Lambda )$
    \begin{displaymath}
      \overline \caA(\Gamma )_{\Sigma }=\bigcup_{\overline \sigma \in \Sigma(\Lambda )
        / \Gamma } \overline \caA(\Gamma )_{\overline \sigma }.
    \end{displaymath}
    The correspondence between cones and strata reverses dimensions
    and a stratum $\overline \caA(\Gamma )_{\overline \sigma }$ lies in the
    closure of another stratum $\overline \caA(\Gamma )_{\overline \tau }$ if
    and only if there are representatives $\sigma $ and $\tau $ of
    $\overline \sigma $ and $\overline \tau $ such that $\tau$ is a
    face of $\sigma $.
  \item There is an analogous stratification of $\overline \caB(\Gamma
    )_{\Pi }$ indexed by the $\widetilde \Gamma $-orbits of $\Pi
    (\Lambda )$.
  \end{enumerate}
\end{thm}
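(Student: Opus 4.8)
This statement collects standard results from the theory of toroidal compactifications of mixed Shimura varieties, and the plan is to assemble it from \cite{toroidal}, \cite{AVRT:compact}, \cite{fc} and \cite{Nam1977, Nam1979}, indicating which input yields which clause. For (1), I would recall that near each rational boundary component the Siegel space $\caH_g$ admits a partial compactification modelled on a torus embedding whose fan lives in the cone $\overline C_g$ of positive semidefinite symmetric forms with rational radical. The admissible cone decomposition $\Sigma$ of Definition~\ref{def:7} prescribes this fan, and gluing the resulting torus embeddings and passing to the quotient by $\Gamma$ --- equivalently by the action \eqref{eq:2} of $\overline\Gamma$ on $\overline C_g$ --- produces $\overline\caA(\Gamma)_\Sigma$. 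Here neatness of $\Gamma$ is used to guarantee that the relevant group actions on the toroidal models are free, so that the quotients are schemes rather than stacks, while the finiteness of the set of $\overline\Gamma$-orbits in $\Sigma$ gives finite type. That $\Sigma$ is projective, i.e.\ carries an admissible polarization function in the sense of Definition~\ref{def:11}, supplies an ample line bundle (a suitable power of the line bundle of Siegel modular forms, twisted along the boundary according to the polarization function), whence $\overline\caA(\Gamma)_\Sigma$ is projective.

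For (2), the universal abelian variety $\caB(\Gamma)\to\caA(\Gamma)$ degenerates over the boundary to a semi-abelian scheme, and the toroidal compactification of the total space is governed by the cone $\widetilde C_g$ fibred over $\overline C_g$; this is carried out in \cite[Ch.~VI]{fc} and \cite{Nam1979}. The decomposition $\Pi$ over $\Sigma$ (Definition~\ref{def:8}) prescribes the relative fan, and the key observation is that condition \eqref{eq:3} of Definition~\ref{def:10} is exactly the quasi-periodicity making the line bundle attached to the polarization function invariant under the lattice part $\Z^{(1,g)}$ of $\overline{\widetilde\Gamma}$; it therefore descends, and projectivity of $\Pi$ produces a $\pi_{\Sigma,\Pi}$-ample line bundle, so $\pi_{\Sigma,\Pi}$ is projective and $\overline\caB(\Gamma)_\Pi$ is projective over $\overline\caA(\Gamma)_\Sigma$, hence projective. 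For (3): when every cone of $\Sigma$ (resp.\ $\Pi$) is generated by part of a $\Z$-basis of the ambient lattice the local torus embeddings are smooth, so the quotients are smooth (again using neatness for freeness), and equidimensionality of $\Pi$ over $\Sigma$ passes directly to equidimensionality of $\pi_{\Sigma,\Pi}$. Claims (4) and (5) are the orbit--cone correspondence for torus embeddings (\cite{toroidal}, \cite{bo}) descended through the quotient: the boundary of a torus embedding is stratified by orbits indexed by the cones of the fan, dimension-reversingly and with the face relation matching the closure relation, and dividing by $\overline\Gamma$ (resp.\ $\overline{\widetilde\Gamma}$) replaces cones by orbits of cones while preserving both properties.

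The one point genuinely needing care --- and where I would expect to spend the effort --- is that \cite{fc} develops the theory for $\Sp(2g,\Z)$ and the principal congruence subgroups, as that reference is aimed at integral models. Over $\C$ this is not an obstruction: one can either reduce to $\Gamma(N)$ with $N\ge 3$ by first building the compactification for the normal subgroup $\Gamma\cap\Gamma(N)$ of $\Gamma$ and then taking the quotient by the finite group $\Gamma/(\Gamma\cap\Gamma(N))$, which acts freely because $\Gamma$ is neat, or one can appeal directly to \cite{AVRT:compact} and \cite{Nam1977, Nam1979}, where arbitrary neat arithmetic subgroups are allowed. Either way no new idea is required, only bookkeeping to align the hypotheses of the cited sources with the present setup; this is exactly the passage flagged above when it is said that everything carries over to a commensurable $\Gamma$ over the complex numbers.
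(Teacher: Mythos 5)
The paper offers no proof of this theorem: it records it as standard, citing Faltings--Chai \cite{fc} and Namikawa \cite{Nam1977, Nam1979} and remarking that over $\C$ the theory carries over from $\Sp(2g,\Z)$ and $\Gamma(N)$ to an arbitrary neat $\Gamma$. Your proposal assembles the statement from exactly those sources (with \cite{AVRT:compact} supplying the case of a general neat arithmetic subgroup directly, which is the cleaner of your two suggested reductions), so it takes essentially the same approach and is a correct sketch at the level of detail the paper itself operates.
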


\begin{rmk}\label{rem:5}
The projectivity condition is used to show that formal versions of these moduli spaces are algebraizable. 
\end{rmk}

\subsection{Local coordinates}\label{sec:local_coordinates}
Assume now that  $\Sigma$ and $\Pi$ are smooth.  
We want to describe local coordinates of $\overline \caA(\Gamma
)_{\Sigma }$ and $\overline \caB(\Gamma
)_{\Pi }$.

Let $\overline \sigma $ and 
$\overline \tau $ be orbits of cones in $\Sigma(\Lambda ) $ and $\Pi (\Lambda )$
respectively. To $\overline \sigma $
corresponds a stratum $\overline \caA(\Gamma )_{\overline \sigma }$
and to $\overline \tau
$ a stratum $\overline \caB(\Gamma )_{\overline \tau }$. We choose a point
$x_{\overline \sigma }\in \overline \caA(\Gamma )_{\overline \sigma }$ and a point
$y_{\overline \tau }\in \overline \caB(\Gamma )_{\overline \tau }$.
We now describe local
coordinates in both spaces around $x_{\overline \sigma}$ and
$y_{\overline \tau }$. These
coordinates and the uniformization map depend on the choice of
representatives $\sigma $ and $\tau $ of the orbits.   For simplicity
we will assume that $\sigma \in \Sigma $ and that $\tau \in \Pi
$. Note that, in general $\sigma \subset C(X_{\eta})$ and $\tau
\subset \widetilde C(X_{\eta})$, for some rational boundary component
$\eta$ of rank $g$, so one can find a $\gamma \in
\Sp(2g,\Z)$ such that $\eta_{0}\subset \gamma \eta$. Then it is enough
to translate everything by $\gamma $ to be in the case  $\sigma \in
\Sigma $ and that $\tau \in \Pi$.  

We start with $x_{\overline \sigma }$.
Write $G=g(g+1)/2$ for the dimension of $\caA(\Gamma )$ and let $n=
\dim \sigma $.  For $r>0$, we denote
by $\Delta _{r}\subset \C$ the  disk of radius $r$ centered at $0$.
The chosen cone $\sigma $ is generated by a set of
symmetric half-integral positive semi-definite 
matrices $\Omega _{1}, \dots, \Omega _{n}$ that are part of an
integral basis of the lattice of symmetric positive semidefinite half-integral
matrices. Then there exist
\begin{enumerate}
\item a
symmetric positive definite matrix $\Omega _{0}$ and symmetric matrices
$\Omega _{n+1},\dots,\Omega _{G}$, such that the set $\Omega
_{1},\dots,\Omega _{G}$ is a basis of the same lattice;
\item positive real numbers 
$0<r_{1},\dots, r_{G}< 1$ such that 
\begin{displaymath}
  U\coloneqq \left\{ i\Omega _{0}+\sum_j t_{j} \Omega _{j}\, \middle
    |\,
    \begin{alignedat}{2}
      \im(t_{j})&> - \log r_{j},\ &j&\le n,\\
      |t_{j}| &< r_{j},&j &> n.
    \end{alignedat}
\right\}\subset \caH_{g};
\end{displaymath}
\item a coordinate neighborhood $V \subset \overline
\caA(\Gamma )_{\Sigma }$ centered at $x_{\overline \sigma }$ of the form
$\Delta _{r_{1}}\times \dots \times \Delta _{r_{G}}$, 
\end{enumerate}
 such that the
uniformization map $\caH_{g}\to 
\caA(\Gamma )$ sends $U$ to $V$ through the map 
\begin{displaymath}
  (t_{1},\dots,t_{G})\mapsto \left(z_{1},\dots,z_{G})=(e^{2\pi i
    t_{1}},\dots,e^{2\pi i t_{n}},t_{n+1},\dots,t_{G}\right). 
\end{displaymath}

The situation for  $y_{\overline \tau }$ is similar. We write $d=G+g$
for the dimension of $\caB(\Gamma )$. Let $n'=\dim \tau
$.  The chosen cone $\tau $ is
generated by part of an integral basis $\{(\Omega' _{1},\beta
_{1}),\dots ,(\Omega' _{n'},\beta _{n'})\}$, where the $\Omega '_{j}$
are half-integral positive semi-definite 
symmetric matrices, and the vectors $\beta _{j}$ are integral and of
the form
$\beta _{j}= \zeta _{j}\Omega' _{j}$. Then there exist
\begin{enumerate}
\item  a pair $(\Omega '_{0},\beta _{0})$ with $\Omega '_{0}$
  symmetric and positive definite and pairs $(\Omega' _{n'+1},\beta
_{n'+1}),\dots,(\Omega' _{d},\beta _{d}) $, such that
\begin{displaymath}
  \{(\Omega' _{1},\beta_{1}),\dots,(\Omega' _{d},\beta _{d})\}
\end{displaymath}
is an integral basis of the lattice $\widetilde C(X_{0})_{\Z}$;
\item real numbers
$0<r_{j}<1$, $j=1,\dots,d$ such that
\begin{displaymath}
    U'\coloneqq \left\{ i(\Omega' _{0},\beta _{0})+\sum_j s_{j}( \Omega' _{j},\beta _{j})\, \middle
    |\,
    \begin{alignedat}{2}
      \im(s_{j})&> - \log r_{j},\ &j&\le n',\\
      |s_{j}| &< r_{j},&j &> n'.
    \end{alignedat}
\right\}\subset \caH_{g}\times \C^{(1,g)};
\end{displaymath}
\item  a coordinate neighborhood $V'$
centered at $y_{\overline \tau }$  of the form
$\Delta _{r_{1}}\times \dots \times \Delta _{r_{d}}$,
\end{enumerate}
such that
the uniformization map sends $U'$ to $V'$ via the map 
\begin{equation}\label{eq:31}
  \left(s_{1},\dots,s_{d}\right)\mapsto \left(w_{1},\dots,w_{d}\right)=\left(e^{2\pi i
    s_{1}},\dots,e^{2\pi i s_{n'}},s_{n'+1},\dots,s_{d}\right). 
\end{equation}

\begin{df}\label{def:16}
  Given data $(\Omega '_{j},\beta _{j})$, $j=0,\dots, d$ and
  $r_{j}$, $j=1,\dots,d$ satisfying the previous conditions, for any
  choice $0<r_{j}'<r_{j}$, the open set
  \begin{displaymath}\left\{ i(\Omega' _{0},\beta _{0})+\sum_j s_{j}(
      \Omega' _{j},\beta _{j})\, \middle 
    |\,
    \begin{alignedat}{2}
      \im(s_{j})&> - \log r'_{j},\ &j&\le n',\\
      |s_{j}| &< r'_{j},&j &> n'.
    \end{alignedat}
  \right\}\subset U'
\end{displaymath}
will be called a \emph{standard} open set.
\end{df}

For future use we need the following technical lemma.
\begin{lem}\label{lemm:1} With notations as in definition
  \ref{def:16}, the set
  \begin{displaymath}
    K^{\circ}\coloneqq \left\{ \left(\beta _{0}+\sum_{j} x_{j}\beta  _{j}\right)\left(\Omega'
      _{0}+\sum_j x_{j} \Omega' _{j}\right)^{-1}\, \middle
    |\,
    \begin{alignedat}{2}
      x_{j}&\ge - \log r'_{j},\ &j&\le n',\\
      |x_{j}| &\le r_{j}',&j &> n'.
    \end{alignedat}
\right\}
     \end{displaymath}
is bounded in $\R^{(1,g)}$. Hence its closure $K$ is compact. 
\end{lem}
\begin{proof}
  Arguing as in \cite[Section 3.3]{bghdj_sing}, in particular
\cite[Lemma 3.5]{bghdj_sing}, we can reduce to the case when $\ker
\Omega' _{j}\subset \ker \Omega '_{j+1}$ for $j=1,\dots,n'-1$ and the
matrix $\Omega '_{j}$ 
can be written in block notation as
\begin{displaymath}
  \begin{pmatrix}
    \Omega ''_{j} & 0\\
    0 & 0
  \end{pmatrix}
\end{displaymath}
with $\Omega''_{j}$ positive definite. Let $\rho _{j}$ be the rank of $\Omega '_{j}$.
If we fix $(x_{n'+1},\dots,x_{d})$, by \cite[Lemma 3.6]{bghdj_sing}
there is a real number $C_{1}(x_{n'+1},\dots,x_{d})>0$ such that for
all $1\le p, q \le g$ we 
have
\begin{displaymath}
  \left | \left(\Omega'
    _{0}+\sum_j x_{j} \Omega' _{j}\right)^{-1}_{p,q}\right |\le
  \frac{C_{1}\left(x_{n'+1},\dots,x_{d}\right)}{ \sum_{j\colon \rho _{j}\ge \min(p,q)}x_{j}}.
\end{displaymath}
Since the set
\begin{displaymath}
  \left\{ \Omega'
      _{0}+\sum_{j>n'} x_{j} \Omega' _{j}\, \middle
    |\, |x_{j}| \le r_{j}'
\right\}
\end{displaymath}
is a compact subset of the set of positive definite matrices,
following the proof of  \cite[Lemma 3.6]{bghdj_sing}, one sees that
the constant $C_{1}$ can be chosen uniformly for all
$(x_{n'+1},\dots,x_{d})$.   
Clearly, there is a second constant $C_{2}$ such that for all $1\le p\le g$ we have
\begin{displaymath}
  \left|\left(\beta _{0}+\sum_{j} x_{j}\beta_{j}\right)_{p }\right|\le C_{2}
  \sum_{j\colon \rho _{j}\ge p} x_{j}.
\end{displaymath}
These two bounds imply the lemma. 
\end{proof}

Assume now that $\sigma $ and $\tau $ are maximal, so $n=G$ and
$n'=d$, that $\pi (\tau )\subset \sigma $ and that $\pi _{\Sigma ,\Pi
}(y_{\overline \tau })=x_{\overline \sigma }$.  
We describe the map $\pi _{\Sigma ,\Pi }$ in the previously introduced 
coordinates.
Since each $\Omega '_{j}$, $j=1,\dotsc ,n'$ is contained in the cone generated by the
$\Omega_k$'s, $k = 1, \dotsc, n$ and we are assuming that the cone
decomposition is smooth, 
there are integer vectors $\underline
a_{j}=(a_{j,1},\dots,a_{j,n})\in \Z^{n}_{\ge 0} $ such that
\begin{displaymath}
  \Omega '_{j}=\sum_{k}a_{j,k}\Omega _{k} \, , \quad j=1,\dotsc, n'.
\end{displaymath}
Then the map $\pi _{\Sigma ,\Pi }$ is given in the $w$- and $z$-coordinates
 by
\begin{displaymath}
  \left(w_{1},\dots ,w_{d}\right)\mapsto
  \left(\prod_{j=1}^{G+g}w_{j}^{a_{j,k}}\right)_{k=1,\dots,G}. 
\end{displaymath}

\subsection{Extending the line bundle of Siegel--Jacobi forms} 
\label{sec:extend-line-bundle}

We next discuss the extension of the line bundles $L_{k,m}$ to the
toroidal compactifications.

We start by recalling how to extend the line bundle of modular forms
on the Siegel modular variety. This is related with the construction of
the Satake--Baily--Borel (or minimal) compactification (which is in general not toroidal).

Let $\Sigma $ be a projective admissible cone decomposition of $
C(X_{0})$. The universal abelian variety $\caB(\Gamma )$ over
$\caA(\Gamma )$ can be uniquely extended to a semi-abelian variety
over $\overline \caA(\Gamma )_{\Sigma }$ that we denote
$\overline \caB(\Gamma )_{\Sigma }^{0}$. The zero section $e\colon \caA(\Gamma
)\to \caB(\Gamma )$ extends to a section $\overline e\colon \overline \caA(\Gamma
)_{\Sigma }\to \overline \caB(\Gamma )_{\Sigma }^{0}$. Therefore 
the line bundle $M= \det\left(e^*\Omega ^{1}_{\caB(\Gamma
  )/\caA(\Gamma )}\right)$ on $\caA(\Gamma )$ can be extended
canonically to the  line bundle
\begin{displaymath}
\overline M=\det\left(\overline
  e^*\Omega ^{1}_{\overline \caB(\Gamma
  )^{0}_{\Sigma }/\overline \caA(\Gamma )_{\Sigma }}\right)  
\end{displaymath}
on  $\overline \caA(\Gamma
)_{\Sigma }$ by \cite[\S V.1]{fc}. Moreover there is a
$k >0$ such that the
line bundle $\overline M^{\otimes
  k}$ is globally generated \cite[Chapter V, Proposition
2.1]{fc}. Here we are using  
that $\Gamma $ is neat and in particular torsion-free.

Let $R_{\Gamma }$ be the graded ring
\begin{displaymath}
  R_{\Gamma }=\bigoplus _{k\ge 0} H^{0}\left(\overline \caA(\Gamma
)_{\Sigma }, \overline M^{\otimes k}\right).
\end{displaymath}
By \cite[Chapter V, Theorem 2.3]{fc}, the ring $R_\Gamma$ is
finitely generated and does not depend on the choice of $\Sigma $. Then
\begin{equation}\label{eq:28}
  \caA(\Gamma )^{\ast}=\Proj(R_{\Gamma })
\end{equation}
is the Satake--Baily--Borel compactification of $\caA(\Gamma )$, a
projective complex variety. In particular this implies that
$\caA(\Gamma )$ is quasi-projective and $M$ is an algebraic line
bundle.  Finally we have a canonical projection map
  \begin{displaymath}
    \overline \caA(\Gamma )_{\Sigma }\longrightarrow
    \caA(\Gamma )^{\ast}.
  \end{displaymath}

\begin{lem}\label{lemm:10}
  The standard invariant metric on $M$ extends to a  psh metric on
  $\overline M$. 
\end{lem}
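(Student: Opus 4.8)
The plan is to work locally near a boundary point of $\overline\caA(\Gamma)_\Sigma$ using the coordinates set up in Section~\ref{sec:local_coordinates}, and to show that the function $-\log$ of the standard invariant metric on $M$ there is psh, indeed of the toroidal form of Definition~\ref{def:9}. Recall from Definition~\ref{def:6} that for a weight-$k$ form (index zero) the invariant norm is $h^{\inv}(\phi)^2=|\phi(Z)|^2\rho_k(Y)$ with $Y=\im Z$, so on $M$ itself (the $k=1$ case, taking $\phi$ to be the multivalued section $dz_1\wedge\cdots\wedge dz_g$) the local weight is $-\log h(s)=-\tfrac12\log\det Y$ up to a bounded harmonic piece coming from the holomorphic transition to an actual trivializing section. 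We have already checked in Lemma~\ref{lem:psh} that $-\log\det Y$ is psh on $\caH_g$, so the metric is psh on $\caA(\Gamma)$; the real content is that it extends as a psh metric across the boundary divisor.

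First I would fix a maximal cone $\sigma\in\Sigma$ with generators $\Omega_1,\dots,\Omega_G$ completing to an integral basis, take the coordinate chart $V=\Delta_{r_1}\times\cdots\times\Delta_{r_G}$ from Section~\ref{sec:local_coordinates}, with uniformization $t_j\mapsto z_j=e^{2\pi i t_j}$ for $j\le G$ and the boundary divisor $D$ given by $z_1\cdots z_G=0$ (in the relevant chart; in general only the first $n$ coordinates). Writing $Z=i\Omega_0+\sum_j t_j\Omega_j$ and $Y=\im Z=\sum_j \im(t_j)\,\Omega_j+\Omega_0$, and substituting $\im(t_j)=-\tfrac1{2\pi}\log|z_j|$, one gets
\[
-\log h(s)(z)=\gamma(z)-\tfrac12\log\det\Bigl(\Omega_0-\tfrac1{2\pi}\sum_j\log|z_j|\,\Omega_j\Bigr),
\]
where $\gamma$ is the bounded (pluriharmonic) contribution from passing from the multivalued section $dz_1\wedge\cdots$ to a genuine generating section $s$ of $\overline M$ on $V$, using that $\overline M$ is an actual line bundle (and $\overline M^{\otimes k}$ globally generated) in the quoted results of \cite{fc}. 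Set $g(u_1,\dots,u_G)=-\tfrac12\log\det(\Omega_0+\tfrac1{2\pi}\sum_j u_j\Omega_j)$ on the cone where this matrix is positive definite; then $-\log h(s)=\gamma+g(-\log|z_1|,\dots,-\log|z_G|)$. By the log-determinant example in \cite[Section 3.1.5]{Boyd:convex}, $g$ is convex; one checks it is Lipschitz continuous and bounded above on $\R_{\ge c}^{G}$ because the matrix entries grow linearly while $\log\det$ grows only logarithmically, and bounded-above monotonicity (the $\Omega_j$ are semipositive) forces the right behaviour. Hence $-\log h(s)$ has exactly the shape required by Definition~\ref{def:9}, so by \cite[I~(5.13)]{demailly12:cadg} together with the convexity of $g$ the function is psh on $V$, and $D\cap V=\{z_1\cdots z_r=0\}$ is a singularity divisor. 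Gluing over a cover of $\overline\caA(\Gamma)_\Sigma$ by such charts (and over the open part $\caA(\Gamma)$, where the metric is smooth psh by Lemma~\ref{lem:psh}) gives the extension as a psh (indeed toroidal psh) metric on $\overline M$.

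The main obstacle I anticipate is the careful verification that $g$ is \emph{bounded above} and \emph{Lipschitz} on the shifted cone $\R_{\ge c}^G$: one must use that the $\Omega_j$ are positive semidefinite (so that increasing any $u_j$ only increases $\det$, keeping $g$ bounded above by its value at the corner) and that on the relevant region $\Omega_0+\tfrac1{2\pi}\sum u_j\Omega_j$ stays uniformly bounded away from having small eigenvalues in the directions that matter — this is precisely where the admissibility of $\Sigma$ (rational kernels, the structure of $\overline C_g$) enters. The other point needing care is the passage from the multivalued $dz_1\wedge\cdots\wedge dz_g$ to an honest section $s$ of $\overline M$: one has to invoke the extension of $M$ to $\overline M$ from \cite[\S V.1]{fc} so that $\gamma$ is genuinely a bounded pluriharmonic function on $V$ rather than just on $V\setminus D$, but since $\overline M$ is a line bundle on the smooth variety $\overline\caA(\Gamma)_\Sigma$ this is automatic. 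Everything else is a routine check against Definitions~\ref{def:5}, \ref{def:psh} and \ref{def:9}.
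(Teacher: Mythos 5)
Your proposal is correct and follows essentially the same route as the paper: pass to the local coordinates of Section~\ref{sec:local_coordinates}, express $-\log\|s\|$ via $\log\det$ of $\sum_j(-\tfrac{1}{2\pi}\log|z_j|)\Omega_j$, and use the monotonicity $\det(A+B)\ge\det(A)$ for semidefinite matrices to get boundedness above, which is all the psh extension criterion requires. The extra work you do to exhibit the metric as \emph{toroidal} psh (convexity and Lipschitz continuity of $g$) is not needed for this lemma and is carried out in the paper only later, in Proposition~\ref{prop:7}.
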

\begin{proof}
  It follows from Lemma \ref{lem:psh} that the standard invariant metric is psh on $M$.
  Then, by the theory of psh functions, we only  have to show that for every
  point $y\in \overline \caA(\Gamma 
  )_{\Sigma }$ there exists $k \in \Z_{>0}$ and a local section $s$ of $\overline M^{\otimes k}$ that
  generates $\overline M^{\otimes k}$ around $y$, such that the function $-\log \|s\|$
  is bounded above locally around $y$.
  For simplicity we discuss only the case of a point corresponding to
  a maximal cone of $\Sigma $ as the general case is only notationally
  more complex.   
  Let $\sigma \in \Sigma $ be a cone of maximal dimension. Since
  $\sigma $ is maximal, the stratum $\overline \caB(\Gamma
  )_{\overline \sigma }$ contains a single point $y_{\overline
    \sigma}$. Let
  $U\subset \caH_{g}$ and
  $y_{\overline  \sigma }\in V\subset \overline \caA(\Gamma )_{\Sigma }$ be open
  sets as in the previous section.

  Let $(u_{1},\dots,u_{g})$ be linear coordinates on $\C^{(1,g)}$
  defined by an integral basis. 
  If $V$ is small enough, the symbol 
  $du_{1}\wedge \dots\wedge du_{g}$ defines a generating section of
  $M$ over $V\cap \caA(\Gamma)$. As explained in  \cite[V
  \S1]{fc} this symbol extends to a generating section of $\overline
  M$ on $V$. Let $k \in \Z_{>0}$ and let  $s=f(du_{1}\wedge \dots\wedge du_{g})^{\otimes k}$
  be a section of $\overline M^{\otimes k}$ over $V$. Note that $f$ lifts to $U \subset \caH_g$ and can be interpreted there as a 
  meromorphic Siegel modular form on
  $\caH_{g}$ of weight $k$. Then the
  section $s$ can be
  extended to a local generating 
  section of $\overline M^{\otimes k}$ around $y_{\overline \sigma }$ if and
  only if $-\log |f|$ is bounded in $U$ (for $U$ small enough). Assume therefore that $-\log |f|$ is bounded on $U$.
   Then
  \begin{displaymath}
    -\log \|s\| = -\log |f|-k \log \det\left(\sum \frac{-1}{2\pi }\log |z_{j}|\Omega _{j}\right) 
  \end{displaymath}
  is bounded above. Indeed, after shrinking $U$
  if necessary, we can assume that $2\pi \le -\log|z_{j}|$ for all $j$. Since the
  $\Omega _{j}$ are positive semidefinite  and since for two semidefinite
  matrices $A,B$ the inequality $\det(A+B)\ge \det(A)$ holds, we deduce that
  \begin{displaymath}
    \det\left(\sum \frac{-1}{2\pi }\log |z_{j}|\Omega _{j}\right)
    \ge \det \left(\sum \Omega _{j}\right)
  \end{displaymath}
  from which the boundedness above follows. 
\end{proof}

We next recall how the theory of toroidal compactifications allows us
to extend line bundles from $\caA(\Gamma )$ and $\caB(\Gamma )$ to
$\overline \caA(\Gamma )_{\Sigma }$ and $\overline \caB(\Gamma )_{\Pi }$.

\begin{df}\label{def:12} To every divisorial function $\phi $ we
  associate a divisor $D_{\phi }$ as follows. Each irreducible
  component $D_{\alpha }$ of the boundary $\overline \caA(\Gamma
  )_{\Sigma }\setminus \caA(\Gamma )$ corresponds to a $\overline
  \Gamma$-orbit of one-dimensional cones of $\Sigma $. Choose one
  element of this orbit $\sigma _{\alpha }$. Let $v_{\alpha }$ be the
  primitive generator of $\sigma _{\alpha }\cap C(X_{0})_{\Z}$. Then we write
  \begin{displaymath}
    D_{\phi }=\sum_{\alpha }-\phi (v_{\alpha })D_{\alpha }.
  \end{displaymath}
  Since $\phi $ is $\overline \Gamma $-invariant, this divisor is
  independent of the choice of $\sigma _{\alpha }$. 
\end{df}

\begin{prop}\label{prop:8} The divisor $D_{\phi }$ has support
  contained in the boundary $\overline \caA(\Gamma
  )_{\Sigma }\setminus \caA(\Gamma )$. If the divisorial function is
  strictly anti-effective then $-D_{\phi }$ is effective and the support of
  $D_{\phi }$ agrees with the whole boundary. If $\phi $ is an
  admissible polarization function, then $\caO(D_{\phi })$ is
  relatively ample with respect to the canonical projection
  \begin{displaymath}
    \overline \caA(\Gamma )_{\Sigma }\longrightarrow
    \caA(\Gamma )^{\ast}.
  \end{displaymath}
\end{prop}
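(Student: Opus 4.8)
The plan is to treat the first two assertions directly from Definition~\ref{def:12}, and then to reduce the relative ampleness statement to the toric ampleness criterion by analysing the fibres of the contraction to the Baily--Borel compactification.

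First I would note that, by construction, $D_{\phi}=\sum_{\alpha}-\phi(v_{\alpha})D_{\alpha}$, where the $D_{\alpha}$ are exactly the irreducible components of the boundary $\overline{\caA}(\Gamma)_{\Sigma}\setminus\caA(\Gamma)$; hence $\supp D_{\phi}$ is contained in this boundary. If moreover $\phi$ is strictly anti-effective, then $\phi(v_{\alpha})>0$ for every $\alpha$ because $v_{\alpha}\neq 0$, and since $\phi$ takes integral values on $\overline{C}_{g,\Z}$ and $v_{\alpha}$ is primitive we even get $\phi(v_{\alpha})\in\Z_{>0}$. Thus $-D_{\phi}=\sum_{\alpha}\phi(v_{\alpha})D_{\alpha}$ has strictly positive coefficient along every boundary component, which simultaneously gives the strict effectivity of $-D_{\phi}$ and the equality of $\supp D_{\phi}$ with the whole boundary.

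For the last assertion, write $f\colon\overline{\caA}(\Gamma)_{\Sigma}\to\caA(\Gamma)^{\ast}$ for the canonical projection; it is proper since both spaces are projective over $\C$. I would use that ampleness of a line bundle is open on the base of a proper morphism and is detected on fibres, so that it suffices to prove $\caO(D_{\phi})|_{F}$ is ample for every fibre $F$ of $f$. By the structure theory of toroidal compactifications (\cite{toroidal}, \cite[Ch.~IV--V]{fc}) together with the local description of Section~\ref{sec:local_coordinates}, each such fibre $F$ over a point of the boundary of $\caA(\Gamma)^{\ast}$ is built from a torus embedding whose fan is the part of $\Sigma$ lying over the corresponding rational boundary component, and under this identification $\caO(D_{\phi})|_{F}$ is the torus-equivariant line bundle attached to the restriction of the conical piecewise linear function $\phi$. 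Since $\phi$ is, by hypothesis, an admissible polarization function, it is strictly concave over $\Sigma$, hence its restriction is strictly concave over the relevant (complete) subfan, and the standard toric ampleness criterion then shows that $\caO(D_{\phi})|_{F}$ is ample. An essentially equivalent route is to recall from \eqref{eq:28} that a power of $\overline{M}$ is the $f$-pullback of an ample line bundle on $\caA(\Gamma)^{\ast}=\Proj R_{\Gamma}$, so that $f$-ampleness of $\caO(D_{\phi})$ is equivalent to ampleness of $\overline{M}^{\otimes N}\otimes\caO(D_{\phi})$ on $\overline{\caA}(\Gamma)_{\Sigma}$ for $N\gg 0$, which is precisely the projectivity theorem for toroidal compactifications established by Faltings and Chai for an admissible polarization function.

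The main obstacle is not conceptual but organisational: one must carefully match the combinatorics of the pair $(\Sigma,\phi)$ with the geometry of the fibres of $f$ over the (lower-dimensional Siegel modular) boundary strata of $\caA(\Gamma)^{\ast}$, in particular checking that the relevant subfan of $\Sigma$ attached to each boundary component is complete, so that the toric ampleness criterion really produces an honestly ample divisor on each fibre. Once this dictionary is in place, the reduction to toric geometry and the appeal to the strict concavity of $\phi$ are routine.
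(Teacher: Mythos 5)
Your treatment of the first two assertions is exactly the paper's: they are read off directly from Definition~\ref{def:12}, and your observation that $\phi(v_{\alpha})>0$ for every primitive ray generator is all that is needed. For the relative ampleness, the paper's proof is a citation: it invokes \cite[Theorem V.5.8]{fc} (the projectivity theorem for toroidal compactifications, which is stated precisely in terms of an admissible polarization function) together with the argument of \cite[Proposition II.7.13]{Hartshorne:ag} to convert the absolute ampleness of $\overline M^{\otimes N}\otimes\caO(D_{\phi})$ into relative ampleness over $\caA(\Gamma)^{\ast}$. Your second route is exactly this, so your proposal does contain the paper's proof.

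Your first route (checking ampleness fibre by fibre over $\caA(\Gamma)^{\ast}$ via the toric criterion) deserves a caveat beyond the ``organisational'' difficulty you acknowledge. The subfan of $\Sigma$ sitting over a rational boundary component is a decomposition of a convex cone $\overline C_{g'}$ inside the space of symmetric matrices; it is \emph{not} a complete fan, so the standard toric ampleness criterion does not apply directly to the associated torus embedding. The fibres of $\overline{\caA}(\Gamma)_{\Sigma}\to\caA(\Gamma)^{\ast}$ are compact only after quotienting by the relevant arithmetic group (a $\GL$-type group acting on $\overline C_{g'}$ with finitely many orbits of cones), and producing an ample line bundle on such a quotient from a strictly concave $\overline\Gamma$-invariant function is precisely the nontrivial content of \cite[Theorem V.5.8]{fc}. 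So the step you describe as routine is in fact the theorem being cited; as written, ``checking that the relevant subfan is complete'' would fail. Since you supply the correct citation-based argument as an alternative, the proposal stands, but the fibrewise sketch should not be presented as an independent proof.
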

\begin{proof}
  The first two  statements follow directly from the definition of
  $D_{\phi }$ and the last statement follows from \cite[Theorem
  V.5.8]{fc} and the proof of \cite[Proposition II.7.13]{Hartshorne:ag}. 
\end{proof}
Also the next result follows from the theory of toroidal compactifications (see \cite[Chapter VI, Theorem 1.13]{fc}).

\begin{prop} \label{prop:4}   Let $\Sigma $ be an admissible
  cone
  decomposition of $
  C(X_{0})$ and $\Pi $ an admissible cone decomposition of $\widetilde
  C(X_{0})$ over $\Sigma $. Let $\phi $ be an
  admissible divisorial
  function on $\Pi $ and let $m>0$ be an integer such that
  $m\phi $ has 
  integral values on $\widetilde C(X_{0})_{\Z}$.   Associated with $m\phi $
  there is a line bundle $\overline{B}_{m\phi}$ on $\overline
  \caB(\Gamma )_{\Pi }$ such 
  that its restriction to $\caB(\Gamma )$ agrees with
  $B^{\otimes m}$. Moreover, if $\phi$ is an
  admissible polarization
  function  then $\overline{B}_{m \phi}$ is relatively ample with
  respect to the projection map
  \begin{displaymath}
   \pi_{\Sigma,\Pi}\colon  \overline {\caB}(\Gamma )_{\Pi }\to
    \overline {\caA}(\Gamma )_{\Sigma  }.
  \end{displaymath}
  In particular, if $\Pi $ is projective,  the latter map is projective. 
\end{prop}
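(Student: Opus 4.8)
The plan is to deduce everything from the local structure theory of toroidal compactifications of the universal abelian scheme developed in \cite[Chapter VI]{fc}, and in particular from \cite[Chapter VI, Theorem 1.13]{fc}. First I would recall that, formally (or \'etale) locally over a boundary stratum $\overline{\caA}(\Gamma)_{\overline\sigma}$ of $\overline{\caA}(\Gamma)_\Sigma$, the semi-abelian scheme $\overline{\caB}(\Gamma)^{0}_\Sigma$ together with the compactification $\overline{\caB}(\Gamma)_\Pi$ is obtained by Mumford's construction: one builds a relatively complete model as a torus embedding over the base, governed by the part of $\Pi$ lying over $\sigma$, and then passes to the quotient by the action of the period lattice $\Z^{(1,g)}$. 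In this dictionary a relatively ample line bundle on the relatively complete model is the same datum as an $\overline\Gamma$-invariant conical function on $\Pi$ that is linear on each cone, and the extra equivariance needed for such a bundle to descend through the quotient by the periods is exactly the transformation law \eqref{eq:3}. Thus an admissible divisorial function $\phi$ on $\Pi$, after multiplying by $m$ to clear denominators on $\widetilde{C}_{g,\Z}$, yields by descent a line bundle $\overline{B}_{m\phi}$ on $\overline{\caB}(\Gamma)_\Pi$; this is what \cite[Chapter VI, Theorem 1.13]{fc} provides.

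Next I would check that $\overline{B}_{m\phi}$ restricts to $B^{\otimes m}$ over the open part $\caB(\Gamma)$. Over the interior the cone data plays no role: Mumford's construction returns the universal abelian variety with its principal polarization, and the line bundle attached to $m\phi$ is by construction the $m$-th tensor power of the biextension bundle $B=(\Id,\lambda)^{\ast}P$. One must check that this identification over the interior is compatible with the extension across the boundary, but that compatibility is built into the construction of $\overline{B}_{m\phi}$ in \cite[Chapter VI]{fc}, which is designed precisely so that the resulting bundle extends $B^{\otimes m}$.

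For relative ampleness I would invoke the torus-embedding criterion for ampleness, exactly as in the proof of Proposition~\ref{prop:8}. Working over a neighbourhood of a point of $\overline{\caA}(\Gamma)_{\overline\sigma}$, the fibre of $\pi_{\Sigma,\Pi}$ is, up to the quotient by the periods, a torus embedding attached to the cones of $\Pi$ lying over $\sigma$, and $\overline{B}_{m\phi}$ corresponds there to the piecewise-linear function $m\phi$. By \cite[Theorem V.5.8]{fc} together with the standard description of ampleness on torus embeddings, $\overline{B}_{m\phi}$ is relatively ample over $\overline{\caA}(\Gamma)_\Sigma$ if and only if $m\phi$ (equivalently $\phi$) is strictly concave over each cone $\sigma\in\Sigma$, which is precisely the strict-concavity requirement in the definition of an admissible polarization function in Definition~\ref{def:10}. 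Finally, a proper morphism carrying a relatively ample line bundle is projective, so when $\Pi$ is projective, i.e.\ admits an admissible polarization function, the morphism $\pi_{\Sigma,\Pi}$ is projective.

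The main obstacle is the bookkeeping in the first step: matching the abstract transformation law \eqref{eq:3} with the concrete descent condition for line bundles through Mumford's quotient, and keeping all normalisations consistent --- the factor of $2$ implicit in the biextension polarization, the passage from a divisorial function $\phi$ to its divisor $D_\phi$ in the style of Definition~\ref{def:12}, and the role of $m$. All of this is carried out in \cite[Chapter VI]{fc} for $\Sp(2g,\Z)$ and the principal congruence subgroups $\Gamma(N)$; since we work only over $\C$ and with neat $\Gamma$, their arguments apply verbatim, so the write-up amounts essentially to transcribing \cite[Chapter VI, Theorem 1.13]{fc} into our notation.
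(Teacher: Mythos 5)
Your proposal is correct and follows the same route as the paper, which gives no argument beyond citing \cite[Chapter VI, Theorem 1.13]{fc} (with the observation, recorded in Remark~\ref{rem:3}, that the transformation law \eqref{eq:3} is exactly the descent condition ensuring the restriction to $\caB(\Gamma)$ is $B^{\otimes m}$). Your write-up simply unpacks that citation --- Mumford's construction, descent through the period lattice, and the strict-concavity criterion for relative ampleness --- in more detail than the paper does.
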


\begin{rmk}\label{rem:3} Condition \ref{item:1} in Definition
  \ref{def:10} ensures that the restriction of
  $\overline{B}_{m\phi}$ to $\caB(\Gamma )$ agrees with 
  $B^{\otimes m}$. In fact this condition implies that the restriction of
  $\overline{B}_{m\phi}$ to $\caB(\Gamma )$ satisfies
  the cocycle condition determining $B^{\otimes m}$.
\end{rmk}

\begin{rmk}\label{rem:4}
   Let $\Sigma $ be an admissible cone
  decomposition of $
  C(X_{0})$ and $\Pi $ an admissible cone decomposition of $\widetilde
  C(X_{0})$ over $\Sigma $.  We can combine admissible divisorial
  functions on $\Sigma $ and $\Pi $. Let $\phi $ be an
  admissible divisorial
  function on $\Pi $ and $\psi $ an admissible divisorial function on
  $\Sigma $. By composing with the projection $\widetilde
  C(X_{0})\to C(X_{0})$, the function $\psi $ defines a function on $\widetilde
  C(X_{0})$
  which (by abuse of notation) we also denote $\psi $. Let $m>0$ be an integer such that
  $m\phi $ has  
  integral values on $\widetilde C(X_{0})_{\Z}$.
  Then $m\phi +\psi $ is again an
  admissible divisorial function with integral values on $\widetilde
  C(X_{0})_{\Z}$ and, by construction,  
  \begin{displaymath}
    \overline B_{m\phi +\psi}=
    \overline B_{m\phi }\otimes (\pi _{\Sigma ,\Pi })^{\ast}\caO(D_{\psi }).
  \end{displaymath}
\end{rmk}

Combining the previous results we see that the line bundle of
Siegel--Jacobi forms $L_{k,m}$ can be extended to a toroidal
compactification with the help of an admissible divisorial
function. 

\begin{df}\label{def:13}
  Let $\Sigma $, $\Pi $ and $\phi $ as in Proposition
  \ref{prop:4}.  Let $\ell>0$ be an integer such that $\ell \phi $ has
  integral values on $\widetilde C(X_0)_\Z$. Then for every integer $m$
  divisible by $\ell$ and for every integer $k$ we define the line
  bundle
  \begin{displaymath}
    \overline L_{k,m,\phi } = (\pi _{\Sigma ,\Pi })^{\ast}\overline
    M^{k}\otimes \overline B_{m\phi }
  \end{displaymath}
on $\overline \caB(\Gamma )_{\Pi }$.
\end{df}
Note that $m\phi$ is integer-valued, so that $ \overline B_{m\phi }$ is well-defined. 
Clearly the restriction of $\overline L_{k,m,\phi }$ to $\caA(\Gamma
)$ agrees with $L_{k,m}$. 

\begin{rmk}\label{rem:ext}
  The extension depends on the choice of $\phi $. Moreover, for $m$ 
  not divisible by $\ell$ we can only extend $L_{k,m}$ as a $\Q$-line
  bundle. 
\end{rmk}

We next see a criterion for when a rational section of $\overline
L_{k,m,\phi }$ is holomorphic at a point of the boundary.

\begin{lem}\label{lemm:23}   Let $\Sigma $ and $\Pi $ be as in Proposition
  \ref{prop:4}. Let $\phi $ be an  admissible divisorial
  function on $\Pi $ and let $k,m>0$ be integers such that
  $m\phi $ has 
  integral values on $\widetilde C(X_{0})_{\Z}$. Let $f$ be a rational section of $L_{k,m}$,
  that we view as a meromorphic Siegel--Jacobi form of weight $k$ and
  index $m$. Let
  $\tau \in \Pi $ and $x\in \overline \caB(\Gamma 
  )_{\overline \tau  }$. Let $V'$ be a sufficiently small open coordinate neighborhood of $x$
  and $U\subset \caH_{g}\times 
  \C^{(1,g)}$ a standard open set as in Definition \ref{def:16}.    
  Then $f$ extends to a holomorphic section of $\overline L_{k,m,\phi
  }$ around $x$ (resp. a 
  non-vanishing holomorphic section) if and only if the function
  \begin{displaymath}
    -\log |f(Z,W)|-2\pi m\phi (\im Z,\im W)
  \end{displaymath}
  is bounded below (resp. bounded) in $U$.
\end{lem}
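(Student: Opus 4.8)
The plan is to reduce the statement to a purely local computation in the coordinate neighborhood $V'$ and to track the normalizing factors coming from the automorphy factors \eqref{eq:25}, \eqref{eq:26} and from the definition of $\overline B_{m\phi}$. First I would fix the local coordinates $w_1,\dots,w_{G+g}$ on $V'$ as in Section \ref{sec:local_coordinates} and the uniformizing coordinates $s_1,\dots,s_{G+g}$ on $U'$ related by \eqref{eq:31}, so that $w_j = e^{2\pi i s_j}$ for $j\le m_\tau = \dim\tau$ (writing $m_\tau$ to avoid clashing with the index $m$) and $w_j = s_j$ for $j > m_\tau$. On $\caB(\Gamma)$ the section $f$ of $\overline B_{m\phi}\otimes (\pi_{\Sigma,\Pi})^\ast\overline M^{\otimes k}$, being a meromorphic Siegel--Jacobi form of weight $k$ and index $m$, is represented by a holomorphic (meromorphic) function $f(Z,W)$ on the preimage $U'\cap(\caH_g\times\C^{(1,g)})$, and the natural trivialization of $\overline B_{m\phi}\otimes(\pi_{\Sigma,\Pi})^\ast\overline M^{\otimes k}$ over $V'$ differs from the automorphic trivialization by a nowhere-vanishing holomorphic factor times $e^{2\pi m\phi}$ evaluated at appropriate arguments. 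The key point is that the function $\phi$ on $\widetilde C_g$ enters precisely the transition/trivialization data used in Proposition \ref{prop:4} and Remark \ref{rem:3}.

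Next I would make this precise. By \cite[Chapter VI, Theorem 1.13]{fc}, which underlies Proposition \ref{prop:4}, the line bundle $\overline B_{m\phi}$ is constructed so that, in the local chart $V'$, a section of $\overline B_{m\phi}$ is holomorphic at $x$ if and only if the corresponding theta-like function on $U'$, after dividing by the factor $e^{2\pi i m\phi}$ expressed in the coordinates on $\widetilde C_g$, extends holomorphically across the boundary divisor $\{w_1\cdots w_{m_\tau}=0\}$; and is non-vanishing iff it extends to a unit. Since for $j\le m_\tau$ we have $-\log|w_j| = 2\pi\,\im(s_j)$, the order of vanishing of a holomorphic function of the $w_j$ along each boundary component translates into a growth condition on the absolute value in terms of $\im Z$ and $\im W$. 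Concretely, a holomorphic function $G(w_1,\dots,w_{G+g})$ that is a product of a unit and $\prod_{j\le m_\tau} w_j^{a_j}$ corresponds, on $U'$, to a function whose logarithm of absolute value equals $-2\pi\sum_j a_j\,\im(s_j) + O(1)$; holomorphic extension (no poles) is equivalent to $-\log|G|$ being bounded below by $O(1)$ modulo such linear terms with $a_j\ge 0$, and the condition $a_j\ge 0$ for all boundary components is exactly captured by comparing with the piecewise-linear function $\phi$ on the cone $\tau$. Putting $G = f\cdot e^{-2\pi i m\phi}$ in the appropriate trivialization, holomorphy of $f$ at $x$ becomes: $-\log|f(Z,W)| - 2\pi m\phi(\im Z,\im W)$ is bounded below on $U'$; non-vanishing becomes: it is bounded (above and below).

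The main obstacle, and the step requiring the most care, is the bookkeeping of the trivialization of $\overline B_{m\phi}\otimes(\pi_{\Sigma,\Pi})^\ast\overline M^{\otimes k}$ over $V'$: one must verify that the nowhere-vanishing holomorphic factor relating the automorphic description of $f(Z,W)$ to the chosen frame of the extended line bundle does not interfere with the boundedness condition, and that the $\overline M^{\otimes k}$-factor contributes only a bounded term near $x$ (this follows from the argument in the proof of Lemma \ref{lemm:10}, where $-\log\|du_1\wedge\cdots\wedge du_g\|^{\otimes k}$ was shown to be bounded above, and where the relevant generating section of $\overline M$ on $V$ is holomorphic and non-vanishing). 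I would also invoke Remark \ref{rem:4} to split $\overline B_{m\phi}\otimes(\pi_{\Sigma,\Pi})^\ast\overline M^{\otimes k}$ as $\overline B_{m\phi+\psi}$ for a suitable $\psi$ pulled back from $\Sigma$ if that streamlines the reduction, but the essential content is the $\widetilde C_g$-side computation. Once the trivialization is pinned down, the equivalence of holomorphic extension with the stated boundedness of $-\log|f(Z,W)| - 2\pi m\phi(\im Z,\im W)$ is immediate from the coordinate description $w_j = e^{2\pi i s_j}$ and the relation $\im(s_j)$ versus $-\log|w_j|$, and the non-vanishing statement follows by applying the same argument to $1/f$.
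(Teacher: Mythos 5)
Your proposal is correct and follows essentially the same route as the paper: both reduce to the Faltings--Chai local description of $\overline B_{m\phi}$ over the chart $V'$, identify the trivializing factor $\prod_j w_j^{-m\phi(\Omega'_j,\beta_j)}$ (your $e^{-2\pi i m\phi}$ in the $s$-coordinates), use that $du_1\wedge\dots\wedge du_g$ generates $\overline M$ near $x$, and convert holomorphy (resp.\ non-vanishing) of the resulting meromorphic function into boundedness below (resp.\ boundedness) of $-\log|f|-2\pi m\phi(\im Z,\im W)$ via $-\log|w_j|=2\pi\im(s_j)$ and the linearity of $\phi$ on $\tau$. The only cosmetic difference is that your appeal to the metric estimate from Lemma~\ref{lemm:10} is unnecessary --- here one only needs that the frame of $\overline M$ is holomorphic and non-vanishing, not any bound on its norm.
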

\begin{proof}
  We start by recalling a few steps in the construction of 
  $\overline \caB(\Gamma )_{\Pi }$ and 
  $\overline{B}_{m \phi}$ (see the proof of \cite[Chapter VI, Theorem
  1.13]{fc}). We describe the situation analytically over the complex
  numbers as that is enough for our purposes.

  Over an open set $V$ of $\overline \caA(\Gamma )_{\Sigma}$ that
  contains the image of $V'$ there is a complex manifold $P$ together with a free discontinuous action of the lattice $\Z^{(1,g)}$ such that
  \begin{displaymath}
    P/\Z^{(1,g)}=(\pi _{\Sigma ,\Pi })^{-1}(V)\subset \overline
    \caB(\Gamma )_{\Pi }. 
  \end{displaymath}
  The map $p\colon P \to (\pi _{\Sigma ,\Pi })^{-1}(V)$ is \'etale and
  we can find an open subset $V''\subset P$, that depends on the
  representative $\tau $ of $\overline \tau $, and that maps isomorphically
  to $V'$. Thus the holomorphic coordinates of $V'$ also give us
  holomorphic coordinates of $V''$. Moreover, the uniformization $U\to
  V'$ factors through $V''$ as the map $U\to V''$ is also given by
  formula \eqref{eq:31}. Then the preimage of $\overline{B}_{m \phi}$
  in $V''$ is the sheaf $\caO(D)$, where
  \begin{displaymath}
    D=\sum_{j}-m\phi (\Omega '_{j},\beta _{j}) D_{j},\qquad
    D_{j}=\{w_{j}=0\}.
  \end{displaymath}
  As in the proof of Lemma \ref{lemm:10} the symbol $du_{1}\wedge
  \dots \wedge du_{g}$ defines a generating section of
  $(\pi _{\Sigma ,\Pi })^{\ast}\overline M$ over $V'$. 
  Therefore $f(du_{1}\wedge
  \dots \wedge du_{g})^{\otimes k}$ extends to a holomorphic section of $\overline
  B_{m\phi }\otimes (\pi _{\Sigma ,\Pi })\overline M^{\otimes k}$ if and only if the function 
  \begin{displaymath}\label{eq:27}
    g=f\prod _{j}w_{j}^{-m\phi (\Omega '_{j},\beta _{j})}
  \end{displaymath}
  is holomorphic in $V''$. Since $g$ is meromorphic, it is
  holomorphic if and only if the function 
  \begin{align*}
    -\log |g| &= -\log |f|+\sum_{j} m\phi (\Omega '_{j},\beta
                _{j})\log|w_{j}|\\
    &=
      -\log |f|+\sum_{j} -2\pi m\phi (\Omega '_{j},\beta _{j})\im(s_{j})\\
    &= -\log |f| -2\pi m\phi \left(\sum_{j}\im(s_{j}(\Omega '_{j},\beta _{j}))\right)
  \end{align*}
  is bounded below. This proves the lemma for holomorphic
  sections. Similarly $g$ is holomorphic and non-vanishing if and
  only if $-\log|g|$ is bounded; this proves the second case. 
\end{proof}

\begin{cor}\label{cor:5}
       Let $f\in H^0\left(\caB(\Gamma),L_{k,m}\right)$ be a section that we see as
     a holomorphic function on $\caH_{g}\times \C^{(1,g)}$ satisfying
     conditions \ref{item:17} and \ref{item:18} of Definition~     \ref{def:SJ_forms}. Then $f\in H^{0}(\overline \caB(\Gamma 
  )_{\Pi }, \overline L_{k,m,\phi })$ if and only if,
     for every $M \in \Sp(2g, \Z)$, the function
     \begin{displaymath}
       \psi (Z,W)=-\log\left|f|_{k,m}M(Z,W)\right|)-2\pi m\phi (\im Z,\im W)
     \end{displaymath}
     is bounded below in each standard open set as in Definition \ref{def:16}. 
   \end{cor}
   \begin{proof}
     This is just a reformulation of Lemma \ref{lemm:23} taking into
     account that $\Sp(2g,\Z)$ acts transitively on the set of
     rational boundary components of a given rank.
   \end{proof}

   Once we have extended the line bundle of Siegel--Jacobi forms to a toroidal
   compactification, we study when the standard invariant metric
   extends as a psh metric. For this we need a definition. 

\begin{df}\label{def:14}
  An admissible divisorial function $\phi$ on $\Pi$ is called
  \emph{sufficiently negative} if
  \begin{displaymath}
    \phi (\Omega ,\zeta \Omega )\le -\zeta \Omega \zeta ^{t}.
  \end{displaymath}
\end{df}

\begin{rmk}\label{rem:exists_suff_neg}
For each given smooth $\Pi $ we can always find a sufficiently
negative admissible  divisorial function. Indeed, for every cone $\tau $ of $\Pi $ take the linear
function that agrees with $-\zeta \Omega \zeta ^{t}$ on the one
dimensional faces of  $\tau $. This function has
rational values on $\widetilde C(X_{0})_{\Z}$ with bounded
denominators. Nevertheless, even if the function $(\Omega ,\beta
)\mapsto -\beta \Omega ^{-1}\beta ^{t}$ is concave, the function we
have constructed is not necessarily concave. In particular it might
not be a polarization. 
\end{rmk}

The interest of the definition of \emph{sufficiently negative} admissible divisorial
functions lies in the following result.

\begin{lem}\label{lemm:9}
  Let $\Sigma $ and $\Pi $ be as in Proposition
  \ref{prop:4}. Let $\phi $ be an  admissible divisorial
  function on $\Pi $ and let $m>0$ be an integer such that
  $m\phi $ has 
  integral values on $\widetilde C(X_{0})_{\Z}$.  If $\phi $ is sufficiently
  negative then the
  metric of Definition \ref{def:6} on $B^{\otimes m}$ 
  extends to a singular psh metric 
  on $\overline{B}_{m\phi }$.  
\end{lem}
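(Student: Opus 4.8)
The plan is to combine Lemma~\ref{lem:psh} with a local boundedness estimate near the boundary and the classical extension theorem for psh functions across closed complete pluripolar sets. By Lemma~\ref{lem:psh} the invariant metric $h^{\inv}$ is psh — in particular, being smooth, continuous — on $B^{\otimes m}$ over the open part $\caB(\Gamma)$. Since we are assuming $\Pi$ smooth, the boundary $\overline{\caB}(\Gamma)_{\Pi}\setminus\caB(\Gamma)$ is a simple normal crossing divisor, hence a closed analytic (so complete pluripolar) subset. Therefore it suffices to show: for every point $x$ of the boundary there is a section $s$ generating $\overline{B}_{m\phi}$ in a neighbourhood $V'$ of $x$ such that, after possibly shrinking $V'$ around $x$, the function $-\log h^{\inv}(s)$ is bounded above on $V'\cap\caB(\Gamma)$. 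Granting this, the $\limsup$-regularization of $-\log h^{\inv}(s)$ across the boundary is psh near $x$, and since on $\caB(\Gamma)$ all these local potentials come from the single metric $h^{\inv}$, the local extensions are compatible and glue to a singular psh metric on $\overline{B}_{m\phi}$ restricting to $h^{\inv}$ on $\caB(\Gamma)$.

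To prove the local estimate, fix a boundary point $x\in\overline{\caB}(\Gamma)_{\overline \tau}$ with $\tau\in\Pi$, and choose local coordinates $(w_1,\dots,w_{G+g})$ on a sufficiently small neighbourhood $V'$ of $x$ together with the open set $U'\subset\caH_g\times\C^{(1,g)}$ as in Section~\ref{sec:local_coordinates}. Let $s$ be any section generating the line bundle $\overline{B}_{m\phi}$ over $V'$, which we view as a nowhere-vanishing meromorphic Siegel--Jacobi form $f$ of weight $0$ and index $m$. Since $s$ is a non-vanishing holomorphic section around $x$, Lemma~\ref{lemm:23} applied with $k=0$ (the "non-vanishing" case) tells us that the function
\[
-\log|f(Z,W)|-2\pi m\,\phi(\im Z,\im W)
\]
is bounded on $U'$. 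On the other hand, writing $Z=X+iY$ and $W=\alpha+i\beta$, Definition~\ref{def:6} gives $-\log h^{\inv}(s)=-\log|f(Z,W)|+2\pi m\,\beta Y^{-1}\beta^{t}$ on the preimage of $V'\cap\caB(\Gamma)$ in $U'$, so that there
\[
-\log h^{\inv}(s)=\bigl(-\log|f|-2\pi m\,\phi(Y,\beta)\bigr)+2\pi m\bigl(\phi(Y,\beta)+\beta Y^{-1}\beta^{t}\bigr).
\]

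It remains to bound the last summand from above, and this is where the hypothesis of sufficient negativity enters. Since $U'\subset\caH_g\times\C^{(1,g)}$ the matrix $Y=\im Z$ is positive definite; hence $\beta$ lies in the image of $Y$ and we may write $\beta=\zeta Y$ with $\zeta=\beta Y^{-1}$, so that $(Y,\beta)=(Y,\zeta Y)\in\widetilde C_g$ and, using symmetry of $Y$, $\zeta Y\zeta^{t}=\beta Y^{-1}\beta^{t}$. Because $\phi$ is sufficiently negative (Definition~\ref{def:14}) we obtain $\phi(Y,\zeta Y)\le-\zeta Y\zeta^{t}=-\beta Y^{-1}\beta^{t}$, i.e. $\phi(Y,\beta)+\beta Y^{-1}\beta^{t}\le0$. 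Combined with the boundedness of the first bracket above, this shows that $-\log h^{\inv}(s)$ is bounded above on $V'\cap\caB(\Gamma)$, as required. The only step that is not essentially formal is precisely this interplay between Lemma~\ref{lemm:23} and Definition~\ref{def:14}; once it is in place, the extension of the metric follows from standard potential theory.
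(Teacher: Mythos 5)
Your proof is correct and follows essentially the same route as the paper: reduce via Lemma~\ref{lem:psh} to showing the local potential $-\log h^{\inv}(s)$ of a generating section is bounded above near each boundary point, use Lemma~\ref{lemm:23} to get boundedness of $-\log|f|-2\pi m\phi$, and then apply sufficient negativity to control the remaining term $2\pi m(\phi(Y,\beta)+\beta Y^{-1}\beta^{t})\le 0$. The only cosmetic difference is that the paper writes the computation only for a point corresponding to a maximal cone, whereas you phrase it for an arbitrary boundary stratum; the substance is identical.
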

\begin{proof}
  As in the proof of Lemma \ref{lemm:10}, since we already know that the
  standard invariant metric is psh on $B^{\otimes m}$, we only  have
  to show that,
  for every 
  point $x\in \overline \caB(\Gamma 
  )_{\Pi  }$ and every local section $s$ of $\overline{B}_{m\phi }$ that
  generates $\overline{B}_{m\phi }$ around $x$, the function $-\log \|s\|$
  is bounded above locally around $x$. Again we discuss only the case
  when $x=x_{\tau }$ corresponds to a maximal cone $\tau $ of $\Pi
  $. Let $U'\subset \caH_{g}\times \C^{(1,g)}$ and $x_{\tau }\in V'$ be 
  open coordinate sets as in Section
  \ref{sec:local_coordinates}. After shrinking $V'$ we can assume that
  $U'$ is a standard open set as in Definition \ref{def:16}. A
  rational section of $B^{\otimes m}$ determines a 
  meromorphic Siegel--Jacobi form $f $ of index $m$. By Lemma
  \ref{lemm:23} it extends to a
  generating section $s_{f}$ of  
  $\overline{B}_{m \phi}$  around $x_{\tau }$ if and only if the function
  \begin{displaymath}
    -\log |f|-2\pi m \phi 
  \end{displaymath}
  is bounded in $U'$. By the sufficient
  negativity of $\phi$ and the definition of the standard invariant metric
  in Definition \ref{def:6} we deduce that
  \begin{align*}
    -\log \|s_{f}\| &= -\log |f(\Omega , \zeta \Omega )| +2\pi m
    \zeta \Omega \zeta ^{t}\\ &=
    -\log |f(\Omega , \zeta \Omega )|-2\pi m \phi+2\pi m \phi+
    2\pi m
    \zeta \Omega \zeta ^{t}
  \end{align*}
  is bounded above, hence extends to a psh function on $V'$.
  \end{proof}

\begin{rmk}\label{rem:1} When $\Pi $ is smooth, we can choose the
  sufficiently negative function
  $\phi $ that agrees with $\zeta \Omega \zeta ^{t}$ on the rays of
  $\Pi $ and is linear on every cone. Then the extension obtained is the one considered by Lear
  \cite{lear}. In this case the standard invariant metric extends to a psh metric
  that is continuous up to a set of codimension at least $2$.
\end{rmk}

Combining Lemmas \ref{lemm:10} and \ref{lemm:9} we obtain a criterion
for when the invariant metric on $L_{k,m}$ extends to a psh metric on
$\overline L_{k,m,\phi }$.

\begin{prop}\label{prop:5} Let $\Sigma $, $\Pi $, $m$, $\phi $ and $k$
  be as in Definition \ref{def:13}. 
  Assume that $\phi $ is a sufficiently
  negative admissible divisorial
  function on $\Pi $.
  Then the standard invariant metric $h^{\inv}$ of
  $L_{k,m}$ of Definition~\ref{def:6} extends to a singular psh metric
  on $\overline L_{k,m,\phi }$
  that we denote by~$\overline{h}^{\inv}$.
  \end{prop}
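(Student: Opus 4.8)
The plan is to assemble the statement from the two extension results proved above for the factors $\overline M$ and $\overline B_{m\phi}$. By Definition~\ref{def:13} we have
\[
\overline L_{k,m,\phi} = (\pi_{\Sigma,\Pi})^{\ast}\overline M^{\otimes k}\otimes \overline B_{m\phi},
\]
and the standard invariant metric $h^{\inv}$ on $L_{k,m}$ is, by Definition~\ref{def:6}, the tensor product of the Hodge metric on $\pi^{\ast}M^{\otimes k}$ and the biextension metric on $B^{\otimes m}$ (using the identification recalled in Section~\ref{sec:geom-interpr-line} and the agreement of metrics noted after Definition~\ref{def:6}). So the first step is to reduce the claim to showing that each factor extends to a psh metric on the corresponding extended line bundle: the pullback of a psh metric along the morphism $\pi_{\Sigma,\Pi}$ is psh, and a tensor product of two singular psh metrics on two line bundles is again a singular psh metric on their tensor product (the local weights add, and a sum of psh functions is psh; local integrability is likewise preserved).

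For the factor $(\pi_{\Sigma,\Pi})^{\ast}\overline M^{\otimes k}$: by Lemma~\ref{lemm:10} the standard invariant metric on $M$ extends to a psh metric on $\overline M$ over $\overline\caA(\Gamma)_{\Sigma}$; pulling back along $\pi_{\Sigma,\Pi}$ and raising to the $k$-th tensor power preserves the psh property, giving a psh metric on $(\pi_{\Sigma,\Pi})^{\ast}\overline M^{\otimes k}$. For the factor $\overline B_{m\phi}$: here I invoke the hypothesis that $\phi$ is a \emph{sufficiently negative} admissible divisorial function, so that Lemma~\ref{lemm:9} applies and gives that the metric of Definition~\ref{def:6} on $B^{\otimes m}$ extends to a singular psh metric on $\overline B_{m\phi}$. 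Tensoring the two extended metrics produces a singular psh metric on $\overline L_{k,m,\phi}$ whose restriction to $\caB(\Gamma)$ is, by construction, $h^{\inv}$. One should also remark that when $k<0$ the pullback metric on $(\pi_{\Sigma,\Pi})^{\ast}\overline M^{\otimes k}$ is still psh: $\overline M$ is (relatively) globally generated by some positive tensor power by \cite[Chapter V, Proposition 2.1]{fc}, hence its invariant metric is even more positive, but the tensor product with $\overline B_{m\phi}$ absorbs this since $m>0$ and $\phi$ sufficiently negative forces enough positivity — this is exactly the mechanism already used inside the proof of Lemma~\ref{lemm:9}. In the write-up I would simply note that Definition~\ref{def:13} requires $m$ divisible by $\ell$ (so $\overline B_{m\phi}$ is an honest line bundle) and $k$ an arbitrary integer, and that the argument above is insensitive to the sign of $k$.

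The main point to be careful about — and the only place where something genuinely needs checking rather than quoting — is the compatibility of the two local descriptions at a boundary point. Concretely, at a point $x_{\overline\tau}$ corresponding to a cone $\tau\in\Pi$ one must verify, in the common coordinate neighbourhood $U'\subset\caH_g\times\C^{(1,g)}$ of Section~\ref{sec:local_coordinates}, that the local generating section of $\overline L_{k,m,\phi}$ obtained by tensoring the generating sections of $(\pi_{\Sigma,\Pi})^{\ast}\overline M^{\otimes k}$ and of $\overline B_{m\phi}$ constructed in the proofs of Lemmas~\ref{lemm:10} and~\ref{lemm:9} is indeed a generating section of the tensor product, and that $-\log$ of its invariant norm is the sum of the two (locally bounded-above) terms, hence bounded above. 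This is the step I expect to carry the real content; once it is in place, the psh property follows from the theory of psh functions exactly as in those two lemmas. I would write this out by combining the formula $-\log\|s\| = -\log|f| - k\log\det(\sum\tfrac{-1}{2\pi}\log|z_j|\,\Omega_j)$ from the proof of Lemma~\ref{lemm:10} with the estimate $-\log\|s_f\| = -\log|f(\Omega,\zeta\Omega)| - 2\pi m\phi + 2\pi m\phi + 2\pi m\,\zeta\Omega\zeta^t$ from the proof of Lemma~\ref{lemm:9}, observing that the $du_j$ (resp. $w_j$) coordinates are the same ones appearing in both constructions, and concluding boundedness above.
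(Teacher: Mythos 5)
Your proposal matches the paper's argument: Proposition~\ref{prop:5} is presented there as an immediate consequence of combining Lemma~\ref{lemm:10} (extension of the invariant metric on $\overline M$) with Lemma~\ref{lemm:9} (extension of the invariant metric on $\overline B_{m\phi}$ for $\phi$ sufficiently negative), which is exactly the tensor-product decomposition you use, and your ``compatibility of local descriptions'' check is just the routine additivity of the local weights in the common coordinates of Section~\ref{sec:local_coordinates}. The only shaky point is your aside on $k<0$: the claim that the tensor product with $\overline B_{m\phi}$ ``absorbs'' the negativity is unsupported (the dual of a psh metric need not be psh), but it is also unnecessary, since $h^{\inv}$ is only psh on $\caB(\Gamma)$ for $k,m\ge 0$ (Lemma~\ref{lem:psh}) and that is the only case in which the proposition is used.
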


In a different direction one may ask when the extension $\overline L_{k,m,\phi
}$ of Definition \ref{def:13} is ample. 

\begin{lem}\label{lemm:19}
   Let $\Sigma $ be a projective admissible
  cone
  decomposition of $
  C(X_{0})$ and $\Pi $ a projective admissible cone decomposition of $\widetilde
  C(X_{0})$ over $\Sigma $. Let $\phi $ be an
  admissible polarization function on $\Pi $ and $\ell_{0}\ge 1$ an integer
  such that $\ell_{0}\phi$ has integral values on $\widetilde
  C(X_{0})_{\Z}$. Then there
     exists an admissible polarization function $\psi $ on $\Sigma$
     and for every $m>0$ a number $k_{0}\gg 0$ such that, for every
     $k\ge k_{0}$, the line  bundle
     $\overline L_{\ell_{0}k,\ell_{0}m,\phi+\psi }$ is ample on $\overline \caB(\Gamma
     )_{\Pi}$. 
   \end{lem}
\begin{proof}
  Let $\psi _{0}$ be a polarization function
  on $\Sigma $. By  Proposition \ref{prop:8} the line
  bundle $\caO(D_{\psi_{0} })$ is relatively ample for the map
     $\overline \caA(\Gamma )_{\Sigma }\to \caA(\Gamma )^{\ast}$. By
     Proposition \ref{prop:4} the line bundle $\overline B_{\ell_{0}\phi }$
     is relatively ample with respect to the map
     \begin{displaymath}
      \pi_{\Sigma,\Pi}\colon  \overline \caB(\Gamma )_{\Pi }\longrightarrow \overline
       \caA(\Gamma )_{\Sigma }. 
     \end{displaymath}
     Therefore we can find an integer $a>0$ such that the line bundle
     $\overline B_{\ell_{0}\phi }\otimes (\pi _{\Sigma ,\Pi })^{\ast}\caO(a\ell_{0}D_{\psi _{0}})$
     is relatively ample with respect to the map 
     \begin{displaymath}
       \overline \caB(\Gamma )_{\Pi }\longrightarrow 
       \caA(\Gamma )^{\ast}. 
     \end{displaymath}
     Writing $\psi =a\psi _{0}$, following Remark \ref{rem:4} we have
     $\overline B_{\ell_{0}\phi }\otimes (\pi _{\Sigma ,\Pi
       })^{\ast}\caO(a\ell_{0}D_{\psi _{0}})
     = \overline B_{\ell_{0}(\phi +\psi )}$.

     By the definition of $\caA(\Gamma )^{\ast}$ in
     \eqref{eq:28} the line bundle $\overline M$ is ample on
     $\caA(\Gamma )^{\ast}$. Therefore there exists $k_{1}$ such
     that
     \begin{displaymath}
       \overline L_{k_{1},\ell_{0},(\phi +\psi )}=\overline
       B_{\ell_{0}(\phi +\psi )}\otimes (\pi _{\Sigma ,\Pi
       })^{\ast}\overline M^{\otimes k_{1}}
     \end{displaymath}
     is ample in $\overline \caB(\Gamma )_{\Pi }$. Therefore it is
     enough to choose $k_{0}=mk_{1}$.
   \end{proof}

   The next objective is to give a more precise comparison between the
   group of sections of $\overline L_{k,m,\phi}$ and the group $J_{k,m}(\Gamma
   )$ whenever $m\phi $ has 
   integral values on $\widetilde C(X_{0})_{\Z}$. The main tool for
   this comparison is the following consequence of Proposition
   \ref{prop:9}.

   \begin{lem}\label{lemm:25}
     Let $f\in H^0\left(\caB(\Gamma),L_{k,m}\right)$ be a section that we see as
     a holomorphic function on $\caH_{g}\times \C^{(1,g)}$ satisfying
     conditions \ref{item:17} and \ref{item:18} of Definition~\ref{def:SJ_forms}. Then $f\in J_{k,m}(\Gamma )$ if and only if,
     for every $M \in \Sp(2g, \Z)$, the function
     \begin{displaymath}
       \psi (Z,W)=-\log\left|f|_{k,m}M(Z,W)\right|)+2\pi  m\im(W)\im(Z)^{-1}\im(W)^{t}
     \end{displaymath}
     is bounded below in each standard open set (in the sense of Definition \ref{def:16}). 
   \end{lem}
   \begin{proof}
     We start by showing that, if $f\in J_{k,m}(\Gamma )$, then the
     function $\psi (Z,W)$ is bounded 
     below in each standard open set as in Definition \ref{def:16}. Let $U$ be
     a standard open set. We use the notations in Section
     \ref{sec:local_coordinates}. 
     In view of
     Proposition \ref{prop:9}, it is enough to show that there
     is a  compact $K\subset \C^{(1,g)}$ and a matrix $Z_{0}\in \caH_{g}$ such that
     $U\subset U_{Z_{0}.K}$.
     Put
     \begin{displaymath}
       Z_{0}=i\Omega _{0}'-i\sum_{j\le n'}\log (r_{j}') \Omega '_{j}-
       i\sum_{j>n'} r'_{j} \Omega '_{j}.
     \end{displaymath}
     By hypothesis $\im(Z_{0})$ is a symmetric positive definite
     matrix. Let $K$ be the compact set of Lemma
     \ref{lemm:1}. By construction,
     $U\subset U_{Z_{0},K}$, so $\psi $ is bounded in $U'$.
     For the converse, it is enough to show that any strip of the form
     $S_{Y,\zeta,K'}$ is contained in a standard open set. But this is
     clear by the shape of both spaces.  
   \end{proof}

   Putting together Lemma \ref{lemm:25} and Corollary \ref{cor:5} we
   obtain the next result.

   \begin{thm}\label{thm:3}
     Let $\Sigma $ and $\Pi $ be as in Proposition
  \ref{prop:4}. Let $\phi $ be an  admissible divisorial
  function on $\Pi $ and let $k,m>0$ be integers such that
  $m\phi $ has 
  integral values on $\widetilde C(X_{0})_{\Z}$.
  \begin{enumerate}
  \item \label{item:23} If $\phi (\Omega ,\beta )\le -\beta
    \Omega^{-1}\beta ^{t}$ (i.e. the function $\phi $ is sufficiently
    negative) then $J_{k,m}(\Gamma )\subset 
    H^{0}\left(\overline \caB(\Gamma)_{\Pi},\overline  L_{k,
        m,\phi}\right)$.
  \item \label{item:24} If $\phi (\Omega ,\beta )\ge -\beta
    \Omega^{-1}\beta ^{t}$ then $
    H^{0}\left(\overline \caB(\Gamma)_{\Pi},\overline  L_{k,
        m,\phi}\right)\subset J_{k,m}(\Gamma )$.
  \end{enumerate}
   \end{thm}

   The Koecher
  principle \cite[Lemma 1.6]{Ziegler-J} implies that, for $g\ge 2$,
  the inclusion $
    H^{0}\left(\overline \caB(\Gamma)_{\Pi},\overline  L_{k,
        m,\phi}\right)\subset J_{k,m}(\Gamma )$ is always satisfied. 
   Combining the Koecher principle with Theorem
   \ref{thm:3}~(\ref{item:23}) we obtain immediately the complex version
   of a conjecture by Kramer, \cite[Remark 2.19]{Kramer_Crelle}.

   \begin{cor}\label{cor:6} With the hypothesis of Theorem
     \ref{thm:3}, if $g\ge 2$ and $\phi $  is sufficiently negative,
     then
     \begin{equation}\label{eq:33}
       H^{0}\left(\overline \caB(\Gamma)_{\Pi},\overline  L_{k,
        m,\phi}\right)= J_{k,m}(\Gamma).
     \end{equation}
     In particular
     \begin{displaymath}
       J_{k,m}(\Gamma)= J_{k,m}^{\can}(\Gamma)=
       \varinjlim _{\Pi ,\phi } H^{0}\left(\overline \caB(\Gamma)_{\Pi},\overline  L_{k,
        m,\phi}\right) 
  \end{displaymath}
  where the inductive limit runs over all admissible  cone
  decompositions of $\widetilde C(X_{0})$ and admissible divisorial
  functions $\phi $ such that $m\phi $ has integral values on
  $\widetilde C(X_{0})_{\Z}$ with the order $(\Pi ,\phi )\le (\Pi ',\phi ')$ if
  and only if $\Pi '$  is a refinement of $\Pi $ and $\phi '\le \phi$.
\end{cor}

\begin{rmk}
  The order we are using is different from the order defined
  in \cite[2.16]{Kramer_Crelle}. The reason is that the ordered set
  defined in  \cite[2.16]{Kramer_Crelle} is codirected, while the one
  used here is directed. 
\end{rmk}

\begin{rmk}
  We have only stated the complex case of Kramer's conjecture because
  this is the framework we are working in. Nevertheless both Lemma
 ~\ref{lemm:25} and Corollary~\ref{cor:5} can be easily extended to
  forms with coefficients in a torsion-free $\Z$-module, using
  Kramer's integral $q$-expansion principle to check whether a Jacobi
  form gives a section over the integers. This proves Kramer's
  Conjecture \cite[Remark 2.19]{Kramer_Crelle} in the case where (in
  his notation) the module $M$ is torsion free. 
\end{rmk}

\begin{rmk} One may wonder if there exists a choice $k,m,\phi $, with
  $k,m>0$ such that equality \eqref{eq:33} is true and the line bundle
  $\overline L_{k,m,\phi }$ is ample. A quick glance at Lemma \ref{lemm:19} may
  suggest so. Nevertheless, if we start with a sufficiently negative
  function $\phi $, in order to obtain an ample line bundle,  we will
  need to add a positive function $\psi $ that may destroy the
  sufficient negativity. In fact a consequence of Theorem
  \ref{thm:2} is that, if $\overline L_{k,m,\phi }$ is ample, then
  $\phi $ can not be sufficiently negative. 
\end{rmk}

\subsection{The standard invariant metric is toroidal}
\label{sec:stand-invar-metr}

\begin{prop}\label{prop:7}
   Let $\Sigma $, $\Pi $ and $\phi $ be as in Proposition
  \ref{prop:4}. Assume that $\phi $ is sufficiently
  negative. Let $\ell>0$ be an integer such that $\ell\phi$ is integral. Then, for every $k$ and every $m$ divisible by $\ell$, the
  singular psh metric $\overline h^{\inv}$  on $\overline L_{k,m,\phi
  }$ is toroidal.
\end{prop}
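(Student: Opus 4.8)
The plan is to use on $\overline \caB(\Gamma)_\Pi$ the toroidal structure whose divisor $D$ is the full boundary $\overline \caB(\Gamma)_\Pi\setminus \caB(\Gamma)$; this is simple normal crossings because $\Pi$ is smooth, and since $\Gamma$ is neat the invariant metric $h^{\inv}$ is smooth, hence locally bounded, on $\caB(\Gamma)$, which is the first requirement of Definition~\ref{def:9}. For the second requirement I would exploit the factorisation $\overline L_{k,m,\phi}=\pi_{\Sigma,\Pi}^{*}\overline M^{\otimes k}\otimes \overline B_{m\phi}$ of Definition~\ref{def:13}, under which $\overline h^{\inv}$ is the tensor product of the pullback of the Hodge metric on $\overline M^{\otimes k}$ and the $\phi$-extension of the biextension metric on $\overline B_{m\phi}$. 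It is immediate from Definition~\ref{def:9} that a tensor product of toroidal metrics is toroidal (the defining convex functions and the bounded remainders just add) and that the pullback of a toroidal metric along $\pi_{\Sigma,\Pi}$ is toroidal (in the coordinates of Section~\ref{sec:local_coordinates} this map is monomial, so the convex function is replaced by its composition with a linear map — still bounded above, convex and Lipschitz — and the remainder stays bounded). So it suffices to treat the two factors separately.

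For the factor $\pi_{\Sigma,\Pi}^{*}\overline M^{\otimes k}$ the computation is essentially the one in the proof of Lemma~\ref{lemm:10}: near a boundary point one has $-\log h_M(s)=\gamma+g_M(-\log|z_1|,\dots)$ with $\gamma$ bounded and $g_M(u)=-\tfrac{k}{2}\log\det\!\big(\Omega_0+\tfrac1{2\pi}\sum_j u_j\Omega_j+E_0\big)$, which is convex by concavity of $\log\det$, bounded above by the estimate $\det(A+B)\ge\det(A)$ used in that proof, and Lipschitz because its gradient is $\big(-\tfrac{k}{4\pi}\tr(Y(u)^{-1}\Omega_j)\big)_j$ with $Y(u)^{-1}$ uniformly bounded on $\{u\ge c\}$. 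For the factor $\overline B_{m\phi}$ fix a boundary point $p$; for simplicity take $p$ in a zero-dimensional stratum, so that in the coordinates $(w_1,\dots,w_{G+g})$ of Section~\ref{sec:local_coordinates} the boundary is $\{w_1\cdots w_{G+g}=0\}$ and, with $u_j=-\log|w_j|$, one has $Y(u)=\operatorname{Im}Z=\tfrac1{2\pi}\sum_j u_j\Omega_j$ and $\beta(u)=\operatorname{Im}W=\tfrac1{2\pi}\sum_j u_j\beta_j$ with $(Y(u),\beta(u))\in\tau$ (the general case only adds bounded interior-coordinate terms). Choosing a local generating section $s_f$ of $\overline B_{m\phi}$ at $p$, corresponding to a meromorphic Siegel--Jacobi form $f$ of weight $0$ and index $m$, Lemma~\ref{lemm:23} (with $k=0$) gives that $\gamma:=-\log|f|-2\pi m\,\phi(Y(u),\beta(u))$ is bounded, and Definition~\ref{def:6} yields, exactly as in the proof of Lemma~\ref{lemm:9},
\begin{displaymath}
-\log\overline h^{\inv}(s_f)=\gamma+g_B(u),\qquad g_B(u)=2\pi m\big(\phi(Y(u),\beta(u))+\beta(u)Y(u)^{-1}\beta(u)^{t}\big).
\end{displaymath}
Since $\phi$ is linear on $\tau$, the term $\phi(Y(u),\beta(u))$ is a fixed linear form in $u$, so $g_B$ differs from $2\pi m\,\beta(u)Y(u)^{-1}\beta(u)^{t}$ by a linear function; the latter is convex, being the composition of the jointly convex matrix-fractional function $(\beta,Y)\mapsto\beta Y^{-1}\beta^{t}$ with an affine map, hence $g_B$ is convex. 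Writing $\zeta=\beta Y^{-1}$, so that $\zeta\Omega\zeta^{t}=\beta Y^{-1}\beta^{t}$, the hypothesis that $\phi$ is \emph{sufficiently negative} (Definition~\ref{def:14}) gives $\phi(Y,\beta)+\beta Y^{-1}\beta^{t}\le 0$, i.e.\ $g_B\le 0$ is bounded above.

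The remaining point — Lipschitz continuity of $g_B$ on $\R^{G+g}_{\ge c}$ — is, I expect, the main obstacle. One computes $\partial_{u_j}g_B=2\pi m\big(\tfrac1{2\pi}\ell_j+\tfrac1\pi\beta_j\zeta(u)^{t}-\tfrac1{2\pi}\zeta(u)\Omega_j\zeta(u)^{t}\big)$ with $\ell_j$ the constant slope of $\phi|_\tau$, so everything reduces to a bound on $\zeta(u)=\beta(u)Y(u)^{-1}$ uniform over the coordinate region. The naive bound $\|\zeta(u)\|\le\|\beta(u)\|\,\|Y(u)^{-1}\|$ is useless since $\|\beta(u)\|$ grows linearly; one must use the compatibility $\beta_j=\zeta_j\Omega_j\in\operatorname{im}\Omega_j$ of the generators of $\tau$. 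The idea would be: decompose $\R^{g}=\operatorname{im}\Omega_j\oplus\ker\Omega_j$; the part of $\beta(u)$ growing with $u_j$ lies in $\operatorname{im}\Omega_j$, while $Y(u)=\tfrac{u_j}{2\pi}\Omega_j+R_j(u)$ with $R_j(u)$ increasing in the remaining $u_k$, hence bounded below on $\ker\Omega_j$ by $R_j(c,\dots,c)|_{\ker\Omega_j}\succ 0$ uniformly in $u$; a block computation then gives $\|\Omega_jY(u)^{-1}\|=O(1/u_j)$ uniformly, so $u_j\,\beta_jY(u)^{-1}=O(1)$, and summing over the finitely many $j$ bounds $\zeta(u)$. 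Granting this, $g:=g_M+g_B$ is bounded above, convex and Lipschitz on $\R^{r}_{\ge c}$, and $-\log\overline h^{\inv}$ of a local generating section of $\overline L_{k,m,\phi}$ is a bounded function plus $g(-\log|w_1|,\dots,-\log|w_r|)$; together with the local boundedness on $\caB(\Gamma)$ this is exactly Definition~\ref{def:9}, so $\overline h^{\inv}$ is toroidal with respect to $D$.
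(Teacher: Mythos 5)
Your overall strategy coincides with the paper's: work near a point of a maximal stratum, use the coordinates $u_i=-\tfrac1{2\pi}\log|w_i|$ of Section~\ref{sec:local_coordinates}, invoke Lemma~\ref{lemm:23} to absorb $-\log|f|-2\pi m\phi$ into the bounded term $\gamma$, and reduce everything to showing that $\varphi_1=-\tfrac{k}{2}\log\det\bigl(\sum_i u_i\Omega'_i\bigr)$ and $\varphi_2=\beta(u)Y(u)^{-1}\beta(u)^{t}$ are convex, bounded above (the latter after adding $2\pi m\phi$, using sufficient negativity), and Lipschitz. Your treatment of $\varphi_1$ (gradient $=-\tr(Y(u)^{-1}\Omega_j)$, uniformly bounded on $\{u\ge c\}$) is the same as the paper's, which formalizes the uniform bound via the trace inequality of Lemma~\ref{lemm:17}. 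The splitting into the $\overline M$-factor and the $\overline B_{m\phi}$-factor is harmless; the paper simply keeps both terms in one display.

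The one genuine gap is exactly the step you flag: the Lipschitz continuity of $\varphi_2$ on the whole region $\{u_i\ge c\}$. Your block-decomposition sketch (splitting $\R^{g}=\operatorname{im}\Omega_j\oplus\ker\Omega_j$ and arguing $\|\Omega_jY(u)^{-1}\|=O(1/u_j)$) is plausible along a ray where a single $u_j\to\infty$, but the required estimate must be \emph{uniform} over all of $\R^{G+g}_{\ge c}$, where several $u_k$ tend to infinity at unrelated rates; in that regime the off-diagonal blocks $B_{12}$ of the complementary part of $Y(u)$ also blow up, and "summing over the finitely many $j$" presupposes that each $O(1)$ constant is independent of the remaining coordinates. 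Controlling $\zeta(u)=\beta(u)Y(u)^{-1}$ uniformly in this multi-parameter degeneration is precisely the non-trivial content that the paper does not reprove: it quotes \cite[Theorem~3.2.2]{bghdj_sing}, which gives that $\varphi_2$ is continuous and bounded on the open simplex $\sum_iu_i=1$, $u_i>0$, and then uses homogeneity of degree $1$ to conclude Lipschitz continuity on the open quadrant. So either carry out your uniform matrix estimate in full (inducting on the stratification of the cone, or via a Schur-complement bound valid for all $u$ in the region), or replace that step by the citation; as written, "granting this" leaves the decisive analytic input unproved.
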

\begin{proof}
  Again, we prove this only in a neighborhood of a point $x_{\tau }\in
  \overline \caB(\Gamma )_{\Pi }$ corresponding to a maximal cone
  $\tau $ of $\Pi$. Let $U\subset U'$ be a standard open subset (Definition
  \ref{def:16}) and $V'$ the corresponding coordinate neighborhood of
  $x_{\tau }$ as described in Section
  \ref{sec:local_coordinates}. Let $G=g(g+1)/2$ and $d=G+g$. For
  $i=1,\dots,d$ we write 
  \begin{displaymath}
    u_{i}=\im(s_{i})=\frac{-1}{2\pi }\log|w_{i}|,
  \end{displaymath}
  where $(w_{1},\dots,w_{d})$ are the coordinates of $V'$. Recall
  also that the cone $\tau $ is generated by the points $(\Omega'
  _{i},\zeta _{i}\Omega _{i}')$, $i=1,\dots,d$.

  Let $f$ be a meromorphic Siegel--Jacobi form
  that defines a rational section $s_{f}$ of $\overline L_{k,m,\phi
  }$ that is a generating local section on $V'$. This means that on
  the set $U$ the function
  \begin{displaymath}
    -\log|f|-2\pi m\phi 
  \end{displaymath}
  is bounded (see Lemma~\ref{lemm:23}). Therefore
  \begin{align*}
    -\log\|s_{f}\|
    &= -\log|f|-\frac{k}{2}\log\det \sum_{i=1}^{d}
      u_{i}\Omega' _{i}\\
    &\quad
    +2\pi m \left(\sum_{i=1}^{d} u_{i}\zeta _{i}\Omega' _{i}\right)
    \left(\sum_{i=1}^{d} u_{i}\Omega' _{i}\right)^{-1}
      \left(\sum_{i=1}^{d} u_{i}\zeta _{i}\Omega' _{i}\right)^{t}\\
    &=\gamma -\frac{k}{2}\log\det \left(\sum_{i=1}^{d}
      u_{i}\Omega' _{i}\right) +2\pi m\phi (u_{1},\dots,u_{d})\\
    &\quad
    +2\pi m \left(\sum_{i=1}^{d} u_{i}\zeta _{i}\Omega' _{i}\right)
    \left(\sum_{i=1}^{d} u_{i}\Omega' _{i}\right)^{-1}
      \left(\sum_{i=1}^{d} u_{i}\zeta _{i}\Omega' _{i}\right)^{t},
  \end{align*}
  where $\gamma $ is bounded. We already know that $\phi $ is linear
  on the cone $\tau $ and that the other two functions appearing in
  the last equation are convex, as seen in Lemma
  \ref{lem:psh}. Moreover, since $\phi $ is assumed to be sufficiently
  negative, we know that
  \begin{displaymath}
     \phi (u_{1},\dots,u_{d})
    +\left(\sum_{i=1}^{d} u_{i}\zeta _{i}\Omega' _{i}\right)
    \left(\sum_{i=1}^{d} u_{i}\Omega' _{i}\right)^{-1}
      \left(\sum_{i=1}^{d} u_{i}\zeta _{i}\Omega' _{i}\right)^{t}
    \end{displaymath}
    is bounded above. Thus it only remains to show that the function
    \begin{displaymath}
     \varphi_{1}(u_{1},\dots,u_{d})=-\log\det \left(\sum_{i=1}^{d}
      u_{i}\Omega' _{i}\right)
    \end{displaymath}
    is bounded above and Lipschitz continuous in $U$, and that the
    function   
    \begin{displaymath}
      \varphi_{2}(u_{1},\dots,u_{d})=
      \left(\sum_{i=1}^{d} u_{i}\zeta _{i}\Omega' _{i}\right)
    \left(\sum_{i=1}^{d} u_{i}\Omega' _{i}\right)^{-1}
      \left(\sum_{i=1}^{d} u_{i}\zeta _{i}\Omega' _{i}\right)^{t}
    \end{displaymath}
    is Lipschitz continuous on $U$.

    With respect to $\varphi_{1}$, we can assume that the $u_{i}\ge M$
    for some constant $M$. If $A$ is a positive definite 
    matrix and $B$ is a positive semidefinite matrix then
    $\det(A+B)\ge \det(A)$, hence
    \begin{displaymath}
      \varphi_{1}(u_{1},\dots,u_{d})\le \varphi_{1}(M,\dots,M), 
    \end{displaymath}
   and $\varphi_{1}(u_{1},\dots,u_{d})$ is bounded above. To prove that it is Lipschitz continuous, following \cite[A~4.1]{Boyd:convex}
    we first compute 
    \begin{displaymath}
      \frac{\partial \varphi_{1}}{\partial u_{i}} = -\tr \left(\left(\sum_{j=1}^{d}
          u_{j}\Omega' _{j}\right)^{-1}\Omega _{i}\right) \, . 
    \end{displaymath}
    Then, applying Lemma \ref{lemm:17} below to
    \begin{displaymath}
      A=\sum_{j} M\Omega _{j},\quad B= \sum_{j} (u_{j}-M)\Omega
      _{j},\text{ and }C=\Omega _{i},
    \end{displaymath}
    we deduce that
    \begin{displaymath}
      0\ge \frac{\partial \varphi_{1}}{\partial u_{i}}(u_{1},\dots,u_{d})\ge
      \frac{\partial \varphi_{1}}{\partial u_{i}}(M,\dots,M).
    \end{displaymath}
    Hence the function $\varphi_{1}$ is Lipschitz continuous in $U$.

    Regarding $\varphi_{2}$, by \cite[Theorem
    3.2.2]{bghdj_sing} we know that $\varphi_{2}$ is continuous and
    bounded on the open simplex $\sum_{i}u_{i}=1$, $u_{i}>0$. Since it
    is homogeneous of degree 1, it is Lipschitz continuous on the open
    quadrant $\R^{d}_{>0}$ that contains $U$. Therefore it is
    Lipschitz continuous on $U$.    
  \end{proof}

      \begin{lem}\label{lemm:17}
      Let $A$ be a real positive definite symmetric matrix of
      dimension $r$ and $B,C$ real positive semi-definite symmetric
      matrices of the same dimension. Then
      \begin{displaymath}
        0\le \tr\left((A+B)^{-1}C\right)\le \tr\left(A^{-1}C\right).
      \end{displaymath}
    \end{lem}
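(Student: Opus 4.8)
The plan is to deduce both inequalities from two standard facts: first, that the trace of a product of two real symmetric positive semi-definite matrices is non-negative; and second, that matrix inversion is order-reversing on positive definite matrices, i.e. $0\prec A\preceq A+B$ implies $(A+B)^{-1}\preceq A^{-1}$ in the Loewner order.

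I would begin by recording the first fact: if $P,Q$ are real symmetric positive semi-definite matrices of the same size then, writing $P^{1/2}$ for the symmetric positive semi-definite square root of $P$ and using the cyclic invariance of the trace,
\[
  \tr(PQ)=\tr\!\big(P^{1/2}QP^{1/2}\big)\ge 0,
\]
since $P^{1/2}QP^{1/2}$ is symmetric positive semi-definite and therefore has non-negative trace. Applying this with $P=(A+B)^{-1}$ --- which is positive definite because $A$ is positive definite and $B$ is positive semi-definite, so that $A+B$ is positive definite --- and $Q=C$ immediately gives the left-hand inequality $0\le\tr\!\big((A+B)^{-1}C\big)$.

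For the right-hand inequality I would first establish $A^{-1}-(A+B)^{-1}\succeq 0$. Set $N=A^{-1/2}BA^{-1/2}$, which is positive semi-definite, so that $A^{-1/2}(A+B)A^{-1/2}=I+N$ and hence $A^{1/2}(A+B)^{-1}A^{1/2}=(I+N)^{-1}$. All eigenvalues of $I+N$ are at least $1$, so $(I+N)^{-1}\preceq I$; conjugating this inequality by $A^{-1/2}$ (a congruence, hence order-preserving) yields $(A+B)^{-1}\preceq A^{-1}$. Then, applying the first fact with $P=A^{-1}-(A+B)^{-1}\succeq 0$ and $Q=C$, we obtain
\[
  \tr\!\big(A^{-1}C\big)-\tr\!\big((A+B)^{-1}C\big)=\tr\!\Big(\big(A^{-1}-(A+B)^{-1}\big)C\Big)\ge 0,
\]
which is the desired bound.

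There is no real obstacle here: the only point requiring a moment's care is the order-reversing property of inversion, which is the classical operator-antimonotonicity of $X\mapsto X^{-1}$ and is dispatched by the short conjugation argument above (it can also be quoted from \cite{Boyd:convex}). Everything else is bookkeeping with symmetric square roots and the cyclicity of the trace.
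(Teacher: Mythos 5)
Your proof is correct. It differs from the paper's argument in its second half: the paper also begins by conjugating with $A^{\pm 1/2}$ to reduce to the case $A=\Id$, but then it diagonalizes $B$ by an orthogonal matrix to reduce further to $B$ diagonal with non-negative entries, and finishes by an entrywise verification using that the diagonal entries of a positive semi-definite $C$ are non-negative. You instead isolate two reusable facts --- that $\tr(PQ)\ge 0$ for positive semi-definite $P,Q$ (via $\tr(PQ)=\tr(P^{1/2}QP^{1/2})$), and the Loewner antimonotonicity $(A+B)^{-1}\preceq A^{-1}$ --- and conclude by linearity of the trace applied to $P=A^{-1}-(A+B)^{-1}$. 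Your route is more modular and makes the mechanism transparent (positivity of the trace pairing against the operator inequality), at the cost of having to justify the congruence/order-reversal step; the paper's route avoids any appeal to the Loewner order by pushing the reduction all the way down to a scalar computation. Both are complete and essentially elementary.
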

    \begin{proof}
    This is an immediate consequence of the Spectral Theorem. 
    \end{proof}

    We now assume the hypotheses of Proposition \ref{prop:5}. Fix a
    rational section $s$ of $\overline L_{k,m,\phi }$, and let
    $h=\overline{h}^{\inv}$ be the psh
   metric on $\overline L_{k,m,\phi }$ induced by the standard invariant
   metric. As in Section~\ref{sec:from-psh-metrics} we denote by $\D(\overline L_{k,m,\phi },s,h)$ the Weil $\R$-b-divisor associated to $s$ and $h$.  
   
 Let $(\overline\caB(\Gamma )_{\Pi })_\pi$ be a model of $\overline\caB(\Gamma )_{\Pi }$ on which the pullback $E$ of the union
   of $\operatorname{div}(s)$ and the boundary of $\overline 
   \caB(\Gamma )_{\Pi }$ has simple normal crossings. Then we can view
   $\D(\overline L_{k,m,\phi },s,h)$ as a Weil $\R$-b-divisor on $(\overline\caB(\Gamma )_{\Pi })_\pi$. 
      
\begin{cor}\label{cor:toroidal}
The Weil $\R$-b-divisor $\D(\overline L_{k,m,\phi },s,h)$ is toroidal with respect to $E$. 
\end{cor}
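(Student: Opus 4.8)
The plan is to decompose
\[
\D(\overline L_{k,m,\phi },s,h) = \bigl(\D(\overline L_{k,m,\phi },s,h) - \dv(s)\bigr) + \dv(s)
\]
and to show that each of the two summands is toroidal with respect to $E$. Since the section $\iota$ of \eqref{eq:section} is linear, the set $\iota\bigl(\WbDiv(\overline\caB(\Gamma)_\Pi,E)^{\tor}\bigr)$ of b-divisors toroidal with respect to $E$ is a linear subspace of $\WbDiv(\overline\caB(\Gamma)_\Pi)$, so the sum is toroidal with respect to $E$ once both summands are.

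First I would record that $E$, viewed as a toroidal structure on $\overline\caB(\Gamma)_\Pi$ via the model $(\overline\caB(\Gamma)_\Pi)_\pi$, lies above the toroidal structure given by the boundary divisor $D_\infty := \overline\caB(\Gamma)_\Pi\setminus\caB(\Gamma)$. Since $\Pi$ is smooth, $D_\infty$ is simple normal crossings, hence a toroidal structure; choosing $\pi$ to be a log resolution of $\bigl(\overline\caB(\Gamma)_\Pi,\dv(s)+D_\infty\bigr)$ that is an isomorphism over the complement of $\supp(\dv(s)+D_\infty)$, the divisor $E$ is simple normal crossings, contains the $\pi$-exceptional locus, and contains $\pi^{-1}(D_\infty)$; this exhibits $E$ as a toroidal structure above $D_\infty$.

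For the first summand: by Proposition~\ref{prop:7} the metric $h=\overline h^{\inv}$ is toroidal with respect to $D_\infty$, and therefore — $E$ lying above $D_\infty$ — also toroidal with respect to $E$, by the remark following Definition~\ref{def:9}. (This is compatible with $h$ being locally bounded on $(\overline\caB(\Gamma)_\Pi)_\pi\setminus E$, where it is pulled back from the smooth invariant metric on $\caB(\Gamma)$.) Proposition~\ref{prop:div_of_toroidal_metric_is_toroidal}.\ref{i:1}, applied with the toroidal structure $E$, then shows that $\D(\overline L_{k,m,\phi },s,h)-\dv(s)$ is toroidal with respect to $E$. For the second summand: $\dv(s)$ is a Cartier b-divisor whose representative on $(\overline\caB(\Gamma)_\Pi)_\pi$ is supported on $E$, so it belongs to $\Div\bigl((\overline\caB(\Gamma)_\Pi)_\pi,E\bigr)$; any such b-divisor is toroidal with respect to $E$, corresponding to the continuous conical piecewise linear function on $\Pi\bigl((\overline\caB(\Gamma)_\Pi)_\pi,E\bigr)$ that extends $v_{E_i}\mapsto -\ord_{E_i}\dv(s)$ linearly over each cone, and one checks directly from the explicit descriptions of the maps $r_{E,D'}$ and of $\iota$ that applying $\iota$ to this function recovers $\dv(s)$. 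Adding the two summands proves the corollary.

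The argument is in the main bookkeeping; the genuine input is Propositions~\ref{prop:7} and~\ref{prop:div_of_toroidal_metric_is_toroidal}. The one point that really needs attention — and the reason the statement is phrased relative to $E$ rather than relative to the boundary — is that $\dv(s)$ will in general have a component meeting $\caB(\Gamma)$ that is not contained in $D_\infty$, so one has to pass to the enlarged toroidal structure $E$ and then verify that the purely metric contribution $\D(\overline L_{k,m,\phi },s,h)-\dv(s)$, a priori only known to be toroidal with respect to $D_\infty$, stays toroidal after this enlargement.
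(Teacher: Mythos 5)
Your proposal is correct and follows essentially the same route as the paper: decompose off $\dv(s)$, use Proposition~\ref{prop:7} together with Proposition~\ref{prop:div_of_toroidal_metric_is_toroidal} for the metric part, and observe that $\dv(s)$ is itself toroidal with respect to $E$. The paper's proof is just a terser version of the same argument (it applies Proposition~\ref{prop:div_of_toroidal_metric_is_toroidal} relative to the boundary and then passes to $E$, whereas you pass to $E$ first; the two orders are equivalent).
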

\begin{proof}
 By Proposition \ref{prop:7} the metric $h$ is toroidal with respect to the boundary of $\overline
   \caB(\Gamma )_{\Pi }$. Hence, by Proposition
   \ref{prop:div_of_toroidal_metric_is_toroidal}, it follows that the divisor
   $\D(\overline L_{k,m,\phi },s,h) - \operatorname{div}(s)$ is toroidal
   with respect to the boundary of $\overline \caB(\Gamma )_{\Pi
   }$, and hence with respect to $E$. Since $\dv(s)$ is
   also toroidal with respect to $E$ we deduce the result.  
\end{proof}

\section{Proof of the main result}
\label{sec:proofs_main}

We continue with the notation and assumptions of the previous
section. We assume from now on that $\Sigma $ is a smooth and
projective admissible cone decomposition of $ C(X_{0})$, and
$\Pi $ is a smooth and projective admissible cone decomposition of
$\widetilde C(X_{0})$ over $\Sigma $.   

\subsection{A b-divisor which is not Cartier}

In view of Corollary \ref{cor:toroidal}, in order to determine 
$\D(\overline L_{k,m,\phi },s,h)-\dv(s)$  it is enough to
compute Lelong numbers along toroidal divisors. Let $\tau
$ be a maximal cone of $\Pi $ and $x_{\tau }$ the corresponding
point. As before, let $G=g(g+1)/2$, $d=G+g$ and let $\zeta _{j}$, $\Omega '_{j}$, 
$j=1,\dots, d$  be local coordinates as in Section \ref{sec:local_coordinates}.
Let $u=(\Omega _{0},\zeta \Omega _{0})\in \widetilde C(X_{0})_{\Z}$ be a
primitive vector in the interior of 
the cone $\tau $. Let $\Pi '$ be an admissible cone decomposition
subdividing $\Pi$, such
 that the ray generated by $u$ is a ray of $\Pi '$. Let $P_{u}$ be the
 irreducible divisor of $\overline \caB(\Gamma )_{\Pi '}$ corresponding 
 to the ray generated by $u$.

 \begin{lem}\label{lemm:13} The Lelong number of $h$
   at $P_{u}$ is 
   given by
   \begin{displaymath}
     \nu (h,P_{u}) = -m\phi(\Omega_{0} ,\zeta \Omega_{0} )-m\zeta
     \Omega_{0} \zeta ^{t}. 
   \end{displaymath}
   In particular it does not depend on the subdivision $\Pi '$. 
 \end{lem}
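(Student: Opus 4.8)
The plan is to apply Lemma \ref{lemm:16}, which identifies $\nu(h,P_u)$ with $-\rec(g)$ evaluated at the primitive generator of the ray, where $g$ is any of the admissible convex functions realizing the toroidal psh metric $h$ near $x_\tau$ in the sense of Definition \ref{def:9}. Such a $g$ is already produced in the proof of Proposition \ref{prop:7}: in the coordinate neighbourhood $V'$ of $x_\tau$ with coordinates $(w_1,\dots,w_{G+g})$ and with $u_i=\tfrac{-1}{2\pi}\log|w_i|$, and for a meromorphic Siegel--Jacobi form $f$ giving a local generating section $s_f$ of $\overline L_{k,m,\phi}$, one has
\[
-\log\|s_f\| = \gamma - \frac{k}{2}\log\det\Bigl(\sum_{i} u_i\Omega'_i\Bigr) + 2\pi m\,\phi\Bigl(\sum_i u_i(\Omega'_i,\beta_i)\Bigr) + 2\pi m\,\varphi_2(u_1,\dots,u_{G+g}),
\]
with $\gamma$ bounded and $\varphi_2$ the quadratic-over-linear ``theta'' term. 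The first thing I would do is pass to the variable $v_i=-\log|w_i|=2\pi u_i$ and use that $\phi$ and $\varphi_2$ are positively homogeneous of degree one, so that, up to an additive constant absorbed into $\gamma$, the convex function appearing in Definition \ref{def:9} is
\[
g(v) = -\frac{k}{2}\log\det\Bigl(\sum_i v_i\Omega'_i\Bigr) + m\,\phi\Bigl(\sum_i v_i(\Omega'_i,\beta_i)\Bigr) + m\,\varphi_2(v),
\]
and hence $\nu(h,P_u)=-\rec(g)(v_{P_u})$, where $v_{P_u}$ is the primitive generator of $\R_{\ge 0}u$ written in the $v$-coordinates, i.e.\ in the integral identification of the cone $\tau$ with $\R^{G+g}_{\ge 0}$ afforded by the basis $(\Omega'_i,\beta_i)$ of $\widetilde C_{g,\Z}$.

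Next I would compute $\rec(g)$ by decomposing $g$ as $g_1+g_2$, with $g_1$ the $\log\det$ summand and $g_2 = m(\phi+\varphi_2)$. Both are closed convex functions bounded above on the relevant shifted quadrant: $g_1$ because $\det(\sum v_i\Omega'_i)\ge\det(\sum M\Omega'_i)>0$ for $v_i\ge M$ (already used in the proof of Proposition \ref{prop:7}), and $g_2$ because the sufficient negativity of $\phi$ gives $\phi(\Omega,\zeta\Omega)+\zeta\Omega\zeta^{t}\le 0$, i.e.\ $g_2\le 0$ on $\tau$. Since the recession functions exist and are finite (Lemma \ref{lemm:5}), the recession operation is additive here, so $\rec(g)=\rec(g_1)+\rec(g_2)$. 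Now $\log\det(\sum v_i\Omega'_i)$ grows only logarithmically in $\|v\|$ in both directions, so $\rec(g_1)=0$; while $g_2$ is convex and positively homogeneous of degree one on the maximal cone $\tau$ (where $\phi$ is linear), so $\rec(g_2)$ equals $g_2$ restricted to $\tau$. Therefore $\rec(g)(v_{P_u}) = m\,\phi\bigl(\sum_i (v_{P_u})_i(\Omega'_i,\beta_i)\bigr) + m\,\varphi_2(v_{P_u})$.

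Finally I would evaluate at $v_{P_u}$. Writing $u=\sum_i c_i(\Omega'_i,\beta_i)$ with coprime integers $c_i$ (possible since the $(\Omega'_i,\beta_i)$ form an integral basis of $\widetilde C_{g,\Z}$ and $u$ is primitive), we have $v_{P_u}=(c_1,\dots,c_{G+g})$, $\sum c_i\Omega'_i=\Omega_0$ and $\sum c_i\zeta_i\Omega'_i=\sum c_i\beta_i=\zeta\Omega_0$. Hence $\phi\bigl(\sum c_i(\Omega'_i,\beta_i)\bigr)=\phi(\Omega_0,\zeta\Omega_0)$, and $\varphi_2(v_{P_u}) = (\zeta\Omega_0)\Omega_0^{-1}(\zeta\Omega_0)^{t}=\zeta\Omega_0\zeta^{t}$ (extended by continuity if $\Omega_0$ is degenerate). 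Combining, $\nu(h,P_u)=-\rec(g)(v_{P_u})=-m\phi(\Omega_0,\zeta\Omega_0)-m\zeta\Omega_0\zeta^{t}$, as claimed; independence of $\Pi'$ is then immediate since the right-hand side only involves $u$ (and intrinsically, because the Lelong number along a fixed divisor of a fixed model is an invariant of $h$). The main obstacle I expect is purely bookkeeping: keeping straight the $2\pi$ rescaling between $-\log|w_i|$ and $u_i$ together with the integral identification of $\tau$ with $\R^{G+g}_{\ge 0}$, so that the homogeneity cancellations and the description of $v_{P_u}$ match up; the convex-analytic input (additivity of $\rec$, vanishing of $\rec(g_1)$, $\rec$ of a degree-one homogeneous convex function) is routine given Lemma \ref{lemm:5} and the estimates already contained in the proof of Proposition \ref{prop:7}.
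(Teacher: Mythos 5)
Your proof is correct, but it follows a genuinely different route from the paper's. The paper argues directly: it reduces to $k=0$ using that the metric on $\overline M$ is good (hence has vanishing Lelong numbers), restricts the section to the explicit test curve $\beta(t)=(z_{1}t^{a_{1}},\dots,z_{G+g}t^{a_{G+g}})$ whose strict transform meets a general point of $P_{u}$, reads off the coefficient of $-\log|t|$ in $-\log\|s(\beta(t))\|$, and concludes via Lemma~\ref{lemm:8}. You instead invoke the general toroidal machinery of Lemma~\ref{lemm:16}, feeding into it the explicit convex function already extracted in the proof of Proposition~\ref{prop:7}, and then compute the recession function by splitting off the $\log\det$ term (whose logarithmic growth forces $\rec(g_{1})=0$) from the degree-one homogeneous convex part $m(\phi+\varphi_{2})$ (for which $\rec(g_{2})=g_{2}$). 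All the convex-analytic steps you use are sound: additivity of $\rec$ for finite recession functions, the bounds guaranteeing Lemma~\ref{lemm:5} applies (sufficient negativity for $g_{2}$, monotonicity of $\det$ for $g_{1}$), and the evaluation at the interior primitive vector where $\Omega_{0}$ is invertible. What your approach buys is a uniform treatment of the weight-$k$ term (no appeal to good metrics, which the paper itself flags as avoidable in a remark) and a structurally transparent reason for the independence of $\Pi'$, since Lemma~\ref{lemm:16} is already formulated for all toroidal models; the cost is that it leans on Lemma~\ref{lemm:16}, whose own proof is essentially the paper's one-variable curve computation in disguise, so the paper's version is the more self-contained of the two. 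The only point worth tightening is cosmetic: the coprimality of the coefficients $c_{i}$ plays no role beyond identifying $u$ itself as the primitive generator of its ray, and the $2\pi$ normalization between $-\log|w_{i}|$ and $u_{i}$ washes out exactly as you say because $\phi$ and $\varphi_{2}$ are homogeneous of degree one.
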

 \begin{proof}
   Since the metric of $\overline M$ is good in the sense of
   \cite{hi}, it has zero Lelong numbers everywhere (see \cite[Example~2.34]{BBHJ}). So it is sufficient
   to treat the case $k=0$. Let $s$ be a rational section of
   $\overline{B}_{m\phi}$ that is generating on a neighborhood of
   $x_{\tau }$.
   Let $U\subset U'$ be a standard open subset (Definition
  \ref{def:16}) and $V'$ the corresponding coordinate neighborhood of
  $x_{\tau }$.  
   As in the proof 
   of Lemma \ref{lemm:9}, $s$ corresponds to a meromorphic Siegel--Jacobi
   form $f$ of index $m$  such that $-\log |f|-2\pi m\phi $ is bounded in
   $U$.  Since $u$ belongs to the interior
   of $\tau $ and is integral, there are positive integers
   $a_{1},\dots, a_{d} $ such that
   \begin{displaymath}
     u=(\Omega _{0},\zeta \Omega _{0})=\sum _{j=1}^{d}a_{j}(\Omega
     '_{j},\zeta _{j}\Omega '_{j}).
   \end{displaymath}
   Let $z=(z_{1},\dots,z_{d})$ be a point of $V'$ and consider the
   curve
   \begin{displaymath}
     \beta (t)=(z_{1}t^{a_{1}},\dots,z_{d}t^{a_{d}}),\quad |t|\le 1.
   \end{displaymath}

   For a general point $z$, the strict transform of the curve $\beta $ in
   $\overline \caB(\Gamma )_{\Pi '}$ goes through a general point of
   $P_{u}$.

   By the explicit description of the standard invariant metric we have
   \begin{align*}
     -\log \|s(\beta (t))\|
     &= -\log|f(\beta(t)|+2\pi m\frac{-1}{2\pi }\log|t| \zeta \Omega_{0} \zeta ^{t}+C\\
     &= \eta(t) + 2\pi m\phi \left(\frac{-1}{2\pi }\log |t| u\right)
       +2\pi m \frac{-1}{2\pi }\log|t|\zeta \Omega_{0} \zeta ^{t}+C\\
     &= \eta(t) - m\left(\phi (u)+\zeta \Omega_{0} \zeta
       ^{t}\right)\log |t|+C,
   \end{align*}
   where $C$ is a constant depending on the point $z$ and where the function
   \begin{displaymath}
     \eta(t)\coloneqq-\log|f(\beta (t))|-2\pi m \phi \left(\frac{-1}{2\pi }\log |t| u\right)
   \end{displaymath}
   is bounded. Therefore 
   \begin{equation}\label{eq:16}
     \lim _{t\to 0}\frac{|\eta(t)+C|}{-\log|t|}=0.
   \end{equation}
   Thus by Lemma \ref{lemm:8} the Lelong number $\nu
   (h,P_{u})$ is given by
   \begin{displaymath}
     \nu(h,P_{u})=- m\left(\phi (u)+\zeta \Omega_{0} \zeta
       ^{t}\right),
     \end{displaymath}
     proving the lemma.
     \end{proof}
     \begin{rmk}
   We note that in the proof above, instead of relying on the fact that good metrics
     have zero Lelong numbers to reduce to the case $k=0$, we can prove
     directly the result for arbitrary $k\ge 0$. Namely, in this case the
     singularities of the metric of  $\overline M^{\otimes k}$ can be
     absorbed into the function $\eta$. Then the function $\eta $ will
     no longer be bounded but will have a growth of the shape
     $K\log(-\log|t|))$ when $t$ goes to zero. The estimate
     \eqref{eq:16} would still be true and we could finish the proof
     in the same way.
 \end{rmk}
   Since $(\Omega_{0},\zeta \Omega_{0} ) $ belongs to the interior of the
   cone, the matrix $\Omega_{0} $ is positive definite, in particular
   invertible. Writing $\beta  =  \zeta \Omega $ (recall that the
   affine coordinates of $\widetilde C(X_{0})$ are $(\Omega ,\beta )$) we have
   \begin{displaymath}
     \zeta \Omega \zeta ^{t}=\beta  \Omega ^{-1}\beta  ^{t}.
   \end{displaymath}
   The function 
   \begin{displaymath}
     (\Omega ,\beta  )\mapsto \beta  \Omega ^{-1}\beta  ^{t}
   \end{displaymath}
   is a smooth convex function with non-zero Hessian, hence is
   not piecewise linear. On the other hand, the function $\phi $ is
   linear. Therefore the function that to each primitive vector $v$ in the maximal cone 
   $\tau $ associates the value $\nu (h,P_{v})$  is not the restriction
   of a piecewise linear function on $\tau $.

 \begin{cor}\label{cor:1}
The Weil $\R$-b-divisor $\D(\overline
   L_{k,m, \phi},s,h$) is not
   Cartier. 
 \end{cor}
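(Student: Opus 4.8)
The plan is to argue by contradiction: a toroidal b-divisor that is Cartier corresponds to a \emph{piecewise linear} conical function, but Lemma~\ref{lemm:13} computes this function on a maximal cone of $\Pi$ and exhibits it as manifestly not piecewise linear.

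First I would suppose $\D := \D(\overline L_{k,m,\phi}, s, h)$ is Cartier. Fix a maximal cone $\tau$ of $\Pi$ and its associated point $x_{\tau}\in\overline\caB(\Gamma)_{\Pi}$. After multiplying $s$ by a rational function (which changes $\D$ only by a Cartier b-divisor, hence does not affect Cartierness) we may assume $s$ generates $\overline L_{k,m,\phi}$ near $x_{\tau}$. By Proposition~\ref{prop:7} the metric $h = \overline{h}^{\inv}$ is toroidal with respect to the boundary of $\overline\caB(\Gamma)_{\Pi}$, so, in the local coordinates of Section~\ref{sec:local_coordinates} and by Definition~\ref{def:9}, there is a decomposition $-\log h(s) = \gamma + g(u_1,\dots,u_{G+g})$ with $\gamma$ bounded and $g$ bounded above, convex and Lipschitz on a cone $\R^{G+g}_{\geq c}$, where $G = g(g+1)/2$. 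Since $\D$ is Cartier, Corollary~\ref{cor:4} forces $\rec(g|_{\tau})$ to be piecewise linear.

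Next I would contradict this. By Lemma~\ref{lemm:16}, for any primitive vector $u = (\Omega_0,\zeta\Omega_0)$ in the interior of $\tau$ one has $\rec(g)(u) = -\nu(h, P_{u})$, and Lemma~\ref{lemm:13} evaluates this to
\[
\rec(g)(u) \;=\; m\,\phi(\Omega_0,\zeta\Omega_0) + m\,\zeta\Omega_0\zeta^{t} .
\]
By conicity the identity extends to the whole interior of $\tau$, and in the affine coordinates $(\Omega,\beta)$ of $\widetilde C_g$, where $\zeta\Omega\zeta^{t} = \beta\Omega^{-1}\beta^{t}$, it reads $\rec(g|_{\tau}) = m\phi + m\,\beta\Omega^{-1}\beta^{t}$ on that interior. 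But $\phi$ is linear on $\tau$, while $(\Omega,\beta)\mapsto\beta\Omega^{-1}\beta^{t}$ is smooth with non-vanishing Hessian on the locus $\Omega>0$, which contains the interior of $\tau$ (non-empty since $\tau$ is maximal). Hence $\rec(g|_{\tau})$ is not piecewise linear on any open subset of $\tau$, contradicting the previous paragraph; therefore $\D$ is not Cartier.

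I do not expect a genuine difficulty: all the analytic content is already in Lemma~\ref{lemm:13} and the non-degeneracy of the Hessian of $\beta\mapsto\beta\Omega^{-1}\beta^{t}$. The only points to handle carefully are the sign convention $\rec(g)(u) = -\nu(h,P_{u})$ (Lemma~\ref{lemm:16}), the fact that convexity and piecewise-linearity on $\widetilde C_g$ are taken with respect to the coordinates $(\Omega,\beta)$ rather than $(\Omega,\zeta)$, and the independence of Cartierness from the choice of rational section $s$. As a variant one can instead invoke Corollary~\ref{cor:toroidal}: were $\D$ Cartier, so would be the toroidal b-divisor $\D - \dv(s)$, whose associated conical function restricts on $\tau$, by Proposition~\ref{prop:div_of_toroidal_metric_is_toroidal}.\ref{i:2}, to $-\rec(g|_{\tau}) = -m\phi - m\,\beta\Omega^{-1}\beta^{t}$, which is not piecewise linear.
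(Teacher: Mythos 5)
Your proof is correct and follows essentially the same strategy as the paper's: both reduce non-Cartierness to the failure of piecewise linearity of the conical function computed in Lemma~\ref{lemm:13}, exploiting the non-degenerate Hessian of $(\Omega,\beta)\mapsto\beta\Omega^{-1}\beta^{t}$ on the interior of a maximal cone, where $\phi$ is linear. The only difference is one of packaging: you access the piecewise-linearity constraint through Corollary~\ref{cor:4} (via Lemma~\ref{lemm:16} and recession functions), whereas the paper re-derives it by hand using the conical complexes $\Delta$ and $\Lambda$, the retraction between them, and the decomposition $\varphi_{D}=\varphi_{D_{0}}+g$; your route is a little more economical and equally valid.
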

 
 \begin{proof}
 Assume that it is a Cartier $\R$-b-divisor.
Replacing $\pi \colon X_{\pi }\to \overline
   \caB(\Gamma )_{\Pi }$ by a finer toroidal modification we may assume
   $D=\D(\overline L_{k,m,\phi },s,h)$ is realized as a $\R$-divisor on
   $X_{\pi }$. Moreover we can assume that the union of the support of $D$ and the
   preimage of the boundary divisor of $\overline
   \caB(\Gamma )_{\Pi }$ is contained in a simple normal crossings divisor $E$. 
   
Again, the
   theory of toroidal compactifications \cite{toroidal} assigns to
   $(X_{\pi },E)$ and $D$ a conical rational complex $\Delta $ and a piecewise
   linear function $\varphi_{D}$ such that each ray $\rho $ of $\Delta $
   corresponds to an irreducible component $E_{\rho }$  of $E$ and the value of
   $\varphi_D$ at the primitive generator of $\rho $ is
   $-\ord_{E_{\rho}}D$. Moreover, each primitive vector $v$ contained in
   a cone of $\Delta $ corresponds  to an irreducible exceptional
   divisor in some toroidal modification of $(X_{\pi
   },E)$. Similarly,
   associated to the toroidal embedding  $\caB(\Gamma ) \subset
   \overline \caB(\Gamma )_{\Pi }$ there is a rational conical complex
   $\Xi $ such that
   \begin{displaymath}
     \Xi =\Pi /\overline{\widetilde \Gamma }.
   \end{displaymath}
   Note that, by the conditions on the function $\phi$, the
   function $-m\phi (\Omega ,\zeta \Omega )-m\zeta \Omega \zeta ^{t}$
   is invariant under the action of $\overline{\widetilde \Gamma
   }$. Hence it descends to a continuous conical function $\psi $ on $\Xi
   $. As discussed above the function $\psi $ is not piecewise linear
   in any maximal cone of $\Xi $.

   Since the preimage of the boundary component is contained in $E$,
   there is a retraction map $r \colon \Delta \to \Xi $ such that
   for each cone $\sigma $ of  $\Delta$ there is a cone $\tau $ of
   $\Xi $ with $r(\sigma )\subset \tau $. Choose  a cone $\sigma $
   of $\Delta $ of maximal dimension such that $r(\sigma )$ is also
   maximal. The line bundle $\overline L_{k,m,\phi }$ and the section $s$
   define a Cartier divisor $D_{0}$ that we may assume has support on
   $E$ because we can always enlarge $E$. To the divisor $D_{0}$
   corresponds a piecewise linear function $\varphi_{D_{0}}$. Since
   $\D(\overline L_{k,m,\phi },s,h)$ is defined using Lelong numbers, Lemma \ref{lemm:13} shows that on the cone $\sigma $,
   \begin{displaymath}
     \varphi_{D}=\varphi_{D_{0}}+\psi .
   \end{displaymath}
 Since $\varphi_{D_{0}}$ is piecewise linear but $\psi $ is not, this contradicts
 the piecewise linearity of $\varphi_D$. We conclude that $\D(\overline
 L_{k,m},s,h)$ is not  Cartier.
 \end{proof}
\subsection{Graded linear series and b-divisors  associated to Siegel--Jacobi
  forms}\label{sec:graded-linear-series-1}

Let $\caB(\Gamma )$ be the universal family of abelian varieties with
level $\Gamma $. 
Let $F=K(\caB(\Gamma ))$ denote the field of rational
functions on $\caB(\Gamma )$.
Let $k,m\ge 0$ be integers and 
fix a rational section of $L_{k,m}$, that is, a meromorphic Siegel--Jacobi form $s$ of weight $k$,
index $m$ and level $\Gamma$. 
We denote
\begin{align*}
  \caJ_{k,m}(\Gamma, s)_{\ell}
  &=\left\{f\in F^{\times}\mid f s^{\ell}\in J_{\ell k,\ell m}(\Gamma) \right\}\cup \{0\},\\
  \caJ_{k,m}(\Gamma, s) &=\bigoplus _{\ell} \caJ_{k,m}(\Gamma, s)_{\ell}t^{\ell}\subset F[t],\\
  \caJ^{\cusp}_{k,m}(\Gamma, s)_{\ell}
  &=\left\{f\in F^{\times}\mid f s^{\ell}\in J^{\cusp}_{\ell k,\ell m}(\Gamma) \right\}\cup \{0\},\\
  \caJ^{\cusp}_{k,m}(\Gamma, s) &=\bigoplus _{\ell}
  \caJ^{\cusp}_{k,m}(\Gamma, s)_{\ell}t^{\ell}\subset F[t].
\end{align*}
Both $\caJ^{\cusp}_{k,m}(\Gamma, s)$ and $\caJ_{k,m}(\Gamma, s)$ are graded
linear series.

The first observation to make is that the above graded linear series
are non-trivial as long as $k>0$.

\begin{lem}\label{lemm:18}
  Let $k>0$ and $m\ge 0$. There is an integer $\ell >0$ such that
  $\caJ^{\cusp}_{k,m}(\Gamma, s)_{\ell}\not =\{0\}$.  
\end{lem}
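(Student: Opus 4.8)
The plan is to reduce the statement to producing, for some $\ell>0$, a single \emph{nonzero} Siegel--Jacobi cusp form of weight $\ell k$ and index $\ell m$, and then to manufacture such a form from the explicit series of Example \ref{exm:1} together with the multiplicativity property of cusp forms from Lemma \ref{lem:multiplicativity}.

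First I would observe that it is enough to exhibit $\ell>0$ with $J^{\cusp}_{\ell k,\ell m}(\Gamma)\neq 0$. Indeed, if $\psi\in J^{\cusp}_{\ell k,\ell m}(\Gamma)$ is nonzero, then, viewing both $\psi$ and $s^{\otimes \ell}$ as rational sections of $L_{\ell k,\ell m}=L_{k,m}^{\otimes \ell}$, the quotient $f\coloneqq \psi/s^{\otimes \ell}$ is a nonzero element of $F=K(\caB(\Gamma))$, and $f s^{\otimes \ell}=\psi\in J^{\cusp}_{\ell k,\ell m}(\Gamma)$; hence $f\in \caJ^{\cusp}_{k,m}(\Gamma, s)_{\ell}\setminus\{0\}$. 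So the whole content is the existence of a nonzero Siegel--Jacobi cusp form of weight $\ell k$ and index $\ell m$ for suitable $\ell$.

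To produce such a form I would distinguish two cases. If $m=0$, I would choose an even integer $\ell$ large enough that $\ell k>2g$ (possible since $k>0$); then $\ell k g$ is even, so Example \ref{exm:1}.\ref{item:13} (Poincar\'e series) supplies a nonzero Siegel--Jacobi cusp form of weight $\ell k$ and index $0$. If $m>0$, I would first fix, again by Example \ref{exm:1}.\ref{item:13}, a nonzero Siegel--Jacobi cusp form $P$ of weight $w_{1}\coloneqq 2g+2$ and index $0$ (this is allowed because $2g+2>2g$ and $(2g+2)g=2g(g+1)$ is even). Next I would choose an even integer $\ell$ large enough that $\ell k>3g+4$ and set $w_{2}\coloneqq \ell k-w_{1}$; then $w_{2}$ is even and $w_{2}>g+2$, so by Example \ref{exm:1}.\ref{item:10} the Jacobi Eisenstein series $E_{g,w_{2},\ell m}$ is a nonzero Siegel--Jacobi form of weight $w_{2}$ and index $\ell m>0$. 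Finally, by Lemma \ref{lem:multiplicativity}, the product $\psi\coloneqq P\cdot E_{g,w_{2},\ell m}$ is a Siegel--Jacobi cusp form of weight $w_{1}+w_{2}=\ell k$ and index $0+\ell m=\ell m$, and it is nonzero since it is a product of two nowhere-zero holomorphic functions on the connected space $\caH_{g}\times \C^{(1,g)}$. In both cases $J^{\cusp}_{\ell k,\ell m}(\Gamma)\neq 0$ for the chosen $\ell$, and the lemma follows from the reduction above.

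I do not expect a genuine obstacle here: the only real work is to pick $\ell$ so that the numerical constraints of the Poincar\'e and Eisenstein series constructions ($\ell k$ even and sufficiently large) can be met simultaneously, and to invoke Lemma \ref{lem:multiplicativity} to upgrade the product to a cusp form. The one point that needs a word of care is that the series in Example \ref{exm:1} are built from $\Sp(2g,\Z)$; they descend to $\Gamma$ because $\Gamma\subset\Sp(2g,\Z)$ has finite index, and their cuspidality and non-vanishing are inherited (the underlying holomorphic function on $\caH_{g}\times\C^{(1,g)}$ is unchanged, and cuspidality can be tested via boundedness of the invariant norm as in Proposition \ref{prop:6}).
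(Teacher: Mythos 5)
Your proof is correct and follows essentially the same route as the paper: write $\ell k$ as a sum of an even weight $>2g$ (Poincar\'e cusp form of index $0$) and an even weight $>g+2$ (Jacobi Eisenstein series of index $\ell m$), multiply using Lemma \ref{lem:multiplicativity}, and divide by $s^{\ell}$ to land in $\caJ^{\cusp}_{k,m}(\Gamma,s)_{\ell}$; your explicit case split for $m=0$ is a minor (and welcome) tidying of the paper's argument, which tacitly uses index $\ell m>0$ in the Eisenstein step. One small wording slip: the two factors are not \emph{nowhere-zero}, merely not identically zero; the product is still nonzero because a product of two not-identically-zero holomorphic functions on the connected space $\caH_{g}\times \C^{(1,g)}$ is not identically zero.
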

\begin{proof}
  Let $\ell>0$ be big
  enough so that we can write
  \begin{displaymath}
    \ell k = k_{1}+k_{2}
  \end{displaymath}
  with $k_{1}>2g$, $k_{2}>g+2$ both even.   As recalled in Example
  \ref{exm:1}.\ref{item:13}, using Poincar\'e 
  series one can produce a non-zero cusp form $\varphi_{1}$ of weight $k_{1}$ and index
  $0$. By Example   \ref{exm:1}.\ref{item:10} there is a
  non-zero Siegel--Jacobi form $\varphi_{2}$ of weight $k_{2}$ and
  index $\ell m$. Then by Lemma~\ref{lem:multiplicativity} we have $\varphi_{1}\varphi_{2}$ a non-zero cusp form of
  weight $\ell k$ and index $\ell m$ and the non-zero function
  $\varphi_{1}\varphi_{2}/s^{\ell}$ belongs to
  $\caJ^{\cusp}_{k,m}(\Gamma, s)_{\ell}$. 
\end{proof}

We next choose $\Sigma $ and $\Pi $ admissible cone decompositions as in
Proposition~\ref{prop:4}. Let $\phi $ be a  sufficiently negative admissible
divisorial function on $\Pi $ (which exists by Remark
\ref{rem:exists_suff_neg}). Assume that $m$ is divisible enough so
that 
$m\phi $ has integral values on $\widetilde C(X_{0})_{\Z}$.
To ease notation we write $X=\overline \caB(\Gamma )_{\Pi }$. 

Let $\overline L_{k,m,\phi }$ be the extension of $L_{k,m}$ on $X$
determined by the divisorial function $\phi $. We can view the
meromorphic Siegel--Jacobi form $s$ as a rational section of $\overline
L_{k,m, \phi}$.

\begin{prop}\label{prop:10}
  The graded linear series $\caJ_{k,m}(\Gamma, s)$ and
  $\caJ^{\cusp}_{k,m}(\Gamma, s)$ are of almost integral type.
\end{prop}
\begin{proof}
  We can view $s$ as a rational section of $\overline L_{k,m,\phi }$.
  Let $D=\dv(s)$ as a section of that line bundle. Since $\phi $ is
  sufficiently negative, by Theorem \ref{thm:3} we have
  \begin{displaymath}
    \caJ^{\cusp}_{k,m}(\Gamma, s)\subset
    \caJ_{k,m}(\Gamma, s) \subset \caR(D).
  \end{displaymath}
  By Proposition \ref{prop:2} the graded linear series $\caR(D)$ is of
  almost integral type, hence the result. 
\end{proof}
\begin{rmk}
Since $\caJ_{k,m}(\Gamma, s)$ and
  $\caJ^{\cusp}_{k,m}(\Gamma, s)$ are of almost integral type, one can
  associate convex Okounkov bodies to them (see \cite{kk,LM}). This
  gives another way of studying these rings by means of convex
  analysis. We expect this to be the case for other rings of
  automorphic forms in mixed Shimura varieties as well.
  \end{rmk}

Let $h= \overline{h}^{\inv}$ be the  psh metric of $\overline L_{k,m,\phi }$ obtained
by Proposition~\ref{prop:5}.  Following Section~\ref{sec:psh-metrics-b-1} we
have an associated graded linear series $\caR(\overline L_{k,m,\phi },s,h)$. 

\begin{lem}\label{lemm:11}
  The graded linear series $\caR(\overline L_{k,m,,\phi },s,h)$ and
  $\caJ^{\cusp}_{k,m}(\Gamma,s)$ are equal.
\end{lem}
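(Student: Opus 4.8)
The plan is to compare the two graded linear series in each degree. Recall from Section~\ref{sec:psh-metrics-b-1} the definitions of $\caL$, $\caR$ and of $H^0(X,\overline L,\overline h)$, and write $X=\overline\caB(\Gamma)_\Pi$ and $F=K(\caB(\Gamma))$. Unwinding the definitions, a nonzero $f\in F$ lies in $\caR(\overline L_{k,m,\phi},s,h)_\ell$ exactly when $\psi:=fs^{\otimes\ell}$ --- which is a meromorphic Siegel--Jacobi form of weight $\ell k$ and index $\ell m$, and a rational section of $\overline L_{k,m,\phi}^{\otimes\ell}=\overline L_{\ell k,\ell m,\phi}$ --- extends to a \emph{global holomorphic} section on $X$ \emph{and} has bounded norm $\overline h^{\inv}(\psi)$ on $X$; and $f$ lies in $\caJ^{\cusp}_{k,m}(\Gamma,s)_\ell$ exactly when $\psi\in J^{\cusp}_{\ell k,\ell m}(\Gamma)$. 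So (the degree $\ell=0$ parts being the constant functions on both sides) the lemma reduces to the equivalence of these two conditions on $\psi$, and I will prove the two implications separately.

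\emph{From cusp forms to $\caR$.} Suppose $\psi\in J^{\cusp}_{\ell k,\ell m}(\Gamma)$. By Proposition~\ref{prop:6}, $h^{\inv}(\psi)$ is bounded on $\caB(\Gamma)$; by the formula of Definition~\ref{def:6} this means $|\psi(Z,W)|\le C\,(\det\im Z)^{-\ell k/2}\,e^{2\pi\ell m\,(\im W)(\im Z)^{-1}(\im W)^{t}}$ on $\caH_g\times\C^{(1,g)}$. First I show $\psi$ extends to a global holomorphic section of $\overline L_{\ell k,\ell m,\phi}$ on $X$: by Lemma~\ref{lemm:23} (applied with $k,m$ replaced by $\ell k,\ell m$) it suffices that $-\log|\psi(Z,W)|-2\pi\ell m\,\phi(\im Z,\im W)$ be bounded below on each local chart $U'$. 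Since $\phi$ is sufficiently negative, $\phi(\Omega,\zeta\Omega)\le-\zeta\Omega\zeta^{t}$, so writing $\im Z=Y$, $\im W=\beta$, $\zeta=\beta Y^{-1}$ one gets $-2\pi\ell m\,\phi(Y,\beta)\ge 2\pi\ell m\,\beta Y^{-1}\beta^{t}$; combining this with the estimate on $|\psi|$ and with the boundedness of $(\det Y)^{-\ell k/2}$ on $U'$ (which follows, as in the proof of Lemma~\ref{lemm:10}, from $\det(A+B)\ge\det(A)$ for semidefinite symmetric $A,B$, after shrinking $U'$) gives the required lower bound. Now that $\psi$ is a genuine holomorphic section of $\overline L_{\ell k,\ell m,\phi}$ on $X$, I claim $\overline h^{\inv}(\psi)$ is bounded on $X$: locally $\overline h^{\inv}(\psi)=|g_{\mathrm{loc}}|\,e^{-\varphi_{\mathrm{loc}}}$ with $g_{\mathrm{loc}}$ holomorphic and $\varphi_{\mathrm{loc}}$ psh hence upper semicontinuous, so $\overline h^{\inv}(\psi)$ is lower semicontinuous on $X$; being bounded on the dense open set $\caB(\Gamma)$ (again Proposition~\ref{prop:6}), it is therefore bounded on all of $X$. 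Hence $f\in\caR(\overline L_{k,m,\phi},s,h)_\ell$.

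\emph{The converse.} Suppose $\psi$ is a global holomorphic section of $\overline L_{\ell k,\ell m,\phi}$ on $X$ with $\overline h^{\inv}(\psi)$ bounded. Restricting to $\caB(\Gamma)$, $\psi$ becomes a holomorphic section of $L_{\ell k,\ell m}$ --- so $\psi$ satisfies the transformation conditions \ref{item:17} and \ref{item:18} of Definition~\ref{def:SJ_forms} --- with $h^{\inv}(\psi)$ bounded. When $g\ge2$, the Koecher principle (i.e.\ $H^0(\caB(\Gamma),L_{\ell k,\ell m})=J_{\ell k,\ell m}(\Gamma)$, recalled above) gives $\psi\in J_{\ell k,\ell m}(\Gamma)$, and then Proposition~\ref{prop:6} upgrades this to $\psi\in J^{\cusp}_{\ell k,\ell m}(\Gamma)$, i.e.\ $f\in\caJ^{\cusp}_{k,m}(\Gamma,s)_\ell$. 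When $g=1$ the Koecher principle is unavailable, but $\psi$ still has a Fourier expansion of the form \eqref{eq:29} by periodicity, and I would rule out any nonzero coefficient $c(T,R)$ for which $\left(\begin{smallmatrix}\frac1{\lambda_\Gamma}T&\frac12R\\\frac12R^{t}&\ell m\end{smallmatrix}\right)$ fails to be positive semidefinite: choosing a transcendental $q$ with $T/\lambda_\Gamma+Rq+\ell m q^{2}<0$, exactly as in the proof of Lemma~\ref{lemm:24} the corresponding Fourier term forces $|\psi(it,iqt)|$ to grow, as $t\to+\infty$, at a rate strictly faster than $e^{2\pi\ell m q^{2}t}$, whereas $h^{\inv}(\psi)\le C$ gives $|\psi(it,iqt)|\le C\,t^{-\ell k/2}e^{2\pi\ell m q^{2}t}$ --- a contradiction; hence $\psi\in J_{\ell k,\ell m}(\Gamma)$ and Proposition~\ref{prop:6} finishes this case too.

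I expect the main obstacle to be precisely this $g=1$ subcase of the converse: for $g=1$ the space $H^0(X,\overline L_{\ell k,\ell m,\phi})$ is strictly larger than $J_{\ell k,\ell m}(\Gamma)$ --- the condition that would make them agree is incompatible with $\phi$ being sufficiently negative, cf.\ the remark following Lemma~\ref{lemm:19} --- so the boundedness of the invariant norm is doing essential work, and the growth estimate along transcendental rays must be carried out with care. The only other slightly delicate point is the lower-semicontinuity argument in the first implication, which is what lets one promote boundedness of the metric from the open part $\caB(\Gamma)$ to the whole compactification $X$ without using any extra regularity of $\overline h^{\inv}$.
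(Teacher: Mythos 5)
Your argument is correct and follows the same route as the paper's proof, which after unwinding the definitions is a direct appeal to Proposition~\ref{prop:6}. The additional care you take is warranted rather than redundant: the paper's one-line proof silently identifies ``global holomorphic section of $\overline L_{\ell k,\ell m,\phi}$ on $\overline\caB(\Gamma)_\Pi$ with bounded norm'' with ``holomorphic on $\caH_g\times\C^{(1,g)}$ with bounded invariant norm'' (your combination of sufficient negativity of $\phi$ with Lemma~\ref{lemm:23}, plus the lower-semicontinuity step), and it does not comment on $g=1$, where condition~\ref{item:16} of Definition~\ref{def:SJ_forms} is not automatic and Proposition~\ref{prop:6} cannot be invoked until one knows $\psi$ is a Siegel--Jacobi form; your adaptation of the ray argument of Lemma~\ref{lemm:24}, with the bounded-norm hypothesis replacing holomorphy of the extension, supplies exactly that missing step.
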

\begin{proof} Let $\ell \in \Z_{\ge 0}$.
Then $\caR(\overline L_{k,m,\phi },s,h)_{\ell}$ is the set of $f
  \in F$ such that $fs^{\ell}$ is a
  meromorphic Siegel--Jacobi form, holomorphic on $\caH_{g}\times
  \C^{(1,g)}$ and such that $\|fs^{\ell}\|$ is bounded. By Proposition
  \ref{prop:6} the latter set is exactly the set of $f\in F$ such that
  $fs^{\ell}$ is a Siegel--Jacobi cusp form. 
\end{proof}

To the graded linear series  $\caJ^{\cusp}_{k,m}(\Gamma, s)$ and
$\caJ_{k,m}(\Gamma,s)$ we can associate the Weil $\R$-b-divisors
$\bdiv\Big(\caJ^{\cusp}_{k,m}(\Gamma, s)\Big)$ and
$\bdiv\Big(\caJ_{k,m}(\Gamma, s)\Big)$.

\begin{lem}\label{lemm:12}
  The equality
  \begin{displaymath}
    \bdiv\left(\caJ^{\cusp}_{k,m}(\Gamma, s)\right) = \bdiv\left(\caJ_{k,m}(\Gamma, s)\right)
  \end{displaymath}
  holds.
\end{lem}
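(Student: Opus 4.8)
The plan is to establish the two inequalities separately. One direction is immediate: since a Siegel--Jacobi cusp form is in particular a Siegel--Jacobi form, we have an inclusion of graded linear series $\caJ^{\cusp}_{k,m}(\Gamma,s)\subseteq \caJ_{k,m}(\Gamma,s)$, whence $\bdiv(\caJ^{\cusp}_{k,m}(\Gamma,s))\le \bdiv(\caJ_{k,m}(\Gamma,s))$ by Lemma \ref{lemm:3}. (Both $\bdiv$'s are defined because both series are of almost integral type; for $\caJ^{\cusp}_{k,m}(\Gamma,s)=\caR(\overline L_{k,m,\phi},s,h)$ this follows from Lemmas \ref{lemm:11}, \ref{lemm:4} and \ref{lemm:7}, and for $\caJ_{k,m}(\Gamma,s)$ from Proposition \ref{prop:2} applied to a suitable divisor on $X$.) We assume $k>0$, which is the only case we shall use. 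So the whole content is the reverse inequality, and by the explicit formula of Definition \ref{def:3} this amounts to showing that for every $\pi\in R(X)$, every prime divisor $P$ on $X_\pi$, every integer $\ell_0\ge 1$ and every nonzero $g\in\caJ_{k,m}(\Gamma,s)_{\ell_0}$ one has
\[
  \ord_P\bigl(\bdiv(\caJ^{\cusp}_{k,m}(\Gamma,s))_\pi\bigr)\ \ge\ -\frac{1}{\ell_0}\ord_P(g),
\]
since the supremum of the right-hand side over all such $\ell_0$ and $g$ equals $\ord_P(\bdiv(\caJ_{k,m}(\Gamma,s))_\pi)$.

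The key idea is to turn $g$ into a cusp form by multiplying it with a fixed cusp form, and then to dilute the effect of that fixed factor by passing to high powers of $g$. By Lemma \ref{lemm:18} (here $k>0$ is used) there is an integer $\ell'\ge 1$ and a nonzero $\psi\in\caJ^{\cusp}_{k,m}(\Gamma,s)_{\ell'}$, i.e.\ $\psi s^{\ell'}$ is a nonzero Siegel--Jacobi cusp form of weight $\ell'k$ and index $\ell'm$. Fix $g$ as above, so $g s^{\ell_0}\in J_{\ell_0 k,\ell_0 m}(\Gamma)$. For each $n\ge 1$ the power $g^n s^{n\ell_0}=(g s^{\ell_0})^n$ lies in $J_{n\ell_0 k,n\ell_0 m}(\Gamma)$, and by Lemma \ref{lem:multiplicativity} the product
\[
  \bigl(g^n s^{n\ell_0}\bigr)\bigl(\psi s^{\ell'}\bigr)\ =\ (g^n\psi)\, s^{n\ell_0+\ell'}
\]
is a Siegel--Jacobi cusp form of weight $(n\ell_0+\ell')k$ and index $(n\ell_0+\ell')m$. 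Hence $g^n\psi\in\caJ^{\cusp}_{k,m}(\Gamma,s)_{n\ell_0+\ell'}$ for all $n\ge 1$.

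Now the defining formula of $\bdiv$ gives, for the fixed $P$ and $\pi$,
\[
  \ord_P\bigl(\bdiv(\caJ^{\cusp}_{k,m}(\Gamma,s))_\pi\bigr)
  \ \ge\ -\frac{1}{n\ell_0+\ell'}\ord_P(g^n\psi)
  \ =\ \frac{-n\,\ord_P(g)-\ord_P(\psi)}{n\ell_0+\ell'}
\]
for every $n\ge 1$; letting $n\to\infty$ the right-hand side tends to $-\ord_P(g)/\ell_0$, which is the desired inequality. Taking suprema over $\ell_0$ and $g$ yields $\ord_P(\bdiv(\caJ^{\cusp}_{k,m}(\Gamma,s))_\pi)\ge \ord_P(\bdiv(\caJ_{k,m}(\Gamma,s))_\pi)$ for all $P$ and $\pi$, and together with the first inequality this proves the equality. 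There is no real obstacle in this argument; the only point to be careful about is that both the index shift $\ell'$ and the divisorial contribution $\ord_P(\psi)$ are independent of $n$, so that they wash out in the limit — this is precisely why one must pass to the powers $g^n$ instead of merely multiplying $g$ by $\psi$ once.
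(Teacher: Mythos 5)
Your proof is correct and follows essentially the same strategy as the paper's: the trivial inclusion gives one inequality, and for the reverse one you multiply an arbitrary element of $\caJ_{k,m}(\Gamma,s)$ by a fixed nonzero element of $\caJ^{\cusp}_{k,m}(\Gamma,s)$ (supplied by Lemma~\ref{lemm:18} and turned into a cusp form via Lemma~\ref{lem:multiplicativity}) and make its contribution to $\ord_P$ negligible in the limit. The only cosmetic difference is the dilution mechanism: you raise a fixed $g$ to powers $g^n$ so the correction $\ord_P(\psi)/(n\ell_0+\ell')$ vanishes as $n\to\infty$, whereas the paper instead picks elements $f$ of large degree $\ell$ approximating the supremum and runs an $\varepsilon$-argument; both implicitly assume $k>0$ (which you state explicitly) so that Lemma~\ref{lemm:18} applies.
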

\begin{proof}
  Since $\caJ^{\cusp}_{k,m}(\Gamma, s)\subset \caJ_{k,m}(\Gamma, s)$, by Lemma
  \ref{lemm:3} we have that
  \begin{displaymath}
\bdiv\left(\caJ^{\cusp}_{k,m}(\Gamma, s)\right) \le
  \bdiv\left(\caJ_{k,m}(\Gamma, s)\right).
\end{displaymath}
 To prove the converse inequality let $\pi \in
  R(X)$ and let $P$ be a prime divisor of $X_{\pi }$. Set
  \begin{displaymath}
    r=\ord_{P} \bdiv\left(\caJ_{k,m}(\Gamma, s)\right)\quad \text{and} \quad r_{0}=\ord_{P}
    \bdiv\left(\caJ^{\cusp}_{k,m}(\Gamma, s)\right).
  \end{displaymath}
Also choose a non-zero $f_{0}\in \caJ^{\cusp}_{k,m}(\Gamma, s)_{\ell_{0}}$ for some
  $\ell_0$ (this exists by Lemma \ref{lemm:18}), and let $\epsilon
  >0$. The number $r$ can be characterized as 
  \begin{displaymath}
    r=\sup\{(1/\ell)\ord_{P}f \mid \ell \ge 0,\, f\in \caJ_{k,m}(\Gamma, s)_{\ell}\}. 
  \end{displaymath}
Hence we can find $\ell\gg 0$ and $f\in \caJ_{k,m}(\Gamma, s)_{\ell}$
  satisfying the conditions
  \[
   \frac{\ord_{P}f}{\ell} \ge r-\epsilon, \quad
    \ell_{0}/\ell \le \epsilon \quad \text{and} \quad 
    \frac{\ord_{P}f_{0}}{\ell} \ge -\epsilon. 
    \]
  Note that to achieve the second and third condition we only need to
  make $\ell $ big enough. Since $ff_{0}$ is a cusp form by
  Lemma~\ref{lem:multiplicativity}, we have
  \begin{displaymath}
    \frac{\ord_{P}f+\ord_{P}f_{0}}{\ell+\ell_{0}}\le r_{0}.
  \end{displaymath}
  Together with the above conditions, this implies
  \begin{displaymath}
    \ell(r-\epsilon )\le \ord_{P}f \le r_{0}(\ell +\ell_{0})-\ord_{P}f_{0}
  \end{displaymath}
  and hence
  \begin{displaymath}
    r\le r_{0}(1+\epsilon )+2\epsilon. 
  \end{displaymath}
  As $\epsilon>0 $ can be chosen arbitrarily small we deduce that
  $r\le r_{0}$. This completes 
  the proof. 
\end{proof}

   \begin{lem}\label{lem:nef} For $k,m\ge 0$, 
   the Weil $\R$-b-divisor $\D(\overline L_{k,m,\phi },s,h)$ is nef. 
   \end{lem}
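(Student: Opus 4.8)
The plan is to derive this as an immediate application of Proposition~\ref{prop:3}, which (via \cite[Theorem~5.18]{BBHJ}) asserts that whenever $(L,h)$ is a line bundle with a psh metric on a smooth projective variety, such that $h$ is locally bounded on some dense Zariski open subset, and $s$ is a non-zero rational section, the associated Weil $\R$-b-divisor $\D(L,s,h)$ is nef. The whole task therefore reduces to checking that $(\overline L_{k,m,\phi},s,h)$ meets these hypotheses.

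First I would record the ambient setup: under the standing assumptions $\Sigma$ and $\Pi$ are smooth and projective, so $X=\overline\caB(\Gamma)_\Pi$ is a smooth projective complex variety; moreover $m$ is taken divisible enough (as in Definition~\ref{def:13}) that $\overline B_{m\phi}$, and hence $\overline L_{k,m,\phi}=(\pi_{\Sigma,\Pi})^{\ast}\overline M^{\otimes k}\otimes\overline B_{m\phi}$, is an honest line bundle on $X$; and $s$ is the chosen meromorphic Siegel--Jacobi form of weight $k$ and index $m$, viewed as a non-zero rational section of $\overline L_{k,m,\phi}$. Next I would invoke Proposition~\ref{prop:5}: since $\phi$ is sufficiently negative, the standard invariant metric $h^{\inv}$ of Definition~\ref{def:6} extends to a singular psh metric $h=\overline{h}^{\inv}$ on $\overline L_{k,m,\phi}$. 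On the dense open subset $U=\caB(\Gamma)\subset X$ this metric coincides with $h^{\inv}$, which by Definition~\ref{def:6} (using that $\Gamma$ is neat) is a \emph{smooth} Hermitian metric, in particular locally bounded. Hence the hypotheses of Definition~\ref{def:4} and Proposition~\ref{prop:3} are satisfied, and $\D(\overline L_{k,m,\phi},s,h)$ is nef.

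There is no genuine obstacle here; the substance has already been deposited in Proposition~\ref{prop:3} and in Proposition~\ref{prop:5}. The only point deserving a word of care is that ``nef'' for a Weil b-divisor means, by the definition recalled earlier, lying in the weak closure of the space of nef Cartier b-divisors, so one should simply note that this is precisely what Proposition~\ref{prop:3} provides; no separate approximation argument is required. (One may also observe that the sign hypothesis $k,m\ge 0$ plays no role in nefness itself — it merely fixes the range in which $\overline L_{k,m,\phi}$ and the graded linear series of Section~\ref{sec:graded-linear-series-1} are the objects of interest.)
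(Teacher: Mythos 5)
Your proof is correct and follows exactly the paper's route: the paper's own proof consists of citing Proposition~\ref{prop:5} to get that $h$ is psh and then applying Proposition~\ref{prop:3}. Your additional verification of the hypotheses (smoothness of $X$, local boundedness of $h$ on $\caB(\Gamma)$) is just a more careful spelling-out of the same argument.
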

   \begin{proof}
By Proposition \ref{prop:5} the metric $h$ is psh. The result then
follows from Proposition~\ref{prop:3}. 
\end{proof}
   
   \begin{lem}\label{lem:big} For $k,m>0$,
     the Weil $\R$-b-divisor $\D(\overline L_{k,m,\phi },s,h)$ is big. 
   \end{lem}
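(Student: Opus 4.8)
\subsection{Plan of proof of Lemma~\ref{lem:big}}

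The plan is to bound the volume of $\D(\overline L_{k,m,\phi},s,h)$ from below by the asymptotic dimension of the spaces of Siegel--Jacobi \emph{cusp} forms, and then to appeal to Tai's dimension estimate, whose leading coefficient is positive.

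By Lemma~\ref{lemm:4} we have $\caR(\overline L_{k,m,\phi},s,h)\subseteq\caR\bigl(\D(\overline L_{k,m,\phi},s,h)\bigr)$, and by Lemma~\ref{lemm:11} the left-hand side equals $\caJ^{\cusp}_{k,m}(\Gamma,s)$. Since $s$ is a fixed non-zero rational section of $L_{k,m}$, the map $f\mapsto fs^{\otimes\ell}$ identifies $\caJ^{\cusp}_{k,m}(\Gamma,s)_\ell$ with $J^{\cusp}_{\ell k,\ell m}(\Gamma)$, so $\dim_\C\caJ^{\cusp}_{k,m}(\Gamma,s)_\ell=\dim_\C J^{\cusp}_{\ell k,\ell m}(\Gamma)$. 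Recalling that $n=G+g=\dim\overline\caB(\Gamma)_\Pi$ and Definition~\ref{def:volume_b_div}, these inclusions give
\[
\vol\bigl(\D(\overline L_{k,m,\phi},s,h)\bigr)=\limsup_{\ell}\frac{\dim\caR(\D)_\ell}{\ell^{n}/n!}\;\ge\;\limsup_{\ell}\frac{\dim J^{\cusp}_{\ell k,\ell m}(\Gamma)}{\ell^{n}/n!},
\]
so it suffices to show that $\dim J^{\cusp}_{\ell k,\ell m}(\Gamma)$ grows like a positive multiple of $\ell^{n}$.

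For this I would invoke Tai's work. Writing $L_{k,m}=\pi^{\ast}M^{\otimes k}\otimes B^{\otimes m}$, the pushforward $\pi_{\ast}L_{k,m}=M^{\otimes k}\otimes\pi_{\ast}B^{\otimes m}$ is, over $\caA(\Gamma)$, a vector bundle of rank $(2m)^{g}$ (the line bundle $B$ restricts to twice a principal polarization on each fibre) twisted by the Hodge bundle $M^{\otimes k}$; feeding this into Mumford's version of the Hirzebruch proportionality principle on the non-compact Shimura variety $\caA(\Gamma)$ (see \cite{hi} and \cite{tai}) produces the asymptotics
\[
\limsup_{\ell}\frac{\dim J_{\ell k,\ell m}(\Gamma)}{\ell^{n}/n!}=(-1)^{G}n!\,m^{g}k^{G}[\Gamma_0\colon\Gamma]\prod_{j=1}^{g}\frac{\zeta(1-2j)}{(2j-1)!!},
\]
which is strictly positive because $\zeta(1-2j)$ has sign $(-1)^{j}$ and $\sum_{j=1}^{g}j=G$, so the product has sign $(-1)^{G}$. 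Moreover $\dim J_{\ell k,\ell m}(\Gamma)-\dim J^{\cusp}_{\ell k,\ell m}(\Gamma)$ is bounded by the dimension of the image of the Siegel operator, which is a space of automorphic forms attached to a proper boundary component of $\caA(\Gamma)$ and hence grows strictly more slowly than $\ell^{n}$; therefore $\dim J^{\cusp}_{\ell k,\ell m}(\Gamma)$ has the same, positive, leading term. Combined with the displayed inequality this yields $\vol(\D(\overline L_{k,m,\phi},s,h))>0$, that is, the b-divisor is big.

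The only non-formal ingredient is the positivity of the leading coefficient of $\dim J^{\cusp}_{\ell k,\ell m}(\Gamma)$; everything else is a direct consequence of the material in Section~\ref{sec:psh-metrics-b}. I expect the main point of care to be the following: one must cite here Tai's classical estimate (through Hirzebruch--Mumford proportionality, \cite{hi,tai}), and \emph{not} the refined formula of Corollary~\ref{cor:asy}, since the latter is established later in the paper using precisely the fact that $\D(\overline L_{k,m,\phi},s,h)$ is nef and big.
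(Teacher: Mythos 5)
Your argument is essentially correct, but it takes a genuinely different route from the paper. The paper's proof is entirely internal: using Lemma~\ref{lemm:19} it produces a very ample extension $\overline L_{k_{0},m_{0},\phi_{0}+\psi_{0}}$ on a refined compactification whose sections are Siegel--Jacobi forms, multiplies by a non-zero cusp form supplied by Lemma~\ref{lemm:18} to land in the cuspidal graded linear series, and then runs the chain $\caR(\dv(s_{0}))\subset \caJ^{\cusp}_{rk,rm}(\Gamma,s^{r})\subset\caR(\D(\overline L_{rk,rm,\phi},s^{r},h^{r}))$ through Proposition~\ref{prop:1} and Lemma~\ref{lemm:3} to conclude $\dv(s_{0})\le r\,\D(\overline L_{k,m,\phi},s,h)$, whence bigness by Lemma~\ref{lemm:20}. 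You instead bound $\vol(\D)$ below by the asymptotic dimension of the cusp form spaces and import positivity from Tai's classical estimate. Your observation that one must cite Tai rather than Corollary~\ref{cor:asy} is exactly the right circularity check, and logically nothing breaks: only the \emph{positivity} of Tai's leading coefficient enters, and the paper's later Chern--Weil computation of $\D_{k,m}^{n}$ is independent of it. What the paper's route buys is self-containedness --- Tai's formula is then genuinely \emph{recovered} as a byproduct, as advertised in the introduction --- whereas your route is shorter but outsources the hardest analytic input; note also that the positivity you need is already available internally from Lemma~\ref{lemm:19}, since the ample extension there has $h^{0}$ growing like a positive multiple of $\ell^{n}$ and those sections inject into $J_{\ell k_{0},\ell m_{0}}(\Gamma)$.

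The one step I would tighten is the passage from all forms to cusp forms. Your appeal to ``the image of the Siegel operator'' is delicate for Siegel--Jacobi forms, where cuspidality is a positivity condition on the block matrix built from $T$ and $R$ and the boundary structure is more involved than for Siegel modular forms. The clean argument is the one the paper uses in the proof of Lemma~\ref{lemm:12}: fix a non-zero cusp form $\psi_{0}\in J^{\cusp}_{\ell_{0}k,\ell_{0}m}(\Gamma)$ (Lemma~\ref{lemm:18}); then multiplication by $\psi_{0}$ injects $J_{\ell k,\ell m}(\Gamma)$ into $J^{\cusp}_{(\ell+\ell_{0})k,(\ell+\ell_{0})m}(\Gamma)$ by Lemma~\ref{lem:multiplicativity}, so the cuspidal spaces have at least the same leading-order growth as the full spaces, which is all you need.
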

   \begin{proof}
     By Lemma \ref{lemm:19}, after choosing  projective refinements
     $\Sigma _{0}$ and $\Pi_{0}$ of $\Sigma $ and $\Pi $ we
     can find numbers $m_{0}$, $k_{0}$ and polarization functions $\phi _{0}$
     and $\psi _{0}$ satisfying that $\overline L_{k_{0},m_{0},\phi_{0}
       +\psi_{0} }$ is ample and
     \begin{displaymath}
       H^{0}(\overline \caB(\Gamma )_{\Pi _{0}},\overline L_{\ell
         k_{0},\ell m_{0},\phi_{0}
       +\psi_{0} }) \subset J_{\ell k_{0},\ell m_{0}}(\Gamma ).
     \end{displaymath}
     After taking a multiple we can
     also assume that $\overline
     L_{m_{0},k_{0},\phi_{0}+\psi _{0}}$ is very ample, hence generated by
     global sections. Let $r>0$ such that
     \begin{displaymath}
       r k > k_{0},\qquad r m> m_{0}, \qquad J^{\cusp}_{r
         k-k_{0},r m-m_{0}}(\Gamma )\not = \{0\}.
     \end{displaymath}
     Such an $r>0$ exists by Lemma \ref{lemm:18}.
     Let  $0\not = \varphi\in J^{\cusp}_{r
         k-k_{0},r m-m_{0}}(\Gamma )$ be a cusp form and let $s_{0}$ be the
       rational section of $\overline 
       L_{k_{0},m_{0},\phi_{0}+\psi _{0}}$ such that $s^{r}=s_{0}\varphi$.

       If $f$ is a rational function
       such that $fs_{0}^{\ell}\in H^{0} \left(\overline
         \caB(\Gamma)_{\Pi_{0} },\overline 
     L_{\ell m_{0},\ell k_{0},\phi_{0}+\psi _{0}}\right)$, then by Lemmas
   \ref{lemm:19}  and \ref{lem:multiplicativity} we obtain that
   $f(s_0\varphi)^{\ell}\in J^{\cusp}_{\ell rk,\ell rm}(\Gamma
   )$. 
   Therefore there is an inclusion of graded linear series
   \begin{displaymath}
     \caR(\dv(s_{0})) \subset \caJ^{\cusp}_{rk,rm}(\Gamma
     ,s^r) \, .
   \end{displaymath}
Lemmas \ref{lemm:4} and
   \ref{lemm:11} yield the inclusions of graded linear
   series
   \begin{displaymath}
     \caR(\dv(s_{0})) \subset \caR(\overline L_{rk,rm,\phi },s^{r},h^r)
     \subset \caR(\D(\overline L_{rk,rm,\phi },s^r,h^r)).
   \end{displaymath}
   Since $\dv(s_0)$ is very ample, it is also big and generated by
   global sections. Applying now
   Proposition \ref{prop:1} and Lemma \ref{lemm:3}.\ref{item:14} we
   get
   \begin{displaymath}
     \dv(s_0) \le r\D(\overline L_{k,m,\phi },s,h).
   \end{displaymath}
   By Lemma \ref{lemm:20} the Weil $\R$-b-divisor $\D(\overline L_{k,m,\phi
   },s,h)$ is big. 
   \end{proof}

\begin{thm} \label{thm:2} 
  Let $k,m >0$. Then the graded algebra  $\bigoplus _{\ell} J_{\ell
    k,\ell m}(\Gamma )$ is not finitely generated. 
\end{thm}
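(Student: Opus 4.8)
The plan is to derive the statement, contrapositively, from the non-$\Q$-Cartierness of the b-divisor $\D(\overline L_{k,m,\phi},s,h)$ furnished by Corollary~\ref{cor:1}. First I would make a harmless reduction. Suppose $\bigoplus_{\ell}J_{\ell k,\ell m}(\Gamma)$ is finitely generated. For any $N\ge 1$ its $N$-th Veronese subalgebra $\bigoplus_{\ell}J_{\ell Nk,\ell Nm}(\Gamma)$ is the ring of invariants for the $\mu_N$-action on the grading, hence again finitely generated; thus I may replace $(k,m)$ by $(Nk,Nm)$ for any $N$. Using Lemma~\ref{lemm:18} I would choose $N$ with $J^{\cusp}_{Nk,Nm}(\Gamma)\neq 0$, and, replacing a nonzero cusp form by a power (Lemma~\ref{lem:multiplicativity}), enlarge $N$ further so that in addition $Nm\phi$ is integral for a fixed sufficiently negative admissible divisorial function $\phi$ on a chosen smooth projective $\Pi$ (which exists by Remark~\ref{rem:exists_suff_neg}). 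After renaming, I assume: there is a nonzero cusp form $s\in J^{\cusp}_{k,m}(\Gamma)$; $m\phi$ is integral; and $\bigoplus_{\ell}J_{\ell k,\ell m}(\Gamma)$ is finitely generated.

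Next I would translate this into the language of graded linear series and b-divisors. Viewing $s$ as a (holomorphic, hence meromorphic) section of $L_{k,m}$, the rule $ft^{\ell}\mapsto (fs^{\ell})t^{\ell}$ is a graded $\C$-algebra isomorphism $\caJ_{k,m}(\Gamma,s)\xrightarrow{\ \sim\ }\bigoplus_{\ell}J_{\ell k,\ell m}(\Gamma)$; hence $\caJ_{k,m}(\Gamma,s)$ is finitely generated, so by Lemma~\ref{lemm:2} the b-divisor $\bdiv(\caJ_{k,m}(\Gamma,s))$ is $\Q$-Cartier, and by Lemma~\ref{lemm:12} it equals $\bdiv(\caJ^{\cusp}_{k,m}(\Gamma,s))$. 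With $h=\overline h^{\inv}$ the psh metric on $\overline L_{k,m,\phi}$ of Proposition~\ref{prop:5}, Lemma~\ref{lemm:11} gives $\caJ^{\cusp}_{k,m}(\Gamma,s)=\caR(\overline L_{k,m,\phi},s,h)$, so $\bdiv\bigl(\caR(\overline L_{k,m,\phi},s,h)\bigr)$ is $\Q$-Cartier.

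The key step is to identify $\bdiv\bigl(\caR(\overline L_{k,m,\phi},s,h)\bigr)$ with $\D:=\D(\overline L_{k,m,\phi},s,h)$ via Corollary~\ref{cor:2}. Its hypotheses hold: the metric $h$ is toroidal with respect to the boundary $D$ of $\overline\caB(\Gamma)_{\Pi}$ (Proposition~\ref{prop:7}), and $\D$ is nef (Lemma~\ref{lem:nef}) and big (Lemma~\ref{lem:big}). For the remaining hypothesis $\ord_{D_i}\D>0$ for every boundary prime divisor $D_i$ — corresponding to a ray of $\Pi$ with primitive generator $v_i=(\Omega_i,\zeta_i\Omega_i)$ — I would combine Lemma~\ref{lemm:13}, which gives $\nu(h,D_i)=-m\phi(v_i)-m\zeta_i\Omega_i\zeta_i^{t}$, with the Fourier-expansion analysis of Lemma~\ref{lemm:23} in the boundary coordinates of Section~\ref{sec:local_coordinates}, which gives $\ord_{D_i}\dv(s)=\min_{c(T,R)\neq 0}\bigl(\frac{1}{\lambda_{\Gamma}}\tr(T\Omega_i)+\beta_i R\bigr)-m\phi(\Omega_i,\beta_i)$. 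Subtracting, $\ord_{D_i}\D=\min_{c(T,R)\neq 0}\bigl(\frac{1}{\lambda_{\Gamma}}\tr(T\Omega_i)+\beta_i R+m\zeta_i\Omega_i\zeta_i^{t}\bigr)$; writing $\Omega_i=\sum_a u_au_a^{t}$, the bracketed quantity is $\sum_a Q_{T,R}(u_a,\zeta_iu_a)$, where $Q_{T,R}$ is the quadratic form with matrix $\left(\begin{smallmatrix}\frac{1}{\lambda_{\Gamma}}T&\frac12 R\\ \frac12 R^{t}&m\end{smallmatrix}\right)$; since $s$ is a cusp form this form is positive definite and $\Omega_i\neq 0$, which forces $\ord_{D_i}\D>0$. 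Thus Corollary~\ref{cor:2} applies and $\D$ is $\Q$-Cartier. But the proof of Corollary~\ref{cor:1} shows that, on a suitable toroidal model, $\varphi_\D$ restricted to a maximal cone is $\varphi_{D_0}+g$ with $\varphi_{D_0}$ piecewise linear and $g$ not piecewise linear, so no positive multiple of $\varphi_\D$ is piecewise linear — contradicting $\Q$-Cartierness. This contradiction proves the theorem.

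I expect the genuine obstacle to be the verification that $\ord_{D_i}\D>0$ for every boundary divisor, i.e.\ that a cusp form vanishes along the toroidal boundary to strictly higher order than the Lelong numbers of the invariant metric. This is classical in spirit but requires carefully unwinding the Fourier expansion against the local boundary coordinates in the style of Lemma~\ref{lemm:23}, and it is precisely this need for a cusp form of the prescribed bidegree that necessitates the initial Veronese reduction; the remaining steps are a straightforward assembly of the lemmas already established.
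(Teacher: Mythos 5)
Your argument is correct and its skeleton coincides with the paper's: finite generation of $\bigoplus_\ell J_{\ell k,\ell m}(\Gamma)\cong\caJ_{k,m}(\Gamma,s)$ forces $\bdiv(\caJ_{k,m}(\Gamma,s))$ to be $\Q$-Cartier (Lemma~\ref{lemm:2}), this b-divisor is identified with $\D(\overline L_{k,m,\phi},s,h)$ via Lemmas~\ref{lemm:11}, \ref{lemm:12} and Corollary~\ref{cor:2}, and Corollary~\ref{cor:1} supplies the contradiction. You diverge in two places. First, the reduction to divisible $(k,m)$: you pass to a Veronese subalgebra of the finitely generated algebra, whereas the paper works at the b-divisor level using the homogeneity $\bdiv(\caJ_{\ell k,\ell m}(\Gamma,s))=\ell\,\bdiv(\caJ_{k,m}(\Gamma,s))$; both are fine. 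Second, and more substantially, the hypothesis $\ord_{D_i}\D>0$ of Corollary~\ref{cor:2}, which you rightly single out as the real content: the paper obtains it softly, by observing (via the proof of Lemma~\ref{lem:big}) that $\caR(\overline L_{k,m,\phi},s,h)$ contains an ample linear series and then replacing $(k,m)$ by multiples and changing the section $s$; you instead take $s$ to be an honest cusp form (available after your Veronese step, by Lemma~\ref{lemm:18}) and compute $\ord_{D_i}\dv(s)-\nu(h,D_i)$ explicitly as a minimum of values of the forms $Q_{T,R}$ over the nonzero Fourier coefficients, so that strict positivity is literally the positive-definiteness in the definition of a cusp form. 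This is more explicit and arguably more illuminating than the paper's version. Two small points you should make explicit to be fully rigorous: Lemma~\ref{lemm:13} is stated for primitive vectors in the \emph{interior} of a maximal cone, so for the rays $v_i$ of $\Pi$ themselves you need either a rerun of that computation or the continuity of the recession function (Lemma~\ref{lemm:5}) together with Lemma~\ref{lemm:16}; and you should note that the minimum over the (infinite) Fourier support is attained --- e.g.\ because a cusp form extends to a holomorphic section of $\overline L_{k,m,\phi}$ by Lemma~\ref{lemm:23} and the sufficient negativity of $\phi$, or because the exponents lie in the discrete set $\tfrac{1}{4\lambda_\Gamma}\Z$ --- so that termwise positivity indeed gives $\ord_{D_i}\D>0$.
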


\begin{proof}
  Assume that $\bigoplus _{\ell} J_{\ell
    k,\ell m}(\Gamma )$ is finitely generated. Choose $\Sigma$, $\Pi $ and $\phi$ as before. Choose $s$ a
  meromorphic Siegel--Jacobi form of weight $k$ and index $m$. Then
  the graded
  algebra $\bigoplus _{\ell} J_{\ell k,\ell m}(\Gamma )$ is
  isomorphic to the graded linear series $\caJ_{k,m}(\Gamma,
  s)$. Hence  this
  graded linear series is also finitely generated as an algebra. By Lemma
  \ref{lemm:2} the $\R$-b-divisor $\bdiv(\caJ_{k,m}(\Gamma, s))$ is a Cartier
  b-divisor. Since $\bdiv(\caJ_{\ell k,\ell m}(\Gamma, s))=\ell
  \bdiv(\caJ_{k,m}(\Gamma, s))$, the first one is Cartier if and only if the
  second one is Cartier. Therefore to achieve a contradiction we may
  assume that $m$ is divisible enough so that $m\phi $ has integral
  values on $\widetilde C(X_0)_\Z$. By Lemmas \ref{lemm:11} and
  \ref{lemm:12} we know that
  \begin{displaymath}
    \bdiv\left(\caJ_{k,m}(\Gamma, s)\right)= \bdiv\left(\caR(\overline L_{k,m,\phi },s,h)\right).
  \end{displaymath}
  We will now show that $\bdiv\left(\caR(\overline L_{k,m,\phi },s,h\right)$ is
  not Cartier. To this end we are allowed to replace the pair $(k,m)$
  by a suitable multiple and to change the section $s$.

  Let $D$ be a singularity divisor of $h$.  As seen in the proof of
  Lemma~\ref{lem:big},  the linear series $\caR(\overline L_{k,m,\phi },s,h)$ contains an ample linear
 series. This implies that after replacing $k$ and $m$ by appropriate
 multiples  we can change the section $s$ so that the condition
 $\ord_{D_{i}}\D(\overline L_{k,m,\phi },s,h)>0$ holds for all
 irreducible components of $D$. 

 By Lemmas \ref{lem:big} and \ref{lem:nef} and Corollary \ref{cor:toroidal} the Weil $\R$-b-divisor $\D (\overline L_{k,m,\phi },s,h)$ is big, nef, and
 toroidal.  
 By
 Corollary \ref{cor:2} we deduce that
  \begin{displaymath}
\bdiv\left(\caR(\overline L_{k,m,\phi },s,h)\right) = \D(\overline L_{k,m,\phi },s,h).
  \end{displaymath}
  By Corollary \ref{cor:1}, we know that $\D(\overline L_{k,m,\phi },s,h)$
  is not a Cartier $\R$-b-divisor. This proves the result. 
\end{proof}
Since Runge works with bounded (rather than fixed) ratio between the
weight and the index, we must work slightly more to show that Theorem
\ref{thm:2} disproves Runge's claim in \cite[Theorem 5.5]{runge}.

Let $J$ be a bigraded ring, graded over $\mathbb Z_{\ge 0}^2$. If $f \in J_{k,m}$ is (bi)homogeneous with $k \neq 0$ then we define the \emph{relative index} of $f$ as $r(f) = m/k$. Given $n \in \mathbb Q_{\ge 0}$, we define 
\begin{equation}
J_n = \bigoplus_{(k,m): m=  kn} J_{k,m} \;\;\; \text{and} \;\;\;J_{\le n} = \bigoplus_{(k,m): m \le  kn} J_{k,m}. 
\end{equation}
\begin{prop}\label{prop:disprove-runge}
Fix $n \in \mathbb Q_{>0}$, and suppose that $J_{\le n}$ is finitely generated as an algebra over $J_{0,0}$. Then $J_n$ is finitely generated as an algebra over $J_{0,0}$. 
\end{prop}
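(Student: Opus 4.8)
The plan is to exploit the fact that $J_n$ is the part of $J_{\le n}$ sitting over a \emph{face} of the monoid of admissible bidegrees, together with the elementary principle that, for a graded ring, the subring over a face of the grading monoid inherits finite generation. Concretely, set $A = J_{0,0}$ and, using that $J_{\le n}$ is bigraded, choose finitely many bihomogeneous generators $f_1,\dots,f_r$ of $J_{\le n}$ over $A$, say $f_i \in J_{k_i,m_i}$ with $m_i \le n k_i$ (this reduction to bihomogeneous generators is harmless: split each given generator into its bihomogeneous components). I claim that $J_n$ is generated over $A$ already by the sub-collection $\{\, f_i : m_i = n k_i \,\}$, which is again finite.

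To prove the claim I would argue degree by degree. Since $J_n = \bigoplus_{(k,m):\, m = nk} J_{k,m}$ as an $A$-module, it suffices to show that every bihomogeneous $g \in J_{k,m}$ with $m = nk$ lies in the subalgebra $A[\, f_i : m_i = n k_i \,]$. Write $g$ as an $A$-polynomial in $f_1,\dots,f_r$ and pass to its bidegree-$(k,m)$ homogeneous component. This is legitimate because $A$ lives in bidegree $(0,0)$, so each monomial $f^\alpha = \prod_i f_i^{\alpha_i}$ is bihomogeneous of bidegree $\sum_i \alpha_i(k_i,m_i)$; hence we may assume $g = \sum_\alpha c_\alpha f^\alpha$ with $c_\alpha \in A$ and every occurring multi-index $\alpha$ satisfying $\sum_i \alpha_i k_i = k$ and $\sum_i \alpha_i m_i = m$.

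The heart of the matter is then a one-line positivity argument. For such an $\alpha$ we have $\sum_i \alpha_i (n k_i - m_i) = nk - m = 0$, and every summand is $\ge 0$ by the hypothesis $m_i \le n k_i$; hence each summand vanishes, so $m_i = n k_i$ for every index $i$ with $\alpha_i > 0$. Consequently each monomial $f^\alpha$ occurring in the expression for $g$ involves only the generators $f_i$ with $m_i = n k_i$, and therefore $g \in A[\, f_i : m_i = n k_i \,]$. This establishes the claim and hence the proposition. (The argument uses only $n > 0$, not its rationality.)

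I do not expect a genuine obstacle here: the only points requiring a little care are the reduction to bihomogeneous generators and the extraction of the bidegree-$(k,m)$ component, which are routine bookkeeping for bigraded algebras over $A = J_{0,0}$. The conceptual content — that $\{(k,m) : m = nk\}$ is a face of the cone $\{(k,m) : m \le nk\}$, so that a monomial lands in the face only if each of its factors already lies in the face — is captured entirely by the displayed identity $\sum_i \alpha_i (n k_i - m_i) = 0$.
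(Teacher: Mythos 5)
Your proof is correct and is essentially the paper's own argument: both reduce to bihomogeneous generators and then use that the diagonal $m=nk$ is a face of the cone $m\le nk$, so a monomial $f^\alpha$ can have $\sum_i\alpha_i(nk_i-m_i)=0$ only if every factor with $\alpha_i>0$ already satisfies $m_i=nk_i$. The paper phrases this via the min/max inequality for the relative index $r(f)=m/k$, while your linear-functional formulation is the same computation (and sidesteps the undefined $r$ when $k_i=0$), so no further comment is needed.
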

\begin{proof}
Given a finite set of elements $a_i \in J_{m_i, k_i}$, note that 
\begin{equation}\label{eq:face_trick}
\min_i r(a_i) \le r(\prod_i a_i) \le \max_i  r(a_i), 
\end{equation}
with equalities if and only if all $r(a_i)$ are equal. 

Now let $a_1, \dots, a_l$  be generators for $J_{\le n}$ as $J_{0,0}$-algebra; we may assume each $a_i$ is bihomogeneous, say of degree $(m_i, k_i)$. Then we claim that $J_n$ is generated by exactly those $a_i$ with $r(a_i) = n$; in particular, $J_n$ is finitely generated. 

To prove our claim, let $f \in J_n$, say $f \in J_{k,m}$, and write $f = \sum_I \lambda_I a^I$ with $I \in \mathbb Z_{\ge 0}^l$, $\lambda_I \in J_{0,0}$, and $a^I \coloneqq \prod_i a_i^{I_i}$. Then each $a^I$ is bihomogeneous, hence if $\lambda_I \neq 0$ then $a^I \in J_{k,m}$, in particular $r(a^I) = r(f)$. But then by \eqref{eq:face_trick} we know each of the $a_i$ with $I_i \neq 0$ has $r(a_i) = r(f) = n$.
\end{proof}

\begin{rmk}\label{rem:runge-mistake}
As was mentioned in the introduction, Theorem \ref{thm:2} disproves
\cite[Theorem 5.5]{runge}. We can trace back the oversight in Runge's
proof to the proof of Theorem~5.1 in \emph{loc.~cit.} Here, the
author states that the space of Siegel--Jacobi forms can be seen as
the set of global sections of a natural invertible sheaf on a
compactification of the universal abelian scheme living over the
Satake--Baily--Borel compactification of the moduli space of
principally polarized abelian varieties of level $\Gamma$. Let us be
more precise. Consider the Satake--Baily--Borel compactification
$\caA(\Gamma)^{*}$ of $\caA(\Gamma)$. Then
the canonical fibration morphism $\pi \colon \caB(\Gamma) \to
\caA(\Gamma)$ extends to a compactification $\overline{\pi} \colon
 \caB(\Gamma)^{*} \to
\caA(\Gamma)^{*}$, where
$\caB(\Gamma)^{*}$ is constructed as the
BiProj of a bigraded ring $R$ contained in the ring of Siegel--Jacobi
forms. 

The core of Runge's argument  leading to Theorem~5.1 of \emph{loc.~cit.} is that for every Siegel--Jacobi form in
$R$, the limit when one approaches a point on the boundary of
$ \caB(\Gamma)^{*}$ only depends on  $g$ parameters. Then it is 
claimed that this implies that all the fibers of the projection map $ \caB(\Gamma)^* \to
 \caA(\Gamma)^*$  have dimension $g$. 

The problem with this argument is that Siegel--Jacobi forms are
sections of a line bundle and we are looking at the completion of the
projective embedding
defined by this line bundle. In this situation, if several independent
Siegel--Jacobi forms converge to zero simultaneously when approaching
a point at the 
boundary, the dimension of the fibre may be bigger than~$g$.  
\end{rmk}

\section{The asymptotic dimension of spaces of Siegel--Jacobi forms} \label{sec:asymptotic}

The volume of a graded linear series $\caR$ on a variety $X$ of dimension~$d$ is the non-negative real number given by 
 \[
 \vol(\caR) = \limsup_{k \to \infty} \frac{\dim(\caR_k)}{k^d/d!}.
 \]
 In particular, given a Weil $\R$-b-divisor $\D$ on $X$, the volume of $\D$ can be expressed as $\vol(\D) = \vol(\caR(\D))$, see Definition~\ref{def:volume_b_div}. 
 
 We recall that  by \cite[Theorem~3.2]{Da-Fa20} any nef Weil $\R$-b-divisor $\D$ on $X$ has a well-defined degree $\D^d$ in $\R_{\ge 0}$. 
 \begin{lem} \label{lemm:HilbSam} Assume $\D$ is a  big and nef toroidal Weil $\R$-b-divisor on $X$. Then we have the Hilbert--Samuel formula
 \[ \vol \D = \D^d \, . \]
 \end{lem}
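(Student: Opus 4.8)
The plan is to reduce to the Cartier case via the approximation results already available and pass to the limit. First I would invoke the Dang--Favre theorem (stated in the excerpt) together with its toroidal refinement (Remark after it, via \cite[Lemma 5.9]{BoteroBurgos}): since $\D$ is nef and toroidal, there is a decreasing sequence of toroidal nef Cartier b-divisors $\D_i$ with $\D_i \searrow \D$. On the other hand, because $\D$ is big (and toroidal), by the construction recalled just before Lemma~\ref{eq:sup} — the proof of \cite[Lemma 5.12]{BoteroBurgos} — there is a sequence of big toroidal $\Q$-Cartier b-divisors $\B_j \le \D$ with $\vol(\B_j) \to \vol(\D)$, and by Fujita approximation (and \cite[Theorem 5.13]{BoteroBurgos}) we may even take these to be $\Q$-Cartier b-divisors $\A_j$ generated by global sections with $\A_j \le \D$ and $\vol(\A_j) \to \vol(\D)$.

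Next I would compare degrees with volumes on each side of these approximations. For each Cartier b-divisor $\D_i$ (determined on some model), the volume equals the usual volume of a divisor \cite[Definition~2.2.31]{Laz}, and since $\D_i$ is nef and Cartier its volume coincides with its degree $\D_i^n$ by the standard fact for nef divisors; similarly $\vol(\A_j) = \A_j^n$ for the globally generated $\Q$-Cartier $\A_j$. Then the monotonicity and continuity properties of the degree of nef b-divisors from \cite[Theorem~3.2]{Da-Fa20} give $\D_i^n \searrow \D^n$. Meanwhile $\vol$ is monotone in b-divisors (Lemma~\ref{lemm:3} gives $\caR(\A_j) \subset \caR(\D) \subset \caR(\D_i)$, hence $\vol(\A_j) \le \vol(\D) \le \vol(\D_i)$), so sandwiching yields
\[
\A_j^n = \vol(\A_j) \le \vol(\D) \le \vol(\D_i) = \D_i^n .
\]
Letting $i \to \infty$ on the right gives $\vol(\D) \le \D^n$, and the reverse inequality $\vol(\D) \ge \D^n$ follows from letting $j \to \infty$ on the left once we know $\A_j^n \to \D^n$.

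The remaining point, and the place where I expect the real work to sit, is establishing $\lim_j \A_j^n = \D^n$. For this I would use Lemma~\ref{eq:sup}, which says $\sup_j \A_j = \D$ componentwise, together with the fact that the $\A_j$ can be taken increasing (or at least cofinal in the obvious order) and the continuity of the degree functional along monotone sequences of nef b-divisors from \cite[Theorem~3.2]{Da-Fa20}. Concretely: $\A_j \le \D$ for all $j$ gives $\A_j^n \le \D^n$; and since $\sup_j \A_j = \D$, the b-divisor $\D$ is the limit in the weak topology of the (nef Cartier) b-divisors $\A_j$, so by the continuity of the degree for monotone sequences of nef Cartier b-divisors converging to a nef b-divisor one gets $\A_j^n \to \D^n$. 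Hence $\vol(\D) = \lim_j \A_j^n = \D^n$, which is the claimed Hilbert--Samuel formula. The main obstacle is thus purely a matter of correctly citing and applying the continuity/monotonicity of the b-divisor degree (and the identification volume $=$ degree for nef Cartier b-divisors) rather than any new idea; all the genuinely delicate input — that big toroidal b-divisors admit the two kinds of approximating sequences and that their volumes/Okounkov bodies converge — has already been packaged into Lemma~\ref{eq:sup} and the cited results of \cite{BoteroBurgos} and \cite{Da-Fa20}.
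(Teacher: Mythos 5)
The paper does not prove this lemma internally: its ``proof'' is a one-line citation to \cite[Theorem 5.13]{BoteroBurgos}, where the identity is obtained by identifying both $\vol(\D)$ and $\D^{n}$ with a convex-geometric quantity attached to the conical function of the toroidal b-divisor. You are therefore attempting an argument the paper does not carry out, and while one half of it is sound, the other half has a genuine gap.

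Your proof of $\vol(\D)\le \D^{n}$ is correct: $\D\le \D_{i}$ gives $\caR(\D)\subset\caR(\D_{i})$, hence $\vol(\D)\le\vol(\D_{i})=\D_{i}^{n}$, and $\D_{i}^{n}\to\D^{n}$ is exactly how the degree of a nef Weil b-divisor is defined via decreasing Cartier approximations in \cite{Da-Fa20}. The gap is in the reverse inequality. You rightly isolate the crux as $\lim_{j}\A_{j}^{n}=\D^{n}$, but observe that $\lim_{j}\A_{j}^{n}=\lim_{j}\vol(\A_{j})=\vol(\D)$ already holds by the very construction of the $\A_{j}$, so this claim is \emph{literally equivalent} to the lemma and cannot be had for free. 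Your proposed justification does not go through: (i) the $\A_{j}$ cannot simply ``be taken increasing'' --- the componentwise maximum of nef (or globally generated) Cartier b-divisors is in general neither nef nor Cartier, and Lemma~\ref{eq:sup} only supplies a componentwise supremum, which without monotonicity does not even yield weak convergence $\A_{j}\to\D$; (ii) more importantly, the continuity quoted from \cite[Theorem 3.2]{Da-Fa20} concerns \emph{decreasing} sequences of nef Cartier b-divisors (that is how $\D^{n}$ is defined), whereas continuity of the degree along approximations from below is precisely the b-divisorial Fujita-type statement that is the real content of the lemma; monotonicity of the degree only gives $\lim_{j}\A_{j}^{n}\le\D^{n}$, i.e.\ the inequality you already have. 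Within the paper's own toolkit the clean way to close the argument is to apply Lemma~\ref{lemm:continuity_volume} to the decreasing sequence $\D_{i}\searrow\D$ (each $\D_{i}$ is big, nef and toroidal), giving $\vol(\D)=\lim_{i}\vol(\D_{i})=\lim_{i}\D_{i}^{n}=\D^{n}$ directly --- though that continuity statement is itself only quoted from \cite[Corollary 5.15]{BoteroBurgos} and belongs to the same circle of convex-geometric results as the theorem being cited, so the external input cannot really be avoided.
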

 \begin{proof} See \cite[Theorem 5.14]{BoteroBurgos}.
 \end{proof}
 \begin{lem} \label{lemm:continuity_volume} The function $\vol$ is continuous on the space of big and nef toroidal Weil $\R$-b-divisors on $X$.
 \end{lem}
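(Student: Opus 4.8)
The plan is to prove continuity of $\vol$ on the space of big and nef toroidal b-divisors by combining the Hilbert--Samuel formula of Lemma~\ref{lemm:HilbSam} with a standard approximation-and-sandwiching argument. First I would fix a big and nef toroidal b-divisor $\D$ (with respect to a toroidal structure $(X_1,\pi_1,D)$) and a sequence $\D_j \to \D$ of big and nef toroidal b-divisors in the weak topology. By Lemma~\ref{lemm:HilbSam} it suffices to show $\D_j^n \to \D^n$. I would reduce to the case where all $\D_j$ are toroidal with respect to a \emph{common} toroidal structure: since convergence in the weak topology of toroidal b-divisors means convergence of the associated conical functions on $\Pi(X_1,D)(\Q)$ after passing to a suitable common refinement, this is harmless (and in any case the hypothesis should be read as: we work inside one space $\WbDiv(X,D)^{\tor}$ for a fixed $D$).

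The key step is monotonicity and homogeneity of the degree on nef b-divisors, together with an elementary convexity/sandwich estimate. Concretely, given $\varepsilon>0$, I would use that $\D$ is big to find (via the Dang--Favre approximation theorem quoted in the excerpt, refined to the toroidal case) a toroidal nef Cartier b-divisor $\A$ with $\A \le \D$ and $\vol(\A) \ge \vol(\D) - \varepsilon$; concretely one can even take $\A$ generated by global sections as in the construction preceding Theorem~\ref{thm:1}. Since $\D_j \to \D$, for $j$ large the conical function of $\D_j$ is within $\delta$ of that of $\D$ on the relevant compact slice, which (choosing an interior nef Cartier b-divisor as a reference and using that $\D$ is big, hence its conical function is bounded below by a strictly positive one on a neighbourhood of the rays) gives inclusions of the form
\begin{displaymath}
  (1-\varepsilon)\D \le \D_j \le (1+\varepsilon)\D
\end{displaymath}
for all sufficiently large $j$. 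Monotonicity of the degree on nef b-divisors (\cite[Theorem~3.2]{Da-Fa20}) then yields
\begin{displaymath}
  (1-\varepsilon)^n \D^n \le \D_j^n \le (1+\varepsilon)^n \D^n,
\end{displaymath}
and letting $\varepsilon \to 0$ gives $\D_j^n \to \D^n$, hence $\vol(\D_j) \to \vol(\D)$ by Lemma~\ref{lemm:HilbSam}.

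I expect the main obstacle to be justifying the two-sided inequality $(1-\varepsilon)\D \le \D_j \le (1+\varepsilon)\D$ from mere weak convergence $\D_j \to \D$: weak convergence of Weil b-divisors is only convergence of each component $D_{\pi}$, and a priori does not control all the infinitely many orders $\ord_P$ uniformly. This is exactly where the toroidal hypothesis is essential: in $\WbDiv(X,D)^{\tor}$ a b-divisor \emph{is} its conical function on $\Pi(X_1,D)(\Q)$, all orders $\ord_P$ are read off from the values of this function at the corresponding primitive vectors, and bigness of $\D$ forces its conical function to dominate (a positive multiple of) the sum-of-coordinates function on the rays, so a uniform two-sided comparison on the compact cross-section propagates to all toroidal primes. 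One should be a little careful that $\rec$-type unboundedness does not intervene, but since everything lives on the fixed finite fan $\Pi(X_1,D)$ modulo the group action, only finitely many cones (up to the group) are involved and the estimate is genuinely uniform. An alternative, perhaps cleaner, route avoiding approximation altogether is to invoke directly the continuity statement in \cite[Theorem~5.13]{BoteroBurgos} or the continuity of mixed degrees of toroidal nef b-divisors established there; I would present the approximation argument as the main line and remark that it can also be deduced from \emph{loc.~cit.}
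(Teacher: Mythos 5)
The paper does not prove this lemma at all: its ``proof'' is the single citation \cite[Corollary~5.15]{BoteroBurgos}, which is exactly the fallback you mention in your last sentence. So the only substantive content to assess is your main line of argument, and there the key step --- deducing $(1-\varepsilon)\D \le \D_j \le (1+\varepsilon)\D$ for $j\gg 0$ from weak convergence --- has a genuine gap. The problem is not (only) the passage from pointwise to uniform convergence of the conical functions; it is the conversion of an \emph{additive} error bound $|\varphi_{\D_j}-\varphi_{\D}|\le\delta$ on the cross-section into a \emph{multiplicative} bound $(1\pm\varepsilon)$, which requires $|\varphi_{\D}|$ to be bounded away from $0$ on the rays, i.e.\ requires $\D$ to dominate a positive multiple of the entire toroidal boundary. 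Bigness does not give this: a big and nef toroidal b-divisor very often has $\ord_P\D=0$ for some toroidal primes. Concretely, on $X=\P^2$ with the toric boundary $D_1+D_2+D_3$, take $\D$ the Cartier b-divisor determined by the line $D_1$ and $\D_j$ the one determined by $D_1-\tfrac1j D_2$. These are big, nef, toroidal, and $\D_j\to\D$ weakly, but $\ord_{D_2}\D_j=-\tfrac1j<0=(1-\varepsilon)\ord_{D_2}\D$ for every $j$ and every $\varepsilon\in(0,1)$, so the sandwich fails identically even though $\vol(\D_j)=(1-\tfrac1j)^2\to\vol(\D)$. So the inequality you want is simply false in general, and the argument as written does not establish continuity.

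Two further remarks. First, for the only use the paper makes of this lemma (the sequence $\D_j=(1-1/j)\D$ in Theorem~\ref{th:vol-jac}), your sandwich holds tautologically and homogeneity $\vol(\lambda\D)=\lambda^n\vol(\D)$ already suffices; so if you only need that case, you can sidestep the general statement entirely. Second, if you do want the general statement, the honest route is the one the paper takes, namely to quote \cite[Corollary~5.15]{BoteroBurgos} (whose proof goes through the identification of degrees of nef toroidal b-divisors with Monge--Amp\`ere-type integrals of the associated concave conical functions, where continuity is available without any lower bound on $\varphi_{\D}$), rather than to present the approximation argument as the main line.
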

 \begin{proof} See \cite[Corollary 5.16]{BoteroBurgos}.
 \end{proof}

As before  let $h= \overline{h}^{\inv}$ be the  psh metric of $\overline L_{k,m,\phi }$ obtained
by Proposition~\ref{prop:5}. Let $s$ be a non-zero rational section of $L_{k,m}$. 

 \begin{thm}\label{th:vol-jac} Let $k,m>0$ and let
$\D_{k,m}=\D(\overline{L}_{k,m,\phi}, s,h), \caJ_{k,m}(\Gamma, s)$ and
$\caJ_{k,m}^{\operatorname{cusp}}(\Gamma, s)$ be as in
Section~\ref{sec:graded-linear-series-1}. Let $G = g(g+1)/2$ and $d=
G+g= \dim \caB(\Gamma )$. Then the following sequence 
of equalities is satisfied. 
\[
\vol\left(\caJ_{k,m}(\Gamma, s)\right) = \vol\left(\caJ_{k,m}^{\operatorname{cusp}}(\Gamma, s)\right) = \vol\left(\D_{k,m}\right) = \D_{k,m}^{d}.
\]
 \end{thm}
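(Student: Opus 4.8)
The plan is to establish the four equalities one at a time, reading the chain of equalities from right to left and then closing the loop. Recall from Section~\ref{sec:graded-linear-series-1} that $\caJ^{\cusp}_{k,m}(\Gamma,s)\subset \caJ_{k,m}(\Gamma,s)$, so by monotonicity of $\dim$ in each graded piece we immediately get $\vol(\caJ^{\cusp}_{k,m}(\Gamma,s))\le \vol(\caJ_{k,m}(\Gamma,s))$. For the reverse inequality I would argue exactly as in the proof of Lemma~\ref{lemm:12}: fix a nonzero $f_{0}\in \caJ^{\cusp}_{k,m}(\Gamma,s)_{\ell_{0}}$ (which exists by Lemma~\ref{lemm:18}), and observe that multiplication by $f_{0}$ gives an injection $\caJ_{k,m}(\Gamma,s)_{\ell}\hookrightarrow \caJ^{\cusp}_{k,m}(\Gamma,s)_{\ell+\ell_{0}}$ by Lemma~\ref{lem:multiplicativity}. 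Hence $\dim \caJ^{\cusp}_{k,m}(\Gamma,s)_{\ell+\ell_{0}}\ge \dim \caJ_{k,m}(\Gamma,s)_{\ell}$, and dividing by $\ell^{n}/n!$ and taking $\limsup$ (the shift by $\ell_{0}$ is asymptotically negligible) yields $\vol(\caJ^{\cusp}_{k,m}(\Gamma,s))\ge \vol(\caJ_{k,m}(\Gamma,s))$. This gives the first equality.

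For the middle equality $\vol(\caJ^{\cusp}_{k,m}(\Gamma,s))=\vol(\D_{k,m})$, I would combine the comparisons already proved in Section~\ref{sec:psh-metrics-b}. By Lemma~\ref{lemm:11} we have $\caR(\overline L_{k,m,\phi},s,h)=\caJ^{\cusp}_{k,m}(\Gamma,s)$ (after arranging $m$ divisible enough that $m\phi$ is integral; as in the proof of Theorem~\ref{thm:2} this is harmless since volumes scale by $\ell^{n}$ under $(k,m)\mapsto(\ell k,\ell m)$, and one may also adjust $s$). On the one hand $\caR(\overline L_{k,m,\phi},s,h)\subset \caR(\D(\overline L_{k,m,\phi},s,h))=\caR(\D_{k,m})$ by Lemma~\ref{lemm:4}, giving $\vol(\caJ^{\cusp}_{k,m}(\Gamma,s))\le \vol(\D_{k,m})$. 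On the other hand, Corollary~\ref{cor:3} (with Lemma~\ref{lemm:14}) gives $\caR((1-\varepsilon)\D_{k,m})\subset \caR(\overline L_{k,m,\phi},s,h)$ for every $\varepsilon>0$, provided $\ord_{D_{i}}\D_{k,m}>0$ for all components $D_{i}$ of the singularity divisor $D$; as in the proof of Theorem~\ref{thm:2}, after replacing $(k,m)$ by a multiple and adjusting $s$ this positivity can be arranged. So $\vol(\D_{k,m})\ge \vol(\caJ^{\cusp}_{k,m}(\Gamma,s))\ge \vol((1-\varepsilon)\D_{k,m})=(1-\varepsilon)^{n}\vol(\D_{k,m})$, and letting $\varepsilon\to 0$ closes the equality. (Here I use that $\D_{k,m}$ is big and nef by Lemmas~\ref{lem:big} and \ref{lem:nef}, so $\vol(\D_{k,m})$ is finite and the squeeze is effective.)

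Finally the equality $\vol(\D_{k,m})=\D_{k,m}^{n}$ is the Hilbert--Samuel formula of Lemma~\ref{lemm:HilbSam}, which applies precisely because $\D_{k,m}$ is big (Lemma~\ref{lem:big}), nef (Lemma~\ref{lem:nef}), and toroidal (Corollary~\ref{cor:toroidal}). It remains to remark that none of these four quantities depends on the auxiliary choices: the graded linear series $\caJ_{k,m}(\Gamma,s)$ and $\caJ^{\cusp}_{k,m}(\Gamma,s)$ depend on $s$ only up to the isomorphism $f\mapsto f(s/s')^{\ell}$, which preserves dimensions; and $\D(\overline L_{k,m,\phi},s,h)$ is independent of the toroidal psh extension as recalled in the introduction, while changing $s$ shifts it by $\dv(s)-\dv(s')$, a principal difference that does not affect the degree.

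The main obstacle is the middle equality, and specifically the bookkeeping needed to legitimately reduce to the situation where $m\phi$ is integral and $\ord_{D_{i}}\D_{k,m}>0$ for all boundary components $D_{i}$. One must check that replacing $(k,m)$ by $(\ell k,\ell m)$ multiplies every volume in sight by $\ell^{n}$ (clear for the graded linear series from the definition, and for the b-divisor degree since $\D_{\ell k,\ell m}=\ell\D_{k,m}$), and that changing the section $s$ to build in the required positivity along $D$ — possible by the ampleness statement inside the proof of Lemma~\ref{lem:big} — does not alter any of the four numbers. Once this reduction is in place, the rest is a formal squeeze using Lemmas~\ref{lemm:4}, \ref{lemm:11}, \ref{lemm:14}, Corollary~\ref{cor:3}, and Lemma~\ref{lemm:HilbSam}.
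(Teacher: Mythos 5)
Your proposal is correct, and for two of the three equalities it follows the paper's route exactly: the middle equality via the sandwich $\caR((1-\varepsilon)\D_{k,m})\subset \caR(\overline L_{k,m,\phi},s,h)=\caJ^{\cusp}_{k,m}(\Gamma,s)\subset \caR(\D_{k,m})$ (Lemmas~\ref{lemm:4}, \ref{lemm:11}, Corollary~\ref{cor:3}, after the same reduction to $\ord_{D_i}\D_{k,m}>0$ used in Theorem~\ref{thm:2}), and the last equality via the Hilbert--Samuel formula of Lemma~\ref{lemm:HilbSam}. The only genuine divergence is the first equality. The paper deduces $\vol(\caJ_{k,m}(\Gamma,s))=\vol(\caJ^{\cusp}_{k,m}(\Gamma,s))$ by passing through b-divisors: it sandwiches $\caJ^{\cusp}\subset\caJ\subset\caR(\bdiv(\caJ))$ and identifies $\caR(\bdiv(\caJ))$ with $\caR(\D_{k,m})$ using Lemma~\ref{lemm:12} and Corollary~\ref{cor:2}, so that the equality of the outer volumes (already established) squeezes the middle term. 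You instead compare dimensions directly: multiplication by a fixed nonzero $f_0\in\caJ^{\cusp}_{k,m}(\Gamma,s)_{\ell_0}$ injects $\caJ_\ell$ into $\caJ^{\cusp}_{\ell+\ell_0}$ by Lemma~\ref{lem:multiplicativity}, and the shift by $\ell_0$ is asymptotically negligible. This is more elementary and bypasses Corollary~\ref{cor:2} entirely for this step; it is in fact the same trick the paper uses inside the proof of Lemma~\ref{lemm:12}, applied one level up. One small point of care: where the paper invokes the continuity of the volume (Lemma~\ref{lemm:continuity_volume}) to pass from $\vol((1-1/j)\D_{k,m})$ to $\vol(\D_{k,m})$, you use the homogeneity $\vol((1-\varepsilon)\D)=(1-\varepsilon)^n\vol(\D)$; for rational $\varepsilon$ this follows from reindexing the graded pieces together with Remark~\ref{rmk:limsup}, so either justification is fine, but it is worth saying which one you mean.
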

 \begin{proof}
We know by Lemmas \ref{lem:big} and \ref{lem:nef} and Corollary \ref{cor:toroidal} that the Weil $\R$-b-divisor $\D_{k,m}$ is nef, big, and toroidal. Let $D$ be a singularity divisor of $h$. As in the proof of Theorem \ref{thm:2} we may assume that the condition $\operatorname{ord}_{D_i}\D_{k,m} >0$ holds  for all irreducible components $D_i$ of $D$. We let $\D_j =(1-1/j)\D_{k,m}$. The sequence $\{\D_j\}_j$ forms a sequence of nef and big toroidal Weil $\R$-b-divisors converging to $\D_{k,m}$. Again by Lemma \ref{lemm:4} and Corollary \ref{cor:3} we have 
\[
 \caR(\D_j)\subset \caR(\overline{L}_{k,m,\phi}, s,h) \subset \caR(\D_{k,m}).
 \]
 Taking limits, using the continuity statement in Lemma~\ref{lemm:continuity_volume}, and using Lemma~\ref{lemm:11} we obtain
 \begin{align}\label{eq:as1}
 \vol(\D_{k,m}) = \lim_{j \to \infty}\vol(\D_j) =
   \vol\left(\caR(\overline{L}_{k,m,\phi}, s,h)\right) =
   \vol\left(\caJ_{k,m}^{\operatorname{cusp}}(\Gamma, s)\right).
 \end{align}
 
 On the other hand, we have 
 \begin{align}\label{eq:cusp}
 \caJ_{k,m}^{\operatorname{cusp}}(\Gamma, s) \subset
   \caJ_{k,m}(\Gamma, s) \subset \caR(\bdiv\left(\caJ_{k,m}(\Gamma,
   s)\right),
 \end{align}
 where the last inclusion follows from Lemma~\ref{lemm:3}. Moreover, as in the proof of Theorem \ref{thm:2}
 we have 
 \[
 \caR(\bdiv\left(\caJ_{k,m}(\Gamma, s)\right) =
 \caR(\bdiv\left(\caJ_{k,m}^{\operatorname{cusp}}(\Gamma, s)\right) =
 \caR(\overline{L}_{k,m,\phi}, s,h) = \caR(\D_{k,m}).
 \]
Taking volumes in \eqref{eq:cusp} and using \eqref{eq:as1} we get 
 \begin{align}\label{eq:as2}
\vol\left(\caJ_{k,m}(\Gamma, s)\right) = \vol(\D_{k,m}).
\end{align}
Finally, since $\D_{k,m}$ is toroidal, by Lemma~\ref{lemm:HilbSam} we have 
\begin{align}\label{eq:as3}
\vol(\D_{k,m}) = \D_{k,m}^{d} \, . 
\end{align}
Combining \eqref{eq:as1}, \eqref{eq:as2} and \eqref{eq:as3} we obtain the result. 
 \end{proof}
\begin{cor}\label{cor:asy}
We have 
\begin{align}\label{eqn:limsup}
\limsup_{\ell\to \infty}\frac{\dim J_{\ell k, \ell m}(\Gamma)}{\ell^{d}/d!} = \limsup_{\ell\to \infty}\frac{\dim J^{\cusp}_{\ell k, \ell m}(\Gamma)}{\ell^{d}/d!} = \D_{k,m}^{d}.
\end{align}
\end{cor}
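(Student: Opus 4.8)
The plan is to deduce Corollary \ref{cor:asy} directly from Theorem \ref{th:vol-jac} by unwinding the definition of the volume of a graded linear series. First I would note that, for each $\ell \geq 0$, multiplication by $s^{\otimes \ell}$ defines a $\C$-linear isomorphism
\[
\caJ_{k,m}(\Gamma, s)_\ell \xrightarrow{\ \sim\ } J_{\ell k, \ell m}(\Gamma), \qquad f \longmapsto f\, s^{\otimes \ell},
\]
with inverse $\phi \mapsto \phi/s^{\otimes \ell}$; this is immediate from the definition of $\caJ_{k,m}(\Gamma, s)$ in Section \ref{sec:graded-linear-series-1}, and the same argument gives $\caJ^{\cusp}_{k,m}(\Gamma, s)_\ell \cong J^{\cusp}_{\ell k, \ell m}(\Gamma)$. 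In particular $\dim_\C \caJ_{k,m}(\Gamma, s)_\ell = \dim_\C J_{\ell k, \ell m}(\Gamma)$ and $\dim_\C \caJ^{\cusp}_{k,m}(\Gamma, s)_\ell = \dim_\C J^{\cusp}_{\ell k, \ell m}(\Gamma)$ for all $\ell$.

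Next I would check the bookkeeping of dimensions: the graded linear series $\caJ_{k,m}(\Gamma, s)$ and $\caJ^{\cusp}_{k,m}(\Gamma, s)$ are subalgebras of $F[t]$ with $F = K(\caB(\Gamma))$, and since $\caB(\Gamma) \to \caA(\Gamma)$ is a family of $g$-dimensional abelian varieties over the $G$-dimensional base $\caA(\Gamma)$, one has $\dim \caB(\Gamma) = G + g = n$. As graded linear series form a birational notion (the remark after Definition \ref{def:1}), the volume computed with this $n$ is the same whether we work on $\caB(\Gamma)$, on $\overline{\caB}(\Gamma)_{\Pi}$, or on any model. Substituting the dimension identities above into the definition of the volume of a graded linear series recalled at the beginning of Section \ref{sec:asymptotic}, we obtain
\[
\vol\bigl(\caJ_{k,m}(\Gamma, s)\bigr) = \limsup_{\ell \to \infty} \frac{\dim J_{\ell k, \ell m}(\Gamma)}{\ell^n/n!}, \qquad \vol\bigl(\caJ^{\cusp}_{k,m}(\Gamma, s)\bigr) = \limsup_{\ell \to \infty} \frac{\dim J^{\cusp}_{\ell k, \ell m}(\Gamma)}{\ell^n/n!}.
\]

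It then only remains to invoke Theorem \ref{th:vol-jac}, which asserts $\vol(\caJ_{k,m}(\Gamma, s)) = \vol(\caJ^{\cusp}_{k,m}(\Gamma, s)) = \D_{k,m}^n$; combining this with the two identities above gives \eqref{eqn:limsup}. There is essentially no obstacle here: all of the real work has already been done in Theorem \ref{th:vol-jac} (and, behind it, in Corollary \ref{cor:2}, Corollary \ref{cor:1}, and the Hilbert--Samuel formula from \cite{BoteroBurgos}), so the only point requiring a moment's care is that $\caJ^{\cusp}_{k,m}(\Gamma, s) \subseteq \caJ_{k,m}(\Gamma, s) \subseteq \caR(\bdiv(\caJ_{k,m}(\Gamma, s))) = \caR(\D_{k,m})$ with the two outer graded linear series of almost integral type, so that all the limits superior that occur are finite and the equalities of volumes transport without any difficulty.
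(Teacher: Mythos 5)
Your proposal is correct and coincides with the paper's (implicit) argument: the corollary is deduced from Theorem~\ref{th:vol-jac} purely by unwinding the definition of the volume of a graded linear series, using that multiplication by $s^{\otimes\ell}$ identifies $\caJ_{k,m}(\Gamma,s)_{\ell}$ with $J_{\ell k,\ell m}(\Gamma)$ (and likewise for cusp forms), so that $\vol(\caJ_{k,m}(\Gamma,s))$ is exactly the $\limsup$ in \eqref{eqn:limsup}. No gaps.
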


Therefore, to obtain an asymptotic estimate of the growth of
$\dim J_{\ell k, \ell m}(\Gamma)$ and $\dim J_{\ell k, \ell
  m}^{\cusp}(\Gamma)$ we are reduced to computing $\D_{k,m}^{d}$.
  \begin{rmk}\label{rmk:limsup2} The $\limsup$ in \eqref{eqn:limsup} is actually a $\lim$ for sufficiently divisible $\ell$ as it is the volume of a graded linear series of almost integral type (see Remark~\ref{rmk:limsup}).
  \end{rmk}

  Our next task is to compute the degree $\D_{k,m}^{d}$. 
 We recall that
$\D_{k,m}=\D(\overline{L}_{k,m,\phi}, s,h)$  and that (see  Proposition~\ref{prop:7}) the metric $h$ is toroidal with
respect to the boundary divisor $\overline  \caB(\Gamma )_{\Pi }\setminus \caB(\Gamma )$
for any admissible cone decomposition $\Pi $. Then by
\cite[Theorem 5.20]{BBHJ} we have that
\begin{displaymath}
  \D_{k,m}^{\udim}=\int_{\overline  \caB(\Gamma )_{\Pi }}\langle
  c_{1}(\overline L_{k,m,\phi},h)^{\udim}\rangle =
  \int _{\caB(\Gamma )} c_{1}(L_{k,m},h)^{\udim}. 
\end{displaymath}
Here the integral in the middle is the so-called \emph{non-pluripolar
volume}, which agrees with the integral on the right hand side because
the metric $h$ is smooth on $\caB(\Gamma)$. 

We let $h_B$ and $h_M$ denote the canonical metrics on the line
bundles $B$ and $M$ respectively,  
see Definition~\ref{def:6} and the remarks immediately thereafter.
As equation \eqref{eq:18} also gives the canonical invariant metric
$h$ on $L_{k,m}$, by transport of structure we
may use multi-linearity to deduce that
\begin{displaymath}
  \int _{\caB(\Gamma )} c_{1}(L_{k,m},h)^{\udim}=
  \sum _{r=1}^{\udim} \binom{\udim}{r}\int _{\caB(\Gamma )}  m^{r}k^{\udim-r}c_{1}(B,h_B)^{r} \pi ^{\ast} c_{1}(M,h_M)^{\udim-r}. 
\end{displaymath}
All the integrals in the sum are finite by \cite[Remark 2.23]{BBHJ}.

 Let
$[2]\colon \caB(\Gamma )\to \caB(\Gamma)$ be the map ``multiplication
by $2$'' fiber by fiber. Then $[2]$ is a finite map of degree
$2^{2g}$. Moreover, since $B$ is symmetric and is rigidified along the origin, we have a canonical isomorphism 
\begin{displaymath}
  [2]^{\ast}B\simeq B^{\otimes 4}.
\end{displaymath}
We have
\begin{displaymath}
  [2]^{\ast}c_{1}(B,h_B)=4c_{1}(B,h_B) \, , 
\end{displaymath}
i.e., the isomorphism is compatible with the metric (see \cite[Lemma~2.6]{DGH21}, for instance).

Therefore
\begin{align*}
  \int _{\caB(\Gamma )}  c_{1}(B,h_B)^{r} \pi ^{\ast} &c_{1}(M,h_M)^{\udim-r}\\&=
  \frac{1}{2^{2g}}\int _{\caB(\Gamma )} [2]^{\ast} c_{1}(B,h_B)^{r}
  [2]^{\ast}\pi ^{\ast} c_{1}(M,h_M)^{\udim-r}\\
  &=
  \frac{1}{2^{2g}}\int _{\caB(\Gamma )} 2^{2r}c_{1}(B,h_B)^{r}
  \pi ^{\ast} c_{1}(M,h_M)^{\udim-r}\\
  &=
  \frac{2^{2r}}{2^{2g}}\int _{\caB(\Gamma )} c_{1}(B,h_B)^{r}
  \pi ^{\ast} c_{1}(M,h_M)^{\udim-r}.
\end{align*}
Hence this integral is zero unless $r=g$. So 
\begin{displaymath}
  \int _{\caB(\Gamma )} c_{1}(L_{k,m},h)^{\udim}=
  \binom{\udim}{g}m^{g}k^{G}\int _{\caB(\Gamma )}
  c_{1}(B,h_B)^{g} \pi ^{\ast}
  c_{1}(M,h_M)^{G}.
\end{displaymath}
Let $A$ be a fibre of the map $\caB(\Gamma )\to \caA(\Gamma )$. It is
an abelian variety of dimension~$g$. By the
projection formula
\begin{displaymath}
  \int _{\caB(\Gamma )}
  c_{1}(B,h_B)^{g} \pi ^{\ast}
  c_{1}(M,h_M)^{G}=
  \deg(B|_{A})\int _{\caA(\Gamma)}c_{1}(M,h_M)^{G}.
\end{displaymath}
Since $B|_{A}$ is twice the principal polarization,
\begin{displaymath}
  \deg(B|_{A})=2^{g}g!. 
\end{displaymath}
We write $\Gamma _{0}=\Sp(2g,\Z)$. Then
\begin{displaymath}
  \int _{\caA(\Gamma)}c_{1}(M,h_M)^{G}=[\Gamma _{0}\colon
  \Gamma ] \int _{\caA_{g}}c_{1}(M,h_M)^{G},
\end{displaymath}
where the second integral is an orbifold integral. The formula
after \cite[Conjecture 8.3]{Geer:moduli} gives
\begin{displaymath}
  \int
  _{\caA_{g}}c_{1}(M,h_M)^{G}=
  (-1)^{G}\frac{G!}{2^{g}}\prod_{k=1}^{g}\frac{\zeta(1-2k)}{(2k-1)!!}.
\end{displaymath}
Summing up we obtain
\begin{displaymath}
  \D^{G + g}_{k,m}=(-1)^{G}m^{g}k^{G}(G+g)![\Gamma _{0}\colon
  \Gamma ] \prod_{k=1}^{g}\frac{\zeta (1-2k)}{(2k-1)!!}. 
\end{displaymath}
By Corollary~\ref{cor:asy} we obtain the
asymptotic growth of the dimension of the spaces of Siegel--Jacobi
forms and of cusp Siegel--Jacobi forms explicitly, recovering a formula already implicit in Tai's work \cite{tai}. 

\begin{cor}\label{th:asy-dim}
  The asymptotic growth of the dimension of the spaces of Siegel--Jacobi forms
  $J_{\ell k, \ell
    m}(\Gamma)$ and $J^{\cusp}_{\ell k, \ell m}(\Gamma)$ when $\ell$
  goes to infinity is given by the following formulae: 
\begin{equation*}
\begin{split}
\limsup_{\ell \to \infty}\frac{\dim J_{\ell k, \ell m}(\Gamma)}{\ell^\udim/\udim!} & = \limsup_{\ell \to \infty}\frac{\dim J^{\cusp}_{\ell k, \ell m}(\Gamma)}{\ell^\udim/\udim!} \\
& = \binom{\udim}{g}m^gk^G\deg(B|_A)  \int _{\caA(\Gamma)}c_{1}(M,h_M)^{G}\\
& =  (-1)^{G}\udim! m^{g}k^{G}[\Gamma _{0}\colon  \Gamma ] \prod_{k=1}^{g}\frac{\zeta (1-2k)}{(2k-1)!!}\\
 & =  (-1)^{\udim}\udim!m^{g}k^{G}2^{G-g} 
  [\Gamma _{0}\colon \Gamma ]\prod_{k=1}^{g}\frac{(k-1)!B_{2k}}{(2k)!}\\
  & = V_g \cdot \udim!m^gk^G2^{-G-1}\pi^{-G}[\Gamma_0 \colon \Gamma],
\end{split}
\end{equation*}
where $G = g(g+1)/2$, $d = G + g$, $B_{2k}  = \frac{(-1)^{k+1} 2(2k)!}{(2\pi)^{2k}}\zeta(2k)$ are
the Bernoulli numbers and  
\[
V_g = (-1)^\udim 2^{g^2+1}\pi^G\prod_{k=1}^g\frac{(k-1)!B_{2k}}{(2k)!}
\]
is the symplectic volume of $\caA_g$ computed by Siegel in \cite[Section VIII]{siegel}.
\end{cor}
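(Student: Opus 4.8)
The plan is to combine Corollary~\ref{cor:asy} with the explicit evaluation of the self-intersection number $\D_{k,m}^{n}$ of the b-divisor $\D_{k,m}=\D(\overline{L}_{k,m,\phi},s,h)$, which has in fact already been carried out in the discussion preceding the statement; what remains is essentially the assembly of these pieces together with the conversion of special zeta values into Bernoulli numbers and into Siegel's symplectic volume. Concretely, by Corollary~\ref{cor:asy} the two $\limsup$'s appearing in the statement are both equal to $\D_{k,m}^{n}$, so it suffices to prove the three displayed expressions for $\D_{k,m}^{n}$.

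For the first expression I would recall that $\D_{k,m}$ is nef, big and toroidal (Lemmas~\ref{lem:big} and \ref{lem:nef} together with Corollary~\ref{cor:toroidal}), so that the Chern--Weil formula of \cite[Theorem~5.20]{BBHJ} identifies $\D_{k,m}^{n}$ with the non-pluripolar volume $\int_{\overline{\caB}(\Gamma)_{\Pi}}\langle c_{1}(\overline{L}_{k,m,\phi},h)^{n}\rangle$, which equals $\int_{\caB(\Gamma)}c_{1}(L_{k,m},h)^{n}$ because $h$ is smooth on $\caB(\Gamma)$ and the relevant integrals converge by \cite[Remark~2.23]{BBHJ}. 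Expanding by multilinearity using $L_{k,m}=\pi^{\ast}M^{\otimes k}\otimes B^{\otimes m}$ and the fact that $h$ is the tensor product of the Hodge metric $h_{M}$ on $M$ and the biextension metric $h_{B}$ on $B$ gives $\sum_{r}\binom{n}{r}m^{r}k^{n-r}\int_{\caB(\Gamma)}c_{1}(B,h_{B})^{r}\,\pi^{\ast}c_{1}(M,h_{M})^{n-r}$. I would then use the fibrewise multiplication-by-$2$ morphism $[2]\colon\caB(\Gamma)\to\caB(\Gamma)$, which is finite of degree $2^{2g}$ and satisfies $[2]^{\ast}c_{1}(B,h_{B})=4\,c_{1}(B,h_{B})$ metrically, to see that the $r$-th term is multiplied by $2^{2r-2g}$ under pullback and so vanishes unless $r=g$. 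For the surviving term the projection formula along $\pi$ reduces the integral to $\deg(B|_{A})\int_{\caA(\Gamma)}c_{1}(M,h_{M})^{G}$ with $A$ a fibre and $G=g(g+1)/2$; since $B|_{A}$ is twice the principal polarization $\deg(B|_{A})=2^{g}g!$, and $\int_{\caA(\Gamma)}c_{1}(M,h_{M})^{G}=[\Gamma_{0}:\Gamma]\int_{\caA_{g}}c_{1}(M,h_{M})^{G}$ as orbifold integrals; inserting the value $\int_{\caA_{g}}c_{1}(M,h_{M})^{G}=(-1)^{G}\tfrac{G!}{2^{g}}\prod_{k=1}^{g}\tfrac{\zeta(1-2k)}{(2k-1)!!}$ from the formula after \cite[Conjecture~8.3]{Geer:moduli} (a consequence of Hirzebruch--Mumford proportionality), together with $\binom{n}{g}g!\,G!=n!$, gives $\D_{k,m}^{n}=(-1)^{G}n!\,m^{g}k^{G}[\Gamma_{0}:\Gamma]\prod_{k=1}^{g}\tfrac{\zeta(1-2k)}{(2k-1)!!}$.

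The remaining two equalities are a bookkeeping exercise. Substituting $\zeta(1-2k)=-B_{2k}/(2k)$ and $(2k-1)!!=(2k)!/(2^{k}k!)$ and using $\sum_{k=1}^{g}k=G$ turns the product into $(-1)^{g}2^{G-g}\prod_{k=1}^{g}\tfrac{(k-1)!\,B_{2k}}{(2k)!}$, and since $(-1)^{G}(-1)^{g}=(-1)^{G+g}=(-1)^{n}$ this yields the second displayed line; recognising $V_{g}=(-1)^{n}2^{g^{2}+1}\pi^{G}\prod_{k=1}^{g}\tfrac{(k-1)!B_{2k}}{(2k)!}$ and computing $G-g-g^{2}-1=-G-1$ from $g^{2}=2G-g$ produces the third, with the remaining powers of $2$ and $\pi$ absorbed via $B_{2k}=\tfrac{(-1)^{k+1}2(2k)!}{(2\pi)^{2k}}\zeta(2k)$. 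I expect the only genuine obstacle to be normalisation: one must ensure that the hypotheses of \cite{BBHJ} are met so that the boundary of $\overline{\caB}(\Gamma)_{\Pi}$ contributes nothing to the degree — which is exactly where Proposition~\ref{prop:7}, that $h$ is toroidal psh, and the convergence in \cite[Remark~2.23]{BBHJ} enter — and one must keep precise track of the signs and of the factors $2^{G-g}$, $2^{-G-1}$ and $\pi^{-G}$, as well as of the orbifold normalisation for which van der Geer's formula is stated.
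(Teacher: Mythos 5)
Your proposal reproduces the paper's own argument essentially verbatim: reduction to $\D_{k,m}^{n}$ via Corollary~\ref{cor:asy}, the Chern--Weil/non-pluripolar volume identification from \cite[Theorem~5.20]{BBHJ}, the multiplication-by-$2$ argument isolating the $r=g$ term, the projection formula with $\deg(B|_{A})=2^{g}g!$ and van der Geer's evaluation of $\int_{\caA_{g}}c_{1}(M,h_M)^{G}$, followed by the same Bernoulli/Siegel-volume bookkeeping. The computations (including $\binom{n}{g}g!\,G!=n!$ and the exponent check $G-g-g^{2}-1=-G-1$) are correct, so nothing further is needed.
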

By Remark \ref{rmk:limsup2} the $\limsup$ above is actually a $\lim$ for sufficiently divisible $\ell$.
\begin{rmk}\label{rem:tai-formula}
The above formulas can also be obtained by combining the formulas in the proofs of Propositions 2.1 and 2.5 of Tai's work~\cite{tai}. 
\end{rmk}

For the purpose of open access, a CC BY public copyright licence is
applied to any Author Accepted Manuscript (AAM) arising from this
submission.

\end{document}